\pdfoutput=1
\documentclass[twoside]{article}
\usepackage[letterpaper, left=3cm, right=3cm, top=4cm, bottom=2.5cm]{geometry}
\usepackage{graphicx}
\usepackage{amsmath,amssymb,amsthm}
\usepackage{authblk}
\usepackage[utf8]{inputenc}
\usepackage{microtype}
\usepackage{mathrsfs} 
\usepackage{enumitem}
\usepackage{calc}
\usepackage{esint}
\usepackage{fancyhdr}
\usepackage{xcolor}
\usepackage[labelfont = bf]{caption}
\usepackage{doi}
\usepackage{subfigure}
\usepackage[numbers]{natbib}
\usepackage{stmaryrd}
\usepackage{mathtools}
\usepackage{booktabs}
\setlength{\aboverulesep}{0pt}
\setlength{\belowrulesep}{0pt}
\usepackage{colortbl}
\usepackage{tabulary}
\usepackage{diagbox}
\usepackage{caption}
\usepackage{float}

\usepackage{paper_diening}

\numberwithin{equation}{section}
\newtheorem{theorem}{Theorem}[section]
\newtheorem{proposition}[equation]{Proposition}

\newtheorem{corollary}[equation]{Corollary}
\newtheorem{remark}[equation]{Remark}
\newtheorem{lemma}[equation]{Lemma}

\newtheorem{assumption}[equation]{Assumption}

\providecommand{\Vo}{{\mathaccent23 V}}
\providecommand{\Qo}{{\mathaccent23 Q}}
\providecommand{\divo}{{\mathrm{div}\,}}

\DeclareMathAlphabet\mathbfcal{OMS}{cmsy}{b}{n}

\usepackage{titlesec}
\titleformat{\section}{\normalfont\scshape\centering}{\thesection.}{0.5em}{}
\titleformat*{\subsection}{\itshape}
\titleformat*{\subsubsection}{\itshape}

\providecommand{\keywords}[1]
{
	{\small\textit{Keywords:~~} #1}
}

\providecommand{\MSC}[1]
{
	{\small\textit{AMS MSC (2020): ~~} #1}
}

\usepackage{hyperref}
\hypersetup{
	colorlinks=true,
	linkcolor=blue,
	citecolor = blue,
	filecolor=magenta, 
	urlcolor=cyan,
}

\begin{document}
	\setlength{\abovedisplayskip}{5.5pt}
	\setlength{\belowdisplayskip}{5.5pt}
	\setlength{\abovedisplayshortskip}{5.5pt}
	\setlength{\belowdisplayshortskip}{5.5pt}

	\title{Error analysis for a finite element approximation of\\ the steady $p(\cdot)$-Navier--Stokes equations}
	\author[1]{Luigi C. Berselli\thanks{Email: \texttt{luigi.carlo.berselli@unipi.it}}\thanks{funded by  INdAM GNAMPA and Ministero dell'istruzione, dell'università e della ricerca (MIUR, Italian Ministry of Education, University and Research)
			within PRIN20204NT8W4: Nonlinear evolution PDEs, fluid dynamics and transport equations: theoretical foundations and applications.}}
	\author[2]{Alex Kaltenbach\thanks{Email: \texttt{\textcolor{black}{kaltenbach@math.tu-berlin.de}}}\thanks{funded by the Deutsche Forschungsgemeinschaft (DFG, German Research
			Foundation) - 525389262.\vspace*{-5.5mm}}}
	\date{\today}
	\affil[1]{\small{Department of Applied Mathematics, University of Pisa, Largo Bruno Pontecorvo 5, 56127~Pisa,~\textcolor{black}{ITALY}}}
	\affil[2]{\small{Institute of Mathematics, Technical University of Berlin, Straße des 17. Juni 136, 10623~Berlin,~\textcolor{black}{GERMANY}}}
	\maketitle

	\pagestyle{fancy}
	\fancyhf{}
	\fancyheadoffset{0cm}
	\addtolength{\headheight}{-0.25cm}
	\renewcommand{\headrulewidth}{0pt} 
	\renewcommand{\footrulewidth}{0pt}
	\fancyhead[CO]{\textsc{Error analysis for a FE approximation of the $p(\cdot)$-Navier--Stokes equations}}
	\fancyhead[CE]{\textsc{L. C. Berselli and A. Kaltenbach}}
	\fancyhead[R]{\thepage}
	\fancyfoot[R]{}
	
	\begin{abstract}
		\hspace*{-2.5mm}In this paper, we examine a finite element approximation of the steady $p(\cdot)$-Navier--Stokes~\mbox{equations} ($p(\cdot)$ is variable dependent) and prove
		orders of convergence by assuming natural fractional regularity assumptions on the velocity vector field and the kinematic pressure.
		Compared to previous~results, we treat the convective term and employ a more practicable  discretization of the power-law~index~$p(\cdot)$.
		Numerical experiments confirm the quasi-optimality of the  \textit{a priori} error estimates~(for~the~\mbox{velocity}) with respect to fractional regularity assumptions  on the~velocity~vector~field~and~the~kinematic~pressure.
	\end{abstract}
	
	\keywords{Variable exponents; \textit{a priori} error analysis; velocity; pressure; finite elements; smart fluids.}
	
	\MSC{35J60; 35Q35;  65N15; 65N30; 76A05.}
	
	\section{Introduction}
	\thispagestyle{empty}
	
	\hspace{5mm}In this paper, we 
	examine a finite element approximation of the steady \textit{$p(\cdot)$-Navier--Stokes equations}\enlargethispage{5mm}
	\begin{equation}
		\label{eq:p-navier-stokes}
		\begin{aligned}
			-\divo\bfS (\cdot,\bfD\bfv)+[\nabla\bfv]\bfv+\nabla q&=\bff &&\quad\text{ in }\Omega\,,\\
			\divo\bfv&=0 &&\quad\text{ in }\Omega\,,
			\\
			\bfv &= \mathbf{0} &&\quad\text{ on } \partial\Omega\,.
		\end{aligned}
	\end{equation}
	More precisely,
	for a given vector field $\bff\colon\Omega\to \setR^d$, describing external forces, and a homogeneous Dirichlet
	boundary condition \eqref{eq:p-navier-stokes}$_3$, we seek a
	\textit{velocity vector field}~${\bfv\coloneqq (v_1,\ldots,v_d)^\top\colon \Omega\to
		\setR^d}$ and a scalar \textit{kinematic pressure} $q\colon \Omega\to \setR$ solving \eqref{eq:p-navier-stokes}.
	Here, $\Omega\subseteq \mathbb{R}^d$, $d\in \{2,3\}$, is a bounded \textcolor{black}{simplicial} Lipschitz domain. The \textit{extra-stress tensor} $\bfS (\cdot,\bfD\bfv)\colon\Omega\to \setR^{d\times d}_{\textup{sym}}$ depends on the \textit{\mbox{strain-rate}~tensor} $\bfD\bfv\coloneqq{ \frac{1}{2}(\nabla\bfv\hspace*{-0.15em}+\hspace*{-0.15em}\nabla\bfv^\top)}\colon\hspace*{-0.15em}\Omega\hspace*{-0.15em}\to\hspace*{-0.15em} \smash{\setR^{d\times d}_{\textup{sym}}}$, \textit{i.e.}, the symmetric part of the velocity gradient  ${\nabla\bfv\hspace*{-0.15em}=\hspace*{-0.15em}(\partial_j v_i)_{i,j=1,\ldots,d}
	\colon\hspace*{-0.15em}\Omega\hspace*{-0.15em}\to\hspace*{-0.15em} \setR^{d\times d}}$. The \textit{convective term} $\smash{[\nabla\bfv]\bfv\colon\Omega\to \mathbb{R}^d}$ is defined by $\smash{([\nabla\bfv]\bfv)_i\coloneqq \sum_{j=1}^d{v_j\partial_j v_i}}$ for all $i=1,\ldots,d$.
	
	Throughout the paper, the extra stress tensor $\bfS\colon \Omega\times \mathbb{R}^{d\times d}\to\mathbb{R}^{d\times d}_{\textup{sym}}  $ is supposed to assume the form
	\begin{align}\label{eq:example-stress}
		\bfS (x,\bfA)\coloneqq \mu_0\, (\delta+\vert\bfA^{\textup{sym}}\vert)^{p(x)-2}\bfA^{\textup{sym}}\,,
	\end{align}
	where $\mu_0>0$, $\delta\ge 0$, and the \textit{power-law index} $p\colon \Omega\to (1,+\infty)$ is at least (Lebesgue) measurable with
	\begin{align*}
		1<p^-\coloneqq \underset{x\in \Omega}{\textrm{ess\,inf}}{\,p(x)}\leq p^+\coloneqq \underset{x\in \Omega}{\textrm{ess\,sup}}{\,p(x)}<\infty\,.
	\end{align*}
	
	The steady $p(\cdot)$-Navier--Stokes equations \eqref{eq:p-navier-stokes} are a prototypical example of a non-linear system with variable growth conditions. They appear naturally in physical models for so-called~\textit{smart fluids}, \textit{e.g.}, electro-rheological fluids (\textit{cf}.\ \cite{RR1,rubo}), micro-polar electro-rheological~fluids (\textit{cf}.\ \cite{win-r,eringen-book}), \textcolor{black}{magneto-rheological fluids (\textit{cf}.\ \cite{magneto})},  chemically reacting fluids (\textit{cf}.\ \cite{LKM78,HMPR10}), and thermo-rheological fluids (\textit{cf}.\ \cite{Z97,AR06}). In all these models, the power-law index $p(\cdot)$ is a function depending on certain physical quantities, \textit{e.g.}, an electric field,~a~concentration field of a chemical material or a temperature field, and, thus, implicitly depends on $x\in \Omega$. In addition, the governing non-linearity \eqref{eq:example-stress} in the steady $p(\cdot)$-Navier--Stokes equations \eqref{eq:p-navier-stokes}  has applications in the field of image reconstruction (\textit{cf}.\ \mbox{\cite{AMS08,CLR06,LLP10}}).
	
	\subsection{Related contributions}\vspace*{-0.5mm}
	
	\hspace{5mm}We summarize the known analytical and numerical results.\vspace*{-2mm}\enlargethispage{2.5mm}
	
	\subsubsection{Existence results}\vspace*{-0.5mm}

	\hspace{5mm}The existence analysis of the steady $p(\cdot)$-Navier--Stokes equations \eqref{eq:p-navier-stokes} is by now well-understood~and developed essentially in the following main steps:
	
	\begin{itemize}[noitemsep,topsep=2pt,leftmargin=!,labelwidth=\widthof{(iii)}]
		\item[(i)]  In \hspace*{-0.1mm}\cite{rubo}, \hspace*{-0.1mm}using \hspace*{-0.1mm}the \hspace*{-0.1mm}theory \hspace*{-0.1mm}of \hspace*{-0.1mm}pseudo-monotone \hspace*{-0.1mm}operators, \hspace*{-0.1mm}M.\ \hspace*{-0.1mm}R\r{u}\v{z}i\v{c}ka \hspace*{-0.1mm}proved \hspace*{-0.1mm}the \hspace*{-0.1mm}weak \hspace*{-0.1mm}solvability~\hspace*{-0.1mm}of~\hspace*{-0.1mm}the steady $p(\cdot)$-Navier--Stokes equations \eqref{eq:p-navier-stokes} for a bounded power-law~index~${p\hspace{-0.15em}\in\hspace{-0.15em} \mathcal{P}^{\infty}(\Omega)}$~with~${p^-\hspace{-0.1em}>\hspace{-0.1em} \frac{3d}{d+2}}$;
		\item[(ii)] In \cite{huber-paper}, using the $L^\infty$-truncation technique, A.\ Huber proved the weak solvability of  the steady $p(\cdot)$-Navier--Stokes \hspace*{-0.1mm}equations \hspace*{-0.1mm}\eqref{eq:p-navier-stokes} \hspace*{-0.1mm}for \hspace*{-0.1mm}a \hspace*{-0.1mm}$\log$-Hölder \hspace*{-0.1mm}continuous power-law~\hspace*{-0.1mm}index~\hspace*{-0.1mm}${p\hspace{-0.1em}\in\hspace{-0.1em} \mathcal{P}^{\log}(\Omega)}$~\hspace*{-0.1mm}with~\hspace*{-0.1mm}${p^-\hspace{-0.15em}>\hspace{-0.15em}\frac{2d}{d+1}}$;
		\item[(iii)]  In \cite{dms}, using the $W^{1,\infty}$-truncation technique, L.\ Diening et al.\ proved the  weak solvability of the steady $p(\cdot)$-Navier--Stokes equations \eqref{eq:p-navier-stokes} for a $\log$-Hölder continuous power-law index $p\in \mathcal{P}^{\log}(\Omega)$ with $p^->\smash{\frac{2d}{d+2}}$. This result is almost optimal as for $p^-<  \smash{\frac{2d}{d+2}}$, one cannot ensure~that~${\mathbf{v}\in \smash{L^2(\Omega;\mathbb{R}^d)}}$, \textit{i.e.},  $\mathbf{v}\otimes \mathbf{v}\in L^1(\Omega;\mathbb{R}^{d\times d}_{\textrm{sym}})$, so that the convective term is not well-defined in~a~\mbox{distributional}~sense. The same limitation also holds in the case of a constant~\mbox{power-law}~index.
	\end{itemize}\vspace*{-1mm}
	
	\subsubsection{Numerical analyses}\vspace*{-0.5mm}
	
	\hspace*{5mm}The numerical analysis of fluids with shear-dependent viscosities (\textit{i.e.}, $p=\textrm{const}$) and, thus, of the steady $p(\cdot)$-Navier--Stokes equations \eqref{eq:p-navier-stokes} started with the seminal contribution of D. Sandri (\textit{cf}.~\cite{San1993}), with \hspace*{-0.1mm}improvements \hspace*{-0.1mm}by J.~\hspace*{-0.1mm}W. \hspace*{-0.1mm}Barret \hspace*{-0.1mm}and \hspace*{-0.1mm}W.~\hspace*{-0.1mm}B. \hspace*{-0.1mm}Liu (\textit{cf}.\ \hspace*{-0.1mm}\cite{baliu}) \hspace*{-0.1mm}deriving \hspace*{-0.1mm}the \hspace*{-0.1mm}error \hspace*{-0.1mm}estimates~\hspace*{-0.1mm}in~\hspace*{-0.1mm}\mbox{\textit{quasi-norms}}.~\hspace*{-0.1mm}For a finite element approximation of the steady $p$-Stokes equations (\textit{i.e.}, $p=\textrm{const}$), \textit{i.e.}, the steady $p$-Navier--Stokes \hspace*{-0.1mm}equations \hspace*{-0.1mm}in \hspace*{-0.1mm}the \hspace*{-0.1mm}case \hspace*{-0.1mm}of \hspace*{-0.1mm}a \hspace*{-0.1mm}slow \hspace*{-0.1mm}(laminar) \hspace*{-0.1mm}flow, \hspace*{-0.1mm}which \hspace*{-0.1mm}motivates~\hspace*{-0.1mm}to~\hspace*{-0.1mm}neglect~\hspace*{-0.1mm}the~\hspace*{-0.1mm}convective~\hspace*{-0.1mm}term~\hspace*{-0.1mm}$[\nabla \bfv]\bfv$ in \eqref{eq:p-navier-stokes}$_1$, L.~Belenki~et~al. (\textit{cf}.~\cite{bdr-phi-stokes}) derived \textit{a priori} error estimates in the~so-called~\textit{natural~distance}; see~also A. Hirn (\textit{cf}.\ \cite{Hi13a}). The numerical analysis of problems 
	with variable exponents non-linearities, however, is less developed and we are  only aware of the following contributions treating the steady case:
	
	\begin{itemize}[noitemsep,topsep=2pt,leftmargin=!,labelwidth=\widthof{(iii)}]
		
		\item[(i)] For \hspace*{-0.15mm}a \hspace*{-0.15mm}finite \hspace*{-0.15mm}element \hspace*{-0.15mm}approximation \hspace*{-0.15mm}of \hspace*{-0.15mm}the \hspace*{-0.15mm}$p(\cdot)$-Laplace \hspace*{-0.15mm}equation \hspace*{-0.15mm}with \hspace*{-0.15mm}Dirichlet \hspace*{-0.15mm}boundary \hspace*{-0.15mm}\mbox{condition},~\hspace*{-0.15mm}\textit{i.e.}, the \hspace*{-0.15mm}steady \hspace*{-0.15mm}$p(\cdot)$-Navier--Stokes \hspace*{-0.15mm}equations \hspace*{-0.15mm}with \hspace*{-0.1mm}the \hspace*{-0.15mm}symmetric \hspace*{-0.15mm}gradient \hspace*{-0.25mm}$\bfD$ \hspace*{-0.25mm}replaced \hspace*{-0.15mm}by \hspace*{-0.15mm}the\hspace*{-0.15mm} full~\hspace*{-0.15mm}\mbox{gradient}~\hspace*{-0.35mm}$\nabla$, without convective term $[\nabla\bfv]\bfv$~in~\eqref{eq:p-navier-stokes}$_1$, and without incompressibility constraint \eqref{eq:p-navier-stokes}$_2$, D.\ Breit et al.\ (\textit{cf}.~\cite{BDS15}) derived \textit{a priori} error estimates in the natural distance.
		In addition, L.~M. Del Pezzo~et~al. (\textit{cf}.~\cite{DPLM12}) studied the (weak) convergence of an Interior Penalty Discontinuous Galerkin (IPDG) method and  M. Caliari and S. Zuccher (\textit{cf}.~\cite{CZ17}) implemented a quasi-Newton minimization for the corresponding energy formulation; 
		
		\item[(ii)] For a finite element approximation of the steady $p(\cdot)$-Stokes equations, \textit{i.e.}, in the case of a slow (laminar) flow (which motivates to neglect the  the convective term $[\nabla \bfv]\bfv$ in \eqref{eq:p-navier-stokes}$_1$), 
		in \cite{BBD15} \textit{a~priori} error estimates in the natural distance were derived.~To~be~more~precise, given a solution pair $(\bfv,q)^\top\in \smash{W^{1,p(\cdot)}_0(\Omega;\mathbb{R}^d)\times L_0^{p'(\cdot)}(\Omega)}$ of \eqref{eq:p-navier-stokes} and a discrete solution pair $(\bfv_h,q_h)^\top\in \textcolor{black}{\Vo_{h,0}}\times \Qo_h$, where  $ \textcolor{black}{\Vo_{h,0}}$ and $\Qo_h$ are suitable finite element spaces, 
		they derived~the~a~priori~error~estimates
		\begin{align}\label{eq:old_apriori}
			\begin{aligned}
				\|\bfF_h(\cdot,\bfD\bfv)-\bfF_h(\cdot,\bfD\bfv_h)\|_{2,\Omega}^2&\lesssim h^{\min\{2,(p^+)'\}}+h^{2\alpha}\,,\\
				\|q-q_h\|_{p_h'(\cdot)}^2&\lesssim  h^{\smash{\frac{\min\{((p^+)')^2,4\}}{(p^-)'}}}+h^{2\alpha}\,,
			\end{aligned}
		\end{align}
		where 
		$\bfF_h(x,\bfA)\hspace*{-0.05em}\coloneqq \hspace*{-0.05em} (\delta+\vert \bfA^{\textup{sym}}\vert)^{\smash{\frac{p_h(x)-2}{2}}}\bfA^{\textup{sym}}$ for a.e.\ $x\in \Omega$ and all $\bfA\hspace*{-0.05em}\in\hspace*{-0.05em} \mathbb{R}^{d\times d}$~and~${p_h\hspace*{-0.05em}\in\hspace*{-0.05em} \mathbb{P}^0(\mathcal{T}_h)}$~is~an \hspace*{-0.1mm}element-wise \hspace*{-0.1mm}constant \hspace*{-0.1mm}approximation \hspace*{-0.1mm}of \hspace*{-0.1mm}$p\hspace*{-0.15em}\in \hspace*{-0.15em}C^{0,\alpha}(\overline{\Omega})$, $\alpha\hspace*{-0.15em}\in\hspace*{-0.15em} (0,1]$, \hspace*{-0.1mm}under \hspace*{-0.1mm}the \hspace*{-0.1mm}regularity~\hspace*{-0.1mm}assumptions~\hspace*{-0.1mm}that  $\bfF(\cdot,\bfD\bfv)\hspace*{-0.1em}\in \hspace*{-0.1em} W^{1,2}(\Omega;\mathbb{R}^{d\times d})$, where 
		$\bfF(x,\bfA)\hspace*{-0.1em}\coloneqq\hspace*{-0.1em} (\delta+\vert \bfA^{\textup{sym}}\vert)^{\frac{p(x)-2}{2}}\bfA^{\textup{sym}}$ for all $x\hspace*{-0.1em}\in\hspace*{-0.1em} \Omega$ and $\bfA\hspace*{-0.1em}\in\hspace*{-0.1em} \mathbb{R}^{d\times d}$, 
		and that $q\in W^{1,p'(\cdot)}(\Omega)$; 
		
		\item[(iii)] For the model describing the steady motion of a chemically reacting fluid, S.\ Ko et al.\ (\textit{cf}.\ \cite{KPS18,KS19}) proved the weak convergence of conforming, discretely inf-sup-stable finite element~\mbox{approximations}. In \cite{GHS23}, P.~A.\ Gazca--Orozco et al.  proposed an iterative scheme for the approximation of the model describing the steady motion of a chemically reacting~fluid  deploying conforming, discretely inf-sup-stable finite element approximations.

	\end{itemize}
	
	\subsection{New contributions}\enlargethispage{2mm}
	
	\hspace*{5mm}The purpose of this paper is to extend the results in the paper \cite{BBD15} with respect to several aspects~and, in~this~way, to establish a firm foundation for further numerical analyses of related complex models involving the steady $p(\cdot)$-Navier--Stokes equations \eqref{eq:p-navier-stokes} such as, \textit{e.g.}, electro-rheological~fluids~(\textit{cf}.~\mbox{\cite{RR1,rubo}}), micro-polar electro-rheological~fluids (\textit{cf}.\ \cite{win-r,eringen-book}), \textcolor{black}{magneto-rheological fluids (\textit{cf}.\ \cite{magneto})},  chemically reacting fluids (\textit{cf}.~\cite{LKM78,HMPR10}), and thermo-rheological~fluids (\textit{cf}.~\mbox{\cite{Z97,AR06}}):
	
	\begin{itemize}[noitemsep,topsep=2pt,leftmargin=!,labelwidth=\widthof{3.}]
		\item[1.] \textit{Quadrature \hspace*{-0.1mm}rule:} \hspace*{-0.1mm}In \hspace*{-0.1mm}this \hspace*{-0.1mm}paper, \hspace*{-0.1mm}to \hspace*{-0.1mm}define \hspace*{-0.1mm}an \hspace*{-0.1mm}element-wise \hspace*{-0.1mm}constant \hspace*{-0.1mm}approximation \hspace*{-0.1mm}of \hspace*{-0.1mm}the \hspace*{-0.1mm}\mbox{power-law} index $p(\cdot)$, 
		we consider a general one-point quadrature rule. We recall that in earlier contributions (\textit{cf}.\  \cite{BDS15,BBD15}), a restrictive quadrature rule was employed that
		afforded to find the minimum of the power-law index $p(\cdot)$ on each element, \textit{i.e.}, $p_h|_T\coloneqq \textrm{argmin}_{x\in T}{p(x)}$ for all~${T\in \mathcal{T}_h}$;
		this is a potentially slow~and~costly~task  --if the power-law index $p(\cdot)$ is non-constant.
		
		\item[2.] \textit{Fractional \hspace*{-0.1mm}regularity/error \hspace*{-0.1mm}decay \hspace*{-0.1mm}rates:} \hspace*{-0.1mm}In \hspace*{-0.1mm}the \hspace*{-0.1mm}case \hspace*{-0.1mm}of \hspace*{-0.1mm}a \hspace*{-0.1mm}non-Lipschitz~\hspace*{-0.1mm}but~\hspace*{-0.1mm}only~\hspace*{-0.1mm}Hölder~\hspace*{-0.1mm}\mbox{continuous}~\hspace*{-0.1mm}power-law index $p\in C^{0,\alpha}(\overline{\Omega})$, $\alpha\in (0,1]$,  one cannot hope for the \textit{``full''} regularity $\bfF(\cdot,\bfD\bfv)\in W^{1,2}(\Omega;\mathbb{R}^{d\times d})$ and $q\in W^{1,p'(\cdot)}(\Omega)$, but instead it is reasonable 
		to expect (\textit{cf}.\  \cite[Rem.\ 4.5]{BDS15})~the~\textit{``partial''}~\mbox{regularity} $\bfF(\cdot,\bfD\bfv)\in  N^{\beta,2}(\Omega;\mathbb{R}^{d\times d})$, $\beta\in (0,1]$, where $N^{\beta,2}(\Omega;\mathbb{R}^{d\times d})$ denotes the Nikolski\u{\i} space.~Concerning the \hspace*{-0.1mm}pressure, \hspace*{-0.1mm}we \hspace*{-0.1mm}propose \hspace*{-0.1mm}to \hspace*{-0.1mm}consider \hspace*{-0.1mm}the \hspace*{-0.1mm}regularity  \hspace*{-0.1mm}$q \hspace*{-0.1mm}\in \hspace*{-0.1mm} H^{\gamma,p'(\cdot)}(\Omega)$, $\gamma \hspace*{-0.1mm}\in  \hspace*{-0.1mm}(0,1]$, \hspace*{-0.1mm}where \hspace*{-0.1mm}$H^{\gamma,p'(\cdot)}(\Omega)$~\hspace*{-0.1mm}\mbox{denotes} the fractional variable Haj\l asz--Sobolev space (\textit{cf}.\ \cite{Yang03,ZS22}),~which~is~more~\mbox{appropriate}~to~our~\mbox{problem}.
		Since the  space $H^{\gamma,p'(\cdot)}(\Omega)$ is slightly different from the classical Sobolev--Slobodeckij space~(\textit{cf}.~\cite{Hitchhiker12}), several new fractional interpolation error estimates are derived.

		\item[3.] \textit{Convective term:} 
		One further extension compared to \cite{BBD15} is the treatment of the convective term, which is more challenging in the case of a non-constant power-law index $p\in C^{0,\alpha}(\overline{\Omega})$, $\alpha\in (0,1]$, and given only fractional regularity assumptions on the solutions. To this end, we need to impose the restriction $p^-\ge\frac{3d}{d+2}$ on the power-law index $p\in C^{0,\alpha}(\overline{\Omega})$, $\alpha\in (0,1]$, which was not needed in \cite{BBD15} and solely comes from the treatment of the convective term.
		
			\item[4.] \textit{Quasi-optimality:} We establish the optimality of the derived error decay rates (for the velocity) with respect to fractional regularity assumptions on the velocity and the pressure via numerical experiments. In addition, imposing an alternative \textit{``natural''} fractional regularity assumption~on~the~pressure, \textit{i.e.}, $(\delta+\vert \bfD\bfv\vert)^{\smash{\frac{2-p(\cdot)}{2}}}\vert \nabla^\gamma p\vert\in L^2(\Omega)$,  $\gamma\in (0,1]$, we derive an \textit{a priori} error estimate with an error decay rate that does not depend critically on the maximal (\textit{i.e.}, $p^+$) and minimal~(\textit{i.e.},~$p^-$) value of the power-law index $p\in C^{0,\alpha}(\overline{\Omega})$, $\alpha\in (0,1]$, but is constant~and~also~\mbox{quasi-optimal}~(for~the~\mbox{velocity}).
		
	\end{itemize}
	
	\textit{This \hspace*{-0.1mm}paper \hspace*{-0.1mm}is \hspace*{-0.1mm}organized \hspace*{-0.1mm}as \hspace*{-0.1mm}follows:} \hspace*{-0.1mm}In \hspace*{-0.1mm}Section \hspace*{-0.1mm}\ref{sec:preliminaries}, \hspace*{-0.1mm}we \hspace*{-0.1mm}introduce \hspace*{-0.1mm}the \hspace*{-0.1mm}relevant \hspace*{-0.1mm}notation~\hspace*{-0.1mm}and~\hspace*{-0.1mm}function~\hspace*{-0.1mm}spaces, and recall definitions and results addressing  (generalized) $N$-functions  and the \hspace*{-0.1mm}extra-stress~\hspace*{-0.1mm}tensor~\hspace*{-0.1mm}\eqref{eq:example-stress}. 
	In Section \ref{sec:p-navier-stokes}, we introduce two equivalent weak formulations of the steady $p$-Navier--Stokes equations \eqref{eq:p-navier-stokes} and examine natural (fractional) regularity assumptions for weak solutions of these formulations. In Section \ref{sec:discrete_p-navier-stokes}, we introduce two equivalent discrete weak formulations of the steady $p$-Navier--Stokes equations  \eqref{eq:p-navier-stokes} and examine two projectors for their stability properties. 
	In Section \ref{sec:fractional_interpolation_estimates},~we~derive~several fractional interpolation error estimates for these projectors.
	In Section \ref{sec:a_priori}, we derive \textit{a priori} error estimates for the approximation of the equivalent weak formulations of the steady $p$-Navier--Stokes equations  \eqref{eq:p-navier-stokes} via
	the equivalent discrete weak formulations of the steady $p$-Navier--Stokes~equations~\eqref{eq:p-navier-stokes}.~In~Section~\ref{sec:experiments}, via numerical experiments, we examine the \textit{a priori} error estimates derived in Section \ref{sec:a_priori} for \mbox{quasi-optimality}.

	\newpage
	\section{Preliminaries}\label{sec:preliminaries}\enlargethispage{8.5mm}
	
	\hspace*{5mm}Throughout the paper, let $\Omega\hspace*{-0.05em}\subseteq\hspace*{-0.05em} \mathbb{R}^d$,~$d\hspace*{-0.05em}\in\hspace*{-0.05em}\{2,3\}$,  be a bounded \textcolor{black}{simplicial} Lipschitz domain.~The~\mbox{integral} mean of a (Lebesgue) integrable function $f\colon \hspace*{-0.15em} M\hspace*{-0.15em} \to\hspace*{-0.15em} \mathbb{R}$ over a (Lebesgue)~measurable~set~${M\hspace*{-0.15em} \subseteq\hspace*{-0.15em}  \mathbb{R}^d}$~with~\textcolor{black}{${\vert M\vert\hspace*{-0.15em} >\hspace*{-0.15em}0}$ is denoted by $\langle f\rangle_M\coloneqq {\frac 1 {|M|}\int_M f \,\textup{d}x}$}. For (Lebesgue) measurable functions~${f,g\colon M\to \mathbb{R}}$~and~a~(Lebesgue) measurable set $M\subseteq \mathbb{R}^d$,
	we will write $(f,g)_M\coloneqq \int_M f g\,\textup{d}x$, 
	whenever~the~right-hand side is well-defined. 
	\subsection{Variable Lebesgue spaces, variable Sobolev spaces, Nikolski\u{\i} spaces, and fractional variable Haj\l asz--Sobolev spaces}
	
	\hspace*{5mm}Let $M\subseteq \mathbb{R}^d$, $d\in \mathbb{N}$, be a (Lebesgue) measurable set
	and $p\colon M\to [1,+\infty]$ be a (Lebesgue) measurable function, a so-called  \textit{variable
		exponent}. By $\mathcal{P}(M)$, we denote the \textit{set of variable exponents}. Then, for $p\in \mathcal{P}(M)$, we denote by
	${p^+\coloneqq \textup{ess\,sup}_{x\in
			M}{p(x)}}$~and~${p^-\coloneqq \textup{ess\,inf}_{x\in
			M}{p(x)}}$ its constant~\textit{limit~exponents}. Moreover,~by
	$\mathcal{P}^{\infty}(M)\coloneqq \{p\in\mathcal{P}(M)\mid
	p^+<\infty\}$, we denote the \textit{set of bounded variable exponents}. For ${p\in\mathcal{P}^\infty(M)}$ and a (Lebesgue) measurable function $v\in L^0(M;\mathbb{R}^{\textcolor{black}{\ell}})$, $\textcolor{black}{\ell}\in \mathbb{N}$, we define the \textit{modular (with respect to $p$)} by\vspace*{-1mm}
	\begin{align*}
		\rho_{p(\cdot),M}(v)\coloneqq \int_{M}{\vert v\vert^{p(\cdot)}\,\mathrm{d}x}\,.
	\end{align*}
	
	Then, for $p\in \mathcal{P}(M)$ and $\textcolor{black}{\ell}\in \mathbb{N}$,
	the \textit{variable Lebesgue space} is defined by
	\begin{align*}
	\smash{L^{p(\cdot)}(M;\mathbb{R}^{\textcolor{black}{\ell}})\coloneqq \big\{ v\in L^0(M;\mathbb{R}^{\textcolor{black}{\ell}})\mid \rho_{p(\cdot),M}(v)<\infty\big\}\,.}
	\end{align*}
	The \textit{Luxembourg norm} $\| v\|_{p(\cdot),M}\coloneqq \inf\{\lambda> 0\mid \rho_{p(\cdot),M}(\frac{v}{\lambda})\leq 1\}$ turns $L^{p(\cdot)}(M;\mathbb{R}^{\textcolor{black}{\ell}})$~into~a~\mbox{Banach}~space (\textit{cf}.\ \cite[Thm.\  3.2.7]{dhhr}).
	
	Moreover, for an open set $G\subseteq \mathbb{R}^d$, $d\in \mathbb{N}$, $p\in\mathcal{P}^\infty(G)$ and $\textcolor{black}{\ell}\in \mathbb{N}$, the \textit{variable Sobolev space} is defined by 
	\begin{align*}
		\smash{W^{1,p(\cdot)}(G;\mathbb{R}^{\textcolor{black}{\ell}})\coloneqq \big\{ v\in L^{p(\cdot)}(G;\mathbb{R}^{\textcolor{black}{\ell}})\mid \nabla v\in L^{p(\cdot)}(G;\mathbb{R}^{\textcolor{black}{\ell}\times d})\big\}\,.}
	\end{align*} 
	The \textit{variable Sobolev norm} $\| v\|_{1,p(\cdot),G}\coloneqq \| v\|_{p(\cdot),G}+\|\nabla v\|_{p(\cdot),G}$ turns $W^{1,p(\cdot)}(G;\mathbb{R}^{\textcolor{black}{\ell}})$ into a Banach space\\[-0.5mm] (\textit{cf}.\  \cite[Thm.\  8.1.6]{dhhr}). The closure of $C^\infty_c(\Omega;\mathbb{R}^{\textcolor{black}{\ell}})$ in $W^{1,p(\cdot)}(G;\mathbb{R}^{\textcolor{black}{\ell}})$ is defined by $W^{1,p(\cdot)}_0(G;\mathbb{R}^{\textcolor{black}{\ell}})$.
	
	To express fractional regularity of the velocity, we need the notation of Nikolski\u{\i} spaces (\textit{cf}.\  \cite{Nikol75}). For an open set $G\subseteq \mathbb{R}^d$, $d\in \mathbb{N}$, $p \in  [1, \infty)$, $\beta\in (0,1]$, and $v\in L^p(G)$, the  \textit{Nikolski\u{\i} semi-norm}~is~defined~by 
	\begin{align}\label{eq:nikolski_semi-norm}
		[v]_{N^{\beta,p}(G)}\coloneqq \sup_{ \tau\in \mathbb{R}^d\setminus\{0\}}{
			\frac{1}{\vert \tau\vert^\beta}\bigg(\int_{G\cap (G- \tau)}{\vert v(x +  \tau)-v(x)\vert^p\,\mathrm{d}x}\bigg)^{\smash{1/p}}}<\infty\,.
	\end{align}
	Then, for $p \in  [1, \infty)$ and $\beta\in (0,1]$, the \textit{Nikolski\u{\i} space} is defined by
	\begin{align*}
		\smash{N^{\beta,p}(G)\coloneqq \big\{ v\in L^p(G)\mid [v]_{N^{\beta,p}(G)}<\infty\big\}\,.}
	\end{align*}
	The \textit{Nikolski\u{\i} norm} $\|\cdot\|_{N^{\beta,p}(G)}\coloneqq \| \cdot\|_{p,G}+[\cdot]_{N^{\beta,p}(G)}$ turns $N^{\beta,p}(G)$  into a Banach~space (\textit{cf}.~\mbox{\cite[Sec.~4.7]{Nikol75}}).
	
	To express fractional regularity of the pressure, we need the notation of variable fractional Haj\l asz--Sobolev spaces (\textit{cf}.\  \cite{Yang03,ZS22}). For  an open set $G\subseteq \mathbb{R}^d$, $d\in \mathbb{N}$, $p\in \mathcal{P}^\infty(G)$, and $\gamma\in (0,1]$, a function  $v\in L^{p(\cdot)}(G)$ has a \textit{($\gamma$-order) upper Haj\l asz gradient} if there exists $g\in L^{p(\cdot)}(G;\mathbb{R}_{\ge 0})$ such that 
	\begin{align}\label{eq:hajlasz_gradient}
		\vert v(x)-v(y)\vert \leq (g(x)+g(y))\,\vert x-y\vert^{\gamma}\quad\text{ for a.e. }x,y\in G\,.
	\end{align}
	For every $v\in L^{p(\cdot)}(G)$, the \textit{set of ($\gamma$-order) upper Haj\l asz gradients} is defined by ${\mathrm{Gr}(v)\coloneqq \{g\in L^{p(\cdot)}(G;}$ $\mathbb{R}_{\ge 0})\mid \eqref{eq:hajlasz_gradient}\text{ holds}\}$.
	Then, for every $v\in L^{p(\cdot)}(G)$ and $\gamma\in (0,1]$, the \textit{Haj\l asz--Sobolev space} is defined by
	\begin{align*}
		\smash{H^{\gamma,p(\cdot)}(G)\coloneqq \big\{v\in L^{p(\cdot)}(G)\mid  \mathrm{Gr}(v)\neq \emptyset\big\}\,.}
	\end{align*}
	The \textit{Haj\l asz--Sobolev norm} $\|v\|_{\gamma,p(\cdot),G}\coloneqq \|v\|_{p(\cdot),G}+\inf_{g\in \mathrm{Gr}(v)}{\|g\|_{p(\cdot),G}}$ turns $H^{\gamma,p(\cdot)}(G)$ into a Banach space (\textit{cf}.\  \cite[Prop.\  2.5]{ZS22}). If $p^->1$, then for any $v\in H^{\gamma,p(\cdot)}(G)$, by $\vert \nabla^{\gamma} v\vert \coloneqq\textup{arg\,min}_{g\in \mathrm{Gr}(v)}\|g\|_{p(\cdot),G}$, we denote  the \textit{minimal $\gamma$-order upper Haj\l asz gradient}, whose (unique) existence is a consequence~of~the direct method in the calculus of variations, due to the non-emptiness, convexity, and~closedness~of~$\mathrm{Gr}(v)\subseteq L^{p(\cdot)}(G)$ for all $v\in H^{\gamma,p(\cdot)}(G)$ and the continuity and strict convexity of $\|\cdot\|_{p(\cdot),G}$ (\textit{cf}.\  \cite[Thm.\  3.4.9]{dhhr}).

	\subsection{(Generalized) $N$-functions}
	
	\hspace*{5mm}A \hspace{-0.1em}(real) \hspace{-0.1em}convex function
	$\psi\colon\hspace{-0.1em}\mathbb{R}_{\geq 0} \to \mathbb{R}_{\geq 0}$ is called an
	\textit{$N$-function},~if~${\psi(0)=0}$,~${\psi(t)>0}$~for~all~${t>0}$,
	$\lim_{t\rightarrow0} \psi(t)/t=0$, and
	$\lim_{t\rightarrow\infty} \psi(t)/t=\infty$. If, in addition, $\psi\in C^1(\mathbb{R}_{\geq 0})\cap C^2(\mathbb{R}_{> 0})$~and~${\psi''(t)\hspace{-0.1em}>\hspace{-0.1em}0}$ for all $t>0$, we call $\psi$ a \textit{regular $N$-function}. For a regular $N$-function  ${\psi \colon \mathbb{R}_{\geq 0}\to \mathbb{R}_{\geq 0}}$,~we~have~that $\psi (0)=\psi'(0)=0$,
	$\psi'\colon\hspace{-0.1em}\mathbb{R}_{\geq 0} \to \mathbb{R}_{\geq 0}$ is increasing and $\lim _{t\to \infty} \psi'(t)=\infty$.~For~a~given~\mbox{$N$-function} ${\psi \colon\mathbb{R}_{\geq 0} \to \mathbb{R}_{\geq 0}}$, we define the corresponding \textit{(Fenchel) conjugate \mbox{$N$-function}} $\psi^*\colon\mathbb{R}_{\geq 0} \to \mathbb{R}_{\geq 0}$,~for~every~$t\ge 0$,~by
	${\psi^*(t)\coloneqq \sup_{s \geq 0} \{st
		-\psi(s)\}}$, which satisfies $(\psi^*)' =
	(\psi')^{-1}$ in $\mathbb{R}_{\ge 0}$. An $N$-function $\psi$ satisfies~the~\textit{$\Delta_2$-con-dition}
	(in short, $\psi \hspace*{-0.1em}\in\hspace*{-0.1em} \Delta_2$), if there exists $K\hspace*{-0.1em}>\hspace*{-0.1em} 2$ such that~for~every~${t \hspace*{-0.1em}\ge\hspace*{-0.1em} 0}$,~it~holds~that~${\psi(2\,t) \hspace*{-0.1em}\leq\hspace*{-0.1em} K\, \psi(t)}$.~Then, \hspace*{-0.15mm}we \hspace*{-0.15mm}denote \hspace*{-0.15mm}the
	\hspace*{-0.1mm}smallest \hspace*{-0.15mm}such \hspace*{-0.15mm}constant \hspace*{-0.15mm}by \hspace*{-0.15mm}$\Delta_2(\psi)\hspace*{-0.15em}>\hspace*{-0.15em}0$. \hspace*{-0.15mm}We \hspace*{-0.15mm}say \hspace*{-0.15mm}that \hspace*{-0.15mm}an \hspace*{-0.15mm}$N$-function~\hspace*{-0.15mm}${\psi\colon\hspace*{-0.1em}\mathbb{R}_{\ge 0}\hspace*{-0.15em}\to \hspace*{-0.15em}\mathbb{R}_{\ge 0}}$ satisfies the \textit{$\nabla_2$-condition} (in short, $\psi\in \nabla_2$), if its (Fenchel) conjugate $\psi^*\colon\mathbb{R}_{\ge 0}\to \mathbb{R}_{\ge 0}$ is an $N$-function satisfying the $\Delta_2$-condition. 
	If $\psi\colon\mathbb{R}_{\ge 0}\to \mathbb{R}_{\ge 0}$ satisfies the $\Delta_2$- and the $\nabla_2$-condition (in~short, $\psi\in \Delta_2\cap \nabla_2$), then, there holds 
	the following refined version~of~the~\mbox{\textit{$\varepsilon$-Young}}~\textit{inequality}: for every
	$\varepsilon> 0$, there exists a constant $c_\varepsilon>0 $, depending only on
	$\Delta_2(\psi),\Delta_2( \psi ^*)<\infty$, such that for every $ s,t\geq0 $, it holds that
	\begin{align}
		\label{ineq:young}
		s\,t\leq  c_\varepsilon \,\psi^*(s)+\varepsilon \, \psi(t)\,.
	\end{align}
	Let $M\subseteq \mathbb{R}^d$, $d\in \mathbb{N}$, be a (Lebesgue) measurable set. Then, a
	function $\psi \colon M \times \mathbb{R}_{\ge 0} \to \mathbb{R}_{\ge 0}$ is called a \textit{generalized $N$-function} if it is Carath\'eodory mapping and $\psi(x,\cdot)\colon \mathbb{R}_{\ge 0}\to \mathbb{R}_{\ge 0}$ is for a.e.\ $x \in M$ an
	$N$-function. For a generalized $N$-function $\psi \colon  M\times\mathbb{R}_{\ge 0} \to\mathbb{R}_{\ge 0}$ and a (Lebesgue) measurable function $f\in L^0(M;\mathbb{R}^{\textcolor{black}{\ell}})$,~$\textcolor{black}{\ell}\in \mathbb{N}$, we define the \textit{modular (with respect to $\psi$)} by
	\begin{align*}
		\rho_{\psi,M}(v)\coloneqq \int_M \psi(\cdot,\vert v\vert )\,\textup{d}x\,.
	\end{align*} For a generalized 
	$N$-function  $\psi \colon  M\times\mathbb{R}_{\ge 0} \to\mathbb{R}_{\ge 0}$ and $\textcolor{black}{\ell}\in \mathbb{N}$, the \textit{generalized Orlicz space} is defined by
	\begin{align*}
		L^{\psi}(M;\mathbb{R}^{\textcolor{black}{\ell}})\coloneqq \big\{v\in L^0(M;\mathbb{R}^{\textcolor{black}{\ell}})\mid
		\rho_{\psi,M}(v)<\infty\big\}\,.
	\end{align*}
	The Luxembourg norm 
	$\smash{\norm {v}_{\psi,M}}\coloneqq  \smash{\inf \set{\lambda >0\mid
			\rho_{\psi,M}(v/\lambda) \le 1}}$ turns
	$L^\psi(M;\mathbb{R}^{\textcolor{black}{\ell}})$ into a Banach space (\textit{cf}.\  \cite[Thm.\  2.3.13]{dhhr}).  If $\psi\colon M\times \mathbb{R}_{\ge 0}\to \mathbb{R}_{\ge 0}$ is a generalized
	$N$-function, then, for every $v\in L^{\psi}(M;\mathbb{R}^{\textcolor{black}{\ell}})$ and
	$u\in L^{\psi^*}(M;\mathbb{R}^{\textcolor{black}{\ell}})$, it holds the \textit{generalized Hölder inequality} (\textit{cf}.\ \cite[Lem.\ 2.6.5]{dhhr})
	\begin{align}\label{eq:gen_hoelder}
		(u,v)_M\leq 2\,\|u\|_{\psi^*,M}\,\|v\|_{\psi,M}\,.
	\end{align}
	
	\subsection{Basic properties of the non-linear operators}\label{sec:basic}
	
	\hspace*{5mm}Throughout the entire paper, we always assume that  $\bfS\colon\Omega\times \mathbb{R}^{d\times d}\to \mathbb{R}^{d\times d}_{\textup{sym}}$ has \textit{$(p(\cdot),\delta)$-structure}, where $p\in \mathcal{P}^{\infty}(\Omega)$ with $p^->1$, $\delta\ge 0$, and $\mu_0>0$, \textit{i.e.}, for a.e.\ $x\in \Omega$ and~every~${\bfA\in \mathbb{R}^{d\times d}}$,~it~holds~that
	\begin{align}\label{def:A}
		\bfS(x,\bfA)\coloneqq \mu_0\,(\delta+\vert \bfA^{\textup{sym}}\vert )^{p(x)-2}\bfA^{\textup{sym}}\,.
	\end{align}
	For given $p\in \mathcal{P}^{\infty}(\Omega)$ with $p^->1$ and $\delta\ge 0$, we introduce the \textit{special generalized $N$-function}~$\varphi\coloneqq \varphi_{p,\delta}\colon\Omega\times\mathbb{R}_{\ge 0}\to \mathbb{R}_{\ge 0}$, for a.e.\ $x\in \Omega$ and all $t\ge 0$, defined by
	\begin{align} 
		\label{eq:def_phi} 
		\varphi(x,t)\coloneqq \int _0^t \varphi'(x,s)\, \mathrm ds\,,\quad\text{where}\quad
		\varphi'(x,t) \coloneqq (\delta +t)^{p(x)-2} t\,.
	\end{align}
	For (Lebesgue) measurable functions $f,g\colon\Omega \to \mathbb{R}_{\ge 0}$, we write
	$f\sim g$ (or $f\lesssim g$) if~there~exists~a constant \hspace{-0.1mm}$c\hspace{-0.1em}>\hspace{-0.1em}0$ \hspace{-0.1mm}such \hspace{-0.1mm}that \hspace{-0.1mm}$c^{-1}g\hspace{-0.1em}\leq\hspace{-0.1em} f\hspace{-0.1em}\leq\hspace{-0.1em} c\,g$ \hspace{-0.1mm}(or \hspace{-0.1mm}$ f\hspace{-0.1em}\leq\hspace{-0.1em} c\,g$) \hspace{-0.1mm}a.e.\ \hspace{-0.1mm}in \hspace{-0.1mm}$\Omega$. \hspace{-0.1mm}In \hspace{-0.1mm}particular,~i\hspace{-0.1mm}f~\hspace{-0.1mm}not~\hspace{-0.1mm}otherwise~\hspace{-0.1mm}specified, we always assume that the hidden constant in $\sim$ and $\lesssim$ depends only on $p^-,p^+>1$, $\delta\ge 0$, and $\mu_0 >0$.
	
	Then, $\varphi\colon \Omega\times\mathbb{R}_{\ge 0}\to \mathbb{R}_{\ge 0}$ satisfies, uniformly with respect to $\delta\ge  0$ and a.e.\ $x\in \Omega$, the
	$\Delta_2$-condition with $\textup{ess\,sup}_{x\in \Omega}{\Delta_2(\varphi(x,\cdot))}\lesssim 2^{\smash{\max \{2,p^+\}}}$. In addition, 
	the (Fenchel) conjugate function (with respect to the second argument) $\varphi^*\colon\Omega\times\mathbb{R}_{\ge 0}\to \mathbb{R}_{\ge 0}$ satisfies, uniformly~with~respect~to~${t \ge 0}$,~${\delta \ge 0}$,~and a.e.\ $x\in \Omega$, $\varphi^*(x,t) \sim
	(\delta^{p(x)-1} + t)^{p'(x)-2} t^2$ and the $\Delta_2$-condition with
	$\textup{ess\,sup}_{x\in \Omega}{\Delta_2(\varphi^*(x,\cdot))} \lesssim 2^{\smash{\max \{2,(p^-)'\}}}$.\newpage
	
	For a generalized $N$-function $\psi\colon\Omega\times \mathbb{R}_{\ge 0}\to \mathbb{R}_{\ge 0}$, we introduce \textit{shifted generalized $N$-functions} $\psi_a\colon\Omega\times \mathbb{R}_{\ge 0}\to \mathbb{R}_{\ge 0}$, ${a\ge 0}$, for a.e.\ $x\in \Omega$ and all $a,t\ge 0$, defined by
	\begin{align}
		\label{eq:phi_shifted}
		\psi_a(x,t)\coloneqq \int _0^t \psi_a'(x,s)\, \mathrm ds\,,\quad\text{where}\quad
		\psi'_a(x,t)\coloneqq \psi'(x,a+t)\frac {t}{a+t}\,.
	\end{align}
	
	\begin{remark} \label{rem:phi_a}
		For the special $N$-function $\varphi\colon \Omega\times \mathbb{R}_{\ge 0}\to \mathbb{R}_{\ge 0}$ (\textit{cf}.\ \eqref{eq:def_phi}), uniformly~with~respect~to~${a,t\ge 0}$ and a.e.\ $x\in \Omega$, it holds that
		\begin{align}
			\varphi_a(x,t)& \sim (\delta+a+t)^{p(x)-2} t^2\,,\label{rem:phi_a.1}\\
			(\varphi_a)^*(x,t)
			&	\sim ((\delta+a)^{p(x)-1} + t)^{\smash{p'(x)-2}} t^2\,.\label{rem:phi_a.2}
		\end{align}
		The families $\{\varphi_a\}_{\smash{a \ge 0}},\{(\varphi_a)^*\}_{\smash{a \ge 0}}\colon\Omega\times  \mathbb{R}_{\ge 0}\to \mathbb{R}_{\ge 0}$ satisfy, uniformly with respect to $a\ge  0$,~the~\mbox{$\Delta_2$-condi-} tion with 
		\begin{align}
			\textup{ess\,sup}_{x\in \Omega}{\Delta_2(\varphi_a(x,\cdot))}& \lesssim 2^{\smash{\max \{2,p^+\}}}\,,\label{rem:phi_a.3}\\
			\textup{ess\,sup}_{x\in \Omega}{\Delta_2((\varphi_a)^*(x,\cdot))} &\lesssim 2^{\smash{\max \{2,(p^-)'\}}}\,,\label{rem:phi_a.4}
		\end{align}
		respectively.
	\end{remark}
	
	Closely related to the extra-stress tensor $\bfS\colon\Omega\times\mathbb{R}^{d\times d}\to \mathbb{R}^{d\times d}_{\textup{sym}}$ defined by \eqref{def:A}~are~the~non-linear mappings $\bfF,\bfF^*\colon\Omega\times\mathbb{R}^{d\times d}\to \mathbb{R}^{d\times d}_{\textup{sym}}$, for a.e.\ $x\in \Omega$ and every $\bfA\in \mathbb{R}^{d\times d}$ defined by
	\begin{align}
		\begin{aligned}
			\bfF(x,\bfA)&\coloneqq (\delta+\vert \bfA^{\textup{sym}}\vert)^{\smash{\frac{p(x)-2}{2}}}\bfA^{\textup{sym}}\,,\\ \bfF^*(x,\bfA)&\coloneqq (\delta^{p(x)-1}+\vert \bfA^{\textup{sym}}\vert)^{\smash{\frac{p'(x)-2}{2}}}\bfA^{\textup{sym}}
			\,.\label{eq:def_F}
		\end{aligned}
	\end{align}

	The relations between
	$\bfS,\bfF,\bfF^*\colon\Omega\times\mathbb{R}^d
	\to \mathbb{R}^d$ and
	$\varphi_a,(\varphi^*)_a,(\varphi_a)^*\colon\Omega\times\mathbb{R}_{\ge
		0}\to \mathbb{R}_{\ge
		0}$,~${a\ge 0}$, are presented in
	the following proposition.
	
	\begin{proposition}
		\label{lem:hammer}
		Uniformly with respect to every $t\ge 0$, 
		$\bfA, \bfB \in \mathbb{R}^{d\times d}$, and a.e.\ $ x,y\in \Omega$,~we~have~that
		\begin{align}
			(\bfS(x,\bfA) - \bfS(x,\bfB))
			\cdot(\bfA-\bfB ) &\sim \smash{\vert \bfF(x,\bfA) - \bfF(x,\bfB)\vert^2}\notag
			\\&\sim \varphi_{\vert \bfA^{\textup{sym}} \vert }(x,\vert \bfA^{\textup{sym}} - \bfB^{\textup{sym}} \vert )\label{eq:hammera}
			\\&\sim (\varphi_{\vert \bfA^{\textup{sym}} \vert })^*(x,\vert \bfS(x,\bfA)-\bfS(x,\bfB)\vert )
			\,,\notag\\
			\smash{\vert \bfF^*(x,\bfA) - \bfF^*(x,\bfB)\vert^2}
			\label{eq:hammerf}
			&\sim \smash{\smash{(\varphi^*)}_{\smash{\vert \bfA^{\textup{sym}} \vert }}(x,\vert \bfA^{\textup{sym}}  - \bfB^{\textup{sym}} \vert )}\,,\\
			\label{eq:hammerg}
			\smash{\smash{(\varphi^*)}_{\smash{\vert \bfS(x,\bfA)\vert }}(x,t)}
			&\sim \smash{\smash{(\varphi}_{\smash{\vert \bfA^{\textup{sym}} \vert }})^*(x,t)}\,,\\
			\label{eq:hammerh}
			\smash{\vert \bfF^*(x,\bfS(x,\bfA)) - \bfF^*(x,\bfS(y,\bfB))\vert^2}
			&\sim \smash{\smash{(\varphi}_{\smash{\vert \bfA^{\textup{sym}} \vert }})^*(x,\vert \bfS(x,\bfA)-\bfS(y,\bfB)\vert)}\,.
		\end{align}
	\end{proposition}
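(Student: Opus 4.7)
The plan is to reduce every equivalence to its constant-exponent pointwise analogue. All non-linearities involved, namely $\bfS(x,\cdot)$, $\bfF(x,\cdot)$, $\bfF^*(x,\cdot)$, depend only on the symmetric part of their matrix argument, so after replacing $\bfA,\bfB$ by $\bfA^{\textup{sym}},\bfB^{\textup{sym}}$ it suffices to work on $\mathbb{R}^{d\times d}_{\textup{sym}}$. Moreover, the exponent $p(\cdot)$ enters each relation only through a single spatial evaluation: in \eqref{eq:hammera}--\eqref{eq:hammerg} only through $p(x)$, and in \eqref{eq:hammerh} the outer $\bfF^*(x,\cdot)$ and the outer shifted conjugate $N$-function are both evaluated at $x$, with $\bfS(y,\bfB)$ entering merely as a matrix argument. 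Hence, one may freeze $x$ and treat $p(x)$ as a constant exponent, which reduces each assertion to a classical hammer-type lemma for $(p,\delta)$-structure in the constant-exponent setting. Uniformity of the equivalence constants in $x\in\Omega$ is inherited from the uniform $\Delta_2,\nabla_2$ bounds recorded in Remark \ref{rem:phi_a}.

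For the first chain \eqref{eq:hammera}, I fix $x$, assume $\bfA,\bfB$ symmetric, and parametrise along the segment $\mathbf{C}_\tau\coloneqq \bfB+\tau(\bfA-\bfB)$, $\tau\in[0,1]$. Writing $\bfS(x,\mathbf{C}_\tau)=\varphi'(x,|\mathbf{C}_\tau|)\mathbf{C}_\tau/|\mathbf{C}_\tau|$ and integrating $\tfrac{\mathrm d}{\mathrm d\tau}\bfS(x,\mathbf{C}_\tau)$ against $\bfA-\bfB$, together with the pointwise bound $D\bfS(x,\mathbf{C})\mathbf{H}\cdot\mathbf{H}\sim \varphi''(x,|\mathbf{C}|)|\mathbf{H}|^2$ and the comparability of $\varphi''(x,|\mathbf{C}_\tau|)$ with $\varphi''(x,|\bfA|+|\bfA-\bfB|)$ on a subinterval of $[0,1]$ of length of order one, yields the equivalence with $\varphi_{|\bfA|}(x,|\bfA-\bfB|)$. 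The equivalence with $|\bfF(x,\bfA)-\bfF(x,\bfB)|^2$ follows analogously by differentiating $\tau\mapsto \bfF(x,\mathbf{C}_\tau)$ and squaring, and the one with $(\varphi_{|\bfA|})^*(x,|\bfS(x,\bfA)-\bfS(x,\bfB)|)$ is the content of the sharp Young equality for the shifted $N$-function $\varphi_{|\bfA|}$ combined with the comparability $|\bfS(x,\bfA)-\bfS(x,\bfB)|\sim \varphi'_{|\bfA|}(x,|\bfA-\bfB|)$ obtained from the same pointwise computation. The identity \eqref{eq:hammerf} then follows by the symmetric argument applied to $\varphi^*(x,\cdot)$ and $\bfF^*$.

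For \eqref{eq:hammerg}, the key input is the one-dimensional shift-duality identity $(\varphi^*)_{\varphi'(x,a)}(x,\cdot)\sim (\varphi_a)^*(x,\cdot)$, uniformly in $a\ge 0$ and a.e.\ $x\in\Omega$, which is a standard consequence of the $\Delta_2,\nabla_2$-conditions applied to the Legendre transform composed with the shift operation; specialising $a=|\bfA^{\textup{sym}}|$ and using $|\bfS(x,\bfA)|\sim \varphi'(x,|\bfA^{\textup{sym}}|)$ yields the claim. Finally, \eqref{eq:hammerh} is obtained by applying \eqref{eq:hammerf} with the symmetric arguments $\bfS(x,\bfA)$ and $\bfS(y,\bfB)$, whose shift-base is then $|\bfS(x,\bfA)|$, followed by the shift-base conversion from $|\bfS(x,\bfA)|$ to $|\bfA^{\textup{sym}}|$ supplied by \eqref{eq:hammerg}; the spatial mismatch between $x$ and $y$ does not enter since $\bfS(y,\bfB)$ appears only as an element of $\mathbb{R}^{d\times d}_{\textup{sym}}$.

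The main technical obstacle is the careful bookkeeping of the shift identities and the sharp Young equality for the shifted family $\{\varphi_a\}_{a\ge 0}$ and $\{(\varphi_a)^*\}_{a\ge 0}$, together with verifying that all equivalence constants are independent of $x\in\Omega$, $\bfA,\bfB\in\mathbb{R}^{d\times d}$, and $a,t\ge 0$. This uniformity is ensured by the hypothesis $p^->1$ through the uniform $\Delta_2,\nabla_2$ bounds in Remark \ref{rem:phi_a}, so no further ingredient beyond the classical constant-exponent hammer lemmas and these uniform modular estimates is required.
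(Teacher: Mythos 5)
Your proof is correct, but for the core equivalences it takes a genuinely different route from the paper. For \eqref{eq:hammera}, \eqref{eq:hammerf}, and \eqref{eq:hammerg} the paper simply cites \cite[Rem.\ A.9]{BDS15}, whereas you give a self-contained reduction: freeze $x$, treat $p(x)$ as a constant exponent, and run the classical constant-exponent hammer argument (segment parametrisation of $\bfS(x,\cdot)$ and $\bfF(x,\cdot)$, comparability of $\varphi''$ along the segment, Young-type identity $\psi^*(\psi'(t))\sim\psi(t)$ for the shifted family, and the shift-duality $(\varphi^*)_{\varphi'(x,a)}(x,\cdot)\sim(\varphi_a)^*(x,\cdot)$). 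Your observation that each relation involves only a single spatial evaluation of $p$ — with $\bfS(y,\bfB)$ in \eqref{eq:hammerh} entering merely as a fixed matrix argument — is exactly what makes this freezing legitimate, and the uniformity in $x$ you invoke from the uniform $\Delta_2,\nabla_2$ bounds (equivalently, from constants depending only on $p^-$, $p^+$, $\delta$, $\mu_0$) is correctly identified. Your approach has the merit of making the proposition self-contained and of making transparent that no variable-exponent technology beyond freezing is needed here, at the cost of length; the paper's approach is shorter but opaque without the external reference. For \eqref{eq:hammerh} your argument coincides with the paper's: apply \eqref{eq:hammerf} to the $\mathbb{R}^{d\times d}_{\textup{sym}}$-valued arguments $\bfS(x,\bfA)$ and $\bfS(y,\bfB)$ (so the shift base becomes $\vert\bfS(x,\bfA)\vert$), then convert the shift base to $\vert\bfA^{\textup{sym}}\vert$ via \eqref{eq:hammerg}.
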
 
	
	\begin{proof}
		For the equivalences \eqref{eq:hammera}--\eqref{eq:hammerg}, we refer to \cite[Rem. A.9]{BDS15}. The equivalence 
		\eqref{eq:hammerh} follows from the equivalences \eqref{eq:hammerf} and \eqref{eq:hammerg}.
	\end{proof}
	
	In addition, we will frequently resort to the following shift change result.
	
	\begin{lemma}\label{lem:shift-change}
		For every $\varepsilon>0$, there exists $c_\varepsilon \geq 1$ (depending only
		on~$\varepsilon>0$, $p^-,p^+>1$, and $\delta\ge 0$) such that for every $t\ge 0$, 
		$\bfA, \bfB \in \mathbb{R}^{d\times d}$, and a.e.\ $ x\in \Omega$, it holds that
		\begin{align}
			\varphi_{\vert \bfA^{\textup{sym}} \vert}(x,t)&\leq c_\varepsilon\, \varphi_{\vert \bfB^{\textup{sym}} \vert }(x,t)
			+\varepsilon\, \vert \bfF(x,\bfA) - \bfF(x,\bfB)\vert^2\,,\label{lem:shift-change.1}
			\\
			(\varphi_{\vert \bfA^{\textup{sym}} \vert})^*(x,t)&\leq c_\varepsilon\, (\varphi_{\vert \bfB^{\textup{sym}} \vert })^*(x,t)
			+\varepsilon\, \vert \bfF(x,\bfA) - \bfF(x,\bfB)\vert^2\,.\label{lem:shift-change.3}
		\end{align}
	\end{lemma}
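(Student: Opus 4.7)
The plan is to prove both estimates \eqref{lem:shift-change.1} and \eqref{lem:shift-change.3} by reducing them, via Remark \ref{rem:phi_a}, to pointwise inequalities for power-type functions, and then absorbing the error term using Young's inequality \eqref{ineq:young} for the shifted $N$-function $\varphi_{|\bfB^{\textup{sym}}|}(x,\cdot)$ and its conjugate. Since the variable exponent $p(x)$ is fixed for each $x\in\Omega$, all estimates are pointwise in $x$, and the constants inherit only dependence on $p^-,p^+$ and $\delta$.

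For \eqref{lem:shift-change.1}, I would set $A\coloneqq |\bfA^{\textup{sym}}|$, $B\coloneqq |\bfB^{\textup{sym}}|$, $D\coloneqq |\bfA^{\textup{sym}}-\bfB^{\textup{sym}}|$, and use the equivalence \eqref{rem:phi_a.1} to rewrite $\varphi_A(x,t)\sim (\delta+A+t)^{p(x)-2}t^2$. The triangle inequality gives $\delta+A+t\le \delta+B+t+D$, and I would split into the two cases $p(x)\ge 2$ and $p(x)<2$. In the first case, $(r+s)^{p(x)-2}\le 2^{p^+-2}(r^{p(x)-2}+s^{p(x)-2})$ yields
\begin{align*}
\varphi_A(x,t)\lesssim \varphi_B(x,t)+(\delta+D)^{p(x)-2}t^2\,.
\end{align*}
In the case $p(x)<2$, the exponent $p(x)-2$ is negative, so $(\delta+A+t)^{p(x)-2}\le (\delta+B+t+D)^{p(x)-2}$ and I would further split according to whether $t\le D$ or $t>D$; the first sub-case gives a term bounded by $\varphi_B(x,D)$ directly, and the second reduces to $\varphi_B(x,t)$ times a bounded factor. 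In every case the correction term is bounded (up to a multiplicative constant) by $\varphi_B(x,t)+\varphi_B(x,D)$.

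Using the equivalence $\varphi_B(x,D)\sim \varphi_A(x,D)\sim |\bfF(x,\bfA)-\bfF(x,\bfB)|^2$ (which follows from \eqref{eq:hammera} together with the uniform $\Delta_2$-estimate \eqref{rem:phi_a.3} allowing a change of shift in the first argument of $\varphi_{\,\cdot\,}$), and then applying Young's inequality \eqref{ineq:young} to the product with the nonnegative weight coming from the above estimate, i.e.\ splitting
\begin{align*}
(\delta+D)^{p(x)-2}t^2\le \varepsilon\,\varphi_B(x,t)+c_\varepsilon\,\varphi_B(x,D)\,,
\end{align*}
which is exactly Young's inequality for $\varphi_B(x,\cdot)$ and its conjugate (noting $(\varphi_B)'(x,D)\sim (\delta+B+D)^{p(x)-2}D\lesssim (\delta+D)^{p(x)-2}D$), I obtain the claimed estimate with $c_\varepsilon$ depending only on $\varepsilon$, $p^\pm$, and $\delta$.

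The inequality \eqref{lem:shift-change.3} is proved by the same scheme applied to the conjugate family $\{(\varphi_a)^*\}_{a\ge 0}$: using the equivalence \eqref{rem:phi_a.2} in place of \eqref{rem:phi_a.1}, the uniform $\Delta_2$-estimate \eqref{rem:phi_a.4}, the equivalence \eqref{eq:hammerf} relating $|\bfF^*(x,\bfS(x,\bfA))-\bfF^*(x,\bfS(x,\bfB))|^2$ to $(\varphi^*)_{|\bfA^{\textup{sym}}|}(x,|\bfA^{\textup{sym}}-\bfB^{\textup{sym}}|)$, and combining with \eqref{eq:hammera} to replace $|\bfF^*\circ\bfS|^2$ differences by $|\bfF|^2$ differences, the same Young-type absorption gives \eqref{lem:shift-change.3}. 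The main obstacle is the careful case distinction $p(x)\ge 2$ versus $p(x)<2$, ensuring that in the sub-quadratic regime the negative power $p(x)-2$ is handled via monotonicity (rather than subadditivity) so that the absorption into $\varepsilon\,\varphi_B(x,t)+c_\varepsilon|\bfF(x,\bfA)-\bfF(x,\bfB)|^2$ remains uniform in $x$.
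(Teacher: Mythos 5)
The paper's own proof is just a citation to \cite[Rem.\ A.9]{BDS15}, so your attempt to supply a self-contained argument is welcome, but it does not close: every branch of your case analysis yields a \emph{fixed} multiplicative constant in front of the term $\vert\bfF(x,\bfA)-\bfF(x,\bfB)\vert^2$, whereas the lemma requires an \emph{arbitrarily small} $\varepsilon$, and that is precisely where the work lies.

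Concretely, the step you describe as ``exactly Young's inequality for $\varphi_B(x,\cdot)$ and its conjugate,''
\begin{align*}
(\delta+D)^{p(x)-2}t^2\le \varepsilon\,\varphi_B(x,t)+c_\varepsilon\,\varphi_B(x,D)\,,
\end{align*}
is not an instance of \eqref{ineq:young}: that inequality reads $s\,u\le c_\varepsilon(\varphi_B)^*(x,s)+\varepsilon\,\varphi_B(x,u)$, and no factorization of $(\delta+D)^{p(x)-2}t^2$ as $s\cdot u$ produces $\varphi_B(x,D)$ on the right. Taking $s=(\delta+D)^{p(x)-2}t$, $u=t$ leaves $t$ inside $(\varphi_B)^*$; taking $s=(\varphi_B)'(x,D)$, $u=t$ gives $(\delta+B+D)^{p(x)-2}D\,t$, which is not the quantity you need to bound. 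Your parenthetical ``$(\varphi_B)'(x,D)\sim(\delta+B+D)^{p(x)-2}D\lesssim(\delta+D)^{p(x)-2}D$'' is also reversed when $p(x)\ge 2$, since then the exponent is nonnegative and $(\delta+B+D)^{p(x)-2}\ge(\delta+D)^{p(x)-2}$. In the sub-quadratic branch the monotonicity statement is likewise backwards: for $p(x)<2$ and $A\le B+D$, the negative exponent gives $(\delta+A+t)^{p(x)-2}\ge(\delta+B+D+t)^{p(x)-2}$, the wrong direction for an upper bound on $\varphi_A(x,t)$; and the split $t\le D$ versus $t>D$ that you then perform only delivers $\varphi_A(x,t)\lesssim\varphi_B(x,t)+\varphi_B(x,D)$ with a fixed constant, which is weaker than the claim.

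The missing mechanism is that the $\varepsilon$ must come either from an elementary Young inequality at the level of powers, or from letting the \emph{threshold} of the case distinction depend on $\varepsilon$. For $p(x)\ge 2$ the clean route is: $(\delta+A+t)^{p(x)-2}\lesssim(\delta+B+t)^{p(x)-2}+D^{p(x)-2}$, so $\varphi_A(x,t)\lesssim\varphi_B(x,t)+D^{p(x)-2}t^2$, and then $D^{p(x)-2}t^2\le\varepsilon' D^{p(x)}+c_{\varepsilon'}t^{p(x)}$ (Young with the conjugate exponents $p(x)/(p(x)-2)$ and $p(x)/2$), combined with $\varphi_B(x,t)\gtrsim t^{p(x)}$ and $\varphi_B(x,D)\gtrsim D^{p(x)}$, which hold for $p(x)\ge 2$. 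For $p(x)<2$ one instead compares $D$ with $\lambda(\delta+B+t)$ for an $\varepsilon$-dependent $\lambda$: when $D\le\lambda(\delta+B+t)$ the shifts are $\lambda$-comparable and $\varphi_A(x,t)\le c_\lambda\varphi_B(x,t)$; when $D>\lambda(\delta+B+t)$ one has $\varphi_A(x,t)\lesssim t^{p(x)}$ and a further $\varepsilon$-dependent split $t\le\lambda D$ versus $t>\lambda D$ yields the small factor. The same remarks apply verbatim to your proposed proof of \eqref{lem:shift-change.3}.
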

	
	\begin{proof}
		See \cite[Rem. A.9]{BDS15}.
	\end{proof}

	\begin{remark}
		\label{rem:natural_dist}
		Due to \eqref{eq:hammera}, uniformly with respect to $\bfu, \bfz \in W^{1,p(\cdot)}(\Omega;\mathbb{R}^d)$,~it~holds~that
		\begin{align*}
			(\bfS(\cdot,\bfD \bfu) -
			\bfS(\cdot,\bfD \bfz),\bfD \bfu - \bfD \bfz)_\Omega
			&\sim
			\|\bfF(\cdot,\bfD \bfu)-\bfF(\cdot,\bfD \bfz)\|_{2,\Omega}^2 \\&\sim \rho_{\varphi_{\vert \bfD \bfu\vert},\Omega}(\bfD \bfu -
			\bfD \bfz)\,.
		\end{align*}
		We refer to all three equivalent quantities as the \textup{natural distance}.  Note that $\varphi_{\vert \bfD \bfu\vert}\colon \Omega\times\mathbb{R}_{\ge 0}\to \mathbb{R}_{\ge 0}$ for every $\bfu\in  W^{1,p(\cdot)}(\Omega;\mathbb{R}^d)$ is a generalized $N$-function. 
	\end{remark}
	
	\subsection{$\log$-Hölder continuity and important related results}
	
	\hspace*{5mm}In this subsection, we recall the notion of $\log$-Hölder continuity of variable exponents and collect important related results that are used in this paper.\enlargethispage{5mm}

	For an open set $G\subseteq \setR^d$, $d\in \setN$,  we say that a bounded exponent $p\in \mathcal P^\infty (G)$ is locally
	\textit{$\log$-Hölder continuous}, if there is a constant $c_1>0$ such that
	for every $x,y\in G$, it holds that
	\begin{align*}
		\vert p(x)-p(y)\vert \leq \frac{c_1}{\log(e+1/\vert x-y\vert)}\,.
	\end{align*}
	We say that $p \in \mathcal P^\infty (G)$ satisfies the \textit{$\log$-Hölder decay condition}, if there exist 
	constants $c_2>0$ and $p_\infty\in \setR$ such that for every $x\in G$, it holds that
	\begin{align*}
		\vert p(x)-p_\infty\vert \leq\frac{c_2}{\log(e+\vert x\vert)}\,.
	\end{align*}
	We say that $p$ is \textit{globally $\log$-Hölder continuous} on $G$, if it is locally 
	$\log$-Hölder continuous and satisfies the $\log$-Hölder decay condition. 
	The constants $c_1$ and $c_2$ are called the \textit{local $\log$-Hölder constant and the $\log$-Hölder decay constant}, respectively. Then, $c_{\log}(p)\coloneqq \max\{c_1,c_2\}$ is called the \textit{$\log$-Hölder constant.}
	Moreover,
	we denote by $\mathcal{P}^{\log}(G)$, the  \textit{set of  all
		globally $\log$-Hölder continuous
		variable exponents on $G$}.
	
	We recall four fundamental results, which will find use in the sequel:
	
	For a cube $Q\subseteq \mathbb{R}^d$, $d\in \mathbb{N}$, we denote by $\ell(Q)>0$ the corresponding side length. Then, we have the following result which implies a
	discrete norm equivalence between Luxembourg norms with respect to $\log$-Hölder continuous exponents and their element-wise constant approximations (\textit{cf}.\  Lemma \ref{lem:norm_equiv}).
	
	\begin{lemma}\label{lem:local_change}
		Let $p\in \mathcal{P}^{\log}(\mathbb{R}^d)$, $d\in \mathbb{N}$. Then,  for every $m>0$, there exists a constant $c>0$, depending only on $m$, $c_{\log}(p)$, and $p^+$, such that for every cube (or ball) $Q\subseteq \mathbb{R}^d$ with~$\ell(Q)\leq 1$,~$a\in [0,1]$, and $t\ge 0$ with $\vert Q\vert^m\leq t \leq \vert Q\vert^{-m}$, for every $x,y\in Q$, it holds that
		\begin{align*}
			(a+t)^{p(x)-p(y)}\leq c\,.
		\end{align*}
	\end{lemma}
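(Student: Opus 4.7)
The plan is to take logarithms and decouple $|p(x)-p(y)|$ from $|\log(a+t)|$ using the $\log$-Hölder modulus together with the two-sided bound $|Q|^m\leq t\leq |Q|^{-m}$. Since $(a+t)^{p(x)-p(y)}\leq 1$ whenever $\log(a+t)$ and $p(x)-p(y)$ have opposite signs, the obvious symmetry $x\leftrightarrow y$ reduces the task to bounding the product $|p(x)-p(y)|\cdot|\log(a+t)|$ by a constant in the two non-trivial cases: (i) $a+t\ge 1$ with $p(x)\ge p(y)$, and (ii) $a+t<1$ with $p(y)\ge p(x)$.

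For the logarithmic factor, in case (i) the hypothesis $t\leq |Q|^{-m}$, combined with $a\leq 1\leq |Q|^{-m}$ (since $|Q|\leq 1$), yields $\log(a+t)\leq \log 2+m\log(1/|Q|)$; in case (ii) the hypothesis $t\geq |Q|^m$ yields $|\log(a+t)|=\log(1/(a+t))\leq \log(1/t)\leq m\log(1/|Q|)$. Hence in both cases
\begin{align*}
|\log(a+t)|\leq \log 2+m\log(1/|Q|).
\end{align*}

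For the exponent factor, $x,y\in Q$ with $\ell(Q)\leq 1$ gives $|x-y|\leq C_d\,|Q|^{1/d}$ (with $C_d=\sqrt d$ for cubes and analogously for balls). Choose a threshold $|Q|_0\in(0,1]$ depending only on $d$ such that $|Q|\leq |Q|_0$ implies $C_d^{-1}|Q|^{-1/d}\geq e$; then $\log(e+1/|x-y|)\geq \log(C_d^{-1}|Q|^{-1/d})\geq (2d)^{-1}\log(1/|Q|)$. Invoking the $\log$-Hölder continuity of $p$,
\begin{align*}
|p(x)-p(y)|\leq \frac{c_{\log}(p)}{\log(e+1/|x-y|)}\leq \frac{2d\,c_{\log}(p)}{\log(1/|Q|)}.
\end{align*}
Multiplying the two bounds gives $|p(x)-p(y)|\cdot|\log(a+t)|\leq 2d\,c_{\log}(p)(m+\log 2)$ for every $|Q|\leq |Q|_0$, and exponentiating concludes the proof in the small-$Q$ regime.

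It remains to address the regime $|Q|>|Q|_0$, where $|Q|^m$ and $|Q|^{-m}$ are both bounded by constants depending only on $m$ and $d$; then $a+t$ lies in $[0,C(m,d)]$ and $(a+t)^{p(x)-p(y)}\leq \max\{1,C(m,d)^{\smash{p^+}}\}$, yielding the claim. There is no substantial obstacle; the one subtle point is selecting the dimensional threshold $|Q|_0$ so that $\log(e+1/|x-y|)$ is comparable to $\log(1/|Q|)$ uniformly for $x,y\in Q$, which is what forces the dependence of the final constant on $c_{\log}(p)$, $p^+$, $m$, and (implicitly) $d$.
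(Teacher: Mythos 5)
Your overall strategy is the standard one for this type of result (the paper itself does not give a proof but cites~\cite[Lem.~2.1]{BDS15}): take logarithms, bound $\vert\log(a+t)\vert$ by a multiple of $\log(1/\vert Q\vert)$ using the two-sided restriction $\vert Q\vert^m\leq t\leq\vert Q\vert^{-m}$, and cancel this against the reciprocal factor coming from $\log$-H\"older continuity. The symmetry reduction to the two non-trivial sign combinations is correct, the two logarithmic bounds $\log(a+t)\leq\log 2+m\log(1/\vert Q\vert)$ and $\log(1/(a+t))\leq m\log(1/\vert Q\vert)$ are correct, and the comparison $\log(e+1/\vert x-y\vert)\gtrsim\log(1/\vert Q\vert)$ for small $\vert Q\vert$ is the right idea (even if the precise dimensional threshold you pick should be tightened slightly so that both the inequality $\log(C_d^{-1}\vert Q\vert^{-1/d})\geq(2d)^{-1}\log(1/\vert Q\vert)$ and the absorption of the $\log 2$ term hold simultaneously).

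There is, however, one step in the large-$\vert Q\vert$ regime that does not hold as written. You assert that from $a+t\in[0,C(m,d)]$ one gets $(a+t)^{p(x)-p(y)}\leq\max\{1,C(m,d)^{p^+}\}$. This implication is false: for $a+t$ close to $0$ and $p(x)-p(y)<0$ the quantity $(a+t)^{p(x)-p(y)}$ blows up, and an upper bound on $a+t$ alone cannot control it. What you actually need here --- and what is available from the hypotheses --- is the \emph{lower} bound $a+t\geq t\geq\vert Q\vert^{m}>\vert Q\vert_0^{m}$ when $\vert Q\vert>\vert Q\vert_0$. With this, $a+t\in[\vert Q\vert_0^{m},\,1+\vert Q\vert_0^{-m}]$, and since $\vert p(x)-p(y)\vert\leq p^+$ one obtains the bound $(a+t)^{p(x)-p(y)}\leq\max\{(1+\vert Q\vert_0^{-m})^{p^+},\,\vert Q\vert_0^{-mp^+}\}$, which depends only on $m$, $d$, and $p^+$ as required. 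So the conclusion is correct, but the justification as stated omits exactly the lower bound that makes the negative-exponent case harmless; the interval $[0,C(m,d)]$ should be replaced by $[c(m,d),C(m,d)]$ with $c(m,d)>0$.

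As you note, the final constant depends on $d$ as well as $m$, $c_{\log}(p)$, $p^+$; this is implicit in the lemma's statement.
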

	
	\begin{proof}
		See \cite[Lem.\  2.1]{BDS15}.
	\end{proof}
	
	A crucial role in the hereinafter analysis plays the following substitute of Jensen's inequality for shifted generalized $N$-functions, the so-called \textit{key estimate}.\vspace*{-0.5mm}
	
	\begin{lemma}[Key estimate]\label{lem:key-estimate}
		Let $p\in \mathcal{P}^{\log}(\mathbb{R}^d)$, $d\in \mathbb{N}$. Then, for every $m>0$, there exists a constant $c>0$, depending only on $m$, $c_{\log}(p)$, and $p^-$, such that for every cube (or ball) $Q\subseteq \mathbb{R}^d$~with~$\ell(Q)\leq 1$, $a\ge 0$, and $z\in L^{p'(\cdot)}(Q)$ with $a+\langle \vert z\vert \rangle_Q\leq \vert Q\vert^{-m}$, for every $x\in Q$, it holds that\vspace*{-0.5mm}
		\begin{align*}
			(\varphi_a)^*(x,\langle  \vert z\vert \rangle_Q)\leq c\, \langle (\varphi_a)^*(\cdot,\vert z\vert )\rangle_Q+c\,\vert Q\vert^m\,.
		\end{align*}
	\end{lemma}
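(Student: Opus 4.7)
The strategy has three ingredients: a pointwise Jensen inequality that freezes the spatial variable, an application of Lemma~\ref{lem:local_change} to exchange the frozen variable with the variable of integration, and a careful decomposition of $Q$ that makes the hypotheses of Lemma~\ref{lem:local_change} applicable while the remainder is absorbed into the additive term $c\,|Q|^m$.

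\textbf{Step 1 (Frozen Jensen).} For every fixed $x\in Q$, the map $t\mapsto (\varphi_a)^*(x,t)$ is a (convex) $N$-function. Applying Jensen's inequality with respect to the probability measure $\mathrm{d}y/|Q|$ on $Q$ yields
\begin{align*}
(\varphi_a)^*(x,\langle |z|\rangle_Q)\;\le\;\big\langle (\varphi_a)^*(x,|z(\cdot)|)\big\rangle_Q\,.
\end{align*}
This reduces the claim to controlling $\langle (\varphi_a)^*(x,|z|)\rangle_Q$ by $c\,\langle (\varphi_a)^*(\cdot,|z|)\rangle_Q+c\,|Q|^m$; in other words, we must exchange the \emph{spatial} argument $x$ in the generalized $N$-function with the variable of integration $y\in Q$.

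\textbf{Step 2 (Exponent exchange).} Using \eqref{rem:phi_a.2}, the integrand satisfies $(\varphi_a)^*(x,|z(y)|)\sim \bigl((\delta+a)^{p(x)-1}+|z(y)|\bigr)^{p'(x)-2}|z(y)|^2$, so the only obstruction to replacing $x$ by $y$ lies in the factor $\bigl((\delta+a)^{p(x)-1}+|z(y)|\bigr)^{p'(x)-p'(y)}$. This is precisely the situation governed by Lemma~\ref{lem:local_change} (applied to $p'\in \mathcal{P}^{\log}(\mathbb{R}^d)$, since $\log$-Hölder continuity is inherited by the conjugate exponent when $p^->1$), provided the base of this power lies in $[|Q|^{m_0},|Q|^{-m_0}]$ for some $m_0=m_0(m,c_{\log}(p),p^-)$. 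Granting this, one obtains $(\varphi_a)^*(x,|z(y)|)\lesssim (\varphi_a)^*(y,|z(y)|)$ pointwise in $y$, and integration over $Q$ produces the desired main term $c\,\langle (\varphi_a)^*(\cdot,|z|)\rangle_Q$.

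\textbf{Step 3 (Decomposition and tail estimate).} One splits $Q=Q_{s}\cup Q_{m}\cup Q_{\ell}$, where $Q_{s}\coloneqq\{y\in Q:|z(y)|<|Q|^{M}\}$, $Q_{\ell}\coloneqq\{y\in Q:|z(y)|>|Q|^{-m_0}\}$, and $Q_{m}\coloneqq Q\setminus(Q_{s}\cup Q_{\ell})$, with $M>0$ chosen large (depending on $m$, $p^-$, $c_{\log}(p)$) and $m_0$ chosen as in Step~2. On $Q_{m}$ the previous exponent-exchange applies directly. On $Q_{s}$, monotonicity of $(\varphi_a)^*(x,\cdot)$ gives $(\varphi_a)^*(x,|z(y)|)\le (\varphi_a)^*(x,|Q|^{M})$, and, using the $\Delta_2$-bound \eqref{rem:phi_a.4} together with the quadratic behaviour of $(\varphi_a)^*(x,\cdot)$ near $0$, one picks $M$ large enough that this is $\le c\,|Q|^m$. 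On $Q_{\ell}$, the hypothesis $\langle|z|\rangle_Q\le |Q|^{-m}$ combined with the pointwise lower bound $|z(y)|>|Q|^{-m_0}$ (via Chebyshev) confines $Q_{\ell}$ to a set of relative measure $\lesssim |Q|^{m_0-m}$, while on $Q_{\ell}$ the $\Delta_2$-bound \eqref{rem:phi_a.4} lets one estimate $(\varphi_a)^*(x,|z(y)|)\lesssim (\varphi_a)^*(y,|z(y)|)$ by again re-running Lemma~\ref{lem:local_change} after an explicit $\Delta_2$-rescaling.

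\textbf{Main obstacle.} The technical heart of the proof is the simultaneous selection of the parameters $M$ and $m_0$: they must (i) be large enough to absorb the small-value contribution into $c\,|Q|^m$, (ii) be small enough (via Chebyshev) to force $|Q_{\ell}|/|Q|$ to be negligible, and (iii) remain compatible with the quantitative $\Delta_2$-bounds \eqref{rem:phi_a.3}--\eqref{rem:phi_a.4} governing the rescaling step. The degenerate case $\delta=a=0$, in which $(\varphi_a)^*(x,t)\sim t^{p'(x)}$, is slightly easier and can be handled in parallel. All dependencies are explicit in $m,\,c_{\log}(p),\,p^-$, as required, and the result follows from the discussion of a reference such as \cite[Lem.~2.2]{BDS15}, of which this is a direct adaptation to the shifted $N$-function $(\varphi_a)^*$.
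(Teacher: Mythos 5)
Your plan hits a genuine snag already in Step~1. Applying Jensen with $x$ frozen yields the intermediate quantity $\langle (\varphi_a)^*(x,|z|)\rangle_Q$, but the hypothesis $z\in L^{p'(\cdot)}(Q)$ only guarantees $\langle (\varphi_a)^*(\cdot,|z|)\rangle_Q<\infty$, \emph{not} $\langle (\varphi_a)^*(x,|z|)\rangle_Q<\infty$ for a fixed $x\in Q$: since $(\varphi_a)^*(x,t)\sim t^{p'(x)}$ for large $t$, if $p'(x)>p'(y)$ on a region where $|z|$ concentrates, the frozen-$x$ average can be $+\infty$ while the true right-hand side is finite. So your Step~1 is in general too lossy, and no downstream decomposition can recover; one sees this concretely in your treatment of $Q_\ell$. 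There the pointwise exchange $(\varphi_a)^*(x,|z(y)|)\lesssim(\varphi_a)^*(y,|z(y)|)$ fails precisely when $p'(x)>p'(y)$ and $|z(y)|>|Q|^{-m_0}$, because Lemma~\ref{lem:local_change} is only applicable for arguments inside the window $[|Q|^{m_0},|Q|^{-m_0}]$; a ``$\Delta_2$-rescaling'' cannot change the exponent of the ratio $|z(y)|^{p'(x)-p'(y)}$, and knowing $|Q_\ell|/|Q|\lesssim |Q|^{m_0-m}$ is useless because $(\varphi_a)^*(x,|z(y)|)$ is unbounded on $Q_\ell$. There is also a secondary inaccuracy in Step~2: identifying ``the only obstruction'' with $((\delta+a)^{p(x)-1}+|z(y)|)^{p'(x)-p'(y)}$ overlooks the second $x$-dependence hidden in the base $(\delta+a)^{p(x)-1}$ (cf. \eqref{rem:phi_a.2}); this is precisely the extra wrinkle created by passing from $\varphi_a$ to $(\varphi_a)^*$ and cannot simply be factored into a single power of the base.

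The intended argument (the one the paper references in \cite[Thm.~2.4]{BDS15}) avoids the infinite-intermediate-bound trap by routing Jensen through a \emph{constant-exponent} $N$-function $\Psi$, for example $\Psi:=(\varphi_a^{p_Q^+})^*$ with $p_Q^+:=\sup_Q p$. One first compares $(\varphi_a)^*(x,t_0)\le c\,\Psi(t_0)+c\,|Q|^m$ at the single bounded argument $t_0:=\langle|z|\rangle_Q\le |Q|^{-m}$, where Lemma~\ref{lem:local_change} is available and the small-argument cases are absorbed into $c\,|Q|^m$. Then Jensen is applied to $\Psi$ (valid because $\Psi$ has no $x$-dependence): $\Psi(t_0)\le\langle\Psi(|z|)\rangle_Q$. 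Finally one compares $\Psi(|z(y)|)\le c\,(\varphi_a)^*(y,|z(y)|)+c\,|Q|^m$ for all $y\in Q$, and this time the comparison is uniformly valid for \emph{large} $|z(y)|$ precisely because $p_Q^+\ge p(y)$ forces $(p_Q^+)'\le p'(y)$, so the exponent difference has the right sign; small and moderate $|z(y)|$ are handled by absorption and Lemma~\ref{lem:local_change} as before. In short: the key point you are missing is that Jensen must be applied to an $N$-function whose exponent is pinned at the extremal value of $p$ over $Q$, so that the ``unbounded side'' of the comparison has a favorable sign, rather than to the frozen-$x$ function $(\varphi_a)^*(x,\cdot)$.
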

	
	\begin{proof}
		Follows along the lines of the proof of \cite[Thm.\  2.4]{BDS15} up to minor adjustments (\textit{e.g.}, using~\eqref{rem:phi_a.2} instead of \eqref{rem:phi_a.1}).
	\end{proof}
	
	In addition, we have the following generalizations of Sobolev's and Korn's inequality.
	
	\begin{theorem}[Sobolev's inequality]\label{thm:sobolev}
		Let $G\subseteq \mathbb{R}^d$, $d\in \mathbb{N}$, be a bounded domain and $p\in \mathcal{P}^{\log}(G)$ with $1<p^-\leq p^+<d$. Moreover, denote by $p^*\coloneqq \smash{\frac{dp}{d-p}}\in  \mathcal{P}^{\log}(G)$ the \textit{Sobolev conjugate~exponent}. Then, for every $\smash{z\in W_0^{1,p(\cdot)}(G)}$, it holds that $z\in L^{p^*(\cdot)}(G)$ with\vspace*{-0.5mm}
		\begin{align*}
			\|z\|_{p^*(\cdot),G}\lesssim \|\nabla z\|_{p(\cdot),G}\,,
		\end{align*}
		where $\lesssim$ depends only on $d$, $c_{\log}(p)$, and $p^+$.\vspace*{-0.5mm}
	\end{theorem}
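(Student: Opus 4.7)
The plan is to follow the classical Riesz potential route, adapted to variable exponents. First, since $z\in W^{1,p(\cdot)}_0(G)$, I extend $z$ by zero to $\mathbb{R}^d$; because $z$ is the $W^{1,p(\cdot)}$-limit of functions in $C_c^\infty(G)$, the extension lies in $W^{1,p(\cdot)}(\mathbb{R}^d)$ with unchanged gradient norm. For each approximating $\varphi\in C_c^\infty(G)$, the fundamental theorem of calculus gives the pointwise bound $|\varphi(x)|\leq c_d\, I_1(|\nabla\varphi|)(x)$, where $I_1 g(x)\coloneqq \int_{\mathbb{R}^d}|x-y|^{1-d}\,g(y)\,\mathrm{d}y$ is the Riesz potential of order one. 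Passing to the limit, the same bound holds for $z$ almost everywhere, so it suffices to establish the continuity $I_1\colon L^{p(\cdot)}(\mathbb{R}^d)\to L^{p^*(\cdot)}(\mathbb{R}^d)$.

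For the latter, I would employ a Hedberg-type pointwise interpolation. Splitting, for $f\in L^{p(\cdot)}(\mathbb{R}^d)$ with $\|f\|_{p(\cdot),\mathbb{R}^d}\leq 1$ and $r>0$,
$$
I_1 f(x)=\int_{|x-y|\leq r}\frac{f(y)}{|x-y|^{d-1}}\,\mathrm{d}y+\int_{|x-y|>r}\frac{f(y)}{|x-y|^{d-1}}\,\mathrm{d}y\,,
$$
the near part is controlled by $c\,r\,\mathcal{M}f(x)$ via dyadic decomposition, where $\mathcal{M}$ denotes the Hardy--Littlewood maximal operator. For the far part, I would apply the generalized Hölder inequality \eqref{eq:gen_hoelder} on each dyadic annulus $\{2^k r\leq |x-y|<2^{k+1} r\}$ and invoke Lemma \ref{lem:local_change} together with the key estimate (Lemma \ref{lem:key-estimate}) to transfer the variable exponent from the integration variable $y$ to the base point $x$; this is precisely where the $\log$-Hölder hypothesis enters. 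Optimizing in $r$ produces the pointwise bound
$$
I_1 f(x)\lesssim (\mathcal{M}f(x))^{p(x)/p^*(x)}+h(x)\,,
$$
for a function $h$ whose $p^*(\cdot)$-modular is controlled by $\rho_{p(\cdot),\mathbb{R}^d}(f)$ plus lower-order terms.

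Raising to the power $p^*(x)$ and integrating, the right-hand side is dominated by $\int_{\mathbb{R}^d}(\mathcal{M}f(x))^{p(x)}\,\mathrm{d}x+\rho_{p^*(\cdot),\mathbb{R}^d}(h)$, which in turn is controlled by the boundedness of $\mathcal{M}\colon L^{p(\cdot)}(\mathbb{R}^d)\to L^{p(\cdot)}(\mathbb{R}^d)$ for $p\in \mathcal{P}^{\log}(\mathbb{R}^d)$ with $p^->1$ (cf.\ \cite{dhhr}). This yields $\rho_{p^*(\cdot),\mathbb{R}^d}(I_1 f)\lesssim 1$ whenever $\|f\|_{p(\cdot),\mathbb{R}^d}\leq 1$, hence $\|I_1 f\|_{p^*(\cdot),\mathbb{R}^d}\lesssim \|f\|_{p(\cdot),\mathbb{R}^d}$ by homogeneity and the unit-ball property of the modular. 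Combined with the Riesz representation of $|z|$, this gives the claim.

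The principal obstacle is the Hedberg-type pointwise interpolation. In the constant-exponent case, $\|f\|_p^{1-p/p^*}$ and $(\mathcal{M}f)^{p/p^*}$ factor out of the integral directly, whereas here both $p(x)$ and $p^*(x)$ depend on the base point. It is precisely the $\log$-Hölder continuity, operationalized through Lemmas \ref{lem:local_change} and \ref{lem:key-estimate}, that permits replacing $p(y)$ by $p(x)$ on balls up to an additive modular error of controllable size, thereby making the classical optimization go through uniformly in $x$.
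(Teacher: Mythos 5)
The paper does not supply its own proof here; it simply refers to \cite[Thm.~8.3.1]{dhhr}. Your sketch reconstructs, in broad outline, the same route that this reference follows: the potential representation $|z|\lesssim I_1|\nabla z|$, a Hedberg-type pointwise interpolation for the Riesz potential in the variable-exponent scale, and the $L^{p(\cdot)}$-boundedness of the Hardy--Littlewood maximal operator. So you are not proposing a genuinely different method; you are unpacking the argument behind the citation.

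Two points need more care before the sketch becomes a proof. First, the paper's Lemmas \ref{lem:local_change} and \ref{lem:key-estimate} are formulated only for cubes or balls with side length $\ell(Q)\leq 1$; they are tailored to the fine-mesh scale and do not directly apply to the far-field dyadic annuli whose radii $2^k r$ may exceed one. The exponent transfer on the far part therefore cannot rest on these two lemmas alone. In the cited reference that transfer uses the decay half of the $\log$-H\"older condition (and, for bounded $G$, the compact support of $z$ after zero extension, which caps the annulus radii); invoking the paper's local lemmas in their place is a gap. Second, the step "optimizing in $r$" is exactly where the variable-exponent Hedberg argument is delicate: after H\"older on each annulus one accrues modular error terms that must be summed over $k$ uniformly in $x$, and the resulting remainder $h$ must have $p^*(\cdot)$-modular controlled by $\rho_{p(\cdot),\mathbb{R}^d}(f)$ plus constants uniformly on the unit ball $\|f\|_{p(\cdot)}\leq 1$. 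One also needs to pass from the norm boundedness of $\mathcal{M}$ to control of the modular $\int_{\mathbb{R}^d}(\mathcal{M}f)^{p(x)}\,\mathrm{d}x$, which brings in $p^{\pm}$-dependent losses that must be tracked. None of this is wrong in principle, but each item is where the real work lies; the honest (and efficient) alternative --- which is what the paper does --- is simply to quote the Sobolev embedding of \cite{dhhr} directly rather than re-derive it.
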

	
	\begin{proof}
		See \cite[Thm.\  8.3.1]{dhhr}.
	\end{proof}
	
	\begin{theorem}[Korn's inequality]\label{thm:korn}
		Let $G\subseteq \mathbb{R}^d$, $d\in \mathbb{N}$, be a bounded domain and $p\in \mathcal{P}^{\log}(G)$ with $p^->1$. Then,  for every $\bfz\in W_0^{1,p(\cdot)}(G;\mathbb{R}^d)$, it holds that
		\begin{align*}
			\|\bfD \bfz\|_{p(\cdot),G}\lesssim \|\nabla \bfz\|_{p(\cdot),G}\,,
		\end{align*}
		where $\lesssim$ depends only on $d$, $c_{\log}(p)$, and $p^+$.\vspace*{-0.5mm}
	\end{theorem}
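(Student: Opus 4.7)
The plan is brief because the inequality as stated is a one-sided bound that follows from the pointwise definition of the symmetric gradient together with the definition of the Luxembourg norm; no Korn-type machinery is needed.

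First I would establish the pointwise estimate $\vert \bfD\bfz(x)\vert \leq \vert \nabla\bfz(x)\vert$ for a.e.\ $x\in G$. Since $\bfD\bfz = \frac{1}{2}(\nabla\bfz + (\nabla\bfz)^\top)$ and the Frobenius norm is invariant under transposition, the triangle inequality delivers the bound at once; equivalently, expanding $\vert \bfD\bfz\vert^2$ componentwise and applying $2ab \leq a^2+b^2$ to the cross terms yields the same conclusion.

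Second, I would transfer the pointwise estimate to the Luxembourg norm. For every $\lambda > 0$, monotonicity of $t \mapsto t^{p(x)}$ and integration over $G$ give $\rho_{p(\cdot),G}(\bfD\bfz/\lambda) \leq \rho_{p(\cdot),G}(\nabla\bfz/\lambda)$. Any $\lambda$ admissible on the right is therefore admissible on the left, so $\|\bfD\bfz\|_{p(\cdot),G} \leq \|\nabla\bfz\|_{p(\cdot),G}$ follows directly from the definition of the Luxembourg norm.

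There is no substantive obstacle to overcome: the implicit constant is in fact $1$, and neither the log-Hölder regularity of $p$ nor the vanishing trace of $\bfz$ enters this direction. These hypotheses are needed only for the reverse (nontrivial) inequality $\|\nabla\bfz\|_{p(\cdot),G} \lesssim \|\bfD\bfz\|_{p(\cdot),G}$, which can fail without them even in the constant-exponent setting; the statement is presumably phrased uniformly with those hypotheses so that the full two-sided equivalence $\|\bfD\bfz\|_{p(\cdot),G} \sim \|\nabla\bfz\|_{p(\cdot),G}$ can be invoked directly later in the paper.
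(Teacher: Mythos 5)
Your argument is correct for the inequality exactly as printed, and with constant $1$: the pointwise bound $\vert\bfD\bfz\vert\leq\vert\nabla\bfz\vert$ a.e.\ plus monotonicity of the modular immediately gives $\|\bfD\bfz\|_{p(\cdot),G}\leq\|\nabla\bfz\|_{p(\cdot),G}$, and you are right that neither the $\log$-H\"older continuity of $p$ nor the vanishing trace of $\bfz$ enters this direction. However, this is not the route the paper takes, and the mismatch you flagged is real: the paper's entire proof is a citation of the variable-exponent Korn inequality \cite[Thm.\ 14.3.21]{dhhr}, i.e., the nontrivial reverse bound $\|\nabla\bfz\|_{p(\cdot),G}\lesssim\|\bfD\bfz\|_{p(\cdot),G}$ for $\bfz\in W^{1,p(\cdot)}_0(G;\mathbb{R}^d)$, and it is precisely this direction that is invoked later (e.g., in the proof of Theorem \ref{thm:error_FE} and in Lemma \ref{lem:sobolev2F_discrete}, where norms of $\nabla\bfe_h$, $\nabla\Pi_h^V\bfv$, etc.\ produced by the Sobolev embedding and the stability of $\Pi_h^V$ are converted into norms of the corresponding symmetric gradients). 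So the displayed inequality in the statement has the roles of $\bfD\bfz$ and $\nabla\bfz$ interchanged -- a misprint -- and your elementary pointwise argument neither proves nor could prove the intended statement, which genuinely requires the zero boundary values (otherwise rigid motions give counterexamples) and, in the variable-exponent setting, conditions on $p$. In short: your proof is valid for the literal claim and you correctly located where the hypotheses matter, but to serve the theorem's actual role in the paper one must cite (or prove) the reverse inequality from \cite{dhhr}; the trivial direction alone does not support the later applications.
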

	
	\begin{proof}
		See \cite[Thm.\  14.3.21]{dhhr}.
	\end{proof}

	\section{The  steady $p(\cdot)$-Navier--Stokes equations}\label{sec:p-navier-stokes}\vspace*{-0.5mm}
	
	\hspace{5mm}In this section, let us recall some  well-known facts about the steady $p(\cdot)$-Navier--Stokes equations~\eqref{eq:p-navier-stokes}; for a thorough analytical examination of this problem, please refer to the contributions \cite{AM,der-2d-erf,huber-paper,dms,dhhr}.\vspace*{-0.5mm}\enlargethispage{6.5mm}
	
	\subsection{Weak formulations}
	
	\hspace*{5mm}We define the following function spaces:
	\begin{align*}
		\begin{aligned}
			V&\coloneqq W^{1,p(\cdot)}(\Omega;\mathbb{R}^d)\,,&&\Vo\coloneqq W^{1,p(\cdot)}_0(\Omega;\mathbb{R}^d)\,,\\[-0.5mm]
			Q&\coloneqq L^{p'(\cdot)}(\Omega)\,,&&\Qo\coloneqq L^{p'(\cdot)}_0(\Omega)\coloneqq \big\{z\in L^{p'(\cdot)}(\Omega)\mid \langle z\rangle_\Omega=0\big\}\,.
		\end{aligned}
	\end{align*}
	With this notation, assuming that $p^-\ge \smash{\frac{3d}{d+2}}$, the weak formulation of the steady $p(\cdot)$-Navier--Stokes equations \eqref{eq:p-navier-stokes} as a non-linear saddle point like problem is the following:
	
	\textit{Problem (Q).}\hypertarget{Q}{} For  given $\bff\in L^{p'(\cdot)}(\Omega;\mathbb{R}^d)$, find $(\bfv,q)^\top\in \Vo\times \Qo$ such that for every $(\bfz,z)^\top\in \Vo\times Q $,\linebreak\hspace*{4.5mm} it holds that\vspace*{-0.5mm}
	\begin{align*}
		(\bfS(\cdot,\bfD\bfv),\bfD\bfz)_{\Omega}+([\nabla \bfv]\bfv,\bfz)_{\Omega}-(q,\divo\bfz)_{\Omega}&=(\bff,\bfz)_{\Omega}\,,\\
		(\divo\bfv,z)_{\Omega}&=0\,.
	\end{align*}
	Equivalently, one can reformulate Problem (\hyperlink{P}{P}) \textit{``hiding''} the pressure.
	
	\textit{Problem (P).}\hypertarget{P}{} For given $\bff\in L^{p'(\cdot)}(\Omega;\mathbb{R}^d)$, find $\bfv\in \Vo(0)$ such that for every $\bfz\in\Vo(0)$, it holds that\vspace*{-0.5mm}
	\begin{align*}
		(\bfS(\cdot,\bfD\bfv),\bfD\bfz)_{\Omega}+([\nabla \bfv]\bfv,\bfz)_{\Omega}=(\bff,\bfz)_{\Omega}\,,
	\end{align*}
	\hspace{5mm}where $\Vo(0)\coloneqq \{\bfz\in \Vo\mid  (\divo\bfz,z)_{\Omega}=0\text{ for all }z\in Q\}$.
	
	The names \textit{Problem (\hyperlink{Q}{Q})} and \textit{Problem (\hyperlink{P}{P})} are traditional in the literature (\textit{cf}.\  \cite{BF1991,bdr-phi-stokes,BBD15}).~The~well-posedness of Problem (\hyperlink{Q}{Q}) and Problem (\hyperlink{P}{P}) is usually proved in two steps:
	first, using pseudo-monotone operator theory (\textit{cf}.\ \cite{rubo}), the well-posedness of Problem (\hyperlink{P}{P}) is shown; then, given the well-posedness of Problem (\hyperlink{P}{P}), the well-posedness of Problem (\hyperlink{Q}{Q})
	follows using the following~\mbox{inf-sup}~stability~result:
	\begin{lemma}\label{lem:inf_sup}
		Let $p\in \mathcal{P}^{\log}(\Omega)$ with $p^->1$. Then,~for~every~$z\in \Qo$, it holds that\vspace*{-0.5mm}
		\begin{align*}
			\|z\|_{p'(\cdot),\Omega}\lesssim \sup_{\bfz\in \Vo\,:\,\smash{\|\nabla\bfz\|_{p(\cdot),\Omega}\leq 1}}{(z,\divo\bfz)_{\Omega}}\,,
		\end{align*}
		where $\lesssim$ depends only on $d$, $p^-$, $p^+$, $c_{\log}(p)$, and $\Omega$.
	\end{lemma}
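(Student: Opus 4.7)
The plan is to derive the inf-sup inequality from the variable-exponent Bogovskii theorem (existence of a bounded right-inverse of the divergence on $\Vo$) via a standard duality argument. Given $z\in \Qo$, I proceed in three steps. First, since $p\in\mathcal{P}^{\log}(\Omega)$ with $p^->1$, the norm-conjugate formula for variable Lebesgue spaces (cf.\ \cite[Cor.\ 3.2.14]{dhhr}) yields
\begin{align*}
\|z\|_{p'(\cdot),\Omega}\lesssim \sup_{f\in L^{p(\cdot)}(\Omega),\,\|f\|_{p(\cdot),\Omega}\leq 1}{(z,f)_{\Omega}}\,.
\end{align*}
Second, for every such $f$, I pass to $\tilde f\coloneqq f-\langle f\rangle_\Omega\in L^{p(\cdot)}_0(\Omega)$, which satisfies $\|\tilde f\|_{p(\cdot),\Omega}\lesssim \|f\|_{p(\cdot),\Omega}\leq 1$ (since $|\langle f\rangle_\Omega|\lesssim \|f\|_{p(\cdot),\Omega}$ by H\"older's inequality and $\|1\|_{p(\cdot),\Omega}<\infty$, as $\Omega$ is bounded), while $(z,\tilde f)_\Omega=(z,f)_\Omega$ follows from $\langle z\rangle_\Omega=0$.

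Third, I would invoke the variable-exponent Bogovskii theorem on the bounded Lipschitz domain $\Omega$ (cf.\ \cite[Thm.\ 14.3.18]{dhhr}), which produces $\bfu\in \Vo$ with $\divo\bfu=\tilde f$ and $\|\nabla\bfu\|_{p(\cdot),\Omega}\leq c_0\|\tilde f\|_{p(\cdot),\Omega}\leq c_1$, for constants $c_0,c_1>0$ depending only on $d$, $p^-$, $p^+$, $c_{\log}(p)$, and $\Omega$. The rescaled field $\bfu/c_1\in \Vo$ is admissible in the supremum on the right-hand side of the claim, and one obtains
\begin{align*}
c_1^{-1}(z,f)_\Omega=c_1^{-1}(z,\tilde f)_\Omega=(z,\divo(\bfu/c_1))_\Omega\leq \sup_{\bfz\in \Vo,\,\|\nabla\bfz\|_{p(\cdot),\Omega}\leq 1}(z,\divo\bfz)_{\Omega}\,.
\end{align*}
Taking the supremum over $f$ and combining with the duality estimate from step one closes the argument.

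The \emph{main obstacle} is the variable-exponent Bogovskii result itself. Its classical construction as a Calder\'on--Zygmund-type singular integral operator is delicate in the variable setting, since $L^{p(\cdot)}$-boundedness of the corresponding kernel requires precisely the $\log$-Hölder continuity of $p$ together with $p^->1$ -- exactly the hypotheses of the lemma. This construction has been carried out in \cite[Thm.\ 14.3.18]{dhhr} on bounded John domains (in particular, on bounded Lipschitz domains), and it is the single deep ingredient I would quote rather than re-prove.
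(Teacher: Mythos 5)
Your proposal is correct and rests on the same single deep ingredient that the paper invokes: the variable-exponent Bogovskii/divergence-equation result \cite[Thm.\ 14.3.18]{dhhr}. The paper simply cites that theorem and leaves the standard duality argument (norm-conjugate formula, reduction to mean-zero data, rescaling) implicit, whereas you have spelled those routine steps out; the substance is the same.
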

	
	\begin{proof}
		See \cite[Thm.\  14.3.18]{dhhr}.\vspace*{-0.5mm}
	\end{proof}
	
	Note that the restriction $p^-\ge\smash{ \frac{3d}{d+2}}$ is only needed to ensure the well-posedness of the weak convective term. If the latter is omitted in Problem (\hyperlink{Q}{Q}) and Problem (\hyperlink{P}{P}), we can even consider~the~case~$p^->1$.\vspace*{-0.5mm}
	
	\subsection{Regularity assumptions}\vspace*{-0.5mm}

	\hspace*{5mm}According to \cite[Rem.\ 4.5]{BDS15}, for a non-Lipschitz but only Hölder continuous power-law index, \textit{i.e.}, $p\in C^{0,\alpha}(\overline{\Omega})$ with $\alpha<1$,
 one cannot hope for the \textit{``full''}~regularity~${\bfF(\cdot,\bfD\bfv)\in W^{1,2}(\Omega;\mathbb{R}^{d\times d})}$,~in~general, but  instead it is reasonable to expect the \textit{``partial''} regularity\enlargethispage{2.5mm}
 	\begin{alignat}{2}\label{eq:natural_regularity.velocity}
 		\bfF(\cdot,\bfD\bfv)&\in \smash{N^{\beta,2}(\Omega;\mathbb{R}^{d\times d})}&&\quad\text{ for some  }\beta\in (0,1]\,.
 \intertext{
 Concerning the regularity of the pressure, we propose to consider the \textit{``partial''} regularity}
\label{eq:natural_regularity.pressure}
			q& \in  \smash{H^{\gamma,p'(\cdot)}(\Omega)}&&\quad\text{ for some }\gamma\in (0,1]\,.
	\end{alignat}
	A \hspace*{-0.1mm}first \hspace*{-0.1mm}important \hspace*{-0.1mm}consequence \hspace*{-0.1mm}of \hspace*{-0.1mm}the \hspace*{-0.1mm}regularity \hspace*{-0.1mm}assumptions \hspace*{-0.1mm}\eqref{eq:natural_regularity.velocity}, \hspace*{-0.1mm}\eqref{eq:natural_regularity.pressure} \hspace*{-0.1mm}is \hspace*{-0.1mm}an \hspace*{-0.1mm}improved \hspace*{-0.1mm}integrability~\hspace*{-0.1mm}result.\vspace*{-0.5mm}
	
	\begin{lemma}\label{lem:improved_integrability}
		Let $p\in C^0(\overline{\Omega})$ with $p^->1$. Then, the following statements apply:
		\begin{itemize}[noitemsep,topsep=2pt,leftmargin=!,labelwidth=\widthof{(ii)}]
			\item[(i)] If $\bfz \hspace*{-0.1em}\in\hspace*{-0.1em} V$ with $\bfF(\cdot,\bfD\bfz) \hspace*{-0.1em}\in\hspace*{-0.1em} N^{\beta,2}(\Omega;\mathbb{R}^{d\times d})$, then 
			$\bfz\hspace*{-0.1em}\in\hspace*{-0.1em} W^{1,\smash{\frac{dp(\cdot)}{d-2\beta}}}(\Omega;\mathbb{R}^d)$ if $2\beta \hspace*{-0.1em}<\hspace*{-0.1em}d$~and~${\bfz\hspace*{-0.1em}\in \hspace*{-0.1em}W^{1,s}(\Omega;\mathbb{R}^d)}$ for all $s\in (1,+\infty)$ if $2\beta =d$.
			
			\item[(ii)] If $z \in  H^{\gamma,p'(\cdot)}(\Omega)$, then  $z\in  L^{r(\cdot)} (\Omega)$	for every  $r\in C^0(\overline{\Omega})$ with $r+\varepsilon\leq (p'(\cdot))_\gamma^*$ in $\Omega$ for some $\varepsilon>0$, where $t_\gamma^*\coloneqq\frac{d t}{d-\gamma t}$ if $t< \frac{d}{\gamma}$ and $t_\gamma^*=\infty$ if $t\ge\frac{d}{\gamma}$.
		\end{itemize}

	\end{lemma}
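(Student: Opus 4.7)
The two assertions are independent Sobolev-type embeddings: (i) propagates Nikolski\u{\i} regularity of the non-linear quantity $\bfF(\cdot,\bfD\bfz)$ into higher Lebesgue integrability of $\nabla\bfz$, while (ii) is a fractional Sobolev embedding on the variable Haj\l asz--Sobolev scale.

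For (i), I would chain three ingredients. First, the classical Sobolev-type embedding for Nikolski\u{\i} spaces, $N^{\beta,2}(\Omega)\hookrightarrow L^{2d/(d-2\beta)}(\Omega)$ when $2\beta<d$, respectively $N^{\beta,2}(\Omega)\hookrightarrow L^s(\Omega)$ for every $s\in (1,\infty)$ when $2\beta=d$; this follows from the inclusion $N^{\beta,2}\hookrightarrow B^\beta_{2,\infty}$ into the Besov scale and the standard Besov--Sobolev embedding (cf.\ \cite{Nikol75}). Applying it to $\bfF(\cdot,\bfD\bfz)$ upgrades the latter from $N^{\beta,2}$ to $L^{2d/(d-2\beta)}$. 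Second, I would invert the pointwise relation $\vert\bfF(x,\bfA)\vert^2 \sim (\delta+\vert\bfA^{\textup{sym}}\vert)^{p(x)-2}\vert\bfA^{\textup{sym}}\vert^2$: raising to the power $d/(d-2\beta)$ and integrating, the set $\{\vert\bfD\bfz\vert\le 1+\delta\}$ contributes a harmless term (since $\Omega$ is bounded and $p^+<\infty$), while on the complement the integrand is pointwise comparable to $\vert\bfD\bfz\vert^{dp(\cdot)/(d-2\beta)}$; this yields $\vert\bfD\bfz\vert\in L^{dp(\cdot)/(d-2\beta)}(\Omega)$. Third, Korn's inequality (Theorem~\ref{thm:korn}), applied with the exponent $dp(\cdot)/(d-2\beta)$, transfers this integrability from $\bfD\bfz$ to $\nabla\bfz$. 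The endpoint case $2\beta=d$ is handled analogously with the endpoint Besov embedding.

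For (ii), the plan is to invoke a variable-exponent Haj\l asz--Sobolev embedding of the form $H^{\gamma,p'(\cdot)}(\Omega)\hookrightarrow L^{(p'(\cdot))_\gamma^*}(\Omega)$ (cf.\ \cite{dachun03,ZS22}). The $\varepsilon$-gap hypothesis $r+\varepsilon\le (p'(\cdot))_\gamma^*$ then supplies exactly the margin needed to descend from $L^{(p'(\cdot))_\gamma^*}$ to $L^{r(\cdot)}$ via the standard variable-Lebesgue embedding on the bounded domain $\Omega$ (cf.\ \cite{dhhr}). On the (possibly empty) subset $\{p'(\cdot)\ge d/\gamma\}$, where $(p'(\cdot))_\gamma^*=\infty$, the gap is automatic since $r$ is bounded on $\overline{\Omega}$.

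The main technical obstacle is the pointwise inversion in the second step of part (i): because $\bfA\mapsto \bfF(x,\bfA)$ is only locally bi-Lipschitz with $(p(x),\delta)$-degenerate constants, one has to split the integration domain according to the size of $\vert\bfD\bfz\vert$ relative to $\delta$. However, $p^+<\infty$ and $\vert\Omega\vert<\infty$ make the small-gradient regime harmless, and the continuity of $p$ on $\overline{\Omega}$ keeps the constants uniform throughout.
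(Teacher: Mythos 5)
Your argument for (i) follows the paper's: the Nikolski\u{\i} embedding $N^{\beta,2}(\Omega)\hookrightarrow L^{2d/(d-2\beta)}(\Omega)$ (resp.\ $L^s(\Omega)$ for all $s<\infty$ when $2\beta=d$) applied to $\bfF(\cdot,\bfD\bfz)$, followed by the pointwise inversion $\vert\bfF(x,\bfD\bfz)\vert\sim(\delta+\vert\bfD\bfz\vert)^{(p(x)-2)/2}\vert\bfD\bfz\vert$ to conclude $\vert\bfD\bfz\vert\in L^{dp(\cdot)/(d-2\beta)}(\Omega)$. Your additional Korn step to pass from $\bfD\bfz$ to $\nabla\bfz$ is a natural way to finish (the paper's proof actually stops at the symmetric gradient), but Theorem~\ref{thm:korn} as stated is the first Korn inequality for $W^{1,p(\cdot)}_0$, while the hypothesis is $\bfz\in V$ with no trace condition; you would need the second Korn inequality (with a lower-order term) or restrict to zero-trace functions.

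For (ii) you diverge and introduce a gap. You quote a \emph{global variable-exponent} fractional Haj\l asz--Sobolev embedding $H^{\gamma,p'(\cdot)}(\Omega)\hookrightarrow L^{(p'(\cdot))^*_\gamma}(\Omega)$, but no such result is cited or proved in the paper; the only embedding it relies on is the \emph{constant}-exponent one from \cite[Prop.\ 1.4]{Yang03}. The paper instead runs a local-to-constant-exponent reduction: using continuity of $r,p'$ and compactness of $\overline{\Omega}$, cover $\overline{\Omega}$ by finitely many balls $B_i$ small enough that $r^+_i\leq ((p^+_i)')^*_\gamma$, where $r^+_i=\sup_{B_i}r$ and $(p^+_i)'=\inf_{B_i}p'$; then on each $B_i$ chain $H^{\gamma,p'(\cdot)}(B_i)\hookrightarrow H^{\gamma,(p^+_i)'}(B_i)\hookrightarrow L^{((p^+_i)')^*_\gamma}(B_i)\hookrightarrow L^{r^+_i}(B_i)\hookrightarrow L^{r(\cdot)}(B_i)$. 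This also corrects your reading of the $\varepsilon$-gap: it is not needed to step down from $L^{(p'(\cdot))^*_\gamma}$ to $L^{r(\cdot)}$ (the bounded-domain Lebesgue embedding only needs $r\leq (p')^*_\gamma$ a.e.), but rather to guarantee that, after taking $\sup r$ and $\inf (p')^*_\gamma$ over a small ball, the inequality $r^+_i\leq((p^+_i)')^*_\gamma$ survives; i.e.\ it licenses the local freezing of the exponent. Replacing the quoted global embedding with this covering argument repairs your proof of (ii).
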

	
	\begin{proof}
		\textit{ad (i).} If $2\beta<d$, then  $N^{\beta,2}(\Omega)\hookrightarrow L^{\smash{\frac{2d}{d-2\beta}}}(\Omega)$
		and if $2\beta =d$, then $N^{\beta,2}(\Omega)\hookrightarrow L^s(\Omega)$ for all $s\hspace*{-0.1em}\in\hspace*{-0.1em} [1,\infty)$.
		Thus, if $2\beta \hspace*{-0.1em}<\hspace*{-0.1em}d$, then $\vert \bfD\bfz\vert \hspace*{-0.1em}\in\hspace*{-0.1em} L^{\smash{\frac{dp(\cdot)}{d-2\beta}}}(\Omega)$ and if $2\beta \hspace*{-0.1em}=\hspace*{-0.1em}d$,~then~${\vert \bfD\bfz\vert\hspace*{-0.1em} \in\hspace*{-0.1em} L^s(\Omega)}$~for~all~${s\hspace*{-0.1em}\in\hspace*{-0.1em} [1,\infty)}$.
		
		\textit{ad (ii).} Due to the continuity of $r,p'\colon \overline{\Omega}\to (1,+\infty)$,   there exists a covering of $\overline{\Omega}$~by~open~balls~${B_i\subseteq \mathbb{R}^d}$, $i=1,\ldots,m$, $m\in \mathbb{N}$, such that, setting   $r^+_i=\sup_{\smash{x\in  B_i}}{r(x)}$ and $p^+_i=\sup_{\smash{x\in  B_i}}{p(x)}$ for all $i=1,\ldots,m$, we have that
		$\smash{r^+_i \leq	((p^+_i)')_\gamma^*}$. 
		Therefore, since
		$H^{\gamma,p'(\cdot)}(B_i)\hookrightarrow H^{\gamma,\smash{(p^+_i)'}}(B_i)$,  ${H^{\gamma,\smash{(p^+_i)'}}(B_j)\hookrightarrow L^{\smash{((p^+_i)')_\gamma^*}}(B_j)}$ (\textit{cf}.\  \cite[Prop.\ 1.4]{Yang03}), and $L^{((p^+_i)')_\gamma^*}(B_i)\hookrightarrow L^{r^+_i}(B_i)\hookrightarrow L^{r(\cdot)}(B_i)$~for~all~${i=1,\ldots,m}$, we~obtain~${q\in L^{r(\cdot)}(\Omega)}$.
	\end{proof}

	The following lemma shows that in the case of \textit{``full regularity''} assumptions on the power-law index, \textit{i.e.}, $p\in C^{0,1}(\overline{\Omega})$, and the velocity vector field, \textit{i.e.}, $\bfF(\cdot,\bfD\bfv)\in W^{1,2}(\Omega;\mathbb{R}^{d\times d})$, and if $p^-\ge  2$~and~${\delta>0}$, one can equally expect the pressure to have  full regularity.\vspace*{-0.5mm}

	\begin{lemma}\label{lem:pres}
		Let $p\in C^{0,1}(\overline{\Omega})$ with $p^-\ge  2$, let $\delta>0$, and let $(\bfv,q)^\top \in \Vo(0)\times \Qo$  be a weak solution of Problem (\hyperlink{Q}{Q}) such that
		$\bfF(\cdot,\bfD\bfv)\in W^{1,2}(\Omega;\mathbb{R}^{d\times d})$.  Then, the following~statements~apply:
		\begin{itemize}[noitemsep,topsep=2pt,leftmargin=!,labelwidth=\widthof{(ii)}]
			\item[(i)]  If $\bff\in L^{\textcolor{black}{p'(\cdot)}}(\Omega;\mathbb{R}^d)$, then it holds that $q \in  W^{1,p'(\cdot)}(\Omega)$. 
			\item[(ii)] If $\bff\in L^2(\Omega;\mathbb{R}^d)$, then it holds that $(\delta    +\vert\bfD\bfv\vert)^{2-p(\cdot)}\vert\nabla q\vert ^2 \in L^1(\Omega)$.
		\end{itemize}
	\end{lemma}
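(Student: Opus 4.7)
The plan is to rewrite the momentum equation of Problem~(\hyperlink{Q}{Q}), tested against arbitrary $\bfz\in C^\infty_c(\Omega;\mathbb{R}^d)$ (not only divergence-free fields), as the distributional identity
\[
\nabla q = \bff + \divo\bfS(\cdot,\bfD\bfv) - [\nabla\bfv]\bfv \quad\text{ in }\mathcal{D}'(\Omega;\mathbb{R}^d),
\]
and then to establish, term by term, the (un)weighted integrability required in (i), resp.\ (ii). The pointwise workhorse is the chain-rule bound obtained from $\bfS(x,\bfA)=\mu_0(\delta+|\bfA^{\textup{sym}}|)^{(p(x)-2)/2}\bfF(x,\bfA)$ together with $p\in C^{0,1}(\overline{\Omega})$, namely
\[
|\nabla\bfS(\cdot,\bfD\bfv)|\lesssim (\delta+|\bfD\bfv|)^{(p(\cdot)-2)/2}\,|\nabla\bfF(\cdot,\bfD\bfv)| + |\nabla p|\,\log(e+|\bfD\bfv|)\,(\delta+|\bfD\bfv|)^{p(\cdot)-1},
\]
whose first summand carries the principal $\nabla\bfD\bfv$-dependence and whose second absorbs the $\nabla p$-contribution (with $|\nabla p|\in L^\infty(\Omega)$ by the Lipschitz assumption on $p$).

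For (i), the relation $p^-\geq 2$ yields $p'(\cdot)\leq 2$ pointwise, so the embedding $L^2(\Omega;\mathbb{R}^d)\hookrightarrow L^{p'(\cdot)}(\Omega;\mathbb{R}^d)$ immediately handles $\bff$. Raising the first summand of the chain-rule bound to the $p'(\cdot)$-power and applying pointwise Young's inequality with conjugate exponents $(2/p'(\cdot),2/(2-p'(\cdot)))$ (admissible since $p'(\cdot)\leq 2$, with the degenerate locus $\{p=2\}$ treated directly) produces $|\nabla\bfF|^2+(\delta+|\bfD\bfv|)^{p(\cdot)}$, both summands in $L^1(\Omega)$ by the assumption $\bfF(\cdot,\bfD\bfv)\in W^{1,2}$ and by $\bfv\in\Vo$. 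The $\nabla p$-log summand, raised to $p'(\cdot)$, is dominated by $C(\delta+|\bfD\bfv|)^{p(\cdot)}\log^{p'(\cdot)}(e+|\bfD\bfv|)$, integrable thanks to the improved integrability of $\bfv$ furnished by Lemma~\ref{lem:improved_integrability}(i) applied with $\beta=1$. Finally, $[\nabla\bfv]\bfv\in L^{p'(\cdot)}(\Omega;\mathbb{R}^d)$ follows from the generalized Hölder inequality, using the improved Lebesgue integrability of $\bfv$ (via Sobolev embedding) and of $\nabla\bfv$; a short exponent check reduces admissibility to $p(\cdot)\geq 3-4/d$, which holds since $p^-\geq 2$ and $d\leq 3$.

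For (ii), the key algebraic cancellation is that multiplying $|\divo\bfS|^2$ by $(\delta+|\bfD\bfv|)^{2-p(\cdot)}$ erases the $(p(\cdot)-2)$-weight in the principal term:
\[
(\delta+|\bfD\bfv|)^{2-p(\cdot)}\,|\divo\bfS|^2\lesssim |\nabla\bfF|^2 + |\nabla p|^2\,\log^2(e+|\bfD\bfv|)\,(\delta+|\bfD\bfv|)^{p(\cdot)},
\]
both summands lying in $L^1(\Omega)$ by the same arguments as above. For $|\bff|^2$ and $|[\nabla\bfv]\bfv|^2$, the weight is globally bounded by $\max\{\delta^{2-p^-},\delta^{2-p^+}\}$ (since $2-p(\cdot)\leq 0$ and $\delta+|\bfD\bfv|\geq\delta>0$), so $L^2$-control suffices and is provided by $\bff\in L^2$, respectively by the improved integrability of $\bfv$ and $\nabla\bfv$. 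The main obstacle, in both (i) and (ii), is the clean separation of the $\nabla\bfF$-dependence from the logarithmic $\nabla p$-contribution in the chain rule, and the verification that the $W^{1,2}$-regularity of $\bfF(\cdot,\bfD\bfv)$ (via Lemma~\ref{lem:improved_integrability}) supplies enough integrability to absorb every logarithmic excess uniformly in $x\in\Omega$.
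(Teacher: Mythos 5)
Your overall strategy — express $\nabla q$ distributionally from the momentum equation and bound $\bff$, $\divo\bfS(\cdot,\bfD\bfv)$, $[\nabla\bfv]\bfv$ term by term — is the same route the paper takes (the paper's own endpoint is exactly the pointwise bound $\vert\nabla q\vert\leq\vert\nabla(\widehat{\bfS}-\bfv\otimes\bfv)\vert+\vert\bff\vert$ in \eqref{lem:pres.5}). The Young-inequality trick with exponents $(2/p'(\cdot),\,2/(2-p'(\cdot)))$, the boundedness of $(\delta+\vert\bfD\bfv\vert)^{2-p(\cdot)}$ from $p^-\geq 2$ and $\delta>0$, and the use of Lemma~\ref{lem:improved_integrability}(i) for the logarithmic excess are all in the same spirit.

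Two places, however, are imprecise and should not be glossed over. First, the chain-rule bound you state for $\vert\nabla\bfS(\cdot,\bfD\bfv)\vert$ is missing a term: differentiating $\bfS=(\delta+\vert\bfD\bfv\vert)^{(p(\cdot)-2)/2}\bfF(\cdot,\bfD\bfv)$ actually produces a third summand of order $(\delta+\vert\bfD\bfv\vert)^{p(\cdot)-2}\vert\nabla\bfD\bfv\vert$, and absorbing it into $(\delta+\vert\bfD\bfv\vert)^{(p(\cdot)-2)/2}\vert\nabla\bfF\vert$ plus a $\nabla p$-log term is precisely the nontrivial equivalence $\vert\nabla\bfF(\cdot,\bfD\bfv)\vert^2+\mu(\bfv)\sim(\delta+\vert\bfD\bfv\vert)^{p(\cdot)-2}\vert\nabla\bfD\bfv\vert^2+\mu(\bfv)$ that the paper imports from \cite[Lems.\ 2.13--2.15]{BK23_pxDirichlet}. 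This is the content that allows one to pass from $\bfF(\cdot,\bfD\bfv)\in W^{1,2}$ to $\bfD\bfv\in W^{1,2}$, and it should be cited or established, not folded silently into a two-term chain rule. Second, the claimed admissibility threshold $p(\cdot)\geq 3-4/d$ for the convective term does not come out of the exponent check; with only $\nabla\bfv\in L^{p(\cdot)}$ and $\bfv\in L^{p^*(\cdot)}$ one finds $p\geq 9/4$ is needed in $d=3$, and with the improved integrability $\nabla\bfv\in L^{dp(\cdot)/(d-2)}$ plus $\bfv\in L^\infty$ one gets through for all $p^-\geq 2$, but neither computation yields your stated threshold. The paper sidesteps this entirely by noting that $\bfD\bfv\in W^{1,2}$ implies $\bfv\in W^{2,2}$ (via the algebraic identity of \cite[Lem.\ 6.3]{mnr3}), hence $\bfv\in L^\infty(\Omega;\mathbb{R}^d)$ and $\bfv\otimes\bfv\in W^{1,2}(\Omega;\mathbb{R}^{d\times d})$, which is cleaner and avoids exponent chasing altogether. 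Your conclusion is correct, but you should redo the exponent check or, better, adopt the $W^{2,2}$ route.
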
 
	
	\begin{proof}
		\textit{ad (i).} Analogously to \cite[Lems. \textcolor{black}{2.45--2.47}]{BK23_pxDirichlet}, abbreviating the flux $\smash{\widehat{\bfS}}\coloneqq \bfS(\cdot,\bfD\bfv)\in L^{p'(\cdot)}(\Omega;\mathbb{R}^{d\times d})$, we deduce that $\bfF^*(\cdot,\smash{\widehat{\bfS}})\in W^{1,2}(\Omega;\mathbb{R}^{d\times d})$~with
		\begin{alignat}{2}
			\vert\nabla  \bfF(\cdot,\bfD\bfv)\vert+(1+\vert \bfD\bfv\vert^{p(\cdot)s}) &\sim \vert\nabla  \bfF^*(\cdot,\smash{\widehat{\bfS}})\vert
			+(1+\vert\smash{\widehat{\bfS}}\vert^{p'(\cdot)s})&&\quad \text{ a.e.\ in }\Omega\,,\label{lem:pres.1}\\[-0.5mm]
			\vert \nabla \bfF(\cdot,\bfD\bfv)\vert^2+\mu(\bfv)&\sim (\delta+\vert \bfD\bfv\vert)^{p(\cdot)-2}\vert \nabla \bfD\bfv\vert^2+\mu(\bfv)&&\quad \text{ a.e.\ in }\Omega\,,\label{lem:pres.2}\\[-0.5mm]
			\vert\nabla  \bfF^*(\cdot,\smash{\widehat{\bfS}})\vert^2+\mu^*(\smash{\widehat{\bfS}})&\sim (\delta^{p(\cdot)-1}+\vert \smash{\widehat{\bfS}}\vert)^{p'(\cdot)-2}\vert \nabla \smash{\widehat{\bfS}}\vert^2+\mu^*(\smash{\widehat{\bfS}})&&\quad \text{ a.e.\ in }\Omega\,,\label{lem:pres.3}
		\end{alignat}
		where $s>1$ is a constant which can chosen to be close to $1$, so that, by Lemma \ref{lem:improved_integrability}, we have that $(1+\vert\smash{\widehat{\bfS}}\vert^{p'(\cdot)s})\sim (1+\vert \bfD\bfv\vert^{p(\cdot)s})\in L^1(\Omega)$, and where
		\begin{align*}
			\mu(\bfv)&\coloneqq \vert \ln(\delta+\vert \bfD\bfv\vert)\vert^2(\delta+\vert \bfD\bfv\vert)^{p(\cdot)-2}\vert \bfD\bfv\vert^{\textcolor{black}{2}}  \vert \nabla p\vert^{\textcolor{black}{2}} 
			\in L^1(\Omega)\,,\\[-0.5mm]
			\mu^*(\smash{\widehat{\bfS}})&\coloneqq \vert \ln(\delta^{p(\cdot)-1}+\vert \smash{\widehat{\bfS}}\vert)\vert^2(\delta^{p(\cdot)-1}+\vert \smash{\widehat{\bfS}}\vert)^{p'(\cdot)-2}\vert \smash{\widehat{\bfS}}\vert^{\textcolor{black}{2}} \vert \nabla p\vert^{\textcolor{black}{2}} 
			\in L^1(\Omega)\,.
		\end{align*} 
		Due to $p^-\ge  2$ and $\delta>0$, from \eqref{lem:pres.2}, it follows that $\bfD\bfv \in  W^{1,2}(\Omega;\mathbb{R}^{d\times d})$. By the usual~\mbox{algebraic}~\mbox{identity} (\textit{cf}.\  \hspace*{-0.1mm}\cite[Lem.\  \hspace*{-0.1mm}6.3.]{mnr3}), \hspace*{-0.1mm}this \hspace*{-0.1mm}implies \hspace*{-0.1mm}that  \hspace*{-0.1mm}$ \bfv\hspace*{-0.15em}\in \hspace*{-0.15em}W^{2,2}(\Omega;\mathbb{R}^d)$ \hspace*{-0.1mm}and, \hspace*{-0.1mm}thus, \hspace*{-0.1mm}$\bfv\hspace*{-0.15em} \in\hspace*{-0.15em} L^\infty(\Omega;\mathbb{R}^d)$~\hspace*{-0.1mm}and~\hspace*{-0.1mm}${\bfv\otimes \bfv\hspace*{-0.15em}\in\hspace*{-0.15em} W^{1,2}(\Omega;\mathbb{R}^{d\times d})}$.  On the other hand, due to $(\delta^{p(\cdot)-1}+\vert \smash{\widehat{\bfS}}\vert)^{p'(\cdot)-2}\sim (\delta+\vert \bfD\bfv\vert)^{2-p(\cdot)}$, from \eqref{lem:pres.1} and \eqref{lem:pres.3}, it follows that $(\delta+\vert \bfD\bfv\vert)^{p(\cdot)-2}\vert \nabla \smash{\widehat{\bfS}}\vert^2\in L^1(\Omega)$ with 
		\begin{align}
				\smash{\vert\nabla  \bfF(\cdot,\smash{\widehat{\bfS}})\vert^2+	\mu^*(\smash{\widehat{\bfS}})\sim (\delta+\vert \bfD\bfv\vert)^{2-p(\cdot)}\vert \nabla \smash{\widehat{\bfS}}\vert^2+	\mu^*(\smash{\widehat{\bfS}})\quad \text{ a.e.\ in }\Omega\,.}\label{lem:pres.4}
		\end{align}
		As a consequence, from \eqref{lem:pres.4}, using the $\varepsilon$-Young inequality \eqref{ineq:young} with $\psi=\vert\cdot\vert^{\frac{2}{p'(\cdot)}}$ (since $p^-\ge 2$, \textit{i.e.}, $p'(\cdot)\leq 2$ in $\Omega$) and $\varepsilon=1$, we obtain $\smash{\widehat{\bfS}}\in W^{1,p'(\cdot)}(\Omega;\mathbb{R}^{d\times d})$ with
		\begin{align*}
			\left.	\begin{aligned}
				\vert\nabla \smash{\widehat{\bfS}}\vert^{p'(\cdot)}&=\vert\nabla \smash{\widehat{\bfS}}\vert^{p'(\cdot)}(\delta+\vert \bfD\bfv\vert)^{p'(\cdot)\frac{2-p(\cdot)}{2}}(\delta+\vert \bfD\bfv\vert)^{-p'(\cdot)\frac{2-p(\cdot)}{2}}
				\\[-0.5mm]&\lesssim  (\delta+\vert \bfD\bfv\vert)^{2-p(\cdot)}\vert \nabla\smash{\widehat{\bfS}}\vert^2+(\delta+\vert \bfD\bfv\vert)^{p(\cdot)}
			\end{aligned}\right\}\quad\text{ a.e.\ in }\Omega\,.
		\end{align*}
		Eventually, using Problem (\hyperlink{Q}{Q}), that $\bff\hspace{-0.175em} \in\hspace{-0.175em}  L^{p'(\cdot)}(\Omega;\mathbb{R}^d)$,~and that $\bfv\otimes \bfv\hspace{-0.175em} \in\hspace{-0.175em}  W^{1,2}(\Omega;\mathbb{R}^{d\times d})\hspace{-0.175em} \hookrightarrow\hspace{-0.175em}  W^{1,p'(\cdot)}(\Omega;\mathbb{R}^{d\times d})$ (since $p^-\ge 2$, \textit{i.e.}, $p'(\cdot)\leq 2$ in $\Omega$), we conclude that  $q \in
		W^{1, p'(\cdot)}(\Omega)$~with
		\begin{align}\label{lem:pres.5}
				\vert \nabla q\vert \leq \vert \nabla (\smash{\widehat{\bfS}}-\bfv\otimes \bfv)\vert+ \vert\bff\vert\quad\text{ a.e. in }\Omega \,.
		\end{align}
		
		\textit{ad (ii).} Multiplying \eqref{lem:pres.5} with $(\delta+\vert  \bfD\bfv\vert)^{\frac{2-p(\cdot)}{2}}$, exploiting that $p^-\ge 2$ and $\delta>0$, we find that
		\begin{align}\label{lem:pres.6}
			\left.\begin{aligned}
				(\delta+\vert  \bfD\bfv\vert)^{2-p(\cdot)}\vert \nabla q\vert^2 &\lesssim (\delta+\vert  \bfD\bfv\vert)^{2-p(\cdot)}\vert \nabla (\smash{\widehat{\bfS}}-\bfv\otimes \bfv)\vert^2+(\delta+\vert  \bfD\bfv\vert)^{2-p(\cdot)} \vert\bff\vert^2
				\\[-0.5mm]&\lesssim (\delta+\vert  \bfD\bfv\vert)^{2-p(\cdot)}\vert \nabla \smash{\widehat{\bfS}}\vert^2  +\delta^{\smash{2-p(\cdot)}}(\vert \nabla(\bfv\otimes \bfv)\vert^2+ \vert\bff\vert^2)
			\end{aligned}\right\}\quad\text{ a.e. in }\Omega \,.
		\end{align}
		Therefore, using in \eqref{lem:pres.6} that $\bff \in L^2(\Omega;\mathbb{R}^d)$, that $\bfv\otimes \bfv\in W^{1,2}(\Omega;\mathbb{R}^{d\times d})$, and \eqref{lem:pres.4}~together~with~\eqref{lem:pres.1}, we conclude that $	(\delta+\vert  \bfD\bfv\vert)^{2-p(\cdot)}\vert \nabla q\vert^2\in L^1(\Omega)$.
	\end{proof}

	\section{The discrete steady $p(\cdot)$-Navier--Stokes equations}\label{sec:discrete_p-navier-stokes}
	
	\hspace*{5mm}In this section, we introduce the discrete steady $p(\cdot)$-Navier--Stokes equations.\vspace*{-1mm}\enlargethispage{2.5mm}
	
	\subsection{Triangulations}
	
	\hspace*{5mm}Throughout the paper, we denote by $\{\mathcal{T}_h\}_{h>0}$ a family~of~regular (\textit{i.e.}, uniformly shape~\mbox{regular}~and conforming) triangulations of $\Omega\subseteq \mathbb{R}^d$, $d\in\{2,3\}$, consisting of $d$-dimensional~\mbox{simplices}~(\textit{cf}.~\cite{EG21}). 
	Here, $h>0$ refers to the \textit{maximal~mesh-size}, \textit{i.e.}, if we set $h_T\coloneqq  \textup{diam}(T)$ for all $T\in \mathcal{T}_h$,~then~${h 
		= \max_{T\in \mathcal{T}_h}{h_T}
	}$.
	For every $T \in \mathcal{T}_h$,
	we denote by $\rho_T>0$, the supremum of diameters of inscribed balls~contained~in~$T$. We assume that there exists a constant $\omega_0>0$, independent of $h>0$, such that $\max_{T\in \mathcal{T}_h}{h_T}{\rho_T^{-1}}\le
	\omega_0$. The smallest such constant is called the \textit{chunkiness} of $\{\mathcal{T}_h\}_{h>0}$. For every $T\in \mathcal{T}_h$, the corresponding \textit{element patch} is defined by $\omega_T\coloneqq \bigcup\{T'\in\mathcal{T}_h\mid T'\cap T\neq \emptyset\}$.
	
	\subsection{Finite element spaces and projectors}
	
	\hspace{5mm}Given
	$m \in \mathbb N_0$ and $h>0$, we denote by $\mathbb{P}^m(\mathcal{T}_h)$ the space of (possibly discontinuous) scalar functions that are polynomials of degree at most $m$ on each simplex $T\in \mathcal{T}_h$, and set $\mathbb{P}^m_c(\mathcal{T}_h)\coloneqq \mathbb{P}^m(\mathcal{T}_h)\cap C^0(\overline{\Omega})$.
	Then, given $k\in\mathbb{N}$ and $\ell \in \mathbb N_0$, we~denote~by
	\begin{align}
		\begin{aligned}
			V_h&\subseteq \textcolor{black}{(\mathbb{P}^k_c(\mathcal{T}_h))^d}\,, &&\,\Vo_h\coloneqq V_h\cap \Vo\,,\\
			Q_h&\subseteq \mathbb{P}^{\textcolor{black}{\ell}}(\mathcal{T}_h)\,, &&\Qo_h \coloneqq Q_h\cap \Qo\,,
		\end{aligned}
	\end{align}
	appropriate conforming
	finite element spaces such that the following two assumptions are satisfied:
	
		\begin{assumption}[Projection operator $\Pi_h^Q$]
		\label{ass:PiY}
		We assume that $\setR
		\subseteq Q_h$ and that there exists a linear projection operator
		$\Pi_h^Q\colon Q \to Q_h$, which is \textup{locally $L^1$-stable}, \textit{i.e.}, for every $q\in Q$ and $T\in \mathcal{T}_h$,~it~holds~that
		\begin{align}
			\label{eq:PiYstab}
			\langle \vert\Pi_h^Q q\vert\rangle_T  \lesssim  \langle\vert q\vert\rangle_{\omega_T}\,.
		\end{align}
	\end{assumption}
	
	\begin{assumption}[Projection operator $\Pi_h^V$]\label{ass:proj-div}
		We assume that $\mathbb{P}^1_c(\mathcal{T}_h) \subseteq V_h$ and that there
		exists a linear projection operator $\Pi_h^V\colon  V \to V_h$ with the following properties:
		\begin{itemize}[noitemsep,topsep=2pt,leftmargin=!,labelwidth=\widthof{(iii)}]
			\item[(i)] \textup{Preservation of divergence in the $Q_h^*$-sense:} For every $\bfz \in V$ and  $z_h \in Q_h$, it holds that
			\begin{align}
				\label{eq:div_preserving}
				(\divo \bfz,z_h)_{\Omega} &= (\divo\Pi_h^V
				\bfz,z_h)_{\Omega} \,;
			\end{align}
			\item[(ii)] \textup{Preservation of homogeneous Dirichlet boundary values:} $\Pi_h^V(\Vo) \subseteq \Vo_h$;
			\item[(iii)] \textup{Local $L^1$-$W^{1,1}$-stability:} For every $\bfz \in V$ and $T\in \mathcal{T}_h$, it holds that
			\begin{align}
				\label{eq:Pidivcont}
				\langle\vert\Pi_h^V\bfz\vert\rangle_T &\lesssim \langle
				\vert\bfz\vert\rangle_{\omega_T} + h_T\, \langle  \vert\nabla \bfz\vert\rangle_{\omega_T} \,.
			\end{align}
		\end{itemize}
	\end{assumption}

	Next, we present a list of common mixed finite element spaces $\{V_h\}_{h>0}$ and $\{Q_h\}_{h>0}$ with projectors $\{\Pi_h^V\}_{h>0}$ and $\{\Pi_h^Q\}_{h>0}$ on regular grids $\{\mathcal{T}_h\}_{h>0}$ satisfying both  Assumption~\ref{ass:PiY}~and~Assumption~\ref{ass:proj-div}, respectively; for a detailed presentation, we recommend 
	the textbook \cite{BBF13}.
	
	\begin{remark}\label{FEM.Q}
		The following discrete spaces and projectors satisfy Assumption~\ref{ass:PiY}:
		\begin{description}[noitemsep,topsep=2pt,leftmargin=!,labelwidth=\widthof{(iii)},font=\normalfont\itshape]
			\item[(i)] If $Q_h= \mathbb{P}^{\textcolor{black}{\ell}}(\mathcal{T}_h)$ for some $\textcolor{black}{\ell}\ge 0$, then $\Pi_h^Q$ can be chosen as (local) $L^2$-projection operator or, more generally, as a Cl\'ement type quasi-interpolation operator.
			
			\item[(ii)] If \hspace*{-0.1mm}$Q_h\hspace*{-0.1em}=\hspace*{-0.1em}\mathbb{P}^{\textcolor{black}{\ell}}_{\textcolor{black}{c}}(\mathcal{T}_h)$ \hspace*{-0.1mm}for \hspace*{-0.1mm}some \hspace*{-0.1mm}$\textcolor{black}{\ell}\hspace*{-0.1em}\ge \hspace*{-0.1em} 1$, \hspace*{-0.1mm}then \hspace*{-0.1mm}$\Pi_h^Q$ \hspace*{-0.1mm}can \hspace*{-0.1mm}be \hspace*{-0.1mm}chosen \hspace*{-0.1mm}as \hspace*{-0.1mm}a \hspace*{-0.1mm}Cl\'ement \hspace*{-0.1mm}type \hspace*{-0.1mm}quasi-interpolation~\hspace*{-0.1mm}operator.
			
		\end{description}
	\end{remark}
	
	\begin{remark}\label{FEM.V}
		The following discrete spaces  and projectors satisfy Assumption~\ref{ass:proj-div}:
		\begin{description}[noitemsep,topsep=2pt,leftmargin=!,labelwidth=\widthof{(iii)},font=\normalfont\itshape]
			\item[(i)] The \textup{MINI element} for $d\in \{2,3\}$, \textit{i.e.}, $V_h=\textcolor{black}{(\mathbb{P}^1_c(\mathcal{T}_h)\bigoplus\mathbb{B}(\mathcal{T}_h))^d}$, where $\mathbb{B}(\mathcal{T}_h)$ is the bubble function space, and $Q_h=\mathbb{P}^1_c(\mathcal{T}_h)$, introduced in \cite{ABF84} for $d=2$; see also \cite[Chap.\ II.4.1]{GR86} and \cite[Sec.\ 8.4.2, 8.7.1]{BBF13}. An operator \hspace*{-0.1mm}$\Pi_h^V$ \hspace*{-0.1mm}satisfying \hspace*{-0.1mm}Assumption \hspace*{-0.1mm}\ref{ass:proj-div} 
			\hspace*{-0.1mm}is \hspace*{-0.1mm}given \hspace*{-0.1mm}in \hspace*{-0.1mm}\cite[Appx.\ \hspace*{-0.1mm}A.1]{bdr-phi-stokes};~\hspace*{-0.1mm}see~\hspace*{-0.1mm}also~\hspace*{-0.1mm}\mbox{\cite[Lem.~\hspace*{-0.1mm}4.5]{GL01}}.
			
			\item[(ii)] The \textup{Taylor--Hood element} for $d\in\{2,3\}$, \textit{i.e.}, $V_h=\textcolor{black}{(\mathbb{P}^2_c(\mathcal{T}_h))^d}$ and $Q_h=\mathbb{P}^1_c(\mathcal{T}_h)$, introduced in \cite{TH73} for $d=2$; see  also \cite[Chap.\ II.4.2]{GR86}, and its generalizations; see, \textit{e.g.}, \cite[Sec.\ 8.8.2]{BBF13}. An operator $\Pi_h^V$ satisfying Assumption \ref{ass:proj-div} is given in \cite[Thm.\ 3.1, 32]{GS03} or \cite{DST2021}.
			
			\item[(iii)] The \textup{conforming Crouzeix--Raviart element} for $d\hspace*{-0.05em}=\hspace*{-0.05em}2$, \textit{i.e.}, $V_h\hspace*{-0.05em}=\hspace*{-0.05em}\textcolor{black}{(\mathbb{P}^2_c(\mathcal{T}_h)\bigoplus\mathbb{B}(\mathcal{T}_h))^2}$~and~${Q_h\hspace*{-0.05em}=\hspace*{-0.05em}\mathbb{P}^1(\mathcal{T}_h)}$, introduced in \cite{CR73}; see also \cite[Ex.\ 8.6.1]{BBF13}. An operator $\Pi_h^V$ satisfying Assumption~\ref{ass:proj-div}(i) is given in \cite[p.\ 49]{CR73} and it can be shown to satisfy Assumption \ref{ass:proj-div}(ii); see, \textit{e.g.}, \cite[Thm.\ 3.3]{GS03}.
			
			\item[(iv)] The {\textup{first order Bernardi--Raugel element}} for $d\hspace{-0.15em}\in \hspace{-0.15em} \{2,3\}$, \textit{i.e.}, 
			$V_h\hspace{-0.15em}=\hspace{-0.15em}\textcolor{black}{(\mathbb{P}^1_c(\mathcal{T}_h)\bigoplus\mathbb{B}_{\tiny \mathscr{F}}(\mathcal{T}_h))^d}$,~where~$\mathbb{B}_{\tiny \mathscr{F}}(\mathcal{T}_h)$ is the facet bubble function space, and $Q_h=\mathbb{P}^0(\mathcal{T}_h)$, introduced in \cite[Sec. II]{BR85}. For $d=2$~is~often  referred to as \textup{reduced $\mathbb{P}^2$-$\mathbb{P}^0$-element} or as \textup{2D SMALL element}; see,  \textit{e.g.},  \cite[Rem.\ 8.4.2]{BBF13} and \cite[Chap.\ II.2.1]{GR86}.  An operator $\Pi_h^V$ satisfying Assumption \ref{ass:proj-div} is given in
			\cite[Sec.\ II.]{BR85}.
			
			\item[(v)] The \hspace*{-0.15mm}{\textup{second \hspace*{-0.15mm}order \hspace*{-0.15mm}Bernardi--Raugel \hspace*{-0.15mm}element}} \hspace*{-0.15mm}for \hspace*{-0.15mm}$d\hspace*{-0.15em}=\hspace*{-0.15em}3$, \hspace*{-0.15mm}introduced \hspace*{-0.1mm}in \hspace*{-0.15mm}\cite[\hspace*{-0.5mm}Sec.\ \hspace*{-0.5mm}III]{BR85}; \hspace*{-0.1mm}see \hspace*{-0.15mm}also~\hspace*{-0.15mm}\mbox{\cite[\hspace*{-0.5mm}Ex.~\hspace*{-0.5mm}8.7.2]{BBF13}} \hspace*{-0.1mm}and \hspace*{-0.1mm}\cite[Chap.\ \hspace*{-0.1mm}II.2.3]{GR86}. \hspace*{-0.1mm}An \hspace*{-0.1mm}operator \hspace*{-0.1mm}$\Pi_h^V$ \hspace*{-0.1mm}satisfying \hspace*{-0.1mm}Assumption \hspace*{-0.1mm}\ref{ass:proj-div} \hspace*{-0.1mm}is \hspace*{-0.1mm}given~\hspace*{-0.1mm}in~\hspace*{-0.1mm}\mbox{\cite[Sec.~\hspace*{-0.1mm}III.3]{BR85}}.
		\end{description}
	\end{remark}
	
	\subsection{Discrete weak formulations}\vspace*{-0.5mm}
	
	\hspace{5mm}An important aspect in the numerical approximation of the steady $p(\cdot)$-Navier--Stokes equations \eqref{eq:p-navier-stokes} consists in the discretization of the $x$-dependent non-linearity \eqref{def:A}. Here, it is convenient~to~use~a~simple one-point quadrature rule. More precisely, if $p\in C^0(\overline{\Omega})$ with $p^->1$, then we define the element-wise constant power-law index $p_h\in \mathbb{P}^0(\mathcal{T}_h)$, the generalized $N$-function $\varphi_h\colon\Omega\times\mathbb{R}_{\ge 0}\to \mathbb{R}_{\ge 0}$, and the non-linear operators $\bfS_h,\bfF_h,\bfF_h^*\colon\Omega\times\mathbb{R}^{d\times d}\to  \mathbb{R}^{d\times d}_{\textup{sym}}$ for every $\bfA\in \mathbb{R}^{d\times d}$, $T\in  \mathcal{T}_h$,  and a.e.\ $x\in T$  by
	\begin{align}
		\begin{aligned}
			p_h(x)&\coloneqq p(\xi_T)\,,\qquad
			&&\hspace*{-1.75mm}\varphi_h(x,\vert \bfA\vert )\coloneqq \varphi(\xi_T,\vert \bfA\vert)\,,\\
			\bfS_h(x,\bfA)&\coloneqq \bfS(\xi_T,\bfA)\,,\qquad
			&& \bfF_h(x,\bfA)\coloneqq \bfF(\xi_T,\bfA)\,,\qquad
			\bfF_h^*(x,\bfA)\coloneqq \bfF^*(\xi_T,\bfA)\,,
		\end{aligned}\label{def:A_h}
	\end{align}
	where $\xi_T\in T$ is some arbitrary quadrature point, \textit{e.g.}, the barycenter of the element $T$.
	
	\begin{remark}\label{rem:uniform}
		Since the hidden constants in all equivalences in Section \ref{sec:basic} depend only on $p^-,p^+>1$ and $\delta\ge 0$ and since $p^-\leq p_h^-\le p^+_h\leq p^+$ a.e.\ in $\Omega$ for all $h>0$,     
		the same equivalences apply to the discretizations \eqref{def:A_h} with the hidden constants depending~only~on~$p^-,p^+\in (1,\infty)$ and $\delta\ge 0$. 
	\end{remark}
	
	Given the definitions \eqref{def:A_h}, we introduce the discrete counterparts to Problem (\hyperlink{Q}{Q}) and Problem~(\hyperlink{P}{P}), respectively:
	
	\textit{Problem (Q$_h$).}\hypertarget{Qh}{} For given $\bff\in L^{p'(\cdot)}(\Omega;\mathbb{R}^d)$, find $(\bfv_h,q_h)^\top\in \Vo_h\times \Qo_h$ such that for every $(\bfz_h,z_h)^\top\in $\linebreak\hspace*{4.5mm} $\Vo_h\times Q_h $, it holds that\vspace*{-0.5mm}
	\begin{align*}
		(\bfS_h(\cdot,\bfD\bfv_h),\bfD\bfz_h)_{\Omega}+\tfrac{1}{2}(\bfz_h\otimes\bfv_h,\nabla\bfv_h)_{\Omega}-\tfrac{1}{2}(\bfv_h\otimes\bfv_h,\nabla\bfz_h)_{\Omega}-(q_h,\divo\bfz_h)_{\Omega}&=(\bff,\bfz_h)_{\Omega}\,,\\
		(\divo\bfv_h,z_h)_{\Omega}&=0\,.
	\end{align*}
	Equivalently, one can reformulate Problem (\hyperlink{Qh}{Q$_h$}) \textit{``hiding''} the discrete pressure.
	
	\textit{Problem (P$_h$).}\hypertarget{Ph}{} For given $\bff\in L^{p'(\cdot)}(\Omega;\mathbb{R}^d)$, find $\bfv_h\in \textcolor{black}{\Vo_{h,0}}$ such that for every $\bfz_h\in \textcolor{black}{\Vo_{h,0}}$,~it~holds~that
	\begin{align*}
		(\bfS_h(\cdot,\bfD\bfv_h),\bfD\bfz_h)_{\Omega}+\tfrac{1}{2}(\bfz_h\otimes\bfv_h,\nabla\bfv_h)_{\Omega}-\tfrac{1}{2}(\bfv_h\otimes\bfv_h,\nabla\bfz_h)_{\Omega}=(\bff,\bfz_h)_{\Omega}\,,
	\end{align*}
	\hspace*{4.5mm} where $\textcolor{black}{\Vo_{h,0}}\coloneqq \{\bfz_h\in \Vo_h\mid  (\divo\bfz_h,z_h)_{\Omega}=0\text{ for all }z_h\in Q_h\}$.

	The well-posedness of Problem (\hyperlink{Qh}{Q$_h$}) and Problem (\hyperlink{Ph}{P$_h$}) can be established as in the continuous case in two steps:
	first, by using pseudo-monotone operator theory, the well-posedness of Problem~(\hyperlink{Ph}{P$_h$})~is~shown; then, given the well-posedness of Problem~(\hyperlink{Ph}{P$_h$}),
	the well-posedness of (\hyperlink{Qh}{Q$_h$}) follows using the following (variable exponent) discrete inf-sup stability result:

	\begin{lemma}\label{lem:ismd}
		Let $p\in \mathcal{P}^{\log}(\Omega)$ with $p^->1$ and let Assumption \ref{ass:proj-div} be satisfied. Then,~for~every~$z_h\in \Qo_h$, it holds that\vspace*{-0.5mm}
		\begin{align*}
			\|z_h\|_{p'_h(\cdot),\Omega}\lesssim \sup_{\bfz_h\in \Vo_h\,:\,\smash{\|\nabla\bfz_h\|_{p_h(\cdot),\Omega}\leq 1}}{(z_h,\divo\bfz_h)_{\Omega}}\,,
		\end{align*}
		where $\lesssim$ depends only on $d$, $k$, $\textcolor{black}{\ell}$, $p^-$, $p^+$, $c_{\log}(p)$, $\omega_0$, and $\Omega$.
	\end{lemma}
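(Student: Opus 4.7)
The plan is to run a Fortin-type argument: transfer the continuous inf-sup stability of Lemma \ref{lem:inf_sup} to the discrete setting via the divergence-preserving projector $\Pi_h^V$ from Assumption \ref{ass:proj-div}, and absorb the mismatch between the exponents $p(\cdot)$ and $p_h(\cdot)$ using the log-Hölder machinery (in particular Lemma \ref{lem:local_change} and the associated norm equivalence $\|\cdot\|_{p(\cdot),\Omega}\sim \|\cdot\|_{p_h(\cdot),\Omega}$).

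More concretely, fix $z_h\in \Qo_h$. Since $z_h\in \Qo\subseteq L^{p'(\cdot)}_0(\Omega)$, Lemma \ref{lem:inf_sup} yields a function $\bfz\in \Vo$ with $\|\nabla \bfz\|_{p(\cdot),\Omega}\leq 1$ and
\begin{align*}
\|z_h\|_{p'(\cdot),\Omega}\;\lesssim\; (z_h,\divo\bfz)_\Omega.
\end{align*}
The norm equivalence implied by Lemma \ref{lem:local_change} gives $\|z_h\|_{p_h'(\cdot),\Omega}\lesssim \|z_h\|_{p'(\cdot),\Omega}$, so it suffices to bound $(z_h,\divo\bfz)_\Omega$ by the discrete supremum. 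Apply $\Pi_h^V$ to $\bfz$: by Assumption \ref{ass:proj-div}(ii), $\Pi_h^V\bfz\in \Vo_h$, and by the $Q_h^*$-divergence-preservation \eqref{eq:div_preserving} together with $z_h\in Q_h$,
\begin{align*}
(z_h,\divo\bfz)_\Omega = (z_h,\divo\Pi_h^V\bfz)_\Omega.
\end{align*}
It remains to control $\|\nabla \Pi_h^V\bfz\|_{p_h(\cdot),\Omega}$ in terms of $\|\nabla \bfz\|_{p(\cdot),\Omega}$, after which one divides by this quantity and takes the supremum.

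The main work is thus the $W^{1,p_h(\cdot)}$-stability of $\Pi_h^V$, and this is where I expect the only real difficulty. The estimate \eqref{eq:Pidivcont} is only local $L^1$-$W^{1,1}$-stability, so to upgrade to $L^{p_h(\cdot)}$ I plan the standard chain of inequalities: using that $\nabla \Pi_h^V\bfz|_T$ is polynomial, the inverse estimate $\langle|\nabla\Pi_h^V\bfz|\rangle_T\lesssim h_T^{-1}\langle|\Pi_h^V\bfz - c_T|\rangle_{T}$ for suitable constants $c_T$, combined with \eqref{eq:Pidivcont} applied to $\bfz - c_T$ and a local Poincaré inequality, yields $\langle|\nabla \Pi_h^V\bfz|\rangle_T\lesssim \langle|\nabla\bfz|\rangle_{\omega_T}$. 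Raising to the power $p_h|_T$ (which is constant on $T$), summing over $T\in \mathcal{T}_h$, and controlling each local average by Jensen's inequality together with Lemma \ref{lem:local_change} (to swap $p_h|_T$ with $p(\cdot)$ on $\omega_T$) gives
\begin{align*}
\rho_{p_h(\cdot),\Omega}(\nabla \Pi_h^V\bfz)\;\lesssim\; \rho_{p(\cdot),\Omega}(\nabla \bfz)+1,
\end{align*}
which translates by a scaling argument to $\|\nabla \Pi_h^V\bfz\|_{p_h(\cdot),\Omega}\lesssim \|\nabla\bfz\|_{p(\cdot),\Omega}$. The log-Hölder swap is the delicate point, but the hypotheses $p\in\mathcal{P}^{\log}(\Omega)$ and the uniform shape regularity of $\{\mathcal{T}_h\}_{h>0}$ are precisely what enable Lemma \ref{lem:local_change} on each patch $\omega_T$. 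Combining this with the previous display gives $(z_h,\divo\Pi_h^V\bfz)_\Omega$ bounded by a constant times $\|z_h\|_{p_h'(\cdot),\Omega}\cdot \|\nabla\Pi_h^V\bfz\|_{p_h(\cdot),\Omega}/\|\nabla\bfz\|_{p(\cdot),\Omega}$, completing the argument after normalization by $\|\nabla \Pi_h^V\bfz\|_{p_h(\cdot),\Omega}$.
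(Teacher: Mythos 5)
Your proposal follows the same Fortin-type strategy that the paper invokes by reference to [Lem.\ 5.2]{BBD15} (with Lemma~\ref{lem:norm_equiv} substituted for [Lem.\ 3.1]{BBD15}): start from the continuous inf-sup stability (Lemma~\ref{lem:inf_sup}), pass to discrete test functions via the divergence-preserving projector $\Pi_h^V$ of Assumption~\ref{ass:proj-div}, and absorb the mismatch between $p(\cdot)$ and $p_h(\cdot)$ by the discrete norm equivalence for piecewise polynomials. The skeleton and the ingredients are correct.

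The one place to tighten is your re-derivation of the $W^{1,p_h(\cdot)}$-stability of $\Pi_h^V$. The step ``controlling each local average by Jensen's inequality together with Lemma~\ref{lem:local_change}'' is not a bare application of Jensen, since Jensen does not apply to the $x$-dependent integrand $t\mapsto t^{p(x)}$. The argument that actually closes this: first use $\|\nabla\bfz\|_{1,\omega_T}\lesssim\|\nabla\bfz\|_{p(\cdot),\Omega}\le 1$ to place $1+\langle|\nabla\bfz|\rangle_{\omega_T}$ in the window $[1,|T|^{-m}]$; then invoke Lemma~\ref{lem:local_change} to drop $p_T$ down to the patchwise minimum $p^-_{\omega_T}$; apply Jensen with this \emph{constant} exponent; and finally pass back to $p(\cdot)$ pointwise using $p^-_{\omega_T}\le p(\cdot)$ on $\omega_T$ (absorbing the cut-off into the ``$+1$''). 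This is exactly the mechanism encoded in the key estimate (Lemma~\ref{lem:key-estimate}) and the modular stability Lemma~\ref{lem:stab_Pi_div_V}, so rather than re-proving it you could cite Lemma~\ref{lem:stab_Pi_div_V} with $a=\delta=0$ together with Lemma~\ref{lem:norm_equiv} applied to the piecewise-polynomial field $\nabla\Pi_h^V\bfz$. With either route the proof is complete, and the final normalization by $\|\nabla\Pi_h^V\bfz\|_{p_h(\cdot),\Omega}$ that you perform is the right way to land in the discrete supremum.
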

	
	The proof of Lemma \ref{lem:ismd} is a consequence of the continuous inf-sup stability result (\textit{cf}.\  Lemma \ref{lem:inf_sup}) in conjunction with 
	the following discrete norm equivalence.\enlargethispage{2.5mm}
	
	\begin{lemma}\label{lem:norm_equiv}
		Let $p\in \mathcal{P}^{\log}(\Omega)$ with $p^->1$ and $n\in \mathbb{N}\cup\{0\}$. Then, 
		for every $z_h\in \mathbb{P}^n(\mathcal{T}_h)$,~it~holds~that $\|z_h\|_{p_h(\cdot),\Omega}\sim  \|z_h\|_{p(\cdot),\Omega}$, 
		where $\sim$ depends on only on $d$, $n$, $p^-$, $p^+$, $c_{\log}(p)$, and $\omega_0$.
	\end{lemma}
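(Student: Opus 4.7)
The plan is to establish the one-sided modular estimate $\rho_{p_h(\cdot),\Omega}(z_h) \lesssim |\Omega| + \rho_{p(\cdot),\Omega}(z_h)$, which together with the homogeneity of the Luxembourg norm yields $\|z_h\|_{p_h(\cdot),\Omega} \lesssim \|z_h\|_{p(\cdot),\Omega}$: after normalizing to $\|z_h\|_{p(\cdot),\Omega}\leq 1$ so that the modular is $\leq 1$, I rescale by $\lambda = C^{1/p^-}\geq 1$ (exploiting $\lambda^{-p_h(x)}\leq \lambda^{-p^-}$) to push the modular with respect to $p_h(\cdot)$ below $1$. The reverse inequality follows by running exactly the same argument with the roles of $p$ and $p_h$ swapped, since the local oscillation bound $|p_h(x)-p(x)| = |p(\xi_T)-p(x)| \leq c_{\log}(p)/\log(e+1/h_T)$ for $x\in T$ is symmetric in the two exponents.

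Assume WLOG $h\leq 1$, fix $T\in \mathcal{T}_h$, and split $T$ into $\{|z_h|\leq 1\}\cup\{|z_h|>1\}$. On the first set, $|z_h|^{p(\xi_T)}\leq 1$ contributes at most $|T|$ to $\rho_{p_h(\cdot),T}(z_h)$. On the second set, the identity $|z_h|^{p(\xi_T)} = |z_h|^{p(x)}\,|z_h|^{p(\xi_T)-p(x)}$ reduces matters to bounding the factor $|z_h(x)|^{|p(\xi_T)-p(x)|}$ uniformly in $x$ and $T$. Since $z_h|_T \in \mathbb{P}^n(T)$ and the mesh is shape-regular, the standard polynomial inverse estimate gives
\[
\|z_h\|_{L^\infty(T)} \lesssim |T|^{-1/p^-}\|z_h\|_{L^{p^-}(T)} \lesssim |T|^{-1/p^-}\bigl(|T|+\rho_{p(\cdot),T}(z_h)\bigr)^{1/p^-} \lesssim |T|^{-1/p^-},
\]
where I used $|z_h|^{p^-}\leq 1+|z_h|^{p(\cdot)}$ pointwise and $\rho_{p(\cdot),\Omega}(z_h)\leq 1$. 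For any fixed $m>1/p^-$, this forces $|z_h(x)|\leq |T|^{-m}$ once $|T|$ is small enough to absorb the absolute constant; the finitely many ``large'' elements with $|T|$ bounded away from $0$ (which can occur only in a bounded range of $h$) are handled directly, since $|z_h|^{|p(\xi_T)-p(x)|}$ is then trivially controlled by the uniform bound on $p^+-p^-$.

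Now the key estimate Lemma \ref{lem:local_change}, applied to a ball $Q\supset T$ of radius $\sim h_T\leq 1$ with $a=0$ and $t=|z_h(x)|\in[1,|T|^{-m}]$ (noting $1\geq |Q|^m$ since $|Q|\leq 1$), yields $|z_h(x)|^{|p(\xi_T)-p(x)|}\leq c$ uniformly in $T$ and $x$, where $c$ depends only on $d$, $n$, $p^\pm$, $c_{\log}(p)$, and $\omega_0$. Summing the element-wise bounds over $\mathcal{T}_h$ gives $\rho_{p_h(\cdot),\Omega}(z_h)\leq c\,(|\Omega|+\rho_{p(\cdot),\Omega}(z_h))$, completing the one-sided estimate, and the swap of $p\leftrightarrow p_h$ closes the equivalence. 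I expect the main obstacle to be selecting the exponent $m$ uniformly in $h$ so that the inverse estimate output $|z_h(x)|\lesssim |T|^{-1/p^-}$ lies in the admissible range of Lemma \ref{lem:local_change}; once this is arranged, every other ingredient (normalization, scaling, and symmetry in $p\leftrightarrow p_h$) is routine.
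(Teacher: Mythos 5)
Your proof is essentially correct and follows the same strategy as the paper's own argument (adapted from \cite[Lem.\ 3.1]{BBD15}): the core steps in both are the element-wise $L^\infty$-bound $\|z_h\|_{\infty,T}\lesssim|T|^{-1/p^-}$ for discrete polynomials, the application of Lemma~\ref{lem:local_change} to control $t^{p(x)-p(\xi_T)}$, and summation over $\mathcal{T}_h$. The differences are purely cosmetic: you establish a modular estimate and rescale, split $T$ into $\{|z_h|\leq 1\}\cup\{|z_h|>1\}$, and invoke a polynomial inverse estimate, whereas the paper works through the norm--modular unit-ball property, applies Lemma~\ref{lem:local_change} at $t=1+|z_h|$ to sidestep the case split, and uses the discrete local norm equivalence combined with Jensen's inequality; the reverse direction is handled in the paper by replacing $p(\xi_T)$ with $p_T^-\coloneqq\min_{x\in T}p(x)$, which is the concrete realization of your remark that the argument is symmetric under $p\leftrightarrow p_h$.
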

	
	\begin{proof}
		The proof is analogous to \cite[Lem.\  3.1]{BBD15}, where $\xi_T \coloneqq \textrm{arg\,min}_{x\in T}{p(x)}$ for all $T\in \mathcal{T}_h$.
		
		First, \hspace*{-0.1mm}assume \hspace*{-0.1mm}that \hspace*{-0.1mm}$\|z_h\|_{p_h(\cdot),\Omega}\leq 1$, \hspace*{-0.1mm}which, \hspace*{-0.1mm}by \hspace*{-0.1mm}the \hspace*{-0.1mm}norm-modular \hspace*{-0.1mm}unit \hspace*{-0.1mm}ball \hspace*{-0.1mm}property~\hspace*{-0.1mm}(\textit{cf}.~\hspace*{-0.1mm}\mbox{\cite[\hspace*{-0.1mm}Lem.~\hspace*{-0.1mm}3.2.4]{dhhr}}), implies that $\rho_{p_h(\cdot),\Omega}(z_h)\leq 1$. Then, using  discrete local norm equivalences (\textit{cf}.\  \cite[Lem.\  12.1]{EG21}) and  Jensen's inequality, for every $T\in \mathcal{T}_h$, 
		due to $\vert T\vert^{-1/p(\xi_T)}\leq \vert T\vert^{-1/p^-}$ (since $h_T\leq 1$), it holds that
		\begin{align*}
			\smash{\|z_h\|_{\infty,T}\lesssim \langle\vert z_h\vert\rangle_{T}\leq \langle\vert z_h\vert^{p(\xi_T )}\rangle_{T}^{1/p(\xi_T )}
				=\vert T\vert ^{-1/p(\xi_T )}\rho_{p_h(\cdot),T}(z_h)^{1/p(\xi_T )}
				\leq \vert T\vert ^{-1/p(\xi_T )}\leq \vert T\vert ^{-1/p^-}\,,}
		\end{align*}
		where $\lesssim$ depends only on $n$ and $\omega_0$.
		By Lemma \ref{lem:local_change}, for every $T\in \mathcal{T}_h$, $x\in T$,~and~${t\in [\vert T\vert^{1/p^-}, \vert T\vert^{-1/p^-}]}$, it holds that $t^{p(x)-p(\xi_T )}\leq c$, where
		$c>0$ depends only on $d$, $n$, $p^-$, $p^+$, $c_{\log}(p)$, and $\omega_0$,~so~that choosing $t=1+\vert z_h\vert $, we find that 
		\begin{align*}
			\smash{\rho_{p(\cdot),\Omega}(z_h)\leq \rho_{p(\cdot),\Omega}(1+\vert z_h\vert )\lesssim  \rho_{p_h(\cdot),\Omega}(1+\vert z_h\vert )\lesssim 1\,,}
		\end{align*}
		which implies that $\|z_h\|_{p(\cdot),\Omega}\lesssim 1$.
		
		Second, \hspace*{-0.1mm}assume \hspace*{-0.1mm}that \hspace*{-0.1mm}$\|z_h\|_{p(\cdot),\Omega}\hspace*{-0.1em}\leq\hspace*{-0.1em} 1$, \hspace*{-0.1mm}which, \hspace*{-0.1mm}by \hspace*{-0.1mm}the \hspace*{-0.1mm}norm-modular \hspace*{-0.1mm}unit \hspace*{-0.1mm}ball \hspace*{-0.1mm}property \hspace*{-0.1mm}(\textit{cf}.~\hspace*{-0.1mm}\mbox{\cite[\hspace*{-0.5mm}Lem.~\hspace*{-0.5mm}3.24.]{dhhr}}), implies that $\rho_{p(\cdot),\Omega}(z_h)\leq 1$. 
		As before, 
		for every $T\in \mathcal{T}_h$, but now setting $p_T^-\coloneqq\textrm{min}_{x\in T}{p(x)}$,~it~holds~that
		\begin{align*}
			\smash{\|z_h\|_{\infty,T}\lesssim \langle\vert z_h\vert\rangle_{T}\leq \langle\vert z_h\vert^{p_T^-}\rangle_{T}^{1/p_T^-}
				\lesssim \vert T\vert ^{-1/p_T^-}\rho_{p(\cdot),\Omega}(1+\vert z_h\vert )^{1/p_T^-}
				\lesssim  \vert T\vert ^{-1/p_T^-}\leq \vert T\vert ^{-1/p^-}\,.}
		\end{align*}
		By Lemma \ref{lem:local_change}, for every $T\in \mathcal{T}_h$, $x\in  T$, and $t\in [\vert T\vert^{1/p^-}, \vert T\vert^{-1/p^-}]$, it~holds~that~${t^{p(\xi_T)-p(x )}\leq c}$, where
		$c>0$ depends only on $d$, $n$, $p^-$, $p^+$, $c_{\log}(p)$, and $\omega_0$,~so~that~choosing $t=1+\vert z_h\vert $, we find that 
		\begin{align*}
			\smash{ \rho_{p_h(\cdot),\Omega}(z_h)\leq \rho_{p_h(\cdot),\Omega}(1+\vert z_h\vert )\lesssim \rho_{p(\cdot),\Omega}(1+\vert z_h\vert )\lesssim 1\,,}
		\end{align*}
		which implies that $\|z_h\|_{p_h(\cdot),\Omega}\lesssim 1$.\vspace*{-0.5mm}
	\end{proof}
	
	\begin{proof}[Proof (of Lemma \ref{lem:ismd}).] We proceed as in the proof of
		\cite[Lem.\  5.2]{BBD15},  by using Lemma \ref{lem:norm_equiv} instead of \cite[Lem.\  3.1]{BBD15}.\vspace*{-0.5mm}
	\end{proof}
	
	In addition, testing Problem (\hyperlink{Qh}{Q$_h$}) with $(\bfz_h,z_h)^\top\coloneqq (\bfv_h,q_h)^\top\in \Vo_h\times \Qo_h$, resorting to the growth conditions of the extra-stress tensor (\textit{cf}.\  Section \ref{sec:basic}) and the discrete inf-sup stability~result~(\textit{cf}.~Lemma~\ref{lem:ismd}), 
	one readily finds a constant $c>0$, depending only on $d$, $k$, $\textcolor{black}{\ell}$, $p^-$, $p^+$, $c_{\log}(p)$, $\omega_0$, and $\Omega$, such that\vspace*{-0.5mm}\enlargethispage{10mm}
	\begin{align}
		\smash{\|q_h\|_{p_h'(\cdot),\Omega}+ \|\bfD\bfv_h\|_{p_h(\cdot),\Omega}\leq c\,.}\label{eq:stability}
	\end{align}
	
	\subsection{Stability estimates for the projectors}\vspace*{-0.5mm}
	
	\hspace{5mm}The following stability result for $\Pi_h^Q$ (\textit{cf}.\  Assumption \ref{ass:PiY}) in terms of  modulars with respect to  conjugate shifted $N$-functions applies.\vspace*{-0.5mm}

	\begin{lemma}\label{lem:stab_Pi_Q}
		Let $p\in \mathcal{P}^{\log}(\Omega)$ and $c_0>0$. Then, for every $m\in \mathbb{N}$, 
		$T\in \mathcal{T}_h$, $z\in \smash{L^{p'(\cdot)}(\omega_T)}$, and $a\ge 0$ with $a+\langle\vert z\vert\rangle_{\omega_T}\leq c_0\,\vert T\vert ^{-m}$, it holds that\vspace*{-1mm}
		\begin{align}\label{lem:stab_Pi_Q.local}
			\smash{\rho_{(\varphi_a)^*,T}(\Pi_h^Q z)\lesssim h_T^{m}+\rho_{(\varphi_a)^*,\omega_T}(z)\,,}
		\end{align}
		where $\lesssim$ depends only on $d$, $\textcolor{black}{\ell}$, $m$, $p^+$, $p^-$, $c_{\log}(p)$,  $\omega_0$, and $c_0$.
		In addition, for every $m\in \mathbb{N}$, $z\in \smash{L^{p'(\cdot)}(\Omega)}$, and $a\ge 0$ with $a+\|z\|_{1,\Omega}\leq c_0$,~it~holds that\vspace*{-0.5mm}
		\begin{align}\label{lem:stab_Pi_Q.global}
			\smash{ \rho_{(\varphi_a)^*,\Omega}(\Pi_h^Q z)\lesssim h^m
				+\rho_{(\varphi_a)^*,\Omega}(z)\,,}
		\end{align}
		where  $\lesssim$ depends only on $d$, $\textcolor{black}{\ell}$, $m$, $p^+$, $p^-$, $c_{\log}(p)$,  $\omega_0$, and $c_0$.
	\end{lemma}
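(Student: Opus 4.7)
The plan is to first establish the local estimate \eqref{lem:stab_Pi_Q.local} by a pointwise-in-$T$ argument combining the stability of $\Pi_h^Q$ with the key estimate, and then derive the global estimate \eqref{lem:stab_Pi_Q.global} from it by summation exploiting the finite overlap of patches.

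For the local estimate, I would proceed in three steps. First, combine the local $L^1$-stability of $\Pi_h^Q$ from Assumption \ref{ass:PiY} with the inverse estimate on the finite-dimensional space $\mathbb{P}^l(T)$ (via scaling to a reference simplex) to obtain
\begin{align*}
\|\Pi_h^Q z\|_{\infty,T} \lesssim \langle |\Pi_h^Q z|\rangle_T \lesssim \langle|z|\rangle_{\omega_T}\,.
\end{align*}
Second, using the monotonicity of $(\varphi_a)^*(x,\cdot)$ and its uniform $\Delta_2$-condition (Remark \ref{rem:phi_a}), for a.e.\ $x\in T$, I would estimate
\begin{align*}
(\varphi_a)^*(x,|\Pi_h^Q z(x)|) \lesssim (\varphi_a)^*(x,\langle|z|\rangle_{\omega_T})\,.
\end{align*}
Third, I would integrate over $T$ and apply the key estimate (Lemma \ref{lem:key-estimate}) on a ball $B\supseteq \omega_T$ with $|B|\sim |T|$. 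The hypothesis $a+\langle|z|\rangle_{\omega_T}\leq c_0|T|^{-m}$ together with shape regularity can be upgraded to $a+\langle|z|\rangle_{B}\leq |B|^{-\tilde m}$ for any sufficiently large $\tilde m$ (absorbing $c_0$ and the overlap constants into the exponent for $h\leq 1$), yielding
\begin{align*}
(\varphi_a)^*(x,\langle|z|\rangle_{\omega_T}) \lesssim \langle (\varphi_a)^*(\cdot,|z|)\rangle_{\omega_T} + |\omega_T|^{\tilde m}\,.
\end{align*}
Integration over $T$ gives a remainder of order $|T|\cdot |\omega_T|^{\tilde m}\lesssim h_T^{d(\tilde m+1)}$, and choosing $\tilde m$ large enough that $d(\tilde m+1)\geq m$ produces the bound $h_T^m+\rho_{(\varphi_a)^*,\omega_T}(z)$ required by \eqref{lem:stab_Pi_Q.local}.

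For the global estimate, observe that the hypothesis $a+\|z\|_{1,\Omega}\leq c_0$ together with $\langle|z|\rangle_{\omega_T}\leq \|z\|_{1,\Omega}/|\omega_T|$ and shape regularity yields $a+\langle|z|\rangle_{\omega_T}\lesssim |T|^{-1}\leq c_0|T|^{-m'}$ for any $m'\geq 1$ as soon as $h\leq 1$. Applying the local estimate on each $T\in\mathcal{T}_h$ with parameter $m'$, summing, bounding $\sum_T \rho_{(\varphi_a)^*,\omega_T}(z)\lesssim \rho_{(\varphi_a)^*,\Omega}(z)$ via the finite overlap of patches, and estimating $\sum_T h_T^{m'}\lesssim h^{m'-d}|\Omega|$, I obtain $\rho_{(\varphi_a)^*,\Omega}(\Pi_h^Q z)\lesssim h^{m'-d}+\rho_{(\varphi_a)^*,\Omega}(z)$. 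Choosing $m'=m+d$ delivers \eqref{lem:stab_Pi_Q.global}.

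The main technical obstacle is the bookkeeping of the various exponents: the $m$ in the lemma's hypothesis, the (distinct) $\tilde m$ used inside the key estimate, and the further adjustment $m'=m+d$ needed to compensate for the $h_T^{-d}$ loss when summing the local remainders. Provided the correspondence $|T|\sim |\omega_T|\sim h_T^d$ and the $\Delta_2$ constants from Remark \ref{rem:phi_a} are applied consistently, all constants remain controlled by $d$, $l$, $m$, $p^-$, $p^+$, $c_{\log}(p)$, $\omega_0$, and $c_0$, as claimed.
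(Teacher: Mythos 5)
Your proof follows essentially the same route as the paper: for the local estimate \eqref{lem:stab_Pi_Q.local}, you combine a discrete inverse estimate and the local $L^1$-stability of $\Pi_h^Q$ to bound $\|\Pi_h^Q z\|_{\infty,T}$ by $\langle|z|\rangle_{\omega_T}$, transfer this to the modular via monotonicity and $\Delta_2$, and invoke the key estimate on an enclosing ball; for the global estimate \eqref{lem:stab_Pi_Q.global}, you apply the local one with the enlarged exponent $m+d$ and sum using finite patch overlap. This is exactly what the paper does (the local part via \cite[Lem.~3.4]{BDS15} with Lemma~\ref{lem:key-estimate} in place of \cite[Thm.~2.4]{BDS15}, and the global part by applying the local estimate with $m\mapsto d+m$ and summing).

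One small point worth tightening: when you upgrade $a+\langle|z|\rangle_{\omega_T}\leq c_0|T|^{-m}$ to $a+\langle|z|\rangle_B\leq|B|^{-\tilde m}$ by choosing $\tilde m$ large, the inequality $c_0|B|^{-m}\leq|B|^{-\tilde m}$ forces $|B|\leq c_0^{-1/(\tilde m-m)}$, so for elements whose enclosing ball has measure close to $1$ no choice of $\tilde m$ absorbs a constant $c_0>1$. For those finitely many scales, $a$, $\langle|z|\rangle_{\omega_T}$, and $h_T^{-m}$ are all uniformly bounded and \eqref{lem:stab_Pi_Q.local} holds trivially with a constant depending on $c_0$ and $m$; adding this dichotomy (small versus large $|B|$) makes the step rigorous and keeps the final constant depending only on the claimed quantities.
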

	
	\begin{proof}
		\textit{ad \eqref{lem:stab_Pi_Q.local}.} Follows analogously to the proof of \cite[Lem.\  3.4]{BDS15}, by using Lemma \ref{lem:key-estimate} instead of \cite[Thm.\ 2.4]{BDS15}.
		
		\textit{ad \eqref{lem:stab_Pi_Q.global}.} If $a+\|z\|_{1,\Omega}\hspace*{-0.1em}\leq\hspace*{-0.1em} c_0$, then, using that, owing to  $h_T\leq 1$, it holds that $\vert T\vert ^{-1}\leq \vert T\vert^{-(d+m)}$, we can find a constant $c>0$, depending on  $\omega_0$~and~$c_0$, such that for every $T\in \mathcal{T}_h$, we have that\vspace*{-0.5mm}
		\begin{align}\label{lem:stab_Pi_Q.0}
			a+\langle\vert z\vert\rangle_{\omega_T}\leq c\,\vert T\vert^{-1}\leq c\,\vert T\vert ^{-(d+m)}\,.
		\end{align}
		Due to \eqref{lem:stab_Pi_Q.0}, resorting to \eqref{lem:stab_Pi_Q.local}, there exists a constant $c>0$, depending on $d$, $\textcolor{black}{\ell}$, $m$, $p^+$, $p^-$, $c_{\log}(p)$,~$\omega_0$, and $c_0$, such that for every $T\in \mathcal{T}_h$, it holds that\vspace*{-0.5mm}
		\begin{align}\label{lem:stab_Pi_Q.1}
			\rho_{(\varphi_a)^*,T}(\Pi_h^Q z)\leq c\,(h_T^m\,\vert T\vert +\rho_{(\varphi_a)^*,\omega_T}(z))\,.
		\end{align}
		Then, summation of \eqref{lem:stab_Pi_Q.1} with respect to $T\in \mathcal{T}_h$ yields 
		the claimed global stability~estimate~\eqref{lem:stab_Pi_Q.global}.
	\end{proof}
	
	As a consequence of Lemma \ref{lem:stab_Pi_Q}, we derive the following $Q$-stability result for $\Pi_h^Q$ (\textit{cf}.~Assumption~\ref{ass:PiY}) in terms of the Luxembourg norm, \textcolor{black}{which has already been proved in \cite[Prop.\ 3.9]{KPS18} for the case of Hölder continuous exponent.}\enlargethispage{5mm}
	
	\begin{lemma}\label{lem:stab_Pi_Q_norm}
		Let $p\in \mathcal{P}^{\log}(\Omega)$. Then, for every $T\in \mathcal{T}_h$ and  $z\in \smash{L^{p'(\cdot)}(\omega_T)}$, it holds that
		\begin{align}\label{lem:stab_Pi_Q_norm.1}
			\smash{\|\Pi_h^Qz\|_{p'(\cdot),T}\lesssim \|z\|_{p'(\cdot),\omega_T} \,,}
		\end{align}
		where $\lesssim $ depends only on $d$, $\textcolor{black}{\ell}$, $p^-$, $p^+$, $c_{\log}(p)$,~and~$\omega_0$. 
		In addition, for every $z\in\smash{ L^{p'(\cdot)}(\Omega)}$,~it~holds~that
		\begin{align}\label{lem:stab_Pi_Q_norm.2}
			\smash{\|\Pi_h^Qz \|_{p'(\cdot),\Omega}\lesssim \|z\|_{p'(\cdot),\Omega} \,.}
		\end{align}
	\end{lemma}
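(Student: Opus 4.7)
The global bound \eqref{lem:stab_Pi_Q_norm.2} follows from the local bound \eqref{lem:stab_Pi_Q_norm.1} by summation over $T\in\mathcal{T}_h$ and the bounded overlap of the patches $\{\omega_T\}_{T\in\mathcal{T}_h}$, so I would concentrate on \eqref{lem:stab_Pi_Q_norm.1}. The strategy is the standard transfer from a modular estimate to a norm estimate: combine the modular stability of Lemma~\ref{lem:stab_Pi_Q} (applied with $a=0$) with the norm--modular unit ball property of $L^{p'(\cdot)}$ and the equivalence $\varphi^*(\cdot,t)\sim t^{p'(\cdot)}$ for large $t$ coming from \eqref{rem:phi_a.2}.

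\textbf{Step 1 (scaling reduction).} By linearity of $\Pi_h^Q$ and homogeneity of the Luxembourg norm, I replace $z$ by $z/\|z\|_{p'(\cdot),\omega_T}$ and reduce to the case $\|z\|_{p'(\cdot),\omega_T}\leq 1$. The norm--modular unit ball property then gives $\rho_{p'(\cdot),\omega_T}(z)\leq 1$, and the generalized Hölder inequality \eqref{eq:gen_hoelder} applied with the constant function $1\in L^{p(\cdot)}(\omega_T)$ yields $\langle|z|\rangle_{\omega_T}\lesssim|T|^{-1}$ (using $|\omega_T|\sim|T|$ and $h_T\lesssim 1$ so that $\|1\|_{p(\cdot),\omega_T}\lesssim 1$). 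Hence the smallness hypothesis of Lemma~\ref{lem:stab_Pi_Q} is satisfied with $a=0$, $m=1$, and a universal $c_0$.

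\textbf{Step 2 (modular transfer).} I apply \eqref{lem:stab_Pi_Q.local} with $a=0$, $m=1$ to obtain
\begin{align*}
\rho_{\varphi^*,T}(\Pi_h^Q z)\lesssim h_T+\rho_{\varphi^*,\omega_T}(z).
\end{align*}
From \eqref{rem:phi_a.2} one has $\varphi^*(x,t)\lesssim 1$ for $t\leq 1$ and $\varphi^*(x,t)\sim t^{p'(x)}$ for $t\geq 1$, hence $\rho_{\varphi^*,\omega_T}(z)\lesssim|\omega_T|+\rho_{p'(\cdot),\omega_T}(z)\lesssim 1$, and therefore $\rho_{\varphi^*,T}(\Pi_h^Q z)\lesssim 1$. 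Using the reverse comparison $t^{p'(x)}\lesssim \varphi^*(x,t)+1$, which follows from the same asymptotics, I then obtain
\begin{align*}
\rho_{p'(\cdot),T}(\Pi_h^Q z)\lesssim \rho_{\varphi^*,T}(\Pi_h^Q z)+|T|\lesssim 1.
\end{align*}
A final appeal to the norm--modular unit ball property gives $\|\Pi_h^Q z\|_{p'(\cdot),T}\lesssim 1$, and undoing the scaling yields \eqref{lem:stab_Pi_Q_norm.1}.

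\textbf{Step 3 (global estimate) and main obstacle.} For \eqref{lem:stab_Pi_Q_norm.2}, the same reasoning carries over verbatim by scaling to $\|z\|_{p'(\cdot),\Omega}\leq 1$ and invoking the global modular bound \eqref{lem:stab_Pi_Q.global} in place of \eqref{lem:stab_Pi_Q.local}. The subtle point I expect to be the main obstacle is the clean control of the lower-order contributions from the region $\{|z|\leq 1\}$: the comparison $\varphi^*\leftrightarrow|\cdot|^{p'(\cdot)}$ is only asymptotic, so the $|T|$- and $|\omega_T|$-terms arising from the region where $\varphi^*$ and $|\cdot|^{p'(\cdot)}$ differ substantially (particularly when $\delta>0$) have to be absorbed uniformly in $T$ and independently of the scaling factor $\|z\|_{p'(\cdot),\omega_T}$; this is ultimately why the modular estimate of Lemma~\ref{lem:stab_Pi_Q} has the $h_T^m$ tolerance built in, which is exactly what makes the scaling argument go through.
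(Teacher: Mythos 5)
Your overall strategy (reduce by scaling, invoke the modular stability of Lemma~\ref{lem:stab_Pi_Q} with $a=0$ and $m=1$, then convert to a Luxembourg-norm bound via the unit-ball property) is the same as the paper's, and Step~1 is fine. The gap lies precisely where you flag the ``main obstacle,'' and it is a genuine one: by keeping the ambient $\delta\geq 0$ when applying \eqref{lem:stab_Pi_Q.local}, the resulting modular involves $\varphi^*(x,t)\sim(\delta^{p(x)-1}+t)^{p'(x)-2}t^2$, and your comparisons ``$\varphi^*(x,t)\lesssim 1$ for $t\leq 1$'' and ``$t^{p'(x)}\lesssim\varphi^*(x,t)+1$'' do \emph{not} hold with $\delta$-independent constants. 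For instance, if $p(x)<2$ and $\delta$ is large, then $\varphi^*(x,1)\sim(\delta^{p(x)-1}+1)^{p'(x)-2}\approx\delta^{2-p(x)}\to\infty$ as $\delta\to\infty$. Consequently your conversion from $\rho_{\varphi^*,T}\lesssim 1$ to $\rho_{p'(\cdot),T}\lesssim 1$ introduces a $\delta$-dependent constant, contradicting the lemma statement, which explicitly lists only $d,l,p^-,p^+,c_{\log}(p),\omega_0$. The paper's fix is a one-line change that dissolves the obstacle rather than absorbing it: apply Lemma~\ref{lem:stab_Pi_Q}\eqref{lem:stab_Pi_Q.local} with $a=\delta=0$ (permissible because the constants in that lemma are $\delta$-free). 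Then $\varphi^*(x,t)\sim t^{p'(x)}$ holds \emph{exactly}, up to constants depending only on $p^-,p^+$, so \eqref{lem:stab_Pi_Q.local} immediately yields $\rho_{p'(\cdot),T}(\Pi_h^Q(z/\lambda))\lesssim 1$ with no asymptotics and no lower-order $\delta$-terms to absorb.

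A secondary remark concerning \eqref{lem:stab_Pi_Q_norm.2}: you write that it ``follows from the local bound by summation over $T$ and bounded overlap of the patches,'' but Luxembourg norms do not sum over elements the way modulars do. The paper instead uses the norm-localization result \cite[Cor.~7.3.21]{dhhr} together with the patch--element comparability $\|1\|_{p'(\cdot),\omega_T}\lesssim\|1\|_{p'(\cdot),T}$. Your alternative route via the global modular bound \eqref{lem:stab_Pi_Q.global} would also work (after checking $a+\|z\|_{1,\Omega}\leq c_0$ via the embedding $L^{p'(\cdot)}\hookrightarrow L^1$), but it inherits the same $\delta$-issue unless you again set $\delta=0$.
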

	
	\begin{proof}\let\qed\relax
		\textit{ad \eqref{lem:stab_Pi_Q_norm.1}.}
		First, let $z\hspace*{-0.1em}\in\hspace*{-0.1em} L^{p'(\cdot)}(\omega_T)$ be arbitrary with $\|z\|_{p'(\cdot),\omega_T}\hspace*{-0.1em}\leq\hspace*{-0.1em} 1$ or equivalently~${\rho_{p'(\cdot),\omega_T}(z)\hspace*{-0.1em}\leq\hspace*{-0.1em} 1}$. Then, 
		let $\lambda\hspace*{-0.15em}>\hspace*{-0.15em}0$ be such that $\rho_{p'(\cdot),\omega_T}(z/\lambda)\hspace*{-0.15em}\leq\hspace*{-0.15em} 1$. As ${\langle \vert z/\lambda\vert \rangle_{\omega_T}\hspace*{-0.15em}\leq \hspace*{-0.15em}\vert \omega_T\vert^{-1}2^{p^+}\hspace*{-0.1em}(\vert \omega_T\vert \hspace*{-0.15em}+\hspace*{-0.15em}\rho_{p'(\cdot),\omega_T}( z/\lambda))\hspace*{-0.15em}\leq\hspace*{-0.15em} c_0\,\vert T\vert^{-1}}$,
		where \hspace*{-0.1mm}$c_0>0$ \hspace*{-0.1mm}depends \hspace*{-0.1mm}only \hspace*{-0.1mm}on \hspace*{-0.1mm}$p^+\hspace*{-0.1em}$ \hspace*{-0.1mm}and \hspace*{-0.1mm}$\omega_0$,
		\hspace*{-0.1mm}due \hspace*{-0.1mm}to \hspace*{-0.1mm}Lemma \hspace*{-0.1mm}\ref{lem:stab_Pi_Q}\eqref{lem:stab_Pi_Q.local} \hspace*{-0.1mm}(with \hspace*{-0.1mm}$a\hspace*{-0.1em}=\hspace*{-0.1em}\delta\hspace*{-0.1em}=\hspace*{-0.1em}0$ \hspace*{-0.1mm}and \hspace*{-0.1mm}${m\hspace*{-0.1em}=\hspace*{-0.1em}1}$),~\hspace*{-0.1mm}it~\hspace*{-0.1mm}holds~\hspace*{-0.1mm}that
		\begin{align}\label{lem:stab_Pi_Q_norm.3}
			\smash{\rho_{p'(\cdot),T}(\Pi_h^Q (z/\lambda))\leq c\,(h_T+\rho_{p'(\cdot),\omega_T}(z/\lambda))\leq c\,,}
		\end{align}
		where $c\hspace*{-0.1em}>\hspace*{-0.1em}1$ depends on $d$, $\textcolor{black}{\ell}$, $p^-$, $p^+$, $c_{\log}(p)$, $\omega_0$, and $c_0$. From \eqref{lem:stab_Pi_Q_norm.3}, we get ${\rho_{p'(\cdot),T}(\Pi_h^Q z/(c^{1/(p^-)'}\lambda))\hspace*{-0.1em}\leq\hspace*{-0.1em} 1}$, which, by the definition of the Luxembourg norm, implies that
		\begin{align}\label{lem:stab_Pi_Q_norm.4}
			\smash{	\|\Pi_h^Qz \|_{p'(\cdot),T}\leq  c^{1/(p^-)'}\,\lambda\,.}
		\end{align}
		Taking in \eqref{lem:stab_Pi_Q_norm.4} the infimum with respect to all $\lambda>0$ such that $\rho_{p'(\cdot),\omega_T}(z/\lambda)\leq  1$, by the definition of the Luxembourg norm, yields that\vspace*{-1mm}
		\begin{align}\label{lem:stab_Pi_Q_norm.5}
			\smash{	\|\Pi_h^Qz \|_{p'(\cdot),T}\leq  c^{1/(p^-)'}\,\|z\|_{p'(\cdot),\omega_T}\,.}
		\end{align}
		Since both sides in \eqref{lem:stab_Pi_Q_norm.5} are homogeneous with respect to scalars, with a standard scaling argument, we conclude that \eqref{lem:stab_Pi_Q_norm.5} applies for all  $z\in L^{p'(\cdot)}(\omega_T)$.
		
		\textit{ad \eqref{lem:stab_Pi_Q_norm.2}.} 	Using \cite[Cor.\  7.3.21]{dhhr}, \eqref{lem:stab_Pi_Q_norm.1}, that $\|1\|_{p'(\cdot),\omega_T}\lesssim \|1\|_{p'(\cdot),T}$, and, again, \cite[Cor.~7.3.21]{dhhr}, we find that\vspace*{-1mm}
		\begin{align*}
			\| \Pi_h^Q z\|_{p'(\cdot),\Omega}
			\lesssim \bigg\|\sum_{T\in \mathcal{T}_h}{ \chi_T\frac{\| \Pi_h^Q z\|_{p'(\cdot),T}}{\|1\|_{p'(\cdot),T}}}\bigg\|_{p'(\cdot),\mathbb{R}^d}
			\lesssim \bigg\|\sum_{T\in \mathcal{T}_h}{\chi_{\omega_T} \frac{\|z\|_{p'(\cdot),\omega_T}}{\|1\|_{p'(\cdot),\omega_T}}}\bigg\|_{p'(\cdot),\mathbb{R}^d}
			\lesssim\|z\|_{p'(\cdot),\Omega}\,.\tag*{$\qedsymbol$}
		\end{align*}
	\end{proof}
	
	The following stability result for $\Pi_h^V$ (\textit{cf}.\  Assumption \ref{ass:proj-div}), in terms of  modulars with respect to  shifted $N$-functions, applies.\enlargethispage{2.5mm}

	\begin{lemma}\label{lem:stab_Pi_div_V}
		Let $p\in \mathcal{P}^{\log}(\Omega)$  and $c_0>0$. Then, for every $m\in \mathbb{N}$,   $T\in \mathcal{T}_h$, $\bfz\in \smash{W^{1,p(\cdot)}(\omega_T;\mathbb{R}^d)}$, and $a\ge  0$ with $a+\langle\vert \nabla\bfz\vert\rangle_{\omega_T}\leq  c_0\,\vert T\vert ^{-m}$, it holds that
		\begin{align}\label{lem:stab_Pi_div_V.local}
			\smash{\rho_{\varphi_a,T}(\nabla \Pi_h^V \bfz)\lesssim h_T^m+\rho_{\varphi_a,\omega_T}(\nabla \bfz)\,,}
		\end{align}
		where $\lesssim$ depends only on $d$, $k$, $m$, $p^+$,~$c_{\log}(p)$,  $\omega_0$, and $c_0$.
		In addition, for every $m\hspace*{-0.1em}\in \hspace*{-0.1em} \mathbb{N}$,~${\bfz\hspace*{-0.1em}\in\hspace*{-0.1em} \smash{W^{1,p(\cdot)}(\Omega;\mathbb{R}^d)}}$, and $a\ge 0$ with $a+\|\nabla \bfz\|_{1,\Omega}\leq c_0$, it holds that
		\begin{align}\label{lem:stab_Pi_div_V.global}
			\smash{\rho_{\varphi_a,\Omega}(\nabla \Pi_h^V \bfz)\lesssim h^m+\rho_{\varphi_a,\Omega}(\nabla \bfz)\,,}
		\end{align}
		where $\lesssim$ depends only on $d$, $k$, $m$, $p^+$,~$c_{\log}(p)$,  $\omega_0$, and $c_0$.
	\end{lemma}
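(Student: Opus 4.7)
The plan is to follow the template set by the proof of Lemma \ref{lem:stab_Pi_Q}, replacing the local $L^1$-stability of $\Pi_h^Q$ by the local $L^1$-$W^{1,1}$-stability of $\Pi_h^V$ (Assumption \ref{ass:proj-div}(iii)) combined with preservation of constants, and using the $\varphi_a$-version of the key estimate (Lemma \ref{lem:key-estimate}), which follows along the lines of \cite[Thm.\ 2.4]{BDS15} using \eqref{rem:phi_a.1} in place of \eqref{rem:phi_a.2}.

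The first step is to upgrade \eqref{eq:Pidivcont} to a gradient-to-gradient inequality
$$
\langle\,|\nabla \Pi_h^V \bfz|\,\rangle_T \lesssim \langle\,|\nabla \bfz|\,\rangle_{\omega_T}\,.
$$
Since $\mathbb{P}^1_c(\mathcal{T}_h)\subseteq V_h$, constants are preserved by $\Pi_h^V$. Setting $\bfc_T \coloneqq \langle\bfz\rangle_{\omega_T}$, I have $\nabla \Pi_h^V\bfz = \nabla\Pi_h^V(\bfz-\bfc_T)$ on $T$, and an inverse estimate on the polynomial $\Pi_h^V(\bfz-\bfc_T)|_T \in \mathbb{P}^k(T)^d$ combined with \eqref{eq:Pidivcont} applied to $\bfz-\bfc_T$ and a Poincar\'e inequality on $\omega_T$ yields the claim.

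The second step localizes the modular. Because $\nabla \Pi_h^V\bfz|_T$ is polynomial, discrete norm equivalence on finite-dimensional polynomial spaces upgrades the previous $L^1$-mean bound to a pointwise bound $|\nabla \Pi_h^V\bfz(x)| \lesssim \langle\,|\nabla\bfz|\,\rangle_{\omega_T}$ for a.e.\ $x\in T$. Monotonicity of $\varphi_a(x,\cdot)$ and the uniform $\Delta_2$-bound \eqref{rem:phi_a.3} then give
$$
\varphi_a(x, |\nabla \Pi_h^V\bfz(x)|) \lesssim \varphi_a(x,\langle\,|\nabla\bfz|\,\rangle_{\omega_T})\qquad \text{for a.e.\ }x\in T\,.
$$
Now the hypothesis $a + \langle\,|\nabla\bfz|\,\rangle_{\omega_T} \leq c_0\,|T|^{-m}$ is exactly what permits invoking the $\varphi_a$-version of Lemma \ref{lem:key-estimate} on $\omega_T$ (up to absorbing the shape-regularity constant $|\omega_T|/|T|\lesssim 1$ into $c_0$), producing
$$
\varphi_a(x,\langle\,|\nabla\bfz|\,\rangle_{\omega_T}) \lesssim \langle\,\varphi_a(\cdot,|\nabla\bfz|)\,\rangle_{\omega_T} + |T|^{m}\,.
$$
Integrating over $T$ and using $|T|\sim h_T^d$ yields \eqref{lem:stab_Pi_div_V.local} after replacing the parameter $m$ in the key estimate by one sufficiently large to dominate the residual $|T|^{m+1}\sim h_T^{d(m+1)}$ by the target $h_T^m$ (possible since $m\in\mathbb{N}$ is arbitrary).

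Finally, \eqref{lem:stab_Pi_div_V.global} is obtained exactly as in the proof of Lemma \ref{lem:stab_Pi_Q}\eqref{lem:stab_Pi_Q.global}: the assumption $a+\|\nabla\bfz\|_{1,\Omega}\leq c_0$ combined with $h_T\leq 1$ implies $a+\langle\,|\nabla\bfz|\,\rangle_{\omega_T}\lesssim |T|^{-1}\leq |T|^{-(d+m)}$, so \eqref{lem:stab_Pi_div_V.local} with exponent $d+m$ applies on every $T\in\mathcal{T}_h$, and summation using the finite overlap of $\{\omega_T\}_{T\in\mathcal{T}_h}$ closes the argument. The main obstacle I anticipate is the careful bookkeeping of the exponents of $|T|$ (after integration over $T$ and after summing over the patches), together with verifying that the $\varphi_a$-version of the key estimate indeed holds with the stated hypotheses --- but this latter point is essentially the original \cite[Thm.\ 2.4]{BDS15}, transcribed via \eqref{rem:phi_a.1} and the uniform $\Delta_2$-condition for $\{\varphi_a\}_{a\ge 0}$.
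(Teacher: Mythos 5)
Your argument is correct and reconstructs the proof behind the paper's citation to \cite[Thm.\ 4.2 a)]{BBD15} for \eqref{lem:stab_Pi_div_V.local}; the global estimate \eqref{lem:stab_Pi_div_V.global} then follows exactly as the paper indicates, by mimicking Lemma \ref{lem:stab_Pi_Q}\eqref{lem:stab_Pi_Q.global}. One small simplification: the exponent bookkeeping already closes with the given $m$, since $|T|^{m+1}\sim h_T^{d(m+1)}\leq h_T^m$ for all $d\geq 1$ and $h_T\leq 1$, so no re-parametrization of $m$ in the key estimate is needed.
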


	\begin{proof}
		\textit{ad \eqref{lem:stab_Pi_div_V.local}.} See \cite[Thm.\  4.2 a)]{BBD15}.
		
		\textit{ad \eqref{lem:stab_Pi_div_V.global}.} We proceed as in the proof of Lemma \ref{lem:stab_Pi_Q}\eqref{lem:stab_Pi_Q.global} up to minor adjustments. 
	\end{proof}

	As a consequence of Lemma \ref{lem:stab_Pi_div_V}, we derive the following $V$-stability result for $\Pi_h^V$ (\textit{cf}.~\mbox{Assumption}~\ref{ass:proj-div}) in terms of the Luxembourg norm.
	
	\begin{lemma}\label{lem:stab_Pi_div_V_norm}
		Let $p\hspace*{-0.1em}\in\hspace*{-0.1em} \mathcal{P}^{\log}(\Omega)$ with $p^-\hspace*{-0.1em}>\hspace*{-0.1em}1$. Then, for every $T\hspace*{-0.1em}\in\hspace*{-0.1em} \mathcal{T}_h$ and  $\bfz\hspace*{-0.1em}\in\hspace*{-0.1em} \smash{W^{1,p(\cdot)}(\omega_T;\mathbb{R}^d)}$,~it~holds~that
		\begin{align}\label{lem:stab_Pi_div_V_norm.1}
			\smash{	\|\nabla \Pi_h^V \bfz\|_{p(\cdot),T}\lesssim \|\nabla \bfz\|_{p(\cdot),\omega_T} \,,}
		\end{align}
		where \hspace*{-0.15mm}$\lesssim $ \hspace*{-0.15mm}depends \hspace*{-0.15mm}only \hspace*{-0.15mm}on \hspace*{-0.15mm}$d$, \hspace*{-0.15mm}$k$, \hspace*{-0.15mm}$p^-$\hspace*{-0.15mm}, \hspace*{-0.15mm}$p^+$\hspace*{-0.15mm}, \hspace*{-0.15mm}$c_{\log}(p)$,~\hspace*{-0.15mm}and~\hspace*{-0.15mm}$\omega_0$. 
		\hspace*{-0.15mm}In \hspace*{-0.15mm}addition, \hspace*{-0.15mm}for \hspace*{-0.15mm}every \hspace*{-0.15mm}$\bfz\hspace*{-0.15em}\in\hspace*{-0.15em} \smash{W^{1,p(\cdot)}(\Omega;\mathbb{R}^d)}$,~\hspace*{-0.15mm}it~\hspace*{-0.15mm}holds~\hspace*{-0.15mm}that
		\begin{align}\label{lem:stab_Pi_div_V_norm.2}
			\smash{	\|\nabla \Pi_h^V \bfz\|_{p(\cdot),\Omega}\lesssim \|\nabla \bfz\|_{p(\cdot),\Omega} \,.}
		\end{align}
	\end{lemma}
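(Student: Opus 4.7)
The plan is to mirror line by line the proof of Lemma \ref{lem:stab_Pi_Q_norm}, with the role of the $Q$-stability (Lemma \ref{lem:stab_Pi_Q}) taken over by the $V$-stability (Lemma \ref{lem:stab_Pi_div_V}). The guiding idea is a scaling argument that upgrades the modular bound supplied by Lemma \ref{lem:stab_Pi_div_V} into a Luxembourg-norm bound, through the norm--modular unit ball property of variable Lebesgue spaces.

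For the local estimate \eqref{lem:stab_Pi_div_V_norm.1}, I would first reduce, by homogeneity of both sides under scalar multiplication, to the situation $\|\nabla \bfz\|_{p(\cdot), \omega_T} \leq 1$, and pick a scale $\lambda > 0$ with $\rho_{p(\cdot), \omega_T}(\nabla \bfz/\lambda) \leq 1$. Using that $|\omega_T| \lesssim |T|$ by shape-regularity controlled by $\omega_0$, I would verify the smallness hypothesis $\langle |\nabla \bfz/\lambda|\rangle_{\omega_T} \leq |\omega_T|^{-1} 2^{p^+}(|\omega_T| + \rho_{p(\cdot), \omega_T}(\nabla \bfz/\lambda)) \leq c_0\, |T|^{-1}$ required by Lemma \ref{lem:stab_Pi_div_V}\eqref{lem:stab_Pi_div_V.local}, where $c_0 > 0$ depends only on $p^+$ and $\omega_0$.

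Then I would apply Lemma \ref{lem:stab_Pi_div_V}\eqref{lem:stab_Pi_div_V.local} with $a = \delta = 0$ and $m = 1$, a legitimate specialization since the constants in that lemma depend only on the parameters listed and not on the $\delta \geq 0$ fixed elsewhere in the paper, and since for $\delta = 0$ one has $\varphi_0(x, t) = t^{p(x)}/p(x)$, so that $\rho_{\varphi_0, \cdot}(\cdot) \sim \rho_{p(\cdot), \cdot}(\cdot)$ with constants depending only on $p^-, p^+$. This yields $\rho_{p(\cdot), T}(\nabla \Pi_h^V(\bfz/\lambda)) \leq c$ for some $c \geq 1$ depending only on $d$, $k$, $p^-$, $p^+$, $c_{\log}(p)$, and $\omega_0$. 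A rescaling by $c^{1/p^-}$ combined with the unit ball property then gives $\|\nabla \Pi_h^V \bfz\|_{p(\cdot), T} \leq c^{1/p^-} \lambda$, and taking the infimum over admissible $\lambda$ yields \eqref{lem:stab_Pi_div_V_norm.1} in the normalized case, while a final scaling removes the normalization.

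The global estimate \eqref{lem:stab_Pi_div_V_norm.2} then follows via exactly the patching step appearing at the end of the proof of Lemma \ref{lem:stab_Pi_Q_norm}, namely \cite[Cor.~7.3.21]{dhhr} applied to the piecewise-constant function $\sum_{T\in\mathcal{T}_h} \chi_T \|\nabla \Pi_h^V \bfz\|_{p(\cdot), T}/\|1\|_{p(\cdot), T}$, combined with the finite overlap of the patches $\{\omega_T\}_{T\in\mathcal{T}_h}$ and the uniform equivalence $\|1\|_{p(\cdot), T} \sim \|1\|_{p(\cdot), \omega_T}$. There is no real obstacle: once Lemma \ref{lem:stab_Pi_div_V} is on the table, the argument is essentially bookkeeping. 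The only point deserving a remark is the permission to specialize $\delta = 0$ when invoking Lemma \ref{lem:stab_Pi_div_V}, and this is what converts the shifted-$N$-function modular into a plain $p(\cdot)$-modular suitable for the Luxembourg-norm scaling argument.
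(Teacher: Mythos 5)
Your argument is exactly the proof the paper intends: the paper's own proof of Lemma~\ref{lem:stab_Pi_div_V_norm} simply says to repeat the proof of Lemma~\ref{lem:stab_Pi_Q_norm} with Lemma~\ref{lem:stab_Pi_div_V} in place of Lemma~\ref{lem:stab_Pi_Q}, and your proposal carries that out faithfully (normalization, verification of the smallness hypothesis, invocation of the local modular stability with $a=\delta=0$, $m=1$, the rescaling by $c^{1/p^-}$ via the norm--modular unit ball property, and the global patching via \cite[Cor.~7.3.21]{dhhr}). The observation that the rescaling exponent changes from $(p')$-type to $p$-type (hence $c^{1/p^-}$ rather than the $c^{1/(p^-)'}$ in the pressure case) is the only substantive adaptation, and you make it correctly.
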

	
	\begin{proof}
		We proceed as in the proof of Lemma \ref{lem:stab_Pi_Q_norm}, using Lemma \ref{lem:stab_Pi_div_V} instead of Lemma \ref{lem:stab_Pi_Q}.
	\end{proof}

	\section{Fractional interpolation error estimates for the velocity and the pressure}\label{sec:fractional_interpolation_estimates}
	
	\hspace{5mm}In this subsection, we derive fractional interpolation error estimates for $\Pi_h^V$ (\textit{cf}.\ Assumption~\ref{ass:proj-div})~and $\Pi_h^Q$ (\textit{cf}.\ Assumption \ref{ass:PiY}), which are the source of fractional error decay rates in dependence of the respective fractional regularity assumptions on the velocity  and the pressure. 
	
	Given fractional regularity of the velocity expressed in Nikolski\u{\i} spaces, we have the following~fractio-nal interpolation error estimate for $\Pi_h^V$  (\textit{cf}.\ Assumption~\ref{ass:proj-div}) measured in the discrete~natural~distance.
	
	\begin{lemma}\label{lem:Pi_div_F}
		Let $p\in C^{0,\alpha}(\overline{\Omega})$ with $p^->1$ and $\alpha\in (0,1]$ and let $\bfz\in \smash{W^{1,p(\cdot)}_0(\Omega;\mathbb{R}^d)}$ be such that
		$\bfF(\cdot,\bfD\bfz)\in \smash{N^{\beta,2}(\Omega;\mathbb{R}^{d\times d})}$ with $\beta\in (0,1]$. Then,  there exists a constant $s>1$, which can be chosen to be close to $1$ if $h_T>0$ is close to $0$, such that
		for every $T\in \mathcal{T}_h$, it holds that
		\begin{align}
			\|\bfF_h(\cdot,\bfD\bfz)-\bfF_h(\cdot,\bfD\Pi_h^V\bfz)\|_{2,T}^2\lesssim 
			h_T^{2\alpha}\,\|1+\vert \bfD\bfz\vert^{p(\cdot)s}\|_{1,\omega_T}   
			+ h_T^{2\beta}\,[ \bfF(\cdot,\bfD\bfz)]_{N^{\beta,2}(\omega_T)}^2\,,\label{lem:Pi_div_F_1}
		\end{align}
		where $\lesssim$ depends only on $d$, $k$, $p^-$, $p^+$, $[p]_{\alpha,\Omega}$, $s$, $\omega_0$, and $\|\bfD\bfz\|_{p(\cdot),\Omega}$.
		In particular, it holds that
		\begin{align}
			\|\bfF_h(\cdot,\bfD\bfz)-\bfF_h(\cdot,\bfD\Pi_h^V\bfz)\|_{2,\Omega}^2\lesssim  h^{2\alpha}\,\big(1+\rho_{p(\cdot)s,\Omega}(\bfD\bfz)\big)+
			h^{2\beta}\,[ \bfF(\cdot,\bfD\bfz)]_{N^{\beta,2}(\Omega)}^2\,.\label{lem:Pi_div_F_2}
		\end{align}
	\end{lemma}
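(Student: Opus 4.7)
The plan is to insert the continuous extra-stress tensor and use a triangle inequality to split the local error on each $T\in \mathcal{T}_h$ as
\begin{align*}
\|\bfF_h(\cdot,\bfD\bfz)-\bfF_h(\cdot,\bfD\Pi_h^V\bfz)\|_{2,T}^2\lesssim I_1+I_2+I_3\,,
\end{align*}
with $I_1\coloneqq \|\bfF_h(\cdot,\bfD\bfz)-\bfF(\cdot,\bfD\bfz)\|_{2,T}^2$, $I_2\coloneqq \|\bfF(\cdot,\bfD\bfz)-\bfF(\cdot,\bfD\Pi_h^V\bfz)\|_{2,T}^2$, and $I_3\coloneqq \|\bfF(\cdot,\bfD\Pi_h^V\bfz)-\bfF_h(\cdot,\bfD\Pi_h^V\bfz)\|_{2,T}^2$. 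The outer terms $I_1,I_3$ isolate the \emph{quadrature-type} error produced by the replacement $p\leftrightarrow p_h$ and should yield the $h_T^{2\alpha}$-contribution, while the inner term $I_2$ is the genuine projection error for the continuous $\bfF$ and should deliver the $h_T^{2\beta}$-Nikolski\u{\i}-contribution.

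For the outer terms, I would start from the identity $\bfF_h(x,\bfA)-\bfF(x,\bfA)=\bfA^{\textup{sym}}\bigl[(\delta+\vert\bfA^{\textup{sym}}\vert)^{(p(\xi_T)-2)/2}-(\delta+\vert\bfA^{\textup{sym}}\vert)^{(p(x)-2)/2}\bigr]$. Applying the fundamental theorem of calculus to $t\mapsto (\delta+\vert\bfA^{\textup{sym}}\vert)^{(t-2)/2}$, using the Hölder estimate $\vert p(x)-p(\xi_T)\vert\leq [p]_{\alpha,\Omega}h_T^\alpha$, and absorbing the resulting logarithm through the elementary bound $\log^2(e+t)\lesssim_\varepsilon (1+t)^\varepsilon$ (valid for any $\varepsilon>0$), one gets the pointwise estimate
\begin{align*}
\vert\bfF_h(x,\bfA)-\bfF(x,\bfA)\vert^2\lesssim h_T^{2\alpha}\bigl(1+\vert\bfA^{\textup{sym}}\vert^{p(x)s}\bigr)\,,
\end{align*}
with $s\coloneqq 1+(\varepsilon+[p]_{\alpha,\Omega}h_T^\alpha)/p^->1$ tending to $1$ as $h_T,\varepsilon\to 0$. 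Integrating this with $\bfA=\bfD\bfz$ immediately controls $I_1$; for $I_3$, the same pointwise bound evaluated at $\bfA=\bfD\Pi_h^V\bfz$ is combined with a local inverse inequality on the polynomial $\bfD\Pi_h^V\bfz$ and the local $L^1$-$W^{1,1}$-stability of $\Pi_h^V$ (Assumption \ref{ass:proj-div}(iii)) to transfer $\|1+\vert\bfD\Pi_h^V\bfz\vert^{p(\cdot)s}\|_{1,T}$ back to $\|1+\vert\bfD\bfz\vert^{p(\cdot)s}\|_{1,\omega_T}$.

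For the genuine projection error $I_2$, I would first use the equivalence \eqref{eq:hammera} to recast $I_2\sim \int_T \varphi_{\vert\bfD\bfz\vert}(x,\vert\bfD\bfz-\bfD\Pi_h^V\bfz\vert)\,\mathrm{d}x$, and then apply the shift-change Lemma \ref{lem:shift-change} to replace the variable shift $\vert\bfD\bfz(x)\vert$ by the constant $\langle\vert\bfD\bfz\vert\rangle_{\omega_T}$, with the perturbation absorbed on the left-hand side. Since $\mathbb{P}^1_c(\mathcal{T}_h)\subseteq V_h$, the operator $\Pi_h^V$ reproduces affine vector fields, so we may subtract such a field from $\bfz$ and invoke the modular stability of Lemma \ref{lem:stab_Pi_div_V} with the shifted $N$-function $\varphi_{\langle\vert\bfD\bfz\vert\rangle_{\omega_T}}$ to reduce the bound to the oscillation $\int_{\omega_T}\varphi_{\langle\vert\bfD\bfz\vert\rangle_{\omega_T}}(x,\vert\bfD\bfz-\langle\bfD\bfz\rangle_{\omega_T}\vert)\,\mathrm{d}x$. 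Re-applying \eqref{eq:hammera} and the shift-change once more rewrites this (up to absorbed terms) as $\int_{\omega_T}\vert\bfF(\cdot,\bfD\bfz)-\langle \bfF(\cdot,\bfD\bfz)\rangle_{\omega_T}\vert^2\,\mathrm{d}x$, which, by the very definition of the Nikolski\u{\i} seminorm \eqref{eq:nikolski_semi-norm}, is bounded by $h_T^{2\beta}[\bfF(\cdot,\bfD\bfz)]_{N^{\beta,2}(\omega_T)}^2$.

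The global estimate \eqref{lem:Pi_div_F_2} will then follow by summing \eqref{lem:Pi_div_F_1} over $T\in \mathcal{T}_h$ and exploiting the finite overlap of the patches $\{\omega_T\}_{T\in\mathcal{T}_h}$. The main technical obstacle is the chain inside $I_2$: the successive passages between the shifted $N$-function modular, the auxiliary constant-shift modular, and the quadratic Nikolski\u{\i} seminorm must be arranged so that fractional regularity is ultimately applied only to the invariant quantity $\bfF(\cdot,\bfD\bfz)$; in parallel, the variable-exponent logarithmic factor appearing in $I_1,I_3$ must be absorbed without degrading the $h_T^{2\alpha}$-rate, which is exactly the role of the $s>1$-trick.
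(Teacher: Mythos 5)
Your decomposition into $I_1 + I_2 + I_3$ is sound and follows essentially the route the paper delegates to its cited sources: the paper's own proof of Lemma~\ref{lem:Pi_div_F} consists of a citation of \cite[Thm.~4.3]{BBD15} for the estimate \eqref{eq:Pi_div_F.1}, which already delivers the sum of the quadrature term $h_T^{2\alpha}\|1+|\bfD\bfz|^{p(\cdot)s}\|_{1,\omega_T}$ and the $L^2$-oscillation $\|\bfF(\cdot,\bfD\bfz)-\langle\bfF(\cdot,\bfD\bfz)\rangle_{\omega_T}\|_{2,\omega_T}^2$, followed by a citation of \cite[(4.6), (4.7)]{breit-lars-etal} to convert the oscillation into $h_T^{2\beta}[\bfF(\cdot,\bfD\bfz)]_{N^{\beta,2}(\omega_T)}^2$. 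Your terms $I_1$, $I_3$ isolate exactly the quadrature error covered by Proposition~\ref{lem:A-Ah}\eqref{eq:Fh-F}, and your $I_2$ reconstructs the content of \cite[Thm.~4.3]{BBD15} from the shift-change Lemma~\ref{lem:shift-change}, the modular stability Lemma~\ref{lem:stab_Pi_div_V}, and Jensen's inequality. Note that your $I_2$-bound is a local version of Lemma~\ref{lem:Pi_div_F.2}, which the paper derives \emph{from} Lemma~\ref{lem:Pi_div_F} (i.e., in the opposite order); your deduction is reversed relative to the paper, but there is no circularity since you do not invoke Lemma~\ref{lem:Pi_div_F.2} itself.

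There are two places where your sketch is imprecise. First, the phrase ``with the perturbation absorbed on the left-hand side'' in the $I_2$-argument cannot be right. Applying Lemma~\ref{lem:shift-change} with $\bfA=\bfD\bfz(x)$ and $\bfB=\langle\bfD\bfz\rangle_{\omega_T}$ produces the error term $\varepsilon\,|\bfF(x,\bfD\bfz(x))-\bfF(x,\langle\bfD\bfz\rangle_{\omega_T})|^2$, which after integration is the $L^2$-oscillation and not a multiple of $I_2=\|\bfF(\cdot,\bfD\bfz)-\bfF(\cdot,\bfD\Pi_h^V\bfz)\|_{2,T}^2$; hence it cannot be absorbed into $I_2$. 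Nothing needs to be absorbed here: the perturbation simply lands on the right-hand side as the desired Nikolski\u{\i}-type term, so one may take $\varepsilon=1$. Second, the global estimate \eqref{lem:Pi_div_F_2} does not follow merely by ``summing ... and exploiting the finite overlap of the patches'': the sum $\sum_{T\in\mathcal{T}_h}h_T^{2\beta}[\bfF(\cdot,\bfD\bfz)]_{N^{\beta,2}(\omega_T)}^2$ is \emph{not} controlled by $h^{2\beta}[\bfF(\cdot,\bfD\bfz)]_{N^{\beta,2}(\Omega)}^2$ via finite overlap, because the Nikolski\u{\i} seminorm involves a supremum over translations $\tau$ that does not commute with the sum over $T$. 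One must instead sum the $L^2$-oscillations before passing to the seminorm, i.e., change variables and exchange the order of the $\tau$- and $x$-integrals exactly as in the proof of \eqref{lem:poincare_F.3}/\eqref{lem:poincare_F.4} of Lemma~\ref{lem:poincare_F} (or \cite[Thm.~5]{breit-lars-etal}); this step also uses $h_T\sim h$ and should be spelled out rather than attributed to overlap alone.
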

	
	\begin{proof}
		Appealing to \cite[Thm.\  4.3]{BBD15}, there exists a constant $s>1$, which can be chosen to be close to $1$ if $h_T>0$ is close to $0$, such that for every $T\in \mathcal{T}_h$, we have that 
		\begin{align}\label{eq:Pi_div_F.1}
			\begin{aligned}
				\|\bfF_h(\cdot,\bfD\bfz)-\bfF_h(\cdot,\bfD\Pi_h^V\bfz)\|_{2,T}^2&\lesssim h_T^{2\alpha}\,\|1+\vert \bfD\bfz\vert^{p(\cdot)s}\|_{1,\omega_T} \\&\quad +\|\bfF(\cdot,\bfD\bfz)-\langle\bfF(\cdot,\bfD\bfz)\rangle_{\omega_T}\|_{2,\omega_T}^2\,,
			\end{aligned}
		\end{align}
		where \hspace*{-0.1mm}$\lesssim$ \hspace*{-0.1mm}depends \hspace*{-0.1mm}only \hspace*{-0.1mm}on \hspace*{-0.1mm}$d$, \hspace*{-0.1mm}$k$, \hspace*{-0.1mm}$p^-$\hspace*{-0.1mm}, \hspace*{-0.1mm}$p^+$\hspace*{-0.1mm}, \hspace*{-0.1mm}$[p]_{\alpha,\Omega}$, \hspace*{-0.1mm}$s$, \hspace*{-0.1mm}$\omega_0$, and $\|\bfD\bfz\|_{p(\cdot),\Omega}$.
		\hspace*{-0.5mm}Using \hspace*{-0.1mm}\cite[\hspace*{-0.1mm}(4.6), (4.7)]{breit-lars-etal},~\hspace*{-0.1mm}for~\hspace*{-0.1mm}\mbox{every}~\hspace*{-0.1mm}${T\hspace*{-0.15em}\in \hspace*{-0.15em} \mathcal{T}_h}$, we find that\vspace*{-1mm}
		\begin{align}\label{eq:Pi_div_F.2}
			\|\bfF(\cdot,\bfD\bfz)-\langle\bfF(\cdot,\bfD\bfz)\rangle_{\omega_T}\|_{2,\omega_T}^2 
			\lesssim h_T^{2\beta} \,[\bfF(\cdot,\bfD\bfz)]_{N^{\beta,2}(\omega_T)}^2\,.
		\end{align}
		Eventually, combining \eqref{eq:Pi_div_F.1} and \eqref{eq:Pi_div_F.2}, \hspace*{-0.1mm}we \hspace*{-0.1mm}arrive \hspace*{-0.1mm}at \hspace*{-0.1mm}the \hspace*{-0.1mm}first \hspace*{-0.1mm}claimed \hspace*{-0.1mm}interpolation \hspace*{-0.1mm}error \hspace*{-0.1mm}estimate~\hspace*{-0.1mm}\eqref{lem:Pi_div_F_1}. \hspace*{-0.1mm}The \hspace*{-0.1mm}second \hspace*{-0.1mm}claimed \hspace*{-0.1mm}interpolation \hspace*{-0.1mm}error \hspace*{-0.1mm}estimate \hspace*{-0.1mm}\eqref{lem:Pi_div_F_2} \hspace*{-0.1mm}follows \hspace*{-0.1mm}via \hspace*{-0.1mm}summation \hspace*{-0.1mm}as \hspace*{-0.1mm}in~\hspace*{-0.1mm}\mbox{\cite[Thm.~\hspace*{-0.1mm}5]{breit-lars-etal}}.
	\end{proof}
	
	Resorting to Proposition \ref{lem:A-Ah}, we can derive an analogue of \textcolor{black}{Lemma \ref{lem:Pi_div_F}} for $\bfF\colon \Omega\times \mathbb{R}^{d\times d}\to \smash{\mathbb{R}^{d\times d}_{\textup{sym}}}$ instead of $\bfF_h\colon \Omega\times \mathbb{R}^{d\times d}\to \smash{\mathbb{R}^{d\times d}_{\textup{sym}}}$, $h\in (0,1]$.\enlargethispage{5mm}
	
	\begin{lemma}\label{lem:Pi_div_F.2}
		Let $p\in C^{0,\alpha}(\overline{\Omega})$ with $p^->1$ and $\alpha\in (0,1]$ and let $\bfz\in \smash{W^{1,p(\cdot)}_0(\Omega;\mathbb{R}^d)}$ be such that
		$\bfF(\cdot,\bfD\bfz)\in N^{\beta,2}(\Omega;\mathbb{R}^{d\times d})$ with $\beta\in (0,1]$. Then,  there exists a constant $s>1$, which can be chosen to be close to $1$ if $h_T>0$ is close to $0$, such that for every $T\in \mathcal{T}_h$, it holds that
		\begin{align}\label{lem:Pi_div_F.2_1}
			\|\bfF(\cdot,\bfD\bfz)-\bfF(\cdot,\bfD\Pi_h^V\bfz)\|_{2,T}^2\lesssim  h_T^{2\alpha}\,\|1+\vert \bfD\bfz\vert^{p(\cdot)s}\|_{1,\omega_T}
			+	h_T^{2\beta}\,[ \bfF(\cdot,\bfD\bfz)]_{N^{\beta,2}(\omega_T)}^2\,,
		\end{align}
		where $\lesssim$ depends only on $d$, $k$, $p^-$, $p^+$, $[p]_{\alpha,\Omega}$, $s$, $\omega_0$, and $\|\bfD\bfz\|_{p(\cdot),\Omega}$.
		In particular, it holds that
		\begin{align}\label{lem:Pi_div_F.2_2}
			\|\bfF(\cdot,\bfD\bfz)-\bfF(\cdot,\bfD\Pi_h^V\bfz)\|_{2,\Omega}^2\lesssim h^{2\alpha}\,\big(1+\rho_{p(\cdot)s,\Omega}(\bfD\bfz)\big)
			+h^{2\beta}\,[ \bfF(\cdot,\bfD\bfz)]_{N^{\beta,2}(\Omega)}^2\,.
		\end{align}
	\end{lemma}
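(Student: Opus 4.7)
The plan is to reduce Lemma \ref{lem:Pi_div_F.2} to the already-proved Lemma \ref{lem:Pi_div_F} by inserting the element-wise frozen non-linearity $\bfF_h$ and controlling the resulting ``consistency'' terms via Proposition \ref{lem:A-Ah}, which (as the hint suggests) quantifies the pointwise discrepancy $\vert \bfF(x,\bfA)-\bfF_h(x,\bfA)\vert^2$ in terms of $h_T^{2\alpha}$ and a growth factor of the form $1+\vert \bfA\vert^{p(x)s}$, reflecting the H\"older continuity $p\in C^{0,\alpha}(\overline{\Omega})$.

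Concretely, on each $T\in \mathcal{T}_h$, I would split
\begin{align*}
\|\bfF(\cdot,\bfD\bfz)-\bfF(\cdot,\bfD\Pi_h^V\bfz)\|_{2,T}
&\leq \|\bfF(\cdot,\bfD\bfz)-\bfF_h(\cdot,\bfD\bfz)\|_{2,T} \\
&\quad +\|\bfF_h(\cdot,\bfD\bfz)-\bfF_h(\cdot,\bfD\Pi_h^V\bfz)\|_{2,T} \\
&\quad +\|\bfF_h(\cdot,\bfD\Pi_h^V\bfz)-\bfF(\cdot,\bfD\Pi_h^V\bfz)\|_{2,T}.
\end{align*}
The middle term is controlled directly by Lemma \ref{lem:Pi_div_F}, giving precisely the two terms $h_T^{2\alpha}\|1+\vert\bfD\bfz\vert^{p(\cdot)s}\|_{1,\omega_T}$ and $h_T^{2\beta}[\bfF(\cdot,\bfD\bfz)]_{N^{\beta,2}(\omega_T)}^2$. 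The first term is handled by Proposition \ref{lem:A-Ah} pointwise and then integrated, yielding a bound of the form $h_T^{2\alpha}\|1+\vert \bfD\bfz\vert^{p(\cdot)s}\|_{1,T}$, which is already of the claimed type.

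The main obstacle is the third term, since it involves $\bfD\Pi_h^V\bfz$ rather than $\bfD\bfz$. After applying Proposition \ref{lem:A-Ah} pointwise, one obtains an estimate by $h_T^{2\alpha}\|1+\vert\bfD\Pi_h^V\bfz\vert^{p(\cdot)s}\|_{1,T}$, so one needs a $\rho_{p(\cdot)s,\cdot}$-stability of $\Pi_h^V$ transferring $\bfD\Pi_h^V\bfz$ on $T$ to $\bfD\bfz$ on $\omega_T$. Since $p(\cdot)s$ is still $\log$-H\"older continuous (and $(p s)^->1$ if $s$ is close enough to $1$), I can invoke Lemma \ref{lem:stab_Pi_div_V} with the generalized $N$-function $t\mapsto t^{p(\cdot)s}$ in place of $\varphi$ (the proof of that lemma only uses the $\Delta_2$-condition and the $L^1$--$W^{1,1}$-stability of $\Pi_h^V$), obtaining $\rho_{p(\cdot)s,T}(\bfD\Pi_h^V\bfz)\lesssim h_T^{m}+\rho_{p(\cdot)s,\omega_T}(\bfD\bfz)$ for some large $m$. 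Absorbing the $h_T^m$-remainder into the $h_T^{2\alpha}$-scaling, the third term is thus bounded by the same expression as the first, up to the patch $\omega_T$.

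Finally, summing the local estimate \eqref{lem:Pi_div_F.2_1} over $T\in \mathcal{T}_h$, using the uniform finite overlap of the patches $\{\omega_T\}_{T\in \mathcal{T}_h}$ granted by the chunkiness $\omega_0$, yields the global bound \eqref{lem:Pi_div_F.2_2} exactly as in the passage from \eqref{lem:Pi_div_F_1} to \eqref{lem:Pi_div_F_2} in Lemma \ref{lem:Pi_div_F}. The constant $s>1$ inherited from Lemma \ref{lem:Pi_div_F} can be taken close to $1$ as $h_T\to 0$, so the integrability requirements $\vert \bfD\bfz\vert^{p(\cdot)s}\in L^1(\omega_T)$ will follow from Lemma \ref{lem:improved_integrability}(i) under the fractional regularity assumption on $\bfF(\cdot,\bfD\bfz)$.
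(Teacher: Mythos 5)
Your proposal is correct and takes essentially the same route as the paper: the same three-way triangle-inequality decomposition, with the middle term handled by Lemma \ref{lem:Pi_div_F}, the two consistency terms by Proposition \ref{lem:A-Ah}\eqref{eq:Fh-F}, the patch-transfer for $\bfD\Pi_h^V\bfz$ via the modular stability of $\Pi_h^V$ (Lemma \ref{lem:stab_Pi_div_V}\eqref{lem:stab_Pi_div_V.local} with $m=d$), and summation as in \cite[Thm.~5]{breit-lars-etal} for the global version. If anything, you are a bit more explicit than the paper about a genuine fine point: the stability is needed for the $\rho_{p(\cdot)s,\cdot}$-modular (not $\rho_{p(\cdot),\cdot}$), and you correctly note that the proof of Lemma \ref{lem:stab_Pi_div_V} applies with $t\mapsto t^{p(\cdot)s}$ since $p(\cdot)s$ remains $\log$-Hölder and one only needs the $\Delta_2$-condition plus the $L^1$-$W^{1,1}$-stability of $\Pi_h^V$.
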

	
	\begin{proof}
		Appealing to Proposition \ref{lem:A-Ah}\eqref{eq:Fh-F} and Lemma \ref{lem:stab_Pi_div_V}\eqref{lem:stab_Pi_div_V.local} (with $m=d$), there exists  a constant $s>1$, which can chosen to be close to $1$ if $h_T>0$ is close to $0$, such that for every $T\in \mathcal{T}_h$, we have that
		\begin{align}\label{eq:Pi_div_F.2.1}
			\begin{aligned}
				\|\bfF_h(\cdot,\bfD\bfz)-\bfF(\cdot,\bfD\bfz)\|_{2,T}^2&\lesssim h_T^{2\alpha}\,\|1+\vert \bfD\bfz\vert^{p(\cdot)s}\|_{1,T}\,;\\
				\|\bfF_h(\cdot,\bfD\Pi_h^V\bfz)-\bfF(\cdot,\bfD\Pi_h^V\bfz)\|_{2,T}^2&\lesssim h_T^{2\alpha}\,\|1+\vert \bfD\Pi_h^V\bfz\vert^{p(\cdot)s}\|_{1,T}\lesssim  h_T^{2\alpha}\,\|1+\vert \bfD\bfz\vert^{p(\cdot)s}\|_{1,\omega_T}\,,\\
			\end{aligned}
		\end{align}
		where $\lesssim$ depends only on $d$, $k$, $p^-$, $p^+$, $[p]_{\alpha,\Omega}$, $s$, $\omega_0$, and $\|\bfD\bfz\|_{p(\cdot),\Omega}$.
		Using Lemma \ref{lem:Pi_div_F}\eqref{lem:Pi_div_F_1} and \eqref{eq:Pi_div_F.2.1}, we~conclude~that
		\begin{align*}
			\|\bfF(\cdot,\bfD\bfz)-\bfF(\cdot,\bfD\Pi_h^V\bfz)\|_{2,T}^2&\lesssim \|\bfF_h(\cdot,\bfD\bfz)-\bfF(\cdot,\bfD\bfz)\|_{2,T}^2
			+\|\bfF_h(\cdot,\bfD\bfz)-\bfF_h(\cdot,\bfD\Pi_h^V\bfz)\|_{2,T}^2
			\\&\quad +\|\bfF_h(\cdot,\bfD\Pi_h^V\bfz)-\bfF(\cdot,\bfD\Pi_h^V\bfz)\|_{2,T}^2
			\\& \lesssim h_T^{2\alpha}\,\|1+\vert \bfD\bfz\vert^{p(\cdot)s}\|_{1,\omega_T}
			+ h_T^{2\beta}\,[\bfF(\cdot,\bfD\bfz)]_{N^{\beta,2}(\omega_T)}^2\,,
		\end{align*}
		which is the first claimed interpolation error estimate \eqref{lem:Pi_div_F.2_1}. \hspace*{-0.1mm}The \hspace*{-0.1mm}second \hspace*{-0.1mm}claimed \hspace*{-0.1mm}interpolation \hspace*{-0.1mm}error \hspace*{-0.1mm}estimate \hspace*{-0.1mm}\eqref{lem:Pi_div_F.2_2} \hspace*{-0.1mm}follows \hspace*{-0.1mm}via \hspace*{-0.1mm}summation \hspace*{-0.1mm}as \hspace*{-0.1mm}in \hspace*{-0.1mm}\mbox{\cite[Thm.~\hspace*{-0.1mm}5]{breit-lars-etal}}.
	\end{proof}
	
	While Lemma \ref{lem:Pi_div_F} and Lemma \ref{lem:Pi_div_F.2} play an important role in the extraction of fractional error decay rates  from higher-order terms related to the extra-stress tensor, the following corollary 
	aids to  extract fractional  error decay rates from higher-order terms related to the convective term.\enlargethispage{6mm}
	
	\begin{corollary}\label{cor:Pi_div_F}
		Let $p\in C^{0,\alpha}(\overline{\Omega})$ with $p^->1$ and $\alpha\in (0,1]$, $r\coloneqq \min\{2,p\}\in C^{0,\alpha}(\overline{\Omega})$, and let $\bfz\in W^{1,p(\cdot)}_0(\Omega;\mathbb{R}^d)$ be such that
		$\bfF(\cdot,\bfD\bfz)\in N^{\beta,2}(\Omega;\mathbb{R}^{d\times d})$ with $\beta\in (0,1]$. Then, there exists a constant $s>1$, which can be chosen to be close to $1$ if $h_T>0$ is close to $0$, such that for every $T\in \mathcal{T}_h$,~it~holds~that
		\begin{align}\label{cor:Pi_div_F.1}
			\smash{  \|\bfD\bfz-\bfD\Pi_h^V\bfz\|_{r(\cdot),T}^2\lesssim h_T^{2\alpha}\,\|1+\vert \bfD\bfz\vert^{p(\cdot)s}\|_{1,\omega_T}
				+ h_T^{2\beta}\,[ \bfF(\cdot,\bfD\bfz)]_{N^{\beta,2}(\omega_T)}^2\,,}
		\end{align}
		where $\lesssim$ depends only on $d$, $k$, $p^-$, $p^+$, $[p]_{\alpha,\Omega}$, $s$, $\omega_0$, and $\|\bfD\bfz\|_{p(\cdot),\Omega}$. In particular, it holds that
		\begin{align}\label{cor:Pi_div_F.2}
			\smash{  \|\bfD\bfz-\bfD\Pi_h^V\bfz\|_{r(\cdot),\Omega}^2\lesssim  h^{2\alpha}\,(1+\rho_{p(\cdot)s,\Omega}(\bfD\bfz))
				+ h^{2\beta}\,[ \bfF(\cdot,\bfD\bfz)]_{N^{\beta,2}(\Omega)}^2 \,.}
		\end{align}
	\end{corollary}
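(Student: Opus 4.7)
The plan is to transfer the $L^2$-bound on the $\bfF$-interpolation error provided by Lemma~\ref{lem:Pi_div_F.2} into an $L^{r(\cdot)}$-bound on the interpolation error of $\bfD\bfz$, at the price of a uniformly bounded shift factor. The bridge is the natural-distance equivalence from Proposition~\ref{lem:hammer}\eqref{eq:hammera}, which, combined with the explicit form~\eqref{rem:phi_a.1} of the shifted $N$-function, yields the pointwise identity
\[
\vert\bfD\bfz - \bfD\Pi_h^V\bfz\vert^{2} \sim \bigl(\delta + \vert\bfD\bfz\vert + \vert\bfD\bfz - \bfD\Pi_h^V\bfz\vert\bigr)^{2-p(\cdot)}\,\vert\bfF(\cdot,\bfD\bfz) - \bfF(\cdot,\bfD\Pi_h^V\bfz)\vert^{2} \quad \text{a.e.\ in } \Omega.
\]
Raising this to the power $r(x)/2 \le 1$ yields a pointwise factorization of $\vert\bfD\bfz - \bfD\Pi_h^V\bfz\vert^{r(\cdot)}$ as a power of the natural-distance integrand times a shift factor $(\delta + \vert\bfD\bfz\vert + \vert\bfD\bfz - \bfD\Pi_h^V\bfz\vert)^{r(\cdot)(2-p(\cdot))/2}$.

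Next, I would integrate over $T$ and split the two factors via the generalized Hölder inequality~\eqref{eq:gen_hoelder} with pointwise conjugate exponents $s(x)=2/r(x)$ and $s'(x)=2/(2-r(x))$. Because $r(x)=\min\{2,p(x)\}$, the exponent on the shift factor after Hölder collapses to $p(x)$ (wherever $p(x)\le 2$). The first factor is then directly controlled by Lemma~\ref{lem:Pi_div_F.2} and reproduces exactly the two RHS terms of~\eqref{cor:Pi_div_F.1}. The second factor is bounded uniformly in $h>0$: combining the $L^{p(\cdot)}$-stability of $\Pi_h^V$ (Lemma~\ref{lem:stab_Pi_div_V_norm}) with Korn's inequality (Theorem~\ref{thm:korn}) gives $\|\bfD\Pi_h^V\bfz\|_{p(\cdot),T} \lesssim \|\bfD\bfz\|_{p(\cdot),\Omega}$, so the shift modular is absorbed into the hidden constant. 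Passing from modular to Luxembourg norm and squaring yields the local estimate~\eqref{cor:Pi_div_F.1}; the global estimate~\eqref{cor:Pi_div_F.2} follows by summation over $T\in\mathcal{T}_h$ with uniformly bounded patch overlap, as in \cite[Thm.\ 5]{breit-lars-etal}.

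The main obstacle is the variable-exponent nature of $r(\cdot)=\min\{2,p(\cdot)\}$: the dual exponent $s'(x)=2/(2-r(x))$ diverges wherever $p(x)\ge 2$, so the Hölder splitting is not uniformly valid on $T$. I expect to resolve this by decomposing $T$ according to the sign of $p(x)-2$ and handling the $p(x)\ge 2$ part either through the direct embedding $L^{2}(T)\hookrightarrow L^{r(\cdot)}(T)$, absorbing the extra volume factor into $h_T^{2\alpha}$ by exploiting $d\ge 2\ge 2\alpha$, or by the $\varepsilon$-Young inequality~\eqref{ineq:young} coupled with the shift-change Lemma~\ref{lem:shift-change}, which moves the singular contribution into the natural-distance term where it can be absorbed. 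A minor subtlety is the propagation of the auxiliary constant $s>1$ from Lemma~\ref{lem:Pi_div_F.2}, which is guaranteed close to $1$ only for sufficiently small $h_T$.
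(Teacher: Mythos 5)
Your overall architecture — pointwise factorization via the natural distance, generalized Hölder to separate the $\bfF$-distance from a shift factor, stability of $\Pi_h^V$ plus Korn to bound the shift modular, then summation — is essentially the same argument the paper packages into Lemma~\ref{lem:sobolev2F} (which is the central tool the paper cites directly, together with Lemma~\ref{lem:stab_Pi_div_V} and Lemma~\ref{lem:Pi_div_F.2}). So you have re-derived the right auxiliary lemma rather than invoking it, which is fine in principle.

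The genuine gap is in your treatment of the set $\{p\ge 2\}$. You correctly observe that the Hölder splitting degenerates there (the dual exponent $2/(2-r(x))$ is unbounded), but the two mechanisms you propose to repair it do not work. The embedding $L^2(T)\hookrightarrow L^{r(\cdot)}(T)$ and a volume factor absorbed into $h_T^{2\alpha}$ is the wrong tool: on $\{p\ge 2\}$ one has $r(\cdot)\equiv 2$, so there is no embedding gain and no scale-invariant volume factor to exploit, and in any case what must be shown is $\|\bfD\bfz-\bfD\Pi_h^V\bfz\|_{2,\{p>2\}}\lesssim\|\bfF(\cdot,\bfD\bfz)-\bfF(\cdot,\bfD\Pi_h^V\bfz)\|_{2,\{p>2\}}$, which is a pointwise statement, not a volume statement. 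The $\varepsilon$-Young/shift-change route is also not the right idea here: shift changes move between shifts $\varphi_a\leftrightarrow\varphi_b$ but do not change the growth class, and cannot turn a $(\delta+|\bfA|+|\bfA-\bfB|)^{2-p(x)}$ factor into something bounded for free.

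What the paper actually does (Lemma~\ref{lem:sobolev2F_second}) is much simpler and is the observation you are missing: on $\{p\ge 2\}$ the exponent $p(x)-2\ge 0$, so the shift factor obeys $(\delta+|\bfA|+|\bfA-\bfB|)^{p(x)-2}\ge\delta^{p(x)-2}$, which, for $\delta>0$, gives the pointwise bound $|\bfA-\bfB|^2\lesssim(\min\{1,\delta\})^{2-p^+}\,|\bfF(\cdot,\bfA)-\bfF(\cdot,\bfB)|^2$ directly. This is where the assumption $\delta>0$ (present in Theorem~\ref{thm:error_FE}, under which this corollary is used) enters in an essential way; without it the argument on $\{p\ge 2\}$ fails. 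Your proposal never isolates this $\delta>0$ dependence and so does not close the $\{p\ge 2\}$ case. Once that observation is made, the rest of your outline matches the paper's proof: combine with the Hölder bound on $\{p\le 2\}$ (the paper's Lemma~\ref{lem:sobolev2F_first}), control the shift modular via Lemma~\ref{lem:stab_Pi_div_V} (the paper uses the modular stability \eqref{lem:stab_Pi_div_V.local} with $a=\delta=0$ and $m=d$, where your norm-stability Lemma~\ref{lem:stab_Pi_div_V_norm} is an acceptable substitute), and then insert Lemma~\ref{lem:Pi_div_F.2}\eqref{lem:Pi_div_F.2_1}.
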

	
	\begin{proof} \textit{ad \eqref{cor:Pi_div_F.1}.} Using Lemma \ref{lem:sobolev2F}, Lemma \ref{lem:stab_Pi_div_V}\eqref{lem:stab_Pi_div_V.local} (with $a=\delta=0$ and $m=d$), and~Lemma~\ref{lem:Pi_div_F.2}\eqref{lem:Pi_div_F.2_1}, for every 
		$T\in \mathcal{T}_h$, we find that\vspace*{-0.5mm}
		\begin{align*}
			\|\bfD\bfz-\bfD\Pi_h^V\bfz\|_{r(\cdot),T}&\lesssim 
			\|\bfF(\cdot,\bfD\bfz)-\bfF(\cdot,\bfD\Pi_h^V\bfz)\|_{2,T}\\&\quad \times\big((1+\rho_{p(\cdot),T}(\vert \bfD\bfz\vert+\vert \bfD\Pi_h^V\bfz\vert))^{\smash{1/p^-}}+(\min\{1,\delta\})^{\smash{2-p^+}}\big)\\&\lesssim  
			\|\bfF(\cdot,\bfD\bfz)-\bfF(\cdot,\bfD\Pi_h^V\bfz)\|_{2,T}\\&\quad \times\big((1+\rho_{p(\cdot),T}(\bfD\bfz))^{\smash{1/p^-}}+(\min\{1,\delta\})^{\smash{2-p^+}}\big)
			\\&\lesssim  \|\bfF(\cdot,\bfD\bfz)-\bfF(\cdot,\bfD\Pi_h^V\bfz)\|_{2,T}\\&\lesssim   h_T^{2\alpha}\,\big(1+\rho_{p(\cdot)s,\omega_T}(\bfD\bfz)\big)
			+h_T^{2\beta}\,[ \bfF(\cdot,\bfD\bfz)]_{N^{\beta,2}(\omega_T)}^2 \,,
		\end{align*}    
		which is the first claimed interpolation error estimate \eqref{cor:Pi_div_F.1}. 
		
		\textit{ad \eqref{cor:Pi_div_F.2}.} \!The \hspace*{-0.1mm}second \hspace*{-0.1mm}claimed \hspace*{-0.1mm}interpolation \hspace*{-0.1mm}error \hspace*{-0.1mm}estimate \hspace*{-0.1mm}\eqref{cor:Pi_div_F.2} \hspace*{-0.1mm}follows \hspace*{-0.1mm}analogously~\hspace*{-0.1mm}with~\hspace*{-0.1mm}Lemma~\hspace*{-0.1mm}\ref{lem:sobolev2F}, Lemma \ref{lem:stab_Pi_div_V}\eqref{lem:stab_Pi_div_V.global} (with $a=\delta=0$ and $m=d$), and  Lemma \ref{lem:Pi_div_F.2}\eqref{lem:Pi_div_F.2_2}.
	\end{proof}
	
	Given fractional regularity of the pressure expressed in fractional variable Haj\l asz--Sobolev spaces, we have the following fractional interpolation error estimate for $\Pi_h^Q$  (\textit{cf}.\ Assumption~\ref{ass:PiY}) measured in the non-discrete Luxembourg norm.

	\begin{lemma}\label{lem:Pi_Q_norm}
		Let $p\in \mathcal{P}^{\log}(\Omega)$ with $p^->1$ and let $z\in H^{\gamma,p'(\cdot)}(\Omega)$ with $\gamma\in (0,1]$ with Haj\l asz gradient $\vert \nabla^{\gamma} z\vert \coloneqq \textup{argmin}_{g\in \mathrm{Gr}(z)}{\|g\|_{p'(\cdot),\Omega}}\in L^{p'(\cdot)}(\Omega)$. Then, for every $T\in \mathcal{T}_h$, it holds that
		\begin{align}
			\smash{	\|z-\Pi_h^Q z\|_{p'(\cdot),T}\lesssim h_T^{\gamma}\,\|\vert \nabla^{\gamma} z\vert \|_{p'(\cdot),\omega_T}\,,}\label{lem:Pi_Q_norm.local}
		\end{align}
		where $\lesssim$ depends only on $d$, $\textcolor{black}{\ell}$, $p^-$, $p^+$, $c_{\log}(p)$, and $\omega_0$. In particular, it holds that
		\begin{align}
			\smash{\|z-\Pi_h^Q z\|_{p'(\cdot),\Omega}\lesssim h^{\gamma}\,\|\vert \nabla^{\gamma} z\vert \|_{p'(\cdot),\Omega}\,.}\label{lem:Pi_Q_norm.global}
		\end{align}
	\end{lemma}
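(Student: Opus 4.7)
The plan is to reduce the interpolation error estimate to a fractional Poincar\'e-type bound on the oscillation $z - \langle z\rangle_{\omega_T}$, exploiting that $\Pi_h^Q$ reproduces constants. Since $\mathbb{R}\subseteq Q_h$ (cf.\ Assumption \ref{ass:PiY}) and $\Pi_h^Q$ is linear, for every constant $c\in\mathbb{R}$, it holds that $z-\Pi_h^Q z = (z-c) - \Pi_h^Q(z-c)$. Choosing $c\coloneqq\langle z\rangle_{\omega_T}$, then applying the triangle inequality together with the local Luxembourg-norm stability of $\Pi_h^Q$ (cf.\ Lemma \ref{lem:stab_Pi_Q_norm}\eqref{lem:stab_Pi_Q_norm.1}), for every $T\in\mathcal{T}_h$, I obtain
\begin{align*}
\|z-\Pi_h^Q z\|_{p'(\cdot),T}\leq \|z-\langle z\rangle_{\omega_T}\|_{p'(\cdot),T}+\|\Pi_h^Q(z-\langle z\rangle_{\omega_T})\|_{p'(\cdot),T}\lesssim \|z-\langle z\rangle_{\omega_T}\|_{p'(\cdot),\omega_T}\,.
\end{align*}

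Next, invoking the defining pointwise estimate \eqref{eq:hajlasz_gradient} for the upper Haj\l asz gradient $|\nabla^\gamma z|$ together with the shape-regularity bound $\textup{diam}(\omega_T)\lesssim h_T$, for a.e.\ $x,y\in\omega_T$, I have $|z(x)-z(y)|\lesssim h_T^\gamma\,(|\nabla^\gamma z|(x)+|\nabla^\gamma z|(y))$. Averaging in $y\in\omega_T$ and then taking the $L^{p'(\cdot)}(\omega_T)$-norm in $x$ leads to
\begin{align*}
\|z-\langle z\rangle_{\omega_T}\|_{p'(\cdot),\omega_T}\lesssim h_T^\gamma\,\big(\|\,|\nabla^\gamma z|\,\|_{p'(\cdot),\omega_T}+\langle|\nabla^\gamma z|\rangle_{\omega_T}\,\|1\|_{p'(\cdot),\omega_T}\big)\lesssim h_T^\gamma\,\|\,|\nabla^\gamma z|\,\|_{p'(\cdot),\omega_T}\,,
\end{align*}
where the second inequality follows by bounding the constant term via the generalized H\"older inequality \eqref{eq:gen_hoelder} and the $\log$-H\"older estimate $|\omega_T|^{-1}\|1\|_{p(\cdot),\omega_T}\|1\|_{p'(\cdot),\omega_T}\lesssim 1$ guaranteed by \cite[Cor.~7.3.21]{dhhr}. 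Combined with the previous step, this establishes the local estimate \eqref{lem:Pi_Q_norm.local}.

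Finally, the global estimate \eqref{lem:Pi_Q_norm.global} will follow from the local one by the same patching argument already used for the $Q$-stability in the proof of Lemma \ref{lem:stab_Pi_Q_norm}\eqref{lem:stab_Pi_Q_norm.2}: one expresses the global Luxembourg norm as a sum over local averages via \cite[Cor.~7.3.21]{dhhr}, controls each local average by $h_T^\gamma\,\|\,|\nabla^\gamma z|\,\|_{p'(\cdot),\omega_T}$ using \eqref{lem:Pi_Q_norm.local}, and then reassembles into the global norm exploiting the finite overlap of the patches $\{\omega_T\}_{T\in\mathcal{T}_h}$ (controlled by the chunkiness $\omega_0$) together with $h_T\leq h$. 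The only subtle point is the $\log$-H\"older-type control $|\omega_T|^{-1}\|1\|_{p(\cdot),\omega_T}\|1\|_{p'(\cdot),\omega_T}\lesssim 1$, which is precisely where the assumption $p\in\mathcal{P}^{\log}(\Omega)$ is used; aside from this, no ingredient beyond Lemma \ref{lem:stab_Pi_Q_norm} and the Haj\l asz definition is required.
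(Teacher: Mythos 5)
Your proof is correct and follows the same overall architecture as the paper's: you recenter at $\langle z\rangle_{\omega_T}$, invoke the local $L^{p'(\cdot)}$-stability of $\Pi_h^Q$ from Lemma~\ref{lem:stab_Pi_Q_norm}\eqref{lem:stab_Pi_Q_norm.1}, reduce to a fractional Poincar\'e-type bound on the oscillation $z-\langle z\rangle_{\omega_T}$ obtained from the Haj\l asz definition \eqref{eq:hajlasz_gradient} and shape regularity, and then reassemble globally exactly as in the proof of \eqref{lem:stab_Pi_Q_norm.2}. The one place where you genuinely deviate from the paper is in controlling the constant-average contribution $\langle|\nabla^\gamma z|\rangle_{\omega_T}\,\|1\|_{p'(\cdot),\omega_T}$: the paper establishes the bound $\|\langle|\nabla^\gamma z|\rangle_{\omega_T}\|_{p'(\cdot),\omega_T}\lesssim \||\nabla^\gamma z|\|_{p'(\cdot),\omega_T}$ via the $L^{p'(\cdot)}$--$L^{p'(\cdot)}$ boundedness of the Hardy--Littlewood maximal operator (cf.\ \cite[Thm.~4.3.8]{dhhr}), writing the patch average as an average over a comparable ball and dominating it by the maximal function; you instead use the generalized H\"older inequality \eqref{eq:gen_hoelder} together with the $\log$-H\"older consequence $|\omega_T|^{-1}\|1\|_{p(\cdot),\omega_T}\|1\|_{p'(\cdot),\omega_T}\lesssim 1$. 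Both routes are sound and yield the same constant dependencies; yours is more elementary in that it avoids maximal-function machinery, but the $\log$-H\"older estimate you use is not really the content of \cite[Cor.~7.3.21]{dhhr} (which is the local-to-global norm decomposition you correctly invoke in the last step). That estimate is better justified directly from $\log$-H\"older continuity, e.g.\ via Lemma~\ref{lem:local_change} (or \cite[Cor.~4.5.9]{dhhr}), which gives $\|1\|_{p(\cdot),\omega_T}\sim|\omega_T|^{1/p(x_T)}$ and $\|1\|_{p'(\cdot),\omega_T}\sim|\omega_T|^{1/p'(x_T)}$ for $h_T\leq 1$; with that reference adjusted, the argument is complete.
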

	
	\begin{proof}
		\textit{ad \eqref{lem:Pi_Q_norm.local}.} Using that ${\omega_T\subseteq \overline{B_{\textrm{diam}(\omega_T)}^d(x_T)}}$, where we denote by $x_T\in T$ the barycenter~of~$T$, that ${\vert \omega_T\vert \sim \vert B_{\textrm{diam}(\omega_T)}^d(x_T)\vert}$, and the $L^{p'(\cdot)}(\mathbb{R}^d)$-$L^{p'(\cdot)}(\mathbb{R}^d)$-stability of the Hardy--Littlewood~maximal~operator $M_d\colon  L^{p'(\cdot)}(\mathbb{R}^d)\to L^{p'(\cdot)}(\mathbb{R}^d)$ (\textit{cf}.\  \cite[Thm.\  4.3.8]{dhhr}), 
		we find that
		\begin{align}\label{eq:Pi_Q_norm.1}
			\begin{aligned}
				\|\langle\vert \nabla^{\gamma} z\vert\rangle_{\omega_T}\|_{p'(\cdot),\omega_T}&\lesssim \big\|\chi_{\omega_T}\langle\chi_{\omega_T}\vert \nabla^{\gamma} z\vert\rangle_{\smash{B_{\textrm{diam}(\omega_T)}^d(x_T)}}\big\|_{p'(\cdot),\mathbb{R}^d}
				\\&\lesssim \|M_d(\chi_{\omega_T}\vert \nabla^{\gamma} z\vert)\|_{p'(\cdot),\mathbb{R}^d} 
				\lesssim \|\vert \nabla^{\gamma }z\vert \|_{p'(\cdot),\omega_T}\,.
			\end{aligned}
		\end{align}
		By the definition of the Haj\l azs gradient (\textit{cf}.\  \eqref{eq:hajlasz_gradient}), for every $T\in \mathcal{T}_h$ and a.e. $x\in \omega_T$, we have that
		\begin{align}\label{eq:Pi_Q_norm.2}
			\smash{	\vert z(x)-\langle z\rangle_{\omega_T}\vert \leq \langle\vert  z(x)-z\vert \rangle_{\omega_T}\lesssim h_T^{\gamma}\, \big(\vert\nabla^\gamma z\vert(x)+\langle\vert  \nabla^\gamma z\vert \rangle_{\omega_T}\big)\,.}
		\end{align}
		Therefore, using that $\smash{\Pi_h^Q \langle z\rangle_{\omega_T}=\langle z\rangle_{\omega_T}}$, Lemma \ref{lem:stab_Pi_Q_norm}\eqref{lem:stab_Pi_Q_norm.1}, \eqref{eq:Pi_Q_norm.1}, and \eqref{eq:Pi_Q_norm.2}, we conclude that
		\begin{align*}
			\|z-\Pi_h^Q z\|_{p'(\cdot),T} &= \|z-\langle z\rangle_{\omega_T}- \Pi_h^Q (z-\langle z\rangle_{\omega_T})\|_{p'(\cdot),T} 
			\\&\lesssim h_T^{\gamma}\,\|\vert\nabla^{\gamma} z\vert+\langle\vert \nabla^{\gamma} z\vert\rangle_{\omega_T}\|_{p'(\cdot),T}+ \|z-\langle z\rangle_{\omega_T}\|_{p'(\cdot),\omega_T} 
			\lesssim h_T^{\gamma}\,\|\vert \nabla^{\gamma }z\vert \|_{p'(\cdot),\omega_T}\,,
		\end{align*}
		which is the claimed fractional approximation error estimate \eqref{lem:Pi_Q_norm.local}.
		
		\textit{ad \eqref{lem:Pi_Q_norm.global}.} Using \cite[Cor.\  7.3.21]{dhhr}, \eqref{lem:Pi_Q_norm.local}, $\|1\|_{p'(\cdot),\omega_T}\lesssim \|1\|_{p'(\cdot),T}$, and, again, \cite[Cor.\  7.3.21]{dhhr},~we~get
		\begin{align*}
			\|z-\Pi_h^Q z\|_{p'(\cdot),\Omega} 
			&\lesssim \bigg\|\sum_{T\in \mathcal{T}_h}{\chi_T \frac{	\|z-\Pi_h^Q z\|_{p'(\cdot),T}}{\|1\|_{p'(\cdot),T}}}\bigg\|_{p'(\cdot),\mathbb{R}^d} 
			\\&\lesssim\bigg\|\sum_{T\in \mathcal{T}_h}{\chi_{\omega_T} \frac{h_T^{\gamma}\,\|\vert \nabla^{\gamma }z\vert \|_{p'(\cdot),\omega_T}}{\|1\|_{p'(\cdot),\omega_T}}}\bigg\|_{p'(\cdot),\mathbb{R}^d}
			\\&
			\lesssim h^{\gamma}\,\|\vert \nabla^{\gamma }z\vert \|_{p'(\cdot),\Omega}\,,
		\end{align*}
		which is the claimed fractional approximation error estimate \eqref{lem:Pi_Q_norm.global}.
	\end{proof}
	
	In addition to the fractional interpolation error estimate measured in the non-discrete Luxembourg norm (\textit{cf}.\  Lemma \ref{lem:Pi_Q_norm}), the following  fractional interpolation error estimate 
	measured in the modular with respect to the conjugate of shifted generalized $N$-function applies.
	
	\begin{lemma}\label{lem:Pi_Q}
		Let $p\in C^{0,\alpha}(\overline{\Omega})$ with $p^->1$ and $\alpha\in (0,1]$, let $z\in H^{\gamma,p'(\cdot)}(\Omega)$ with $\gamma\in (0,1]$ with Haj\l asz gradient $\vert \nabla^{\gamma} z\vert \coloneqq \textup{argmin}_{g\in \mathrm{Gr}(z)}{\|g\|_{p'(\cdot),\Omega}}\in L^{p'(\cdot)}(\Omega)$, and let $\bfA\in L^{p(\cdot)}(\Omega;\mathbb{R}^{d\times d})$. Then, for every $m>0$ and  $T\in \mathcal{T}_h$, it holds that
		\begin{align}\label{lem:Pi_Q.1}
			\begin{aligned}
				\rho_{(\varphi_{\vert \bfA\vert})^*,T}(z-\Pi_h^Q z)&\leq c\,\big(h_T^m+\rho_{(\varphi_{\vert \bfA\vert})^*,\omega_T}(h_T^{\gamma}\vert \nabla^{\gamma} z\vert)\big)\\&\quad + c\,\|\bfF(\cdot,\bfA)-\bfF(\cdot,\langle\bfA\rangle_{\omega_T})\|_{2,\omega_T}^2 \,,
			\end{aligned}
		\end{align}
		where $\lesssim$ depends only on $d$, $k$, $\textcolor{black}{\ell}$, $m$, $p^-$, $p^+$, $[p]_{\alpha,\Omega}$, $\omega_0$, $\|\bfA\|_{p(\cdot),\Omega}$, $\|z\|_{\gamma,p'(\cdot),\Omega}$.
		In particular,~it~holds~that 
		\begin{align}\label{lem:Pi_Q.2}
			\begin{aligned}
				\rho_{(\varphi_{\vert \bfA\vert})^*,\Omega}(z-\Pi_h^Q z)&\leq c\,\big(h^m+\rho_{(\varphi_{\vert\bfA\vert})^*,\Omega}(h^{\gamma}\vert \nabla^{\gamma} z\vert)\big)\\&\quad+
				c\,\sum_{T\in \mathcal{T}_h}{\|\bfF(\cdot,\bfA)-\bfF(\cdot,\langle\bfA\rangle_{\omega_T})\|_{2,\omega_T}^2}\,.  
			\end{aligned}
		\end{align}
	\end{lemma}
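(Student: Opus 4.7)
The strategy is to reduce to the case of a \emph{constant} shift via Lemma~\ref{lem:shift-change}, then exploit that $\Pi_h^Q$ preserves constants, use the Haj\l asz gradient to absorb the Hölder-type oscillation of $z$, and finally invoke the key estimate (Lemma~\ref{lem:key-estimate}) to pass from pointwise mean values back to integrated modulars.

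\emph{Step 1 (shift reduction).} For fixed $T\in\mathcal{T}_h$ set $a\coloneqq|\langle\bfA\rangle_{\omega_T}|$, which is constant on $\omega_T$. Apply Lemma~\ref{lem:shift-change}\eqref{lem:shift-change.3} pointwise with $t=|z(x)-\Pi_h^Qz(x)|$, $\bfA$, and $\bfB=\langle\bfA\rangle_{\omega_T}$, and integrate over $T$. Choosing $\varepsilon$ small and using \eqref{eq:hammera} to identify $|\bfF(\cdot,\bfA)-\bfF(\cdot,\langle\bfA\rangle_{\omega_T})|^2$ gives
\begin{align*}
\rho_{(\varphi_{|\bfA|})^*,T}(z-\Pi_h^Qz)\;\lesssim\;\rho_{(\varphi_a)^*,T}(z-\Pi_h^Qz)+\|\bfF(\cdot,\bfA)-\bfF(\cdot,\langle\bfA\rangle_{\omega_T})\|_{2,T}^2\,.
\end{align*}
Only the first term is nontrivial from now on.

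\emph{Step 2 (constants and decomposition).} Since $\mathbb{R}\subseteq Q_h$, $\Pi_h^Q$ reproduces constants, so $z-\Pi_h^Qz=(z-\langle z\rangle_{\omega_T})-\Pi_h^Q(z-\langle z\rangle_{\omega_T})$. Using the $\Delta_2$-property of $(\varphi_a)^*$ (cf.\ Remark~\ref{rem:phi_a}\eqref{rem:phi_a.4}) I split
\begin{align*}
\rho_{(\varphi_a)^*,T}(z-\Pi_h^Qz)\;\lesssim\;\rho_{(\varphi_a)^*,T}(z-\langle z\rangle_{\omega_T})+\rho_{(\varphi_a)^*,T}\big(\Pi_h^Q(z-\langle z\rangle_{\omega_T})\big)\,.
\end{align*}
For the first term, the defining inequality \eqref{eq:hajlasz_gradient} with $g=|\nabla^\gamma z|$ and $|x-y|\lesssim h_T$ on $\omega_T$ gives, for a.e.\ $x\in\omega_T$,
\begin{align*}
|z(x)-\langle z\rangle_{\omega_T}|\;\le\;\langle|z(x)-z(\cdot)|\rangle_{\omega_T}\;\lesssim\; h_T^\gamma\bigl(|\nabla^\gamma z|(x)+\langle|\nabla^\gamma z|\rangle_{\omega_T}\bigr)\,.
\end{align*}
For the second term, local $L^1$-stability of $\Pi_h^Q$ (Assumption~\ref{ass:PiY}) together with the $L^\infty$-$L^1$ norm equivalence on $\mathbb{P}^l(T)$ yields $\|\Pi_h^Q(z-\langle z\rangle_{\omega_T})\|_{\infty,T}\lesssim\langle|z-\langle z\rangle_{\omega_T}|\rangle_{\omega_T}\lesssim h_T^\gamma\langle|\nabla^\gamma z|\rangle_{\omega_T}$. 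Using once more the $\Delta_2$-property and monotonicity of $(\varphi_a)^*(x,\cdot)$, both parts are controlled by
\begin{align*}
\rho_{(\varphi_a)^*,T}(h_T^\gamma|\nabla^\gamma z|)+\int_T(\varphi_a)^*\bigl(x,\,h_T^\gamma\langle|\nabla^\gamma z|\rangle_{\omega_T}\bigr)\,\mathrm{d}x\,.
\end{align*}
The first summand is already of the form appearing in~\eqref{lem:Pi_Q.1} (on $T$, hence bounded by the $\omega_T$-modular).

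\emph{Step 3 (key estimate).} Only the second summand remains. I apply Lemma~\ref{lem:key-estimate} on the ball (or cube) circumscribing $\omega_T$ with $\widetilde m=m/d$ to obtain, for every $x\in\omega_T$,
\begin{align*}
(\varphi_a)^*\bigl(x,h_T^\gamma\langle|\nabla^\gamma z|\rangle_{\omega_T}\bigr)\;\lesssim\;\langle(\varphi_a)^*(\cdot,h_T^\gamma|\nabla^\gamma z|)\rangle_{\omega_T}+|\omega_T|^{\widetilde m}\,.
\end{align*}
Integrating over $T$ and using $|\omega_T|\sim h_T^d$ gives exactly $\rho_{(\varphi_a)^*,\omega_T}(h_T^\gamma|\nabla^\gamma z|)+h_T^{m}$, as required. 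The global estimate \eqref{lem:Pi_Q.2} follows by summing \eqref{lem:Pi_Q.1} over $T\in\mathcal{T}_h$ and invoking the uniformly finite overlap of the patches $\{\omega_T\}_{T\in\mathcal{T}_h}$ (governed by $\omega_0$).

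\emph{Main obstacle.} The delicate point is verifying the smallness hypothesis $a+\langle h_T^\gamma|\nabla^\gamma z|\rangle_{\omega_T}\le c_0|\omega_T|^{-\widetilde m}$ required by Lemma~\ref{lem:key-estimate}. This is where the dependence of the final constant on $\|\bfA\|_{p(\cdot),\Omega}$ and $\|z\|_{\gamma,p'(\cdot),\Omega}$ enters: the generalized Hölder inequality \eqref{eq:gen_hoelder} bounds $\langle|\bfA|\rangle_{\omega_T}$ and $\langle|\nabla^\gamma z|\rangle_{\omega_T}$ by $\|\bfA\|_{p(\cdot),\Omega}\|1\|_{p'(\cdot),\omega_T}/|\omega_T|$ and similarly for $|\nabla^\gamma z|$, producing a bound of order $|\omega_T|^{-1}$, which is absorbed into $|\omega_T|^{-\widetilde m}$ once $\widetilde m\ge 1$ and $h$ is small enough.
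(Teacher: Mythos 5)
Your proof follows the same route as the paper's: reduce the pointwise shift $\vert\bfA\vert$ to the constant shift $a=\vert\langle\bfA\rangle_{\omega_T}\vert$ via Lemma~\ref{lem:shift-change}\eqref{lem:shift-change.3}, exploit that $\Pi_h^Q$ reproduces constants to decompose $z-\Pi_h^Qz$, control the $\Pi_h^Q$-term via local $L^1$-stability, absorb the oscillation using the Haj\l asz gradient and the key estimate, and verify the smallness hypothesis with the generalized H\"older inequality. The one genuine difference in technique is that you re-derive the local modular stability of $\Pi_h^Q$ from scratch via the $L^\infty$-$L^1$ inverse estimate and monotonicity of $(\varphi_a)^*(x,\cdot)$, whereas the paper simply invokes the already-established Lemma~\ref{lem:stab_Pi_Q}\eqref{lem:stab_Pi_Q.local}. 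Your route is equivalent and perfectly valid, merely less economical, as it duplicates part of the argument encapsulated in that lemma.

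There is, however, a gap at the end. After Step 3 you have reduced matters to the bound $\rho_{(\varphi_a)^*,\omega_T}(h_T^\gamma\vert\nabla^\gamma z\vert)+h_T^m$ with the \emph{constant} shift $a=\vert\langle\bfA\rangle_{\omega_T}\vert$, and declare this ``as required''. But the right-hand side of \eqref{lem:Pi_Q.1} involves $(\varphi_{\vert\bfA\vert})^*$ with the \emph{pointwise} shift $\vert\bfA(x)\vert$, not $(\varphi_a)^*$, and these two modulars are not the same quantity. You need one further application of Lemma~\ref{lem:shift-change}\eqref{lem:shift-change.3} in the reverse direction, i.e.,
\begin{align*}
\rho_{(\varphi_{\vert\langle\bfA\rangle_{\omega_T}\vert})^*,\omega_T}(h_T^\gamma\vert\nabla^\gamma z\vert)\lesssim \rho_{(\varphi_{\vert\bfA\vert})^*,\omega_T}(h_T^\gamma\vert\nabla^\gamma z\vert)+\|\bfF(\cdot,\bfA)-\bfF(\cdot,\langle\bfA\rangle_{\omega_T})\|_{2,\omega_T}^2\,,
\end{align*}
which produces a second copy of the $\bfF$-oscillation term that is then absorbed into the one already present from Step~1. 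The paper performs exactly this step in \eqref{eq:Pi_Q.4}; without it, the conclusion of the lemma does not follow from what you have established.
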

	
	\begin{proof}
		\textit{ad \eqref{lem:Pi_Q.1}.} Using the shift change Lemma \ref{lem:shift-change}\eqref{lem:shift-change.3}, for every $T\in \mathcal{T}_h$, we find that\enlargethispage{4mm}
		\begin{align}\label{eq:Pi_Q.1}
			\begin{aligned}
				\rho_{(\varphi_{\vert \bfA\vert})^*,T}(z-\Pi_h^Q z)&\leq c\, \rho_{(\varphi_{\vert \langle\bfA\rangle_{\omega_T}\vert})^*,T}(z-\Pi_h^Q z)\\&\quad+c\,\|\bfF(\cdot,\bfA)-\bfF(\cdot,\langle\bfA\rangle_{\omega_T})\|_{2,T}^2
				\,.\end{aligned}
		\end{align}
		Since \hspace*{-0.1mm}$\vert \langle\bfA\rangle_{\omega_T}\vert +\langle \vert z-\langle z\rangle_{\omega_T}\vert \rangle_T\hspace*{-0.1em}\leq\hspace*{-0.1em} c\,\vert T\vert^{-1}\hspace*{-0.1em}\leq\hspace*{-0.1em} c\,\vert T\vert^{-m}$, \hspace*{-0.1mm}where \hspace*{-0.1mm}$c\hspace*{-0.1em}>\hspace*{-0.1em}0$ \hspace*{-0.1mm}depends \hspace*{-0.1mm}only \hspace*{-0.1mm}on 
	\hspace*{-0.1mm}$d$, 	\hspace*{-0.1mm}$k$, \hspace*{-0.1mm}$\textcolor{black}{\ell}$, \hspace*{-0.1mm}$p^-\!$,~\hspace*{-0.1mm}$p^+\!$,~\hspace*{-0.1mm}$\omega_0$,~\hspace*{-0.1mm}$\|\bfA\|_{p(\cdot),\Omega}$, and $\|z\|_{p'(\cdot),\Omega}$, 
		using  Lemma \ref{lem:stab_Pi_Q}\eqref{lem:stab_Pi_Q.local} (with $a=\vert \langle\bfA\rangle_{\omega_T}\vert$) and $\Pi_h^Q\langle z\rangle_{\omega_T}=\langle z\rangle_{\omega_T}$,~for~every~${ T\in \mathcal{T}_h} $,~we~get
		\begin{align}\label{eq:Pi_Q.2}
			\begin{aligned}
				\rho_{(\varphi_{\vert \langle\bfA\rangle_{\omega_T}\vert})^*,T}(z-\Pi_h^Q z)&\leq c\,\rho_{(\varphi_{\vert \langle\bfA\rangle_{\omega_T}\vert})^*,T}(z-\langle z\rangle_{\omega_T})
				\\&\quad+c\,\rho_{(\varphi_{\vert \langle\bfA\rangle_{\omega_T}\vert})^*,T}(\Pi_h^Q(z-\langle z\rangle_{\omega_T}))
				\\&\leq c\,\rho_{(\varphi_{\vert \langle\bfA\rangle_{\omega_T}\vert})^*,\omega_T}(z-\langle z\rangle_{\omega_T})+c\,h_T^{m}\,.
			\end{aligned}
		\end{align}
		Thus, using \eqref{eq:Pi_Q.2} together with  \eqref{eq:Pi_Q_norm.2}, the key estimate (\textit{cf}.\  Lemma \ref{lem:key-estimate} (with $a=\vert \langle\bfA\rangle_{\omega_T}\vert$)), since $\vert \langle\bfA\rangle_{\omega_T}\vert +h_T^{\gamma} \langle\vert  \nabla^\gamma z\vert \rangle_{\omega_T}\leq c\,\vert T\vert^{-1}\leq c\,\vert T\vert ^{-m}$, where $c>0$ depends only on $d$, $k$, $\textcolor{black}{\ell}$, $p^-$, $p^+$, $\omega_0$, $\|\bfA\|_{p(\cdot),\Omega}$, and $\|\vert \nabla^{\gamma}z\vert\|_{p'(\cdot),\Omega}$, and the shift change Lemma \ref{lem:shift-change}\eqref{lem:shift-change.3}, for every $T\in \mathcal{T}_h$, we find that
		\begin{align}\label{eq:Pi_Q.4}
			\begin{aligned}
				\rho_{(\varphi_{\vert \langle\bfA\rangle_{\omega_T}\vert})^*,T}(z-\Pi_h^Q z)&\leq c\,\rho_{(\varphi_{\vert \langle\bfA\rangle_{\omega_T}\vert})^*,\omega_T}(h_T^{\gamma} \vert\nabla^\gamma z\vert)
				\\&\quad +c\,\rho_{(\varphi_{\vert \langle\bfA\rangle_{\omega_T}\vert})^*,\omega_T}(h_T^{\gamma} \langle\vert  \nabla^\gamma z\vert \rangle_{\omega_T})+c\,h_T^m
				\\
				&\leq c\,\rho_{(\varphi_{\vert \langle\bfA\rangle_{\omega_T}\vert})^*,\omega_T}(h_T^{\gamma} \vert\nabla^\gamma z\vert)
				+c\,h_T^m
				\\
				&\leq c\,\rho_{(\varphi_{\vert \bfA
						\vert})^*,\omega_T}(h_T^{\gamma} \vert\nabla^\gamma z\vert)
				\\&\quad +
				c\,\|\bfF(\cdot,\bfA)-\bfF(\cdot,\langle\bfA\rangle_{\omega_T})\|_{2,\omega_T}^2 +c\,h_T^m\,.
			\end{aligned}
		\end{align}
		Eventually, using \eqref{eq:Pi_Q.4} in \eqref{eq:Pi_Q.1}, we arrive at the first claimed  interpolation error estimate \eqref{lem:Pi_Q.1}. 
		
		\textit{ad \eqref{lem:Pi_Q.2}.} The second  interpolation error estimate \eqref{lem:Pi_Q.2} follows from the first via summation.
	\end{proof}

	\newpage 
	\section{A priori error estimates}\label{sec:a_priori}
	
	\hspace{5mm}In this section, we derive \textit{a priori} error estimates for the approximation of the steady $p(\cdot)$-Navier--Stokes equations  \eqref{eq:p-navier-stokes} (\textit{i.e.}, Problem (\hyperlink{Q}{Q}) and Problem (\hyperlink{P}{P}), respectively)
	through the discrete  steady $p(\cdot)$-Navier--Stokes equations (\textit{i.e.}, Problem (\hyperlink{Qh}{Q$_h$}) and  Problem (\hyperlink{Ph}{P$_h$}), respectively).\enlargethispage{8mm}

	\begin{theorem}
		\label{thm:error_FE}
		Let 
		$p\in C^{0,\alpha}(\overline{\Omega})$ with $p^-\ge \frac{3d}{d+2}$ and $\alpha\in (0,1]$, let $\delta> 0$, let  $\bfF(\cdot,\bfD\bfv)\in N^{\beta,2}(\Omega;\mathbb{R}^{d\times d})$ with $\beta\in (0,1]$ and let $q\in H^{\gamma,p'(\cdot)}(\Omega)$ with $\gamma\in (\frac{\alpha}{\min\{2,(p^+)'\}},1]$. Moreover, let $h\sim h_T$~for~all~$T\in \mathcal{T}_h$.
		Then, there exists a~constant~$s>1$, which can chosen to be close to $1$ if $h>0$ is close to $0$, and a constant $c_0 >0$, depending only on
		$d$, $k$, $\textcolor{black}{\ell}$, $p^- $, $p^+$, $[p]_{\alpha,\Omega}$, $\delta^{-1}$, $\omega_0$,  $\Omega$, and $s$, such that if $\|\bfD
		\bfv\|_{r(\cdot),\Omega}\le c_0$,~where $r\coloneqq \min\{2,p\}\in C^{0,\alpha}(\overline{\Omega})$, then
		\begin{align*}
			\|\bfF_h(\cdot,\bfD \bfv_h)-\bfF_h(\cdot,\bfD
			\bfv)\|_{2,\Omega}^2+ \smash{\| q_h-q\|_{p'(\cdot),\Omega}^{(r^-)'}}&\lesssim  h^{2\alpha}\,\big(1+\rho_{p(\cdot)s,\Omega}(\bfD\bfv)\big)\\&\quad+
			h^{2\beta}\,[\bfF(\cdot,\bfD\bfv)]_{N^{\beta,2}(\Omega)}^2\\&\quad+
			\smash{\rho_{(\varphi_{\vert \bfD\bfv\vert})^*,\Omega}(h^{\gamma}\vert \nabla^{\gamma}q\vert )+h^{(r^-)'\gamma}\|\vert \nabla^{\gamma}q\vert \|_{p'(\cdot),\Omega}^{(r^-)'}}\,,
		\end{align*}
		where $\lesssim $ depends on 
		$d$, $k$, $\textcolor{black}{\ell}$,  $p^- $, $p^+$, $[p]_{\alpha,\Omega}$, $\delta^{-1}$, $\omega_0$, $\Omega$,  $s$, $c_0$, and $\|q\|_{\gamma,p'(\cdot),\Omega}$.
	\end{theorem}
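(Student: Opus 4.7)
The plan is to combine the monotonicity of the extra-stress tensor, Galerkin orthogonality, and the fractional interpolation estimates from Section~\ref{sec:fractional_interpolation_estimates}. I begin by setting $\bfv^{*} \coloneqq \Pi_h^V \bfv$ and observing that, by Assumption~\ref{ass:proj-div}(i) together with $\divo \bfv = 0$, one has $\bfv^{*} \in \Vo_h(0)$, so that $\mathbf{e}_h \coloneqq \bfv^{*} - \bfv_h \in \Vo_h(0)$ is an admissible test function both in Problem~(\hyperlink{Ph}{P$_h$}) and in Problem~(\hyperlink{Q}{Q}) (as a member of $\Vo$). Using the discrete version of Proposition~\ref{lem:hammer} (cf.\ Remark~\ref{rem:uniform}), the target natural distance is equivalent to $(\bfS_h(\cdot,\bfD\bfv^{*}) - \bfS_h(\cdot,\bfD\bfv_h),\bfD\mathbf{e}_h)_\Omega$. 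Subtracting Problem~(\hyperlink{Qh}{Q$_h$}) tested on $\mathbf{e}_h$ from Problem~(\hyperlink{Q}{Q}) tested on $\mathbf{e}_h$, and using $(\Pi_h^Q q, \divo \mathbf{e}_h)_\Omega = 0$ (since $\mathbf{e}_h \in \Vo_h(0)$), I arrive at
\begin{align*}
    \|\bfF_h(\cdot,\bfD\bfv^{*}) - \bfF_h(\cdot,\bfD\bfv_h)\|_{2,\Omega}^2
    &\lesssim (\bfS_h(\cdot,\bfD\bfv^{*}) - \bfS(\cdot,\bfD\bfv),\bfD\mathbf{e}_h)_\Omega\\
    &\quad + |c(\bfv,\bfv,\mathbf{e}_h) - c_h(\bfv_h,\bfv_h,\mathbf{e}_h)| + |(q - \Pi_h^Q q,\divo\mathbf{e}_h)_\Omega|\,.
\end{align*}

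The first term I split as $(\bfS_h(\cdot,\bfD\bfv^{*}) - \bfS_h(\cdot,\bfD\bfv),\bfD\mathbf{e}_h)_\Omega + (\bfS_h(\cdot,\bfD\bfv) - \bfS(\cdot,\bfD\bfv),\bfD\mathbf{e}_h)_\Omega$. The first summand is handled via the generalized Hölder inequality~\eqref{eq:gen_hoelder} with the pair $(\varphi_{\vert\bfD\bfv\vert})^{*}$--$\varphi_{\vert\bfD\bfv\vert}$, followed by the shift-change Lemma~\ref{lem:shift-change} and the $\varepsilon$-Young inequality~\eqref{ineq:young}, yielding $\varepsilon\|\bfF_h(\cdot,\bfD\bfv^{*}) - \bfF_h(\cdot,\bfD\bfv_h)\|_{2,\Omega}^2 + c_\varepsilon\|\bfF_h(\cdot,\bfD\bfv)-\bfF_h(\cdot,\bfD\bfv^{*})\|_{2,\Omega}^2$, where the latter is controlled by Lemma~\ref{lem:Pi_div_F}. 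The $\bfS_h$-versus-$\bfS$ discrepancy is handled by a pointwise comparison of $\bfS_h$ and $\bfS$ (of the type used in the proof of Lemma~\ref{lem:Pi_div_F.2}), contributing an $h^{2\alpha}$-term. The pressure term $|(q - \Pi_h^Q q,\divo\mathbf{e}_h)_\Omega|$ is again treated by generalized Hölder against $(\varphi_{\vert\bfD\bfv\vert})^{*}$--$\varphi_{\vert\bfD\bfv\vert}$, the $\varepsilon$-Young inequality, and Lemma~\ref{lem:Pi_Q}, which produces an absorbable term plus the modular $\rho_{(\varphi_{\vert\bfD\bfv\vert})^{*},\Omega}(h^\gamma\vert\nabla^\gamma q\vert)$ and higher-order Nikolski\u{\i}-type remainders.

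For the convective term I exploit $\divo\bfv = 0$ to rewrite $c(\bfv,\bfv,\mathbf{e}_h)$ in the skew-symmetric form $\tfrac{1}{2}(\mathbf{e}_h\otimes\bfv,\nabla\bfv)_\Omega - \tfrac{1}{2}(\bfv\otimes\bfv,\nabla\mathbf{e}_h)_\Omega$, so that the difference $c(\bfv,\bfv,\mathbf{e}_h) - c_h(\bfv_h,\bfv_h,\mathbf{e}_h)$ decomposes into four trilinear contributions that are bilinear in $\bfv - \bfv_h = (\bfv - \bfv^{*}) + \mathbf{e}_h$. Each contribution is bounded by Hölder's inequality at an exponent triple adapted to $r(\cdot) = \min\{2,p\}$, using Korn's and Sobolev's inequalities (Theorems~\ref{thm:korn}, \ref{thm:sobolev}) together with the improved integrability of Lemma~\ref{lem:improved_integrability} --- the hypothesis $p^- \ge \tfrac{3d}{d+2}$ being precisely what places $\bfv\otimes\bfv$ in a sufficient Lebesgue space --- and Corollary~\ref{cor:Pi_div_F} for the $L^{r(\cdot)}$ interpolation error of $\bfD(\bfv - \bfv^{*})$. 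The smallness assumption $\|\bfD\bfv\|_{r(\cdot),\Omega} \leq c_0$ combined with $\varepsilon$-Young absorbs the factors carrying $\bfD\mathbf{e}_h$ into the left-hand side, concluding the velocity bound.

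For the pressure I apply the discrete inf-sup stability (Lemma~\ref{lem:ismd}) to $\Pi_h^Q q - q_h \in \Qo_h$: subtracting Problem~(\hyperlink{Q}{Q}) and Problem~(\hyperlink{Qh}{Q$_h$}) tested on $\bfz_h \in \Vo_h$ rewrites $(\Pi_h^Q q - q_h,\divo\bfz_h)_\Omega$ as stress, convective, and interpolation differences that are bounded exactly as before, but Hölder is now paired to leave a single factor $\|\nabla\bfz_h\|_{p_h(\cdot),\Omega}$. Passing between $p_h'(\cdot)$ and $p'(\cdot)$ by Lemma~\ref{lem:norm_equiv}, and adding the triangle term $\|q - \Pi_h^Q q\|_{p'(\cdot),\Omega}$ controlled by Lemma~\ref{lem:Pi_Q_norm}, yields the claimed pressure estimate. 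The exponent $(r^-)'$ arises from the shift-change arithmetic: when $p^- < 2$, the modular $\rho_{(\varphi_{\vert\bfD\bfv\vert})^{*},\Omega}$ controls the $\|\cdot\|_{p'(\cdot),\Omega}$-norm only after raising to the $(r^-)'$ power. The main obstacle is the convective term, for which the variable exponent prevents the standard $L^4$-type argument; tracking the correct Hölder triples and using the smallness of $\|\bfD\bfv\|_{r(\cdot),\Omega}$ to absorb $\bfD\mathbf{e}_h$ is the delicate point, while the threshold $\gamma > \alpha/\min\{2,(p^+)'\}$ is used in the pressure step to ensure that the $h^\gamma$-contribution dominates the $h^\alpha$-residuals from the one-point quadrature approximation of $p(\cdot)$.
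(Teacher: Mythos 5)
Your overall architecture matches the paper's: monotonicity of $\bfS_h$, Galerkin orthogonality, $\varepsilon$-Young absorption, the fractional interpolation estimates of Section~\ref{sec:fractional_interpolation_estimates}, the skew-symmetric decomposition of the convective difference, and discrete inf-sup for the pressure. Working with $\bfe_h \coloneqq \Pi_h^V\bfv - \bfv_h \in \Vo_h(0)$ as the test function (instead of first splitting $\bfv_h-\bfv$ and then projecting) is a legitimate reformulation and produces the same terms; the observation $(\Pi_h^Q q, \divo\bfe_h)_\Omega = 0$ for $\bfe_h\in\Vo_h(0)$ is correct.

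There is, however, a genuine gap in your treatment of the pressure term $(q - \Pi_h^Q q, \divo\bfe_h)_\Omega$. After $\varepsilon$-Young against the discrete pair $(\varphi_h)_{\vert\bfD\bfv\vert}$--$((\varphi_h)_{\vert\bfD\bfv\vert})^*$, you are left with $\rho_{((\varphi_h)_{\vert\bfD\bfv\vert})^*,\Omega}(q-\Pi_h^Qq)$. To apply Lemma~\ref{lem:Pi_Q} you must first pass to the continuous modular, and Proposition~\ref{lem:A-Ah}\eqref{eq:phih-phi} with the scaling $\lambda=h^{\widetilde\gamma}$, $\widetilde\gamma\coloneqq\alpha/\min\{2,(p^+)'\}$, produces the correction term $h^{2\alpha}\,\rho_{p'(\cdot)s,\Omega}\big(h^{-\widetilde\gamma}(q-\Pi_h^Q q)\big)$. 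This is where your argument, as written, does not close: $q-\Pi_h^Qq$ is \emph{not} a discrete function, so one cannot invoke the discrete local norm equivalences (cf.\ \cite[Lem.\ 12.1]{EG21}) to trade the $p'(\cdot)s$-modular for a $p_h'(\cdot)$-modular at the price of $h^{d(1-\tilde{s})}$. Lemma~\ref{lem:Pi_Q_norm}\eqref{lem:Pi_Q_norm.global} only controls $\|q-\Pi_h^Qq\|_{p'(\cdot),\Omega}$, not the $p'(\cdot)s$-norm, so the rescaled modular cannot be shown to be $\lesssim 1$ along this route. The paper circumvents this precisely by observing that $\divo\bfe_h\in\mathbb{P}^{k-1}(\mathcal{T}_h)$, so that $(q,\divo\bfe_h)_\Omega=(\mathrm{P}_h^{k-1}q,\divo\bfe_h)_\Omega$; the pressure residual is then $\Pi_h^Q q-\mathrm{P}_h^{k-1}q$, a \emph{difference of discrete functions}, for which the inverse-inequality estimate \eqref{thm:error_FE.6.1}--\eqref{thm:error_FE.6.4} applies, and it is exactly there that the threshold $\gamma>\widetilde\gamma$ is consumed. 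Introducing $\mathrm{P}_h^{k-1}q$ is the key idea you are missing, and without it the pressure step is not merely ``delicate'' but open.

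A secondary remark: you identify the convective term as ``the main obstacle,'' but in this proof it is actually handled along relatively standard lines once the decomposition, Sobolev/Korn, Lemma~\ref{lem:sobolev2F_discrete}, and the smallness of $\|\bfD\bfv\|_{r(\cdot),\Omega}$ are in place. The technically delicate part is the pressure consistency term $J_h^1$ described above; this is where the new condition $\gamma\in(\alpha/\min\{2,(p^+)'\},1]$ earns its place in the hypotheses.
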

	
	\begin{remark}[Smallness condition in Theorem \ref{thm:error_FE}]
		The smallness condition $\|\bfD
		\bfv\|_{r(\cdot),\Omega}\le c_0$ for some $c_0>0$ in Theorem \ref{thm:error_FE} is no further restriction of the assumptions, due to the following two aspects:
		\begin{itemize}[leftmargin=!,topsep=2pt,noitemsep,labelwidth=\widthof{(ii)}]
			\item[(i)] \textup{Uniqueness:} The smallness condition ensures the uniqueness of a solution to Problem (\hyperlink{Q}{Q}) (and Problem~(\hyperlink{P}{P}), respectively) as well as of a discrete solution to Problem (\hyperlink{Qh}{Q$_h$}) (and Problem (\hyperlink{Ph}{P$_h$}), respectively) and, consequently, the well-posedness of the \textit{a priori} error estimate in Theorem~\ref{thm:error_FE};
			
			\item[(ii)] \textup{Regularity:} The smallness condition  ensures higher regularity  of solutions to Problem (\hyperlink{Q}{Q}) (or~Problem~(\hyperlink{P}{P})), \textit{i.e.}, for $\bfv\in W^{2,2}(\Omega;\mathbb{R}^d)\cap C^{1,\gamma}(\overline{\Omega};\mathbb{R}^d)$~for~some~${\gamma>0}$ and $q\in W^{1,2}(\Omega)\cap C^{0,\gamma}(\overline{\Omega})$~(\textit{cf}.~\cite{CG09}), so \hspace*{-0.1mm}that \hspace*{-0.1mm}we \hspace*{-0.1mm}expect \hspace*{-0.1mm}it \hspace*{-0.1mm}to \hspace*{-0.1mm}be \hspace*{-0.1mm}equally \hspace*{-0.1mm}needed \hspace*{-0.1mm}to \hspace*{-0.1mm}ensure \hspace*{-0.1mm}the \hspace*{-0.1mm}fractional \hspace*{-0.1mm}regularity \hspace*{-0.1mm}assumptions \hspace*{-0.1mm}in \hspace*{-0.1mm}Theorem~\hspace*{-0.1mm}\ref{thm:error_FE}.
		\end{itemize} 
	\end{remark}
	
	As an immediate consequence of Theorem \ref{thm:error_FE}, we obtain two error estimates with explicit~decay~rates.
	
	\begin{corollary}\label{cor:error_FE}
		Let the assumptions of Theorem \ref{thm:error_FE} be satisfied. Then, there exists a constant  $s>1$, which can chosen to be close to $1$ if $h>0$ is close to $0$, and a constant $c_0 >0$, depending only on
		$d$, $k$, $\textcolor{black}{\ell}$,  $p^- $, $p^+$, $[p]_{\alpha,\Omega}$, $\delta^{-1}$, $\omega_0$, $\Omega$,  and $s$, such that if $\|\bfD
		\bfv\|_{r(\cdot),\Omega}\le c_0$, where $r\coloneqq \min\{2,p\}\in C^{0,\alpha}(\overline{\Omega})$,~then
		\begin{align}
			\|\bfF_h(\cdot,\bfD \bfv_h)-\bfF_h(\cdot,\bfD
			\bfv)\|_{2,\Omega}^2+ \smash{\| q_h-q\|_{p'(\cdot),\Omega}^{(r^-)'}}&\lesssim h^{2\alpha}\,\big(1+\rho_{p(\cdot)s,\Omega}(\bfD\bfv)\big)\notag\\&\quad+
			h^{2\beta}\,[\bfF(\cdot,\bfD\bfv)]_{N^{\beta,2}(\Omega)}^2\label{cor:error_FE.1}\\&\quad+
			\smash{h^{\min\{2,(p^+)'\}\gamma}}\big(\rho_{(\varphi_{\vert \bfD\bfv\vert})^*,\Omega}(\vert\nabla^{\gamma}q\vert)
			+ \smash{\|\vert \nabla^{\gamma}q\vert \|_{p'(\cdot),\Omega}^{(r^-)'}}\big)\,,\notag
		\end{align}
		where $\lesssim $ depends on 
		$d$, $k$, $\textcolor{black}{\ell}$, $p^- $, $p^+$, $[p]_{\alpha,\Omega}$, $\delta^{-1}$, $\omega_0$,  $\Omega$,  $s$, $c_0$, and $\|q\|_{\gamma,p'(\cdot),\Omega}$. If, in addition, $p^-\ge 2$ and $(\delta+\vert\bfD\bfv\vert)^{\smash{\frac{2-p(\cdot)}{2}}}\vert \nabla^{\gamma} q\vert \in L^2(\Omega)$, then
		\begin{align}
			\|\bfF_h(\cdot,\bfD \bfv_h)-\bfF_h(\cdot,\bfD
			\bfv)\|_{2,\Omega}^2+ \smash{\| q_h-q\|_{p'(\cdot),\Omega}^2}
			&\lesssim  h^{2\alpha}\,\big(1+\rho_{p(\cdot)s,\Omega}(\bfD\bfv)\big)\notag\\&\quad+
			h^{2\beta}\,[\bfF(\cdot,\bfD\bfv)]_{N^{\beta,2}(\Omega)}^2\label{cor:error_FE.2}\\&\quad +h^{2\gamma}\,\big( \smash{\|(\delta\hspace*{-0.05em}+\hspace*{-0.05em}\vert\bfD\bfv\vert)^{\smash{\frac{2-p(\cdot)}{2}}}\vert\nabla^{\gamma} q\vert\|_{2,\Omega}^2 \hspace*{-0.05em}+\hspace*{-0.05em}\|\vert \nabla^{\gamma}q\vert \|_{p'(\cdot),\Omega}^2} \big)\,,\notag
		\end{align}
		where $\lesssim $ depends on 
		$d$, $k$, $\textcolor{black}{\ell}$, $p^- $, $p^+$, $[p]_{\alpha,\Omega}$, $\delta^{-1}$, $\omega_0$,  $\Omega$,  $s$, $c_0$, and $\|q\|_{\gamma,p'(\cdot),\Omega}$.
	\end{corollary}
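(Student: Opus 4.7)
The plan is to deduce both estimates directly from Theorem~\ref{thm:error_FE} by bounding the last two terms
\begin{align*}
\rho_{(\varphi_{\vert \bfD\bfv\vert})^*,\Omega}(h^{\gamma}\vert \nabla^{\gamma}q\vert )+h^{(r^-)'\gamma}\|\vert \nabla^{\gamma}q\vert \|_{p'(\cdot),\Omega}^{(r^-)'}
\end{align*}
by the desired quantities. All the remaining terms on the right-hand side are identical to those in Theorem~\ref{thm:error_FE}, hence pass through unchanged.

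For the estimate \eqref{cor:error_FE.1}, the key ingredient is the uniform $\Delta_2$-scaling of the family $\{(\varphi_a)^*\}_{a\ge 0}$ recorded in \eqref{rem:phi_a.4}. Together with the representation \eqref{rem:phi_a.2}, it yields, for every $\lambda\in (0,1]$, $a\ge 0$, $t\ge 0$, and a.e.\ $x\in \Omega$, the pointwise scaling estimate
\begin{align*}
(\varphi_a)^*(x,\lambda t)\lesssim \lambda^{\min\{2,p'(x)\}}\,(\varphi_a)^*(x,t)\leq \lambda^{\min\{2,(p^+)'\}}\,(\varphi_a)^*(x,t)\,,
\end{align*}
where in the last step I use that $p'(x)\ge (p^+)'$. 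Applying this with $\lambda=h^{\gamma}\in (0,1]$ (taking $h\leq 1$, which may be imposed since the statement is asymptotic in $h$), $a=\vert \bfD\bfv\vert$, and $t=\vert \nabla^{\gamma}q\vert$, and integrating, I obtain
\begin{align*}
\rho_{(\varphi_{\vert \bfD\bfv\vert})^*,\Omega}(h^{\gamma}\vert \nabla^{\gamma}q\vert )\lesssim h^{\min\{2,(p^+)'\}\gamma}\,\rho_{(\varphi_{\vert \bfD\bfv\vert})^*,\Omega}(\vert \nabla^{\gamma}q\vert )\,.
\end{align*}
For the Luxembourg norm term, I use that $(r^-)'=\max\{2,(p^-)'\}\ge 2\ge \min\{2,(p^+)'\}$, so that $h^{(r^-)'\gamma}\leq h^{\min\{2,(p^+)'\}\gamma}$. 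Combining both bounds with Theorem~\ref{thm:error_FE} yields \eqref{cor:error_FE.1}.

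For the estimate \eqref{cor:error_FE.2}, the additional hypothesis $p^-\ge 2$ implies $p'(x)\leq 2$ a.e.\ in $\Omega$, i.e.\ $p'(x)-2\leq 0$. Consequently, in the representation \eqref{rem:phi_a.2},
\begin{align*}
((\delta+a)^{p(x)-1}+t)^{p'(x)-2}\leq ((\delta+a)^{p(x)-1})^{p'(x)-2}=(\delta+a)^{2-p(x)}\,,
\end{align*}
using $(p(x)-1)(p'(x)-2)=2-p(x)$. Hence $(\varphi_a)^*(x,t)\lesssim (\delta+a)^{2-p(x)}\,t^2$, and substituting $a=\vert\bfD\bfv\vert$, $t=h^{\gamma}\vert\nabla^\gamma q\vert$ and integrating gives
\begin{align*}
\rho_{(\varphi_{\vert \bfD\bfv\vert})^*,\Omega}(h^{\gamma}\vert \nabla^{\gamma}q\vert )\lesssim h^{2\gamma}\,\|(\delta+\vert\bfD\bfv\vert)^{(2-p(\cdot))/2}\vert\nabla^{\gamma} q\vert\|_{2,\Omega}^2\,.
\end{align*}
Since $p\ge p^-\ge 2$ on $\overline{\Omega}$, we have $r=\min\{2,p\}\equiv 2$, so $(r^-)'=2$ and the second term in Theorem~\ref{thm:error_FE} is exactly $h^{2\gamma}\|\vert \nabla^{\gamma}q\vert \|_{p'(\cdot),\Omega}^{2}$. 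Combining yields \eqref{cor:error_FE.2}. The only genuinely subtle point is tracking the correct exponents emerging from the $\Delta_2$-scaling of $(\varphi_a)^*$ versus those generated by the duality pairing $r^-\leftrightarrow (r^-)'$; once these are reconciled the rest is routine.
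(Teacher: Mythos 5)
Your proposal is correct and follows essentially the same route as the paper: both estimates are obtained from Theorem~\ref{thm:error_FE} by the pointwise scaling bound $(\varphi_a)^*(x,\lambda t)\lesssim \lambda^{\min\{2,p'(x)\}}(\varphi_a)^*(x,t)$ (which you correctly justify from the representation~\eqref{rem:phi_a.2}, though the invocation of the $\Delta_2$-constant in~\eqref{rem:phi_a.4} is not really what is at play), the observation $p'(x)\ge (p^+)'$, and, for~\eqref{cor:error_FE.2}, the bound $(\varphi_a)^*(x,t)\lesssim (\delta+a)^{2-p(x)}t^2$ valid when $p^-\ge 2$. The only detail you spell out that the paper leaves implicit is the reconciliation of the Luxembourg-norm exponent $(r^-)'$ with $\min\{2,(p^+)'\}$ (resp.\ with $2$), and this is handled correctly.
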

	
	Essential ingredient in the treatment of the (discrete) convective term (\textit{cf}.\  \eqref{def:bh}) in the proof of Theorem \ref{thm:error_FE} is the following lemma that relates the $W^{1,r(\cdot)}(\Omega;\mathbb{R}^{d\times d})$-semi-norm of the discrete and the exact velocity to their discrete natural distance. It is decisively based on 
	Lemma \ref{lem:norm_equiv} and Lemma \ref{lem:sobolev2F}.
	
	\begin{lemma}\label{lem:sobolev2F_discrete}
		Let the assumptions of Theorem \ref{thm:error_FE} be satisfied. Then, there exists a constant  $s>1$, which can chosen to be close to $1$ if $h>0$ is close to $0$, such that 
		\begin{align*}
			\|\bfD\bfv_h-\bfD\bfv\|_{r(\cdot),\Omega}^2&\lesssim \|\bfF_h(\cdot,\bfD\bfv_h)-\bfF_h(\cdot,\bfD\bfv)\|_{2,\Omega}^2\\&\quad+ h^{2\alpha}\,\big(1+\rho_{p(\cdot)s,\Omega}(\bfD\bfv)\big)+h^{2\beta}\,[\bfF(\cdot,\bfD\bfv)]_{N^{\beta,2}(\Omega)}^2 \,,
		\end{align*}
		where $\lesssim$ depends only on $d$, $k$, $\textcolor{black}{\ell}$, $p^- $, $p^+$, $[p]_{\alpha,\Omega}$, $\delta^{-1}$, $\omega_0$, $\Omega$,  $s$, and $\|\bfD
		\bfv\|_{r(\cdot),\Omega}$.
	\end{lemma}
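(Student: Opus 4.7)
The strategy is to insert the projection $\Pi_h^V \bfv$ as an intermediate term and split
\begin{equation*}
\|\bfD\bfv_h-\bfD\bfv\|_{r(\cdot),\Omega}\leq \|\bfD\bfv_h-\bfD\Pi_h^V\bfv\|_{r(\cdot),\Omega} + \|\bfD\Pi_h^V\bfv-\bfD\bfv\|_{r(\cdot),\Omega}\,.
\end{equation*}
The second (interpolation) term is handled directly by Corollary~\ref{cor:Pi_div_F}\eqref{cor:Pi_div_F.2}, which already contributes precisely the fractional terms $h^{2\alpha}\bigl(1+\rho_{p(\cdot)s,\Omega}(\bfD\bfv)\bigr)$ and $h^{2\beta}[\bfF(\cdot,\bfD\bfv)]_{N^{\beta,2}(\Omega)}^2$ after squaring.

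For the first term, the key observation is that $\bfD\bfv_h-\bfD\Pi_h^V\bfv$ is piecewise polynomial, so that the discrete norm equivalence (Lemma~\ref{lem:norm_equiv}), applied to the $\log$-Hölder continuous exponent $r\in\mathcal{P}^{\log}(\Omega)$, yields
$\|\bfD\bfv_h-\bfD\Pi_h^V\bfv\|_{r(\cdot),\Omega}\sim \|\bfD\bfv_h-\bfD\Pi_h^V\bfv\|_{r_h(\cdot),\Omega}$. By Remark~\ref{rem:uniform}, Lemma~\ref{lem:sobolev2F} is applicable with $p,\bfF$ replaced by $p_h,\bfF_h$, providing
\begin{equation*}
\|\bfD\bfv_h-\bfD\Pi_h^V\bfv\|_{r_h(\cdot),\Omega}\lesssim \|\bfF_h(\cdot,\bfD\bfv_h)-\bfF_h(\cdot,\bfD\Pi_h^V\bfv)\|_{2,\Omega}\,\Lambda\,,
\end{equation*}
where $\Lambda\coloneqq (1+\rho_{p_h(\cdot),\Omega}(\vert\bfD\bfv_h\vert+\vert\bfD\Pi_h^V\bfv\vert))^{1/p^-}+(\min\{1,\delta\})^{2-p^+}$. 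A further triangle inequality reduces the $\bfF_h$-difference to the discrete natural distance $\|\bfF_h(\cdot,\bfD\bfv_h)-\bfF_h(\cdot,\bfD\bfv)\|_{2,\Omega}$ plus $\|\bfF_h(\cdot,\bfD\bfv)-\bfF_h(\cdot,\bfD\Pi_h^V\bfv)\|_{2,\Omega}$, the latter being bounded by Lemma~\ref{lem:Pi_div_F}\eqref{lem:Pi_div_F_2}, again producing exactly the fractional terms on the right-hand side. Squaring and using $(a+b)^2\leq 2a^2+2b^2$ assembles the claim.

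The main technical obstacle is ensuring that the multiplicative constant $\Lambda$ remains bounded by a quantity depending only on the data. For $\bfv_h$, the a priori stability bound \eqref{eq:stability} together with the discrete norm equivalence (Lemma~\ref{lem:norm_equiv}) controls $\|\bfD\bfv_h\|_{p(\cdot),\Omega}$ uniformly in $h$; for $\Pi_h^V\bfv$, the $V$-stability result Lemma~\ref{lem:stab_Pi_div_V_norm} yields $\|\bfD\Pi_h^V\bfv\|_{p(\cdot),\Omega}\lesssim \|\bfD\bfv\|_{p(\cdot),\Omega}$, which is itself controlled thanks to the improved integrability result Lemma~\ref{lem:improved_integrability}(i) (a consequence of $\bfF(\cdot,\bfD\bfv)\in N^{\beta,2}(\Omega;\mathbb{R}^{d\times d})$ and $\delta>0$). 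Finally, the term $(\min\{1,\delta\})^{2-p^+}$ is harmless since $\delta>0$ is assumed. Since no $\varepsilon$-Young absorption is needed (the natural distance is simply a summand on the right-hand side), the argument is essentially linear and yields the stated dependence of the hidden constant on $\|\bfD\bfv\|_{r(\cdot),\Omega}$ through the bound on $\Lambda$.
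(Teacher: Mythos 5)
Your proof is correct and follows essentially the same route as the paper: the same decomposition via $\Pi_h^V\bfv$, Lemma~\ref{lem:norm_equiv} to pass to the discrete exponent $r_h$, Lemma~\ref{lem:sobolev2F} (via Remark~\ref{rem:uniform}) to convert to the discrete natural distance, a triangle inequality on the $\bfF_h$-difference, and Lemma~\ref{lem:Pi_div_F} together with Corollary~\ref{cor:Pi_div_F} for the interpolation contributions, with the multiplicative factor $\Lambda$ controlled by \eqref{eq:stability}, Lemma~\ref{lem:stab_Pi_div_V_norm}, and Korn's inequality. The only superfluous point is the appeal to Lemma~\ref{lem:improved_integrability}(i): $\|\bfD\bfv\|_{p(\cdot),\Omega}$ is finite simply because $\bfv\in V=W^{1,p(\cdot)}(\Omega;\mathbb{R}^d)$, so no improved-integrability argument is needed there.
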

	\begin{proof}
		Lemma \ref{lem:norm_equiv}, Lemma \ref{lem:sobolev2F}, the estimate \eqref{eq:stability}, and
		$\rho_{p_h(\cdot),\Omega}(\bfD\Pi_h^V\bfv)\leq 1+\|\bfD\Pi_h^V\bfv\|_{p_h(\cdot),\Omega}^{p^+}\lesssim 1+\smash{\|\bfD\Pi_h^V\bfv\|_{p(\cdot),\Omega}^{p^+}}\lesssim 1+\smash{\|\bfD\bfv\|_{p(\cdot),\Omega}^{p^+}}$ (\textit{cf}.\  \cite[Lem.\  3.2.5]{dhhr}, Lemma \ref{lem:norm_equiv}, Lemma \ref{lem:stab_Pi_div_V_norm.2},~and~Theorem~\ref{thm:korn})~yield 
		\begin{align}\label{eq:sobolev2F_discrete.1}
			\begin{aligned}
				\|\bfD\bfv_h-\bfD\Pi_h^V\bfv\|_{r(\cdot),\Omega}^{\textcolor{black}{2}}&\lesssim  \|\bfD\bfv_h-\bfD\Pi_h^V\bfv\|_{r_h(\cdot),\Omega}^{\textcolor{black}{2}}\\&\lesssim
				\|\bfF_h(\cdot,\bfD\bfv_h)-\bfF_h(\cdot,\bfD\Pi_h^V\bfv)\|_{2,\Omega}^{\textcolor{black}{2}}\\&\quad \times\big((1+\rho_{p_h(\cdot),\Omega}(\vert \bfD\bfv_h\vert+\vert \bfD\Pi_h^V\bfv\vert))^{\textcolor{black}{\smash{\frac{2}{p^-}}}}+(\min\{1,\delta\})^{\smash{2-p^+}}\big)
				\\&\lesssim  \|\bfF_h(\cdot,\bfD\bfv_h)-\bfF_h(\cdot,\bfD\Pi_h^V\bfv)\|_{2,\Omega}^{\textcolor{black}{2}}\,.
			\end{aligned}
		\end{align}
		Therefore, using \eqref{eq:sobolev2F_discrete.1}, Lemma \ref{lem:Pi_div_F}\eqref{lem:Pi_div_F_2}, and Corollary \ref{cor:Pi_div_F}\eqref{cor:Pi_div_F.2}, we have that
		\begin{align*}
			\|\bfD\bfv_h-\bfD\bfv\|_{r(\cdot),\Omega}^2&\leq \|\bfD\bfv_h-\bfD\Pi_h^V\bfv\|_{r(\cdot),\Omega}^2+\|\bfD\bfv-\bfD\Pi_h^V\bfv\|_{r(\cdot),\Omega}^2\\
			&\lesssim \|\bfF_h(\cdot,\bfD\bfv_h)-\bfF_h(\cdot,\bfD\Pi_h^V\bfv)\|_{2,\Omega}^2+\|\bfD\bfv-\bfD\Pi_h^V\bfv\|_{r(\cdot),\Omega}^2\
			\\
			&\lesssim \|\bfF_h(\cdot,\bfD\bfv_h)-\bfF_h(\cdot,\bfD\bfv)\|_{2,\Omega}^2+\|\bfF_h(\cdot,\bfD\bfv)-\bfF_h(\cdot,\bfD\Pi_h^V\bfv)\|_{2,\Omega}^2\\&\quad+\|\bfD\bfv-\bfD\Pi_h^V\bfv\|_{r(\cdot),\Omega}^2\
			\\
			&\lesssim \|\bfF_h(\cdot,\bfD\bfv_h)-\bfF_h(\cdot,\bfD\bfv)\|_{2,\Omega}^2\\&\quad +h^{2\alpha}\,\big(1+\rho_{p(\cdot)s,\Omega}(\bfD\bfv)\big)+h^{2\beta}\,[\bfF(\cdot,\bfD\bfv)]_{N^{\beta,2}(\Omega)}^2\,,
		\end{align*}
		which is the claimed estimate.
	\end{proof}\vspace*{-1mm}
	
	We have now everything at our disposal to prove Theorem \ref{thm:error_FE}.\vspace*{-1mm}
	
	\begin{proof}[Proof (of Theorem \ref{thm:error_FE}).]
		We split the proof of Theorem \ref{thm:error_FE} into two main steps:
		
		\textit{1. Error estimate for the velocity vector field:}  First, we abbreviate $\bfe_h\coloneqq \bfv_h-\bfv\in \Vo$. Then, using \eqref{eq:hammera} and the decomposition
		$\bfe_h = \Pi_h^V\bfe_h + \Pi_h^V \bfv -\bfv$ in $\Vo$, we find that
		\begin{align}\label{thm:error_FE.1}
			\begin{aligned}
				c\, \|\bfF_h(\cdot,\bfD \bfv_h) - \bfF_h(\cdot,\bfD
				\bfv)\|_{2,\Omega}^2&\leq (\bfS _h(\cdot,\bfD \bfv_h) - \bfS _h(\cdot,\bfD
				\bfv),\bfD \bfe_h)_\Omega
				  \\&= (\bfS _h(\cdot,\bfD \bfv_h) - \bfS(\cdot,\bfD
							\bfv),\bfD \bfe_h)_\Omega
				      \\&\quad+(\bfS(\cdot,\bfD
								\bfv)-\bfS _h(\cdot,\bfD \bfv),\bfD \bfe_h)_\Omega
						\\
							&= (\bfS _h(\cdot,\bfD \bfv_h) - \bfS(\cdot,\bfD
								\bfv),\bfD \Pi_h^V\bfe_h)_\Omega
						\\&\quad+ (\bfS _h(\cdot,\bfD \bfv_h) - \bfS(\cdot,\bfD
							\bfv),\bfD\Pi_h^V\bfv-\bfD\bfv)_\Omega
				 \\&\quad+(\bfS(\cdot,\bfD
				 \bfv)-\bfS _h(\cdot,\bfD \bfv),\bfD \bfe_h)_\Omega
				\\&= (\bfS _h(\cdot,\bfD \bfv_h) - \bfS(\cdot,\bfD
				\bfv),\bfD \Pi_h^V\bfe_h)_\Omega
				\\&\quad+(\bfS _h(\cdot,\bfD \bfv_h) - \bfS_h(\cdot,\bfD
				\bfv),\bfD\Pi_h^V\bfv-\bfD\bfv)_\Omega
				\\&\quad+ (\bfS _h(\cdot,\bfD \bfv) - \bfS(\cdot,\bfD
				\bfv),\bfD  \Pi_h^V\bfv-\bfD\bfv)_\Omega
				\\&\quad+(\bfS(\cdot,\bfD
				\bfv)-\bfS _h(\cdot,\bfD \bfv),\bfD \bfe_h)_\Omega
				\\&\eqqcolon I_h^1+ I_h^2 +I_h^3 +I_h^4\,.
			\end{aligned}
		\end{align}
		The $\varepsilon$-Young inequality \eqref{ineq:young} with $\psi=(\varphi_h)_{\smash{\vert \bfD\bfv\vert}}$, \eqref{eq:hammera}, \eqref{eq:hammerh},  Lemma \ref{lem:Pi_div_F}\eqref{lem:Pi_div_F_2}, and Lemma~\ref{lem:A-Ah}\eqref{eq:Ah-A} yield that
		\begin{align}\label{thm:error_FE.2}
			I_h^2&\leq \varepsilon\,\|\smash{\bfF_h(\cdot,\bfD \bfv_h)}-\bfF_h(\cdot,\bfD
			\bfv)\|_{2,\Omega}^2+c_\varepsilon\, \|\bfF_h(\cdot,\bfD\bfv)-\bfF_h(\cdot,\bfD\Pi_h^V\bfv)\|_{2,\Omega}^2\\&\leq
			\varepsilon\,\|\bfF_h(\cdot,\bfD \bfv_h)-\bfF_h(\cdot,\bfD
			\bfv)\|_{2,\Omega}^2+c_\varepsilon \,\big(h^{2\alpha}\big(1+\rho_{p(\cdot)s,\Omega}(\bfD\bfv)\big)+h^{2\beta}\, [\bfF(\cdot,\bfD \bfv)]_{N^{\beta,2}(\Omega)}^2\big)\,;\notag\\[1mm]\label{thm:error_FE.3}
			I_h^3&\leq c\,\|\smash{\bfF_h^*(\cdot,\bfS _h(\cdot,\bfD \bfv)))}-\bfF_h^*(\cdot,\bfS(\cdot,\bfD \bfv))\|_{2,\Omega}^2+c\,\|\bfF_h(\cdot,\bfD\bfv)-\bfF_h(\cdot,\bfD\Pi_h^V\bfv)\|_{2,\Omega}^2\\&\leq
			c\,h^{2\alpha}\,\big(1+\rho_{p(\cdot)s,\Omega}(\bfD\bfv)\big)+c\,h^{2\beta}\, [\bfF(\cdot,\bfD \bfv)]_{N^{\beta,2}(\Omega)}^2\,;\notag\\[1mm]\label{thm:error_FE.3.1}
			I_h^4&\leq c_\varepsilon\,\|\smash{\bfF_h^*(\cdot,\bfS _h(\cdot,\bfD \bfv)))}-\bfF_h^*(\cdot,\bfS(\cdot,\bfD \bfv))\|_{2,\Omega}^2+\varepsilon\,\|\bfF_h(\cdot,\bfD\bfv_h)-\bfF_h(\cdot,\bfD\bfv)\|_{2,\Omega}^2\\&\leq
			c_\varepsilon \,h^{2\alpha}\big(1+\rho_{p(\cdot)s,\Omega}(\bfD\bfv)\big)+\varepsilon\,\|\bfF_h(\cdot,\bfD\bfv_h)-\bfF_h(\cdot,\bfD\bfv)\|_{2,\Omega}^2\,.\notag
		\end{align}
		
		Testing the first lines of Problem (\hyperlink{Q}{Q}) and Problem (\hyperlink{Qh}{Q$_h$}) with  $\bfz_h=\Pi_h^V\bfe_h\in \textcolor{black}{\Vo_{h,0}}$,  then, subtracting the resulting equations, and using that $(q,\divo \Pi_h^V\bfe_h)_\Omega=(\textcolor{black}{\Pi}_h^{k-1}q,\divo \Pi_h^V\bfe_h)_\Omega$, where $\textcolor{black}{\Pi}_h^{k-1}\colon \hspace*{-0.1em}L^1(\Omega)\hspace*{-0.1em}\to \hspace*{-0.1em}\mathbb{P}^{\textcolor{black}{k-1}}(\mathcal{T}_h)$ denotes the \textit{(local) $L^2$-projection operator},~as~well~as that $(q_h,\divo \Pi_h^V\bfe_h)_\Omega=0= (z_h,\divo \Pi_h^V\bfe_h)_\Omega$ (\textit{cf}.\ Assumption \ref{ass:proj-div}(ii)) for all  $z_h\in   \smash{\Qo_h}$,
		for~every~${z_h\in Q_h}$,~we~find~that
		\begin{align}\label{thm:error_FE.4}
			\begin{aligned}
				I_h^1&=(z_h-\textcolor{black}{\Pi}_h^{k-1}q,\divo \Pi_h^V\bfe_h)_\Omega \\&\quad+[b(\bfv,\bfv,\Pi_h^V\bfe_h)-b(\bfv_h,\bfv_h,\Pi_h^V\bfe_h)]\\&\eqqcolon  J_h^1+ J_h^2 \,.
			\end{aligned}
		\end{align}
		Here, $b\colon [\Vo]^3\to \mathbb{R}$  denotes the \textit{discrete convective term}, for every 
		$(\bfu,\bfw,\bfz)^\top\in [\Vo]^3$ defined by 
		\begin{align}\label{def:bh}
			\smash{b(\bfu,\bfw,\bfz)\coloneqq \tfrac{1}{2}(\bfz\otimes \bfu,\nabla\bfw)_{\Omega}-\tfrac{1}{2}(\bfw\otimes \bfu,\nabla\bfz)_{\Omega}\,.}
		\end{align}
		
		So, let us next estimate the terms $J_h^1$ and $J_h^2$:
		
		\textit{ad $J_h^1$.} Using, again, that $\Pi_h^V \bfe_h=\bfe_h+\Pi_h^V\bfv -\bfv
		$,  the $\varepsilon$-Young inequality~\eqref{ineq:young} for 
		$\psi =(\phi_h)_{\vert\bfD \bfv\vert}$, \eqref{eq:hammera}, and Lemma \ref{lem:Pi_div_F}\eqref{lem:Pi_div_F_2}, for every $z_h\in Q_h$, we find that
		\begin{align}\label{thm:error_FE.5}
			\begin{aligned}
				J_h^1&=((z_h-\textcolor{black}{\Pi}_h^{k-1}q)\mathbf{I}_d,\bfD \bfe_h)_{\Omega}\\&\quad((z_h-\textcolor{black}{\Pi}_h^{k-1}q)\mathbf{I}_d,\bfD\Pi_h^V \bfv-\bfD\bfv)_{\Omega}
				\\&\leq c_\varepsilon\,\rho_{((\varphi_h)_{\vert\bfD\bfv\vert})^*,\Omega}(z_h-\textcolor{black}{\Pi}_h^{k-1}q)+
				\varepsilon\, c\,\|\bfF_h(\cdot,\bfD\bfv)-\bfF_h(\cdot,\bfD\bfv_h)\|_{2,\Omega}^2\\&\quad+c\,\rho_{((\varphi_h)_{\vert\bfD\bfv\vert})^*,\Omega}(z_h-\textcolor{black}{\Pi}_h^{k-1}q)+c\,\|\bfF_h(\cdot,\bfD\bfv)-\bfF_h(\cdot,\bfD\Pi_h^V\bfv)\|_{2,\Omega}^2
				\\&\leq c_\varepsilon\,\rho_{((\varphi_h)_{\vert\bfD\bfv\vert})^*,\Omega}(z_h-\textcolor{black}{\Pi}_h^{k-1}q)\\&\quad+c\,\big(h^{2\alpha}\big(1+\rho_{p(\cdot)s,\Omega}(\bfD\bfv)\big)+h^{2\beta}\, [\bfF(\cdot,\bfD \bfv)]_{N^{\beta,2}(\Omega)}^2\big)\\&\quad+\varepsilon\,c\,
				\|\bfF_h(\cdot,\bfD\bfv)-\smash{\bfF_h(\cdot,\bfD\bfv_h)}\|_{2,\Omega}^2\,.
			\end{aligned}
		\end{align}
		Using Lemma \ref{lem:A-Ah}\eqref{eq:phih-phi} (with $\lambda\hspace*{-0.1em}=\hspace*{-0.1em}h^{\widetilde{\gamma}}$, where $\widetilde{\gamma}\hspace*{-0.1em}\coloneqq\hspace*{-0.1em} \alpha/\min\{2,(p^+)'\}$, $g\hspace*{-0.1em}=\hspace*{-0.1em}h^{-\widetilde{\gamma}}(\Pi_h^Q q-\textcolor{black}{\Pi}_h^{k-1}q)$,~and~${\bfA\hspace*{-0.1em}=\hspace*{-0.1em}\bfD\bfv}$), 
		and Lemma \ref{lem:Pi_Q}\eqref{lem:Pi_Q.2} together with Lemma~\ref{lem:poincare_F}\eqref{lem:poincare_F.4}, 
		we obtain
		\begin{align}\label{thm:error_FE.6}
			\begin{aligned}
				\rho_{((\varphi_h)_{\vert\bfD\bfv\vert})^*,\Omega}(\Pi_h^Q q-\textcolor{black}{\Pi}_h^{k-1}q)
				&\leq c\,\rho_{(\varphi_{\vert\bfD\bfv\vert})^*,\Omega}(\Pi_h^Q q-\textcolor{black}{\Pi}_h^{k-1}q
				)\\&\quad+c\,h^{2\alpha}\,\big(1+\rho_{p(\cdot)s,\Omega}(\bfD\bfv)\big)
				\\&\quad + c\,h^{\widetilde{\gamma}\min\{2,(p^+)'\}+\alpha}\,\rho_{p'(\cdot)s,\Omega}
				(h^{-\widetilde{\gamma}}(\Pi_h^Q q-\textcolor{black}{\Pi}_h^{k-1}q))
				\\&\leq c\,h^{2\alpha}\,\big(1+\rho_{p(\cdot)s,\Omega}(\bfD\bfv)\big)
				\\&\quad+c\,h^{2\beta}\,[\bfF(\cdot,\bfD\bfv)]_{N^{\beta,2}(\Omega)}^2+ c\,\rho_{(\varphi_{\vert \bfD\bfv\vert})^*,\Omega}(h^{\gamma}\vert\nabla^{\gamma}q\vert)
				\\&\quad + c\,h^{2\alpha}\,\rho_{p'(\cdot)s,\Omega}
				(h^{-\widetilde{\gamma}}(\Pi_h^Q q-\textcolor{black}{\Pi}_h^{k-1}q))\,,
			\end{aligned}
		\end{align}
		so that it is left to estimate the last term on the right-hand side of \eqref{thm:error_FE.6}.  To this end, note first that   $t^{p'(x)s}\hspace*{-0.1em}\leq \hspace*{-0.1em} (1+t)^{p'(\xi_T)p'(x)/p'(\xi_T)s}\hspace*{-0.1em}\leq\hspace*{-0.1em} c\, (1+t^{p'(\xi_T)\widetilde{s}})$ for all $t\hspace*{-0.1em}\ge \hspace*{-0.1em} 0$, $T\hspace*{-0.1em}\in \hspace*{-0.1em} \mathcal{T}_h$, and $x\hspace*{-0.1em}\in\hspace*{-0.1em} T$,~where~${\tilde{s}\hspace*{-0.1em}>\hspace*{-0.1em}1}$~is~a~constant that \hspace*{-0.1mm}can \hspace*{-0.1mm}chosen \hspace*{-0.1mm}to \hspace*{-0.1mm}be \hspace*{-0.1mm}close \hspace*{-0.1mm}to \hspace*{-0.1mm}$1$ \hspace*{-0.1mm}if \hspace*{-0.1mm}$h\hspace*{-0.1em}>\hspace*{-0.1em}0$ \hspace*{-0.1mm}is \hspace*{-0.1mm}close \hspace*{-0.1mm}to \hspace*{-0.1mm}$0$, discrete local norm equivalences
		 \hspace*{-0.1mm}(\textit{cf}.\ \hspace*{-0.1mm}\mbox{\cite[Lem.\ \hspace*{-0.1mm}12.1]{EG21}}), \hspace*{-0.1mm}that \hspace*{-0.1mm}$h\hspace*{-0.1em}\sim \hspace*{-0.1em} h_T$ \hspace*{-0.1mm}for \hspace*{-0.1mm}all \hspace*{-0.1mm}$T\in \mathcal{T}_h$, \hspace*{-0.1mm}and~\hspace*{-0.1mm}that~\hspace*{-0.1mm}${\sum_{i\in \mathbb{N}}{\vert a_i\vert^s}\hspace*{-0.1em}\leq\hspace*{-0.1em} (\sum_{i\in \mathbb{N}}{\vert a_i\vert})^s}$  
		for all~$(a_i)_{i\in \mathbb{N}}\subseteq\ell^1(\mathbb{N})$,~yield~that
		\begin{align}\label{thm:error_FE.6.1}
			\begin{aligned}
			\rho_{p'(\cdot)s,\Omega}
			(h^{-\widetilde{\gamma}}(\Pi_h^Q q-\textcolor{black}{\Pi}_h^{k-1}q))&\leq c\,(1+ \rho_{p'_h(\cdot)\tilde{s},\Omega}
			(h^{-\widetilde{\gamma}}(\Pi_h^Q q-\textcolor{black}{\Pi}_h^{k-1}q))\big)
			\\&\leq  c\,\big(1+ 
			h^{d(1-\tilde{s})}(\rho_{p'_h(\cdot),\Omega}
			(h^{-\widetilde{\gamma}}(\Pi_h^Q q-\textcolor{black}{\Pi}_h^{k-1}q))^{\tilde{s}}\big)\,.
		\end{aligned}
		\end{align}
		In \hspace*{-0.1mm}addition, \hspace*{-0.1mm}using \hspace*{-0.1mm}Lemma \hspace*{-0.1mm}\ref{lem:norm_equiv} \hspace*{-0.1mm}and  \hspace*{-0.1mm}Lemma \hspace*{-0.1mm}\ref{lem:Pi_Q_norm}\eqref{lem:Pi_Q_norm.global} \hspace*{-0.1mm}($\textcolor{black}{\Pi}_h^{k-1}$ \hspace*{-0.1mm}satisfies \hspace*{-0.1mm}Assumption~\hspace*{-0.1mm}\ref{ass:PiY}~\hspace*{-0.1mm}with~\hspace*{-0.1mm}${Q_h\hspace*{-0.15em}=\hspace*{-0.15em}\mathbb{P}^{k-1}(\mathcal{T}_h))}$, we~find~that\enlargethispage{8mm}
		\begin{align*}
			\begin{aligned}
				\|h^{-\widetilde{\gamma}}(\Pi_h^Q q-\textcolor{black}{\Pi}_h^{k-1}q)\|_{p'_h(\cdot),\Omega}&\leq  c\, h^{-\widetilde{\gamma}} \,\|q-\Pi_h^Q q\|_{p'(\cdot),\Omega}
		 +c\,h^{-\widetilde{\gamma}}\,	\|q-\textcolor{black}{\Pi}_h^{k-1}q\|_{p'(\cdot),\Omega}
					\\&\leq c\, h^{\gamma-\widetilde{\gamma}}\,\|\vert \nabla^\gamma q\vert \|_{p'(\cdot),\Omega} 
					\,,
			\end{aligned}
		\end{align*}
		which, appealing to \cite[Lem.\ 3.2.5]{dhhr}, implies that
		\begin{align}	\label{thm:error_FE.6.3} 
			\rho_{p'_h(\cdot),\Omega}
			(h^{-\widetilde{\gamma}}(\Pi_h^Q q-\textcolor{black}{\Pi}_h^{k-1}q))
			\leq c\, h^{(p^+)'(\gamma-\widetilde{\gamma})}\,.
		\end{align}
		Using \eqref{thm:error_FE.6.3} in  \eqref{thm:error_FE.6.1} and choosing $\tilde{s}>1$ sufficiently close to $1$  such that 
		$$d(1-\tilde{s})+\tilde{s}(p^+)'(\gamma-\widetilde{\gamma})\ge 0\,,$$
		which is possible since $\gamma>\widetilde{\gamma}=\frac{\alpha}{\min\{2,(p^+)'\}}$, we find that
		\begin{align}	\label{thm:error_FE.6.4}
			 \begin{aligned}
		\rho_{p'(\cdot)s,\Omega}
				(h^{-\widetilde{\gamma}}(\Pi_h^Q q-\textcolor{black}{\Pi}_h^{k-1}q))&\leq c\,\big(1+ h^{d(1-\tilde{s})+\tilde{s}(p^+)'(\gamma-\widetilde{\gamma})}\big)\\&\leq 
			 c\,.
			 \end{aligned}
		\end{align}
		Eventually, using \eqref{thm:error_FE.6.4} in   \eqref{thm:error_FE.6}, we arrive at
		\begin{align}\label{thm:error_FE.6.5}
			\begin{aligned}
				\rho_{((\varphi_h)_{\vert\bfD\bfv\vert})^*,\Omega}(\Pi_h^Q q-\textcolor{black}{\Pi}_h^{k-1}q)&\leq  c\,h^{2\alpha}\,\big(1+\rho_{p(\cdot)s,\Omega}(\bfD\bfv)\big)
					\\&\quad+c\,h^{2\beta}\,[\bfF(\cdot,\bfD\bfv)]_{N^{\beta,2}(\Omega)}^2+ c\,\rho_{(\varphi_{\vert \bfD\bfv\vert})^*,\Omega}(h^{\gamma}\vert\nabla^{\gamma}q\vert)\,.
				\end{aligned}
		\end{align}
		Thus, choosing $z_h =\Pi_h^Q q\in Q_h$ in \eqref{thm:error_FE.5} and resorting to \eqref{thm:error_FE.6.5}, we deduce that
		\begin{align}\label{thm:error_FE.5a}
			\begin{aligned}
				\vert J_h^1\vert &\leq c_\varepsilon\,h^{2\alpha}\,\big(1+\rho_{p(\cdot)s,\Omega}(\bfD\bfv)\big)\\&\quad +
				c_\varepsilon\,h^{2\beta}\,[\bfF(\cdot,\bfD\bfv)]_{N^{\beta,2}(\Omega)}^2+ c_\varepsilon\,\rho_{(\varphi_{\vert \bfD\bfv\vert})^*,\Omega}(h^{\gamma}\vert\nabla^{\gamma}q\vert)
				\\&\quad +
				\varepsilon\,c\,
				\|\bfF_h(\cdot,\bfD\bfv)-\bfF_h(\cdot,\bfD\bfv_h)\|_{2,\Omega}^2\,.
			\end{aligned}
		\end{align}
		
		\textit{ad $J_h^2$.} By definition of $b\colon [\Vo]^3\to \mathbb{R}$ (\textit{cf}.\ \eqref{def:bh}), we have that $b(\bfv_h,\Pi_h^V\bfe_h,\Pi_h^V\bfe_h)=0$,~which~yields~that
		\begin{align}\label{thm:error_FE.7}
			\hspace*{-3mm}\begin{aligned}
				J_h^2&= b(\bfv,\bfv,\Pi_h^V\bfe_h)\pm b(\bfv,\Pi_h^V\bfv,\Pi_h^V\bfe_h)\\&\quad-b(\bfv_h,\bfv_h,\Pi_h^V\bfe_h)\pm     b(\bfv_h,\bfv,\Pi_h^V\bfe_h)
				\\
				&= b(\bfv,\bfv-\Pi_h^V\bfv,\Pi_h^V\bfe_h)+ b(\bfv,\Pi_h^V\bfv,\Pi_h^V\bfe_h)\\&\quad-b(\bfv_h,\Pi_h^V\bfe_h,\Pi_h^V\bfe_h)-     b(\bfv_h,\Pi_h^V\bfv,\Pi_h^V\bfe_h)
				\\
				&= b(\bfv,\bfv-\Pi_h^V\bfv,\Pi_h^V\bfe_h)- b(\bfe_h,\Pi_h^V\bfv,\Pi_h^V\bfe_h)
				\\&
				\eqqcolon  J_h^{21}-J_h^{22}\,.
			\end{aligned}
		\end{align}
		First, we have that\enlargethispage{1mm}
		\begin{align}\label{thm:error_FE.8}
			\begin{aligned}
				2\, J_h^{21}&=(\Pi_h^V\bfe_h\otimes \bfv,\nabla\bfv-\nabla\Pi_h^V\bfv)_{\Omega}-((\bfv-\Pi_h^V\bfv)\otimes \bfv,\nabla\Pi_h^V\bfe_h)_{\Omega}\\&\eqqcolon J_{h,1}^{21}+J_{h,2}^{21}\,.
			\end{aligned}
		\end{align}
		Using the generalized Hölder inequality \eqref{eq:gen_hoelder}, the 
		$\varepsilon$-Young inequality \eqref{ineq:young}~with $\smash{\psi=\frac{1}{2}\vert \cdot\vert ^2}$, the Sobolev embedding theorem (\textit{cf}.\  Theorem \ref{thm:sobolev} together with $2r'\leq r^*$ in $\Omega$ for $p^-\ge \frac{3d}{d+2}$), 
		Lemma \ref{lem:stab_Pi_div_V_norm}\eqref{lem:stab_Pi_div_V_norm.2}, Korn's inequality (\textit{cf}.\  Theorem \ref{thm:korn}), Lemma \ref{lem:sobolev2F_discrete}, and~Corollary~\ref{cor:Pi_div_F}\eqref{cor:Pi_div_F.2},~we~find~that
		\begin{align}\label{thm:error_FE.9}
			\begin{aligned}
				\vert J_{h,1}^{21}\vert &\leq \varepsilon\,c\,\|\Pi_h^V\bfe_h\|_{2r'(\cdot),\Omega}^2\|\bfv\|_{2r'(\cdot),\Omega}^2+c_\varepsilon\,\| \nabla\bfv-\nabla\Pi_h^V\bfv\|_{r(\cdot),\Omega}^2
				\\&\leq 
				\varepsilon\,c\,\|\bfD\bfe_h\|_{r(\cdot),\Omega}^2\|\bfD \bfv\|_{r(\cdot),\Omega}^2+c_{\varepsilon}\,\|\bfD\bfv-\bfD\Pi_h^V\bfv\|_{r(\cdot),\Omega}^2
				\\&\leq 
				\varepsilon\,c\,\|\bfF_h(\cdot,\bfD\bfv_h)-\bfF_h(\cdot,\bfD\bfv)\|_{2,\Omega}^2\\&\quad+c_{\varepsilon}\,\big( h^{2\alpha}\,\big(1+\rho_{p(\cdot)s,\Omega}(\bfD\bfv)\big)+h^{2\beta}\,[\bfF(\cdot,\bfD\bfv)]_{N^{\beta,2}(\Omega)}^2\big)\,.
			\end{aligned}
		\end{align}
		Similarly, using the generalized Hölder inequality \eqref{eq:gen_hoelder}, the $\varepsilon$-Young inequality \eqref{ineq:young}~with $\psi=\frac{1}{2}\vert\cdot\vert^2$, the Sobolev embedding theorem (\textit{cf}.\  Theorem \ref{thm:sobolev} together with $2r'\leq r^*$ in $\Omega$ for $p^-\ge \frac{3d}{d+2}$), 
		Lemma~\ref{lem:stab_Pi_div_V_norm} \eqref{lem:stab_Pi_div_V_norm.2}, Korn's inequality (\textit{cf}.\  Theorem~\ref{thm:korn}), Lemma \ref{lem:sobolev2F_discrete}, and Corollary~\ref{cor:Pi_div_F}\eqref{cor:Pi_div_F.2}, we~find~that 
		\begin{align}\label{thm:error_FE.10}
			\begin{aligned}
				\vert J_{h,2}^{21}\vert &\leq c_\varepsilon\, \|\bfv-\Pi_h^V\bfv\|_{2r'(\cdot),\Omega}^2\| \bfv\|_{2r'(\cdot),\Omega}^2+\varepsilon\,c\,\|\smash{\nabla\Pi_h^V\bfe_h}\|_{r(\cdot),\Omega}^2
				\\&\leq c_{\varepsilon}\,
				\|\bfD \bfv-\bfD\Pi_h^V\bfv\|_{r(\cdot),\Omega}^2\|\bfD\bfv\|_{r(\cdot),\Omega}^2+ \varepsilon\,c\,\|\bfD\bfe_h\|_{r(\cdot),\Omega}^2
				\\&\leq 
				\varepsilon\,c\,\|\bfF_h(\cdot,\bfD\bfv_h)-\bfF_h(\cdot,\bfD\bfv)\|_{2,\Omega}^2\\&\quad+c_{\varepsilon}\,\big( h^{2\alpha}\,\big(1+\rho_{p(\cdot)s,\Omega}(\bfD\bfv)\big)+h^{2\beta}\,[\bfF(\cdot,\bfD\bfv)]_{N^{\beta,2}(\Omega)}^2\big)\,.
			\end{aligned}
		\end{align}
		Second, we have that
		\begin{align}\label{thm:error_FE.11}
			\begin{aligned}
				2\, J_h^{22}&=\hskp{\Pi_h^V\bfe_h\otimes\bfe_h}{\nabla\Pi_h^V\bfv}_{\Omega}-\hskp{\Pi_h^V\bfv\otimes \bfe_h}{\nabla\Pi_h^V\bfe_h}_{\Omega}\\&\eqqcolon  J_{h,1}^{22}+J_{h,2}^{22}\,.
			\end{aligned}
		\end{align}
		Using the generalized Hölder inequality \eqref{eq:gen_hoelder}, 
		the Sobolev embedding theorem (\textit{cf}.\  Theorem \ref{thm:sobolev} together with $2r'\leq r^*$ in $\Omega$ for $p^-\ge \frac{3d}{d+2}$), Lemma \ref{lem:stab_Pi_div_V_norm}\eqref{lem:stab_Pi_div_V_norm.2}, Korn's inequality (\textit{cf}.\  Theorem~\ref{thm:korn}),  and Lemma~\ref{lem:sobolev2F_discrete},  we~obtain
			\begin{align}\label{thm:error_FE.12}
				\begin{aligned} 
			\vert  J_{h,1}^{22}\vert &\leq c\,\|\Pi_h^V\bfe_h\|_{2r'(\cdot),\Omega}\|\bfe_h\|_{2r'(\cdot),\Omega}\|\nabla\Pi_h^V\bfv\|_{r(\cdot),\Omega}
			\\
			& \leq
			c\, \|\bfD\bfe_h\|_{r(\cdot),\Omega}^2\|\bfD\bfv\|_{r(\cdot),\Omega}
			\\
			&  \leq  
			c_0\,c\,\|\bfF_h(\cdot,\bfD\bfv_h)-\bfF_h(\cdot,\bfD\bfv)\|_{2,\Omega}^2\\&\quad+c\,\big( h^{2\alpha}\,\big(1+\rho_{p(\cdot)s,\Omega}(\bfD\bfv)\big)+h^{2\beta}\,[\bfF(\cdot,\bfD\bfv)]_{N^{\beta,2}(\Omega)}^2\big)\,.
		\end{aligned}
		\end{align}
		Similarly, using the generalized Hölder inequality \eqref{eq:gen_hoelder}, 
		the Sobolev embedding theorem (\textit{cf}.\  Theorem~\ref{thm:sobolev} together with $2r'\leq r^*$ in $\Omega$ for $p^-\ge \frac{3d}{d+2}$), Lemma \ref{lem:stab_Pi_div_V_norm}\eqref{lem:stab_Pi_div_V_norm.2}, Korn's inequality (\textit{cf}.\  Theorem~\ref{thm:korn}), and Lemma \ref{lem:sobolev2F_discrete}, we~obtain
		\begin{align}\label{thm:error_FE.13}
			\begin{aligned}
							\vert J_{h,2}^{22}\vert &\leq c\,\|\Pi_h^V\bfv\|_{2r'(\cdot),\Omega}\|\bfe_h\|_{2r'(\cdot),\Omega}\|\nabla\Pi_h^V\bfe_h\|_{r(\cdot),\Omega}		\\	& \leq
							c\, \|\bfD\bfv\|_{r(\cdot),\Omega}\|\bfD\bfe_h\|_{r(\cdot),\Omega}^2\\
							&  \leq  
								c_0\,c\,\|\bfF_h(\cdot,\bfD\bfv_h)-\bfF_h(\cdot,\bfD\bfv)\|_{2,\Omega}^2\\&\quad+c\,\big( h^{2\alpha}\,\big(1+\rho_{p(\cdot)s,\Omega}(\bfD\bfv)\big)+h^{2\beta}\,[\bfF(\cdot,\bfD\bfv)]_{N^{\beta,2}(\Omega)}^2\big)\,.
			\end{aligned}
		\end{align}
		Using \eqref{thm:error_FE.9} and \eqref{thm:error_FE.10} in
		\eqref{thm:error_FE.8} as well as  \eqref{thm:error_FE.12} and 
		\eqref{thm:error_FE.13} in \eqref{thm:error_FE.11}, from \eqref{thm:error_FE.7}, it follows that 
		\begin{align}\label{thm:error_FE.13.2}
			\begin{aligned}
				\vert J_h^2\vert &\leq 
				(\varepsilon+c_0)\,c\,\|\bfF_h(\cdot,\bfD\bfv_h)-\bfF_h(\cdot,\bfD\bfv)\|_{2,\Omega}^2
				\\&\quad+c_{\varepsilon} \,\big(h^{2\alpha}\,\big(1+\rho_{p(\cdot)s,\Omega}(\bfD\bfv)\big)+h^{2\beta}\,
				[\bfF(\cdot,\bfD\bfv)]_{N^{\beta,2}(\Omega)}^2\big)
				\,.
			\end{aligned}
		\end{align}
		Using, in turn, \eqref{thm:error_FE.5a} and \eqref{thm:error_FE.13.2} in \eqref{thm:error_FE.4}, we obtain\enlargethispage{9mm}
		\begin{align}\label{thm:error_FE.16.0}
			\begin{aligned}
				\vert I_h^1\vert 
				&\le c_{\varepsilon} \,\big(h^{2\alpha}\,\big(1+\rho_{p(\cdot)s,\Omega}(\bfD\bfv)\big)
				\\&\quad+h^{2\beta}\,[\bfF(\cdot,\bfD\bfv)]_{N^{\beta,2}(\Omega)}^2+ \rho_{(\varphi_{\vert \bfD\bfv\vert})^*,\Omega}(h^{\gamma}\vert\nabla^{\gamma}q\vert)\big)\\&\quad+ (\varepsilon+c_0)\,c\,\|\bfF_h(\cdot,\bfD\bfv_h)-\bfF_h(\cdot,\bfD\bfv)\|_{2,\Omega}^2 \,.
			\end{aligned}
		\end{align}
		
		Putting everything together, \textit{i.e.},
		\eqref{thm:error_FE.2}, \eqref{thm:error_FE.3}, \eqref{thm:error_FE.3.1}, and
		\eqref{thm:error_FE.16.0} in \eqref{thm:error_FE.1},  we arrive at
		\begin{align}\label{thm:error_FE.16}
			\begin{aligned}
				\|\bfF_h(\cdot,\bfD \bfv_h) - \bfF(\cdot,\bfD
				\bfv)\|_{2,\Omega}^2 
				&\le c\, (\varepsilon+c_0)\,\|\bfF_h(\cdot,\bfD\bfv_h)-\bfF_h(\cdot,\bfD\bfv)\|_{2,\Omega}^2
				\\
				&\quad + c_{\varepsilon} \,\big(h^{2\alpha}\,\big(1+\rho_{p(\cdot)s,\Omega}(\bfD\bfv)\big)
				\\&\quad+h^{2\beta}\,[\bfF(\cdot,\bfD\bfv)]_{N^{\beta,2}(\Omega)}^2+ \rho_{(\varphi_{\vert \bfD\bfv\vert})^*,\Omega}(h^{\gamma}\vert\nabla^{\gamma}q\vert)\big)\,.
			\end{aligned}
		\end{align}
		
		Eventually, choosing  $c_0>0$ and $\varepsilon>0$ sufficiently small,  we can absorb the first term~on~the~right-hand side of \eqref{thm:error_FE.16} in the left-hand side and conclude 
		the existence of a
		constant $c>0$, depending~only~on  $d$, $k$, $\textcolor{black}{\ell}$,  $p^- $, $p^+$, $[p]_{\alpha,\Omega}$, $\delta^{-1}$, $\omega_0$, $s$, $c_0$, and $\|q\|_{\gamma,p'(\cdot),\Omega}$, such that
		\begin{align}\label{thm:error_FE.17}
			\begin{aligned}
				\|\bfF_h(\cdot,\bfD \bfv_h)-\bfF_h(\cdot,\bfD
				\bfv)\|_{2,\Omega}^2&\leq c\,h^{2\alpha}\,\big(1+\rho_{p(\cdot)s,\Omega}(\bfD\bfv)\big)
				\\&\quad+c\,h^{2\beta}\,[\bfF(\cdot,\bfD\bfv)]_{N^{\beta,2}(\Omega)}^2+ c\,\rho_{(\varphi_{\vert \bfD\bfv\vert})^*,\Omega}(h^{\gamma}\vert\nabla^{\gamma}q\vert) \,,
			\end{aligned}
		\end{align}
		which is the claimed \textit{a priori} error estimate for the velocity vector field.\enlargethispage{2mm}
		
		\textit{2. Error estimates for the pressure:} We employ the abbreviation
		\begin{align*}
			\Xi^2_h(\bfv,q)\coloneqq h^{2\alpha}\,\big(1+\rho_{p(\cdot)s,\Omega}(\bfD\bfv)\big)
		+h^{2\beta}\,[\bfF(\cdot,\bfD\bfv)]_{N^{\beta,2}(\Omega)}^2+ \rho_{(\varphi_{\vert \bfD\bfv\vert})^*,\Omega}(h^{\gamma}\vert\nabla^{\gamma}q\vert) \,. 
		\end{align*}
		Due to Lemma \ref{lem:ismd}, there exists a constant $c>0$ such
		that for every $z_h\in \Qo_h$, it holds that
		\begin{align}
			\label{thm:error_pressure_FE.1}
			c\,\|z_h\|_{p'_h(\cdot),\Omega}\leq \sup_{\bfz_h\in \Vo_h\,:\,{\|\nabla\bfz_h\|_{p_h(\cdot),\Omega}\leq 1}}{( z_h,\divo\bfz_h)_{\Omega}}\,.
		\end{align}
		On the other hand,  testing the first lines of Problem (\hyperlink{Q}{Q}) and Problem (\hyperlink{Qh}{Q$_h$}) with an arbitrary $\bfz_h\in \Vo_h$~and, then, subtracting the resulting equations, 
		for every $\bfz_h\in \smash{\Vo_h}$, we find that
		\begin{align}\label{thm:error_pressure_FE.2}
			\begin{aligned}
				(q_h-q,\divo \bfz_h)_{\Omega}
				&=(\bfS_h (\cdot,\bfD \bfv_h) - \bfS_h (\cdot,\bfD
				\bfv),\bfD \bfz_h)_{\Omega}
				\\&\quad+ (\bfS_h (\cdot,\bfD \bfv) - \bfS (\cdot,\bfD
				\bfv),\bfD \bfz_h)_{\Omega}
				\\&\quad+[b(\bfv_h,\bfv_h,\bfz_h)-b(\bfv,\bfv,\bfz_h)]
				\\&\eqqcolon I_h^1+I_h^2+I_h^3\,.
			\end{aligned}
		\end{align}
		
		So, let us next estimate $I_h^1$, $I_h^2$, and $I_h^3$ for an arbitrary $\bfz_h\in \smash{\Vo_h}$ with $\|\nabla\bfz_h\|_{p_h(\cdot),\Omega}\leq 1$:
		
		\textit{ad $I_h^1$}.
		Using the generalized Hölder inequality \eqref{eq:gen_hoelder} with $\psi=(\varphi_h)_{\smash{\vert\bfD\bfv\vert}}$, we find that
		\begin{align}\label{thm:error_pressure_FE.3}
			\begin{aligned}
				\vert I_h^1\vert &\leq 2\,\|\bfS_h(\cdot,\bfD\bfv_h)-\bfS_h(\cdot,\bfD\bfv)\|_{\smash{((\varphi_h)_{\smash{\vert\bfD\bfv\vert}})^*}}\|\bfD\bfz_h\|_{\smash{(\varphi_h)_{\smash{\vert \bfD\bfv\vert}}}}
				\\&\eqqcolon  2\,I_h^{11}\times I_h^{12}\,.
			\end{aligned}
		\end{align}
		Appealing to \eqref{eq:hammera} and \eqref{thm:error_FE.17}, we have that 
		\begin{align}\label{thm:error_pressure_FE.4}
			\begin{aligned}
				\rho_{((\varphi_h)_{\smash{\vert \bfD\bfv\vert}})^*,\Omega}(\bfS_h(\cdot,\bfD\bfv_h)-\bfS_h(\cdot,\bfD\bfv))&\leq c\,\|\bfF_h(\cdot,\bfD\bfv_h)-\bfF_h(\cdot,\bfD\bfv)\|_{2,\Omega}^2
				\\&
				\leq c\,\Xi^2_h(\bfv,q)\,.
			\end{aligned}
		\end{align}
		Thus, Lemma \ref{lem:just_to_mighty}(ii) applied to \eqref{thm:error_pressure_FE.4} yields a constant $c>0$ such that 
		\begin{align}\label{thm:error_pressure_FE.5}
			\begin{aligned}
				\smash{I_h^{11}\leq c\,\Xi^2_h(\bfv,q)^{\smash{\frac{1}{(r^-)'}}}}\,.
			\end{aligned}
		\end{align}
		Resorting to  the shift change Lemma \ref{lem:shift-change}\eqref{lem:shift-change.1}, that $\varphi(x,t)\sim \delta^{p(x)}+t^{p(x)}$ for all $t\ge 0$ and $x\in \Omega$, and 
		$\rho_{p(\cdot),\Omega}(\bfD\bfz_h)\hspace*{-0.1em} \leq \hspace*{-0.1em}  1+{\|\bfD\bfz_h\|_{p(\cdot),\Omega}^{p^+}}\hspace*{-0.1em} \leq\hspace*{-0.1em}  c\,(1+{\|\bfD\bfz_h\|_{p_h(\cdot),\Omega}^{p^+}})\hspace*{-0.1em} \leq\hspace*{-0.1em} c$ (\textit{cf}.\  \cite[Lem.\  3.2.5]{dhhr} and Lemma~\ref{lem:norm_equiv}),~we~obtain
		\begin{align}\label{thm:error.6}
			\begin{aligned}
				\rho_{\varphi_{\vert \bfD\bfv\vert},\Omega}(\bfD\bfz_h)&\leq c\,\rho_{\varphi,\Omega}(\bfD\bfz_h)+c\,\rho_{\varphi,\Omega}(\bfD\bfv)
				\\&\leq c\,\rho_{p(\cdot),\Omega}(\bfD\bfz_h)+c\,\rho_{p(\cdot),\Omega}(\bfD\bfv)+c\,(\max\{1,\delta\})^{\smash{p^+}}\vert \Omega\vert\\&\leq c\,.
			\end{aligned}
		\end{align}
		Thus, Lemma \ref{lem:just_to_mighty}(i) applied to \eqref{thm:error.6}  
		yields a constant $c>0$   such that
		\begin{align}\label{thm:error.7}
			\smash{I_h^{12}\leq c\,.}
		\end{align}
		Then, combining \eqref{thm:error_pressure_FE.5} and \eqref{thm:error.7} in \eqref{thm:error_pressure_FE.3}, we arrive that
		\begin{align}\label{thm:error.I1}
			\smash{\vert I_h^1\vert 
				\leq c\,\Xi^2_h(\bfv,q)^{\smash{\frac{1}{(r^-)'}}}\,.}
		\end{align}
		
		\textit{ad $I_h^2$}. Using the generalized Hölder inequality \eqref{eq:gen_hoelder} with $\psi=(\varphi_h)_{\smash{\vert\bfD\bfv\vert}}$, we find that\enlargethispage{3.5mm}
		\begin{align}\label{thm:error_pressure_FE.6}
			\begin{aligned}
				\vert I_h^2\vert &\leq 2\,\|\bfS_h(\cdot,\bfD\bfv)-\bfS(\cdot,\bfD\bfv)\|_{\smash{((\varphi_h)_{\smash{\vert\bfD\bfv\vert}})^*}}\|\bfD\bfz_h\|_{\smash{(\varphi_h)_{\smash{\vert \bfD\bfv\vert}}}}\\&\eqqcolon  2\,I_h^{21}\times I_h^{22}\,.
			\end{aligned}
		\end{align}
		Appealing to \eqref{eq:hammera} and Lemma \ref{lem:A-Ah}\eqref{eq:Ah-A}, we have that 
		\begin{align}\label{thm:error_pressure_FE.7}
			\begin{aligned}
				\rho_{((\varphi_h)_{\smash{\vert \bfD\bfv\vert}})^*,\Omega}(\bfS_h(\cdot,\bfD\bfv)-\bfS(\cdot,\bfD\bfv))&\leq c\,\|\bfF_h^*(\cdot,\bfS_h(\cdot,\bfD\bfv))-\bfF_h^*(\cdot,\bfS(\cdot,\bfD\bfv))\|_{2,\Omega}^2
				\\&
				\leq c\,h^{2\alpha}\,\big(1+\rho_{p(\cdot)s,\Omega}(\bfD\bfv)\big)
					\\&
				\leq c\,\Xi^2_h(\bfv,q)\,.
			\end{aligned}
		\end{align}
		Thus, Lemma \ref{lem:just_to_mighty}(ii) applied to \eqref{thm:error_pressure_FE.7} yields a constant $c>0$  such that
		\begin{align}\label{thm:error_pressure_FE.8}
			\begin{aligned}
				\smash{\vert I_h^{21}\vert \leq c\,\Xi^2_h(\bfv,q)^{\smash{\frac{1}{(r^-)'}}}}\,.
			\end{aligned}
		\end{align}
		Then, 
		combining \eqref{thm:error_pressure_FE.8} and \eqref{thm:error.7} in \eqref{thm:error_pressure_FE.6}, we arrive at\enlargethispage{7mm}
		\begin{align}\label{thm:error.I2}
			\smash{\vert I_h^2\vert 
				\leq c\,\Xi^2_h(\bfv,q)^{\smash{\frac{1}{(r^-)'}}}}\,.
		\end{align}
		
		\textit{ad $I_h^3$.} Using, again, the abbreviation $\bfe_h\coloneqq  \bfv_h-\bfv\in \smash{\Vo}$, the term $I_h^3$ can be re-written~as
		\begin{align}\label{eq:i3_pressure_FE}
			\begin{aligned}
				I_h^3 &= -b(\bfv,\bfv-\Pi_h^V\bfv,\bfz_h)+b(\bfe_h,\Pi_h^V\bfv,\bfz_h)+b(\bfv_h,\Pi_h^V\bfe_h,\bfz_h)
				\\&\eqqcolon  I_h^{31}+I_h^{32}+I_h^{33}\,.
			\end{aligned}
		\end{align}
		So, let us next estimate $ I_h^{3i}$, $i=1,2,3$:
		
		\textit{ad $I_h^{31}$}.  The definition of $b\colon [\Vo]^3\to \mathbb{R}$ (\textit{cf}.\ \eqref{def:bh})
		yields that
		\begin{align}\label{eq:i31_pressure_FE}
			\begin{aligned}
				2\,I_h^{31}&=-(\bfz_h\otimes \bfv,\nabla\bfv-\nabla\Pi_h^V\bfv)_{\Omega}+((\bfv-\Pi_h^V\bfv)\otimes \bfv,\nabla\bfz_h)_{\Omega}
				\\&\eqqcolon  I_{h,1}^{31}+ I_{h,2}^{31}\,.
			\end{aligned}
		\end{align}
		Using the generalized Hölder inequality \eqref{eq:gen_hoelder}, the Sobolev embedding theorem (\textit{cf}.\  Theorem~\ref{thm:sobolev}~together with $2r'\leq r^*$ in $\Omega$ for $p^-\ge \frac{3d}{d+2}$),  Korn's inequality (\textit{cf}.\  Theorem \ref{thm:korn}), that $\|\nabla\bfz_h\|_{r(\cdot),\Omega}\leq 2\,(1+\vert \Omega\vert )$ $\times\|\nabla\bfz_h\|_{p(\cdot),\Omega}$
		(since $r\leq p$ in $\Omega$, \textit{cf}.\ \cite[Cor.\ 3.3.4]{dhhr}), Lemma \ref{lem:norm_equiv}, and Lemma~\ref{cor:Pi_div_F}\eqref{cor:Pi_div_F.2},~we~find~that
		\begin{align}
			\label{eq:i311_pressure_FE}
			\begin{aligned}
				\vert I_{h,1}^{31}\vert &\leq c\,\|\bfz_h\|_{2r'(\cdot),\Omega}\|\bfv\|_{2r'(\cdot),\Omega}\|\nabla\bfv-\nabla\Pi_h^V\bfv\|_{r(\cdot),\Omega}\\
				&\leq c\,\|\nabla\bfz_h\|_{r(\cdot),\Omega}\|\bfD\bfv\|_{r(\cdot),\Omega}\|\bfD\bfv-\bfD\Pi_h^V\bfv\|_{r(\cdot),\Omega}
				\\
				&\leq c\,\|\nabla\bfz_h\|_{p_h(\cdot),\Omega}\|\bfD\bfv\|_{r(\cdot),\Omega}\big( h^{2\alpha}\,\big(1+\rho_{p(\cdot)s,\Omega}(\bfD\bfv)\big)+h^{\beta}\,[\bfF(\cdot,\bfD\bfv)]_{N^{\beta,2}(\Omega)}^2\big)^{\smash{\frac{1}{2}}}
				\\
				&\leq c\,\Xi_h^2(\bfv,q)^{\smash{\frac{1}{2}}}\,.
			\end{aligned}
		\end{align}
		Similarly, using the generalized Hölder inequality \eqref{eq:gen_hoelder}, the Sobolev embedding theorem (\textit{cf}.\  Theorem~\ref{thm:sobolev} together \hspace*{-0.1mm}with \hspace*{-0.1mm}$2r'\hspace*{-0.15em}\leq\hspace*{-0.15em}  r^*$ \hspace*{-0.1mm}in \hspace*{-0.1mm}$\Omega$ \hspace*{-0.1mm}for \hspace*{-0.1mm}$p^-\hspace*{-0.15em}\ge\hspace*{-0.15em} \frac{3d}{d+2}$), \hspace*{-0.1mm}Korn's \hspace*{-0.1mm}inequality \hspace*{-0.1mm}(\textit{cf}.\  \hspace*{-0.1mm}Theorem~\hspace*{-0.1mm}\ref{thm:korn}),~\hspace*{-0.1mm}that~\hspace*{-0.1mm}${\|\nabla\bfz_h\|_{r(\cdot),\Omega}\hspace*{-0.15em}\leq\hspace*{-0.15em} 2\,(1\hspace*{-0.15em}+\hspace*{-0.15em}\vert \Omega\vert )}$ $\times\|\nabla\bfz_h\|_{p(\cdot),\Omega}$
		(since $r\leq p$ in $\Omega$, \textit{cf}.\ \cite[Cor.\ 3.3.4]{dhhr}), Lemma \ref{lem:norm_equiv}, and Lemma~\ref{cor:Pi_div_F}\eqref{cor:Pi_div_F.2},~we~find~that
		\begin{align}
			\label{eq:i312_pressure_FE}
			\begin{aligned}
				\vert I_{h,2}^{31}\vert &\leq c\,\|\bfv-\Pi_h^V\bfv\|_{2r'(\cdot),\Omega}\|\bfv\|_{2r'(\cdot),\Omega}\|\nabla\bfz_h\|_{r(\cdot),\Omega}\\
				&\leq c\,\|\bfD\bfv-\bfD\Pi_h^V\bfv\|_{r(\cdot),\Omega}\|\bfD\bfv\|_{r(\cdot),\Omega}\|\nabla\bfz_h\|_{p_h(\cdot),\Omega}\\
				&\leq c\,\|\bfD\bfv\|_{r(\cdot),\Omega}\|\nabla\bfz_h\|_{r_h(\cdot),\Omega}\big( h^{2\alpha}\,\big(1+\rho_{p(\cdot)s,\Omega}(\bfD\bfv)\big)+h^{\beta}\,[\bfF(\cdot,\bfD\bfv)]_{N^{\beta,2}(\Omega)}^2\big)^{\smash{\frac{1}{2}}}
				\\
				&\leq c\,\Xi_h^2(\bfv,q)^{\smash{\frac{1}{2}}}\,.
			\end{aligned}
		\end{align}
		
		\textit{ad $I_h^{32}$}. The definition of $b\colon [\Vo]^3\to \mathbb{R}$ (\textit{cf}.\ \eqref{def:bh})
		yields that
		\begin{align}
			\label{eq:i32_pressure_FE}
			\begin{aligned}
				2\,I_h^{32}&=(\bfz_h\otimes \bfe_h,\nabla\Pi_h^V\bfv)_{\Omega}-(\Pi_h^V\bfv\otimes \bfe_h,\nabla\bfz_h)_{\Omega}
				\\&	\eqqcolon  I_{h,1}^{32}+ I_{h,2}^{32}\,.
			\end{aligned}
		\end{align}
		Using the generalized Hölder inequality \eqref{eq:gen_hoelder}, the Sobolev embedding theorem (\textit{cf}.\  Theorem~\ref{thm:sobolev}~together with $2r'\leq r^*$ in $\Omega$ for $p^-\ge \frac{3d}{d+2}$), Lemma \ref{lem:stab_Pi_div_V_norm}\eqref{lem:stab_Pi_div_V_norm.2}, 
		Korn's inequality (\textit{cf}.\  Theorem \ref{thm:korn}), that $\|\nabla\bfz_h\|_{r(\cdot),\Omega}\leq 2\,(1+\vert \Omega\vert )\,\|\nabla\bfz_h\|_{p(\cdot),\Omega}$
		(since $r\leq p$ in $\Omega$, \textit{cf}.\ \cite[Cor.\ 3.3.4]{dhhr}), Lemma \ref{lem:norm_equiv}, Lemma~\ref{lem:sobolev2F_discrete}, and \eqref{thm:error_pressure_FE.2},  we find that\enlargethispage{3.5mm}
		\begin{align}
			\label{eq:i321_pressure_FE}
			\begin{aligned}
				\vert I_{h,1}^{32}\vert &\leq
				c\,\|\bfz_h\|_{2r'(\cdot),\Omega}\|\bfe_h\|_{2r'(\cdot),\Omega}	\|\nabla\Pi_h^V\bfv\|_{r(\cdot),\Omega}
				\\
				&\leq
				c\,\|\nabla\bfz_h\|_{p_h(\cdot),\Omega}\|\bfD\bfe_h\|_{r(\cdot),\Omega}\|\bfD\bfv\|_{r(\cdot),\Omega}
				\\
				&\leq c\,\Xi_h^2(\bfv,q)^{\smash{\frac{1}{2}}}\,.
			\end{aligned}
		\end{align}
		Similarly, using the generalized Hölder inequality \eqref{eq:gen_hoelder}, the Sobolev embedding theorem (\textit{cf}.\  Theorem~\ref{thm:sobolev} together with $2r'\leq r^*$ in $\Omega$ for $p^-\ge \frac{3d}{d+2}$), Lemma \ref{lem:stab_Pi_div_V_norm}\eqref{lem:stab_Pi_div_V_norm.2}, 
		Korn's inequality (\textit{cf}.\ Theorem~\ref{thm:korn}), that $\|\nabla\bfz_h\|_{r(\cdot),\Omega}\leq 2\,(1+\vert \Omega\vert )\,\|\nabla\bfz_h\|_{p(\cdot),\Omega}$
		(since $r\leq p$ in $\Omega$, \textit{cf}.\ \cite[Cor.\ 3.3.4]{dhhr}), Lemma \ref{lem:norm_equiv}, Lemma~\ref{lem:sobolev2F_discrete}, and \eqref{thm:error_pressure_FE.2},  we find that
		\begin{align}
			\label{eq:i322_pressure_FE}
			\begin{aligned}
				\vert I_{h,2}^{32}\vert &\leq c\,\|\Pi_h^V\bfv\|_{2r'(\cdot),\Omega}\|\bfe_h\|_{2r'(\cdot),\Omega}\|\nabla\bfz_h\|_{r(\cdot),\Omega}\\
				&\leq c\,\|\bfD\bfv\|_{r(\cdot),\Omega}\|\bfD \bfe_h\|_{r(\cdot),\Omega}\|\nabla\bfz_h\|_{p_h(\cdot),\Omega}\\
				&\leq c\,\Xi_h^2(\bfv,q)^{\smash{\frac{1}{2}}}\,.
			\end{aligned}
		\end{align}

		\textit{ad $I_h^{33}$}.  The definition of $b\colon [\Vo]^3\to \mathbb{R}$ (\textit{cf}.\ \eqref{def:bh})
		yields that
		\begin{align}
			\label{eq:i33_pressure_FE}
			\begin{aligned}
				2\,I_h^{33}&=(\bfz_h\otimes \bfv_h,\nabla\Pi_h^V\bfe_h)_{\Omega}-(\Pi_h^V\bfe_h\otimes \bfv_h,\nabla\bfz_h)_{\Omega}
				\\&\eqqcolon  I_{h,1}^{33}+ I_{h,2}^{33}\,.
			\end{aligned}
		\end{align}
		Using the generalized Hölder inequality \eqref{eq:gen_hoelder}, the Sobolev embedding theorem (\textit{cf}.\  Theorem \ref{thm:sobolev} together with $2r'\leq r^*$ in $\Omega$ for $p^-\ge \frac{3d}{d+2}$),  Lemma \ref{lem:stab_Pi_div_V_norm}\eqref{lem:stab_Pi_div_V_norm.2}, 
		Korn's inequality (\textit{cf}.\  Theorem~\ref{thm:korn}), that $\|\nabla\bfz_h\|_{r(\cdot),\Omega}\leq 2\,(1+\vert \Omega\vert )\,\|\nabla\bfz_h\|_{p(\cdot),\Omega}$
		(since $r\leq p$ in $\Omega$, \textit{cf}.\ \cite[Cor.\ 3.3.4]{dhhr}), Lemma \ref{lem:norm_equiv}, Lemma~\ref{lem:sobolev2F_discrete}, and \eqref{thm:error_pressure_FE.2},  we find that
		\begin{align}
			\label{eq:i331_pressure_FE}
			\begin{aligned}
				\vert I_{h,1}^{33}\vert &\leq c\,
				\|\bfz_h\|_{2r'(\cdot),\Omega}\|\bfv_h\|_{2r'(\cdot),\Omega}\|\nabla\Pi_h^V\bfe_h\|_{r(\cdot),\Omega}\\
				&\leq
				c\,\|\nabla\bfz_h\|_{p_h(\cdot),\Omega}\|\bfD\bfv_h\|_{r(\cdot),\Omega}\|\bfD\bfe_h\|_{r(\cdot),\Omega}\\
				&\leq c\,\Xi_h^2(\bfv,q)^{\smash{\frac{1}{2}}}\,.
			\end{aligned}
		\end{align}
		Similarly, using  the generalized Hölder inequality \eqref{eq:gen_hoelder}, the Sobolev embedding theorem (\textit{cf}.\ Theorem~\ref{thm:sobolev} together with $2r'\leq  r^*$ in $\Omega$ for $p^-\ge \frac{3d}{d+2}$),  Lemma \ref{lem:stab_Pi_div_V_norm}\eqref{lem:stab_Pi_div_V_norm.2}, 
		Korn's inequality (\textit{cf}.\  Theorem~\ref{thm:korn}), that $\|\nabla\bfz_h\|_{r(\cdot),\Omega}\leq 2\,(1+\vert \Omega\vert )\,\|\nabla\bfz_h\|_{p(\cdot),\Omega}$
		(since $r\leq p$ in $\Omega$, \textit{cf}.\ \cite[Cor.\ 3.3.4]{dhhr}), Lemma \ref{lem:norm_equiv}, Lemma~\ref{lem:sobolev2F_discrete}, and \eqref{thm:error_pressure_FE.2}, we find that
		\begin{align}
			\label{eq:i332_pressure_FE}
			\begin{aligned}
				\vert I_{h,2}^{33}\vert &\leq c\, \|\Pi_h^V\bfe_h\|_{2r'(\cdot),\Omega}\|\bfv_h\|_{2r'(\cdot),\Omega}\|\nabla\bfz_h\|_{r(\cdot),\Omega}\\
				&\leq c\,\|\bfD\bfe_h\|_{r(\cdot),\Omega}\|\bfD\bfv_h\|_{r(\cdot),\Omega}\|\nabla\bfz_h\|_{p_h(\cdot),\Omega}\\
				&\leq c\,\Xi_h^2(\bfv,q)^{\smash{\frac{1}{2}}}\,.
			\end{aligned}
		\end{align}
		Eventually, combining \eqref{eq:i3_pressure_FE}--\eqref{eq:i332_pressure_FE}, we conclude that\vspace*{-0.5mm}
		\begin{align}
			\vert I_h^3\vert \leq 
			c\,\Xi_h^2(\bfv,q)^{\smash{\frac{1}{2}}}\,.\label{eq:i3fin_pressure_FE}
		\end{align}

		Putting it all together,  \textit{i.e.}, \eqref{thm:error.I1}, \eqref{thm:error.I2}, and \eqref{eq:i3fin_pressure_FE} in \eqref{thm:error_pressure_FE.2},  for every $\bfz_h\hspace*{-0.1em}\in\hspace*{-0.1em} \Vo_h$ with ${\|\nabla\bfz_h\|_{p_h(\cdot),\Omega}\hspace*{-0.1em}\leq\hspace*{-0.1em} 1}$, we conclude that\vspace*{-1.5mm}
		\begin{align}\label{eq:i3fin_pressure_FE.2}
			(q_h-q,\divo\bfz_h)_{\Omega}\leq  \Xi^2_h(\bfv,q)^{\smash{\frac{1}{(r^-)'}}}\,.
		\end{align}
		Therefore, using Lemma \ref{lem:norm_equiv}, \eqref{thm:error_pressure_FE.1}, and \eqref{eq:i3fin_pressure_FE.2}, for every $z_h\in\smash{ \Qo_h}$, we find that
		\begin{align}\label{eq:fast_pressure_FE}
			\begin{aligned}
				\|q_h-q\|_{p'(\cdot),\Omega}
				&\leq c\,\|q_h-z_h\|_{p'_h(\cdot),\Omega}+\|z_h-q\|_{p'(\cdot),\Omega} \\[-0.5mm]&\leq
				c\,\sup_{\bfz_h\in \Vo_h\,:\, {\|\nabla\bfz_h\|_{p_h(\cdot),\Omega}\leq
						1}}{(q_h-z_h,\divo\bfz_h)_{\Omega}+\|z_h-q\|_{p'(\cdot),\Omega}} \\[-0.5mm]&\leq
				c\,	\sup_{\bfz_h\in \Vo_h\,:\,  {\|\nabla\bfz_h\|_{p_h(\cdot),\Omega}\leq
						1}}{(q_h-q,\divo\bfz_h)_{\Omega}+c\,\|z_h-q\|_{p'(\cdot),\Omega}} \\[-0.5mm]&\leq
				c\,\Xi^2_h(\bfv,q)^{\smash{\frac{1}{(r^-)'}}}+c\,\|z_h-q\|_{p'(\cdot),\Omega}\,.
			\end{aligned}
		\end{align}
		Due to $\langle q\rangle_{\Omega}=0$,  $\| \cdot\|_{1,\Omega}\leq 2\,(1+\vert \Omega\vert )\,\|\cdot\|_{p'(\cdot),\Omega}$
		(\textit{cf}.\ \cite[Cor.\ 3.3.4]{dhhr}), and Lemma \ref{lem:Pi_Q_norm}\eqref{lem:Pi_Q_norm.global},~we~have~that\vspace*{-0.5mm}\enlargethispage{6.5mm}
		\begin{align}\label{eq:fast_pressure_FE.3}
			\begin{aligned}
				\|q-(\smash{\Pi_h^Q } q-\langle\smash{\Pi_h^Q }q\rangle_{\Omega})\|_{p'(\cdot),\Omega}&= \|q-\smash{\Pi_h^Q } q-\langle q-\smash{\Pi_h^Q } q\rangle_{\Omega}\|_{p'(\cdot),\Omega}
				\\&\leq \|q-\smash{\Pi_h^Q } q\|_{p'(\cdot),\Omega}+\vert\Omega\vert^{\smash{-1}}\| q-\smash{\Pi_h^Q }q\|_{1,\Omega}\|1\|_{p'(\cdot),\Omega}
				\\&\leq c\,h^{\gamma} \|\vert \nabla^{\gamma} q\vert\|_{p'(\cdot),\Omega}\,.
			\end{aligned}
		\end{align}
		Therefore, choosing $z_h\coloneqq \Pi_h^Q q-\langle\Pi_h^Q q\rangle_{\Omega}\in \Qo_h$ in \eqref{eq:fast_pressure_FE}, using \eqref{eq:fast_pressure_FE.3} in doing so, we conclude that
		\begin{align*}
			\smash{\|q_h-q\|_{p'(\cdot),\Omega}^{(r^-)'}\leq c\,\Xi^2_h(\bfv,q)+c\,h^{(r^-)'\gamma}\, \|\vert \nabla^{\gamma} q\vert\|_{p'(\cdot),\Omega}^{(r^-)'}\,,}
		\end{align*}
		which is the claimed error estimate for the kinematic pressure.
	\end{proof}
	
	\begin{proof}[Proof (of Corollary \ref{cor:error_FE}).]
		
		\textit{ad \eqref{cor:error_FE.1}.}
		Using that $(\varphi_a)^*(x,h\,t) \lesssim
		h^{\smash{\min\{2,p'(x)\}}} (\varphi_a)^*(x,t)$ for all $t,a\ge 0$, $h\in (0,1]$, and $x\in \Omega$,  we deduce that\vspace*{-0.5mm}
		\begin{align*}
			\smash{\rho_{(\varphi_{\vert \bfD\bfv\vert})^*,\Omega}(h^{\gamma}\vert\nabla^{\gamma}q\vert) \lesssim h^{\smash{\min\{2,(p^+)'\}\gamma}}\rho_{(\varphi_{\vert \bfD\bfv\vert})^*,\Omega}(\vert\nabla^{\gamma}q\vert) \,,}
		\end{align*}
		so that from Theorem \ref{thm:error_FE}, it follows the claimed \textit{a priori} error estimate \eqref{cor:error_FE.1}.
		
		\textit{ad \eqref{cor:error_FE.2}.} 
		Using that 	$(\varphi_a)^*(x,h\,t)\sim ( (\delta+a)^{p(x)-1}
		+h\,t\big )^{p'(x)-2}\, h^2\, t^2	\le (\delta+a)^{2-p(x)}\, h^2\, t^2 $~for~all~${t,a\ge 0}$, $x\in \Omega$, and $h\in (0,1]$ (\textit{cf}.\ \eqref{rem:phi_a.2}), due to $p^-\ge 2$, we deduce  that
		\begin{align*}
			\smash{\rho_{(\varphi_{\vert \bfD\bfv\vert})^*,\Omega}(h^{\gamma}\vert\nabla^{\gamma}q\vert) \lesssim h^{\smash{2\gamma}}\,
			\|(\delta+\vert \bfD\bfv\vert )^{\smash{\frac{2-p(\cdot)}{2}}}\vert \nabla^{\gamma} q\vert\|_{2,\Omega}^2 	\big )\,,}
		\end{align*}
		so that from Theorem \ref{thm:error_FE}, it follows the claimed \textit{a priori} error estimate \eqref{cor:error_FE.2}.
	\end{proof}\newpage
	
	\section{Numerical experiments}\label{sec:experiments}

	\hspace*{5mm}In this section, we complement the theoretical findings  of Section \ref{sec:a_priori} via numerical experiments:~first, we examine a purely academic example that is only meant to confirm the quasi-optimality of the derived error decay rates (for the velocity vector field); second, we consider a less academic example.

	\subsection{Implementation details}
	
	\hspace*{5mm}All experiments were conducted deploying the finite element software package \texttt{FEniCS} (version 2019.1.0), \textit{cf}.\  \cite{LW10}. All triangulations were generated using  the \texttt{MatLab} (version R2022b, \textit{cf}.\  \cite{matlab}) library \texttt{DistMesh} (version 1.1, \textit{cf}.\  \cite{distmesh}).
	All graphics were generated using the \texttt{Matplotlib} library~(version~3.5.1, \textit{cf}.~\cite{Hun07}). The communication between \texttt{FEniCS} (\textit{i.e.}, \texttt{Numpy} (version 1.24.3, \textit{cf}.\  \cite{numpy})) and \texttt{MatLab}~relied~on the \texttt{mat4py} library (version 3.1.4).\enlargethispage{10mm}
	
	To keep the computational costs moderate, we restrict to the case $d=2$.  
	As quadrature points of the one-point~quadrature~rule~used~to discretize $p \in C^{0,\alpha}(\overline{\Omega})$, $\alpha \in (0,1]$, 
	we employ barycenters~of~elements, \textit{i.e.}, we set $\xi_T \coloneqq  \frac{1}{3}\sum_{\nu \in \mathcal{N}_h\cap T}{\nu}$ for all $T\in  \mathcal{T}_h$, where $\mathcal{N}_h$ denotes the set of vertices of $\mathcal{T}_h$.
	
	We approximate the discrete solution $(\bfv_h,q_h)^{\top}\in \textcolor{black}{\Vo_{h,0}}\times \Qo_h$ of the non-linear saddle point problem (\textit{i.e.}, Problem (\hyperlink{Qh}{Q$_h$})) using the Newton solver from \mbox{\texttt{PETSc}} (version~3.17.3, \textit{cf}.~\cite{LW10}),~with~an~\mbox{absolute} tolerance~of $\tau_{abs}= 1.0\times10^{-8}$ and a relative tolerance of $\tau_{rel}=1.0\times10^{-10}$.~The~linear~system emerging~in~each~Newton iteration is solved using a sparse direct solver from \texttt{MUMPS} (version~5.5.0,~\textit{cf}.~\cite{mumps}).~In~the \mbox{implementation}, the uniqueness of the pressure is enforced via~adding~a~zero~mean~condition.\enlargethispage{3.5mm}
	
	\subsection{Quasi-optimality of the derived error decay rates (for the velocity)}
	
	\hspace*{5mm}In this subsection, we confirm the quasi-optimality of the derived error decay rates (for~the~velocity).
	More precisely, we apply Problem (\hyperlink{Qh}{Q$_h$}) (or Problem (\hyperlink{Ph}{P$_h$}), respectively)  to approximate the system~\eqref{eq:p-navier-stokes} with  $\bfS\colon\Omega\times \mathbb{R}^{2\times 2}\to\mathbb{R}^{2\times 2}_{\mathrm{sym}}$, where $\Omega=(0,1)^2$, for every \textcolor{black}{$(x,\bfA)^\top\in \Omega\times\mathbb{R}^{2\times 2}$}~defined~by\vspace*{-0.5mm}
	\begin{align*}
		\bfS(x,\bfA) \coloneqq \mu_0\,(\delta+\vert \bfA^{\textup{sym}}\vert)^{p(x)-2}\bfA^{\textup{sym}}\,,
	\end{align*}  
	where $\mu_0 = \frac{1}{2}$, $\delta\coloneqq 1.0\times10^{-5}$, and $ p\in C^{0,\alpha}(\overline{\Omega})$, where $\alpha\in (0,1]$,  for every $x\in \overline{\Omega}$ is defined by\vspace*{-0.5mm}
	\begin{align*}
		p(x)\coloneqq \Big(1-\frac{\vert x\vert^{\alpha}}{2^{\alpha/2}}\Big)\, p^++\frac{\vert x\vert^{\alpha}}{2^{\alpha/2}}\,p^-\,,
	\end{align*}
	where $p^-,p^+>1$. As manufactured solutions serve the vector field $\bfv\in V$\footnote{The manufacture solution does not satisfy the homogeneous Dirichlet boundary condition \eqref{eq:p-navier-stokes}$_3$. 
		However, the error is concentrated around the singularity and, therefore, this small inconsistency with the setup of the theory does not have any influence on the results of this paper. Following the recent contribution \cite{JK23_inhom}, \textcolor{black}{which treats the case of a constant power-law index}, the results of this paper should be readily extendable to the case of homogeneous Dirichlet~boundary~data.\vspace*{-5.5mm}} and the function $q \in \Qo$, given $\rho_{\bfv},\rho_q\in C^{0,\alpha}(\overline{\Omega})$, 
	for every $x\coloneqq(x_1,x_2)^\top\in \Omega$ defined by\vspace*{-0.5mm}
	\begin{align}\label{solutions}
		\bfv(x)\coloneqq\vert x\vert^{\rho_{\bfv}(x)} (x_2,-x_1)^\top\,, \qquad q(x)\coloneqq \vert x\vert^{\rho_{q}(x)}-\langle\,\vert \!\cdot\!\vert^{\rho_{q}(\cdot)}\,\rangle_\Omega\,,
	\end{align}
	\textit{i.e.}, we choose $\bff\in L^{p'(\cdot)}(\Omega;\mathbb{R}^2)$ and boundary data $\bfv_{\partial\Omega}\in W^{\smash{1,1-\frac{1}{p^-}}}(\partial\Omega;\mathbb{R}^2)$ accordingly.
	
	Regarding the regularity of the velocity, for  $\beta\in (0,1]$, we choose
	${\rho_{\bfv}\coloneqq 2\frac{\beta-1}{p}\hspace*{-0.1em}+\hspace*{-0.1em}1.0\hspace*{-0.1em}\times\hspace*{-0.1em}10^{-4}\hspace*{-0.1em}\in\hspace*{-0.1em} C^{0,\alpha}(\overline{\Omega})}$, 
	 such 
	that $\bfF(\cdot,\bfD\bfv)\in N^{\beta,2}(\Omega;\mathbb{R}^{2\times 2})$. 
	Regarding the regularity of the pressure, for  $\gamma \in (0,1]$,~we~consider the two different cases  in Corollary~\ref{cor:error_FE}:\vspace{2mm}
	\begin{itemize}[noitemsep,topsep=0pt,leftmargin=!,labelwidth=\widthof{(Case 2)}]
		\item[(\hypertarget{case_1}{Case 1})] We choose $\rho_{q}\hspace*{-0.1em} =\hspace*{-0.1em}  \gamma-\frac{2}{p'}+1.0\times10^{-4}\hspace*{-0.1em} \in\hspace*{-0.1em}  C^{0,\alpha}(\overline{\Omega})$, 
		 such that  ${q\hspace*{-0.1em} \in\hspace*{-0.1em} 
		H^{\gamma,p'(\cdot)}(\Omega)\cap L^{p'(\cdot)}_0(\Omega)}$;\vspace*{1mm}
		\item[(\hypertarget{case_2}{Case 2})] We \hspace*{-0.1mm}choose \hspace*{-0.1mm}$\rho_{q}\hspace*{-0.175em} =\hspace*{-0.175em}  \rho_{\bfv}\frac{p-2}{2} +\gamma-1+1.0\times10^{-4}\hspace*{-0.175em} \in\hspace*{-0.175em}  C^{0,\alpha}(\overline{\Omega})$, 
	 such that $(\delta\hspace*{-0.175em} +\hspace*{-0.175em} \vert\bfD\bfv\vert)^{\smash{\frac{2-p(\cdot)}{2}}} \vert \nabla^{\gamma} q\vert  \in
		L^2(\Omega)$.\vspace{3mm}
	\end{itemize}
	The power-law index $p\in C^{0,\alpha}(\overline{\Omega})$ is  constructed precisely in such a way that $p(x)\approx p^+$ close to the origin, \textit{i.e.}, when $\vert x\vert \to 0$, where both the velocity vector field and the pressure field 
	have singularities~that~enforce the respective fractional regularity. This turned out to be crucial for creating a critical setup that enables us to confirm the quasi-optimality of the a  priori error estimates~(for~the~velocity)~in~Corollary~\ref{cor:error_FE}. 
	It proved particularly important to use $x$-dependent power functions $\rho_{\bfv},\rho_q\in C^{0,\alpha}(\overline{\Omega})$ in \eqref{solutions},~as,~then, the asymptotic behavior of the power-law $p\in C^{0,\alpha}(\overline{\Omega})$, \textit{i.e.}, $p(x)\approx p^+$  when $\vert x\vert \to 0$, transfers to these power functions, \textit{i.e.}, $\rho_{\bfv}(x)\approx 2\frac{\beta-1}{p^+}+1.0\times10^{-4}$ and $\rho_q(x)\approx \rho_{\bfv}(x)\frac{p^+-2}{2}+\gamma -1+1.0\times10^{-4}$~if~$\vert x\vert \to 0$.
	
	We construct an initial triangulation $\mathcal
	T_{h_0}$, where $h_0=1$, by subdividing the domain $\Omega=(0,1)^2$ along its diagonals into four triangles  with different orientations.  Finer triangulations~$\mathcal T_{h_i}$, $i=1,\ldots,9$, where $h_{i+1}=\frac{h_i}{2}$ for all $i=0,\ldots,9$, are, then,
	obtained by
	regular subdivision of the previous grid: Each \mbox{triangle} is subdivided
	into four equal triangles by connecting the midpoints of the edges. 
	
	Note that it is difficult to compute numerically the pressure errors measured in the Luxembourg~norm, \textit{i.e.}, $\|q_{h_i}-q\|_{p'(\cdot),\Omega}$, $i=0,\ldots,9$, or the discretized Luxembourg norm,~\textit{i.e.},~$\|q_{h_i}-q\|_{p'_h(\cdot),\Omega}$,~$i=0,\ldots,9$, as these error quantities 
	cannot be localized due to the particular structure~of~the~Luxembourg norm. In fact, appealing to \cite[Cor.\ 7.3.21]{dhhr}, one can only hope for the \textit{local-to-global~equivalence}
	\begin{align}
		e_{q,i}^{\textup{true}}\coloneqq\|q_{h_i}-q\|_{p'(\cdot),\Omega}\sim \bigg\|\sum_{T\in \mathcal{T}_h}{\chi_T\frac{\|q_{h_i}-q\|_{p'(\cdot),T}}{\|1\|_{p'(\cdot),T}}}\bigg\|_{p'(\cdot),\Omega}\,,\quad i=0,\ldots,9\,.\label{eq:pseudo-localized}
	\end{align}
	 To \hspace*{-0.1mm}have \hspace*{-0.1mm}at \hspace*{-0.1mm}least \hspace*{-0.1mm}an \hspace*{-0.1mm}upper \hspace*{-0.1mm}bound \hspace*{-0.1mm}for \hspace*{-0.1mm}the \hspace*{-0.1mm}asymptotic \hspace*{-0.1mm}behavior \hspace*{-0.1mm}of \hspace*{-0.1mm}the \hspace*{-0.2mm}\textit{``true''} \hspace*{-0.1mm}pressure~\hspace*{-0.1mm}errors~\hspace*{-0.1mm}$e_{q,i}^{\textup{true}}$,~\hspace*{-0.1mm}${i\hspace*{-0.1em}=\hspace*{-0.1em}0,\ldots,9}$, (\textit{cf}.~\eqref{eq:pseudo-localized}),
	in the numerical experiments, we compute \textit{``localized''} pressure errors $e_{q,i}$, $i\hspace*{-0.1em}=\hspace*{-0.1em}0,\ldots,9$,~(\textit{cf}.~\eqref{eq:errors}), which can be 
    computed numerically and are 
    motivated by the fact that, due to Lemma~\ref{lem:norm_equiv},~Lemma~\ref{lem:Pi_Q_norm} \eqref{lem:Pi_Q_norm.global}, and Minkowski's inequality, for every $i=0,\ldots,9$, we have that
	\begin{align}\label{eq:relation}
		\begin{aligned}
		e_{q,i}^{\textup{true}}&\leq \|q_{h_i}-\textcolor{black}{\Pi}_{h_i}^0 q\|_{p'(\cdot),\Omega}+\|q-\textcolor{black}{\Pi}_{h_i}^0 q\|_{p'(\cdot),\Omega}
			\\&	\leq \|q_{h_i}-\textcolor{black}{\Pi}_{h_i}^0 q\|_{p'_h(\cdot),\Omega}+c\,h^{\gamma}\,\|\vert \nabla^\gamma q\vert \|_{p'(\cdot),\Omega}
			\\[-0.5mm]&	\le \sum_{T\in \mathcal{T}_h}{\|q_{h_i}-\textcolor{black}{\Pi}_{h_i}^0 q\|_{p'(x_T),T}}+c\,h^{\gamma}\,\|\vert \nabla^\gamma q\vert \|_{p'(\cdot),\Omega}\,,
		\end{aligned}
	\end{align}
	where $\textcolor{black}{\Pi}_{h_i}^0\colon L^1(\Omega)\to \mathbb{P}^0(\mathcal{T}_{h_i})$, $i=0,\ldots,9$, denote the (local) $L^2$-projection operators.
	
	Therefore, for the resulting series of triangulations $\mathcal T_{h_i}$, $i=0,\ldots,9$, we apply the above Newton scheme to compute the corresponding discrete solutions $(\bfv_{h_i},q_{h_i})^\top\in V_{h_i}\times \Qo_{h_i}$, $i=0,\ldots,9$, 
	and the error quantities 
	\begin{align}\label{eq:errors}
		\left.\begin{aligned}
			e_{\bfv,i}&\coloneqq\|\bfF_{h_i}(\cdot,\bfD\bfv_{h_i})-\bfF_{h_i}(\cdot,\bfD\bfv)\|_{2,\Omega}\,,\\
			e_{q,i}&\coloneqq \sum_{T\in \mathcal{T}_h}{\|q_{h_i}-\textcolor{black}{\Pi}_{h_i}^0q\|_{p'(x_T),T}}\,,\\[-2mm]
		\end{aligned}\quad\right\}\quad i=0,\ldots,9\,.
	\end{align}

	As estimation of the convergence rates,  the experimental order of convergence~(EOC)
	\begin{align*}
		\texttt{EOC}_i(e_i)\coloneqq\frac{\log(e_i/e_{i-1})}{\log(h_i/h_{i-1})}\,, \quad i=1,\ldots,9\,,
	\end{align*}
	where for every $i= 1,\ldots,9$, we denote by $e_i$
	either 
	$e_{\bfv,i}$ or
	$e_{q,i}$,~respectively,~is~recorded.  
	
	Due to  Corollary~\ref{cor:error_FE}, in the Case \hyperlink{case_1}{1}, we can expect the
	convergence rate $\min\{\alpha,\beta,\gamma \min\{1,(p^+)'/2\}\}$ for error quantity $e_{\bfv,i}$, $i=1,\ldots,9$, while in the  Case \hyperlink{case_2}{2}, we can expect~the~convergence~rate~$\min\{\alpha,\beta,\gamma\}$.
	Moreover, due to Corollary \ref{cor:error_FE} and the relation \eqref{eq:relation}, in the Case \hyperlink{case_1}{1},  we
	can expect a convergence rate not larger than $2\min\{\alpha,\beta,\gamma \min\{1,(p^+)'/2\}\}/(r^-)'$ 
	 for error quantity $e_{q,i}$, $i=1,\ldots,9$, while~in~the~Case~\hyperlink{case_2}{2}, we can expect a convergence rate not larger than $\min\{\alpha,\beta,\gamma\}$.
	 
	  Motivated by Lemma \ref{lem:pres}, we restrict to the case $\alpha=\beta=\gamma$. In addition, we always
	set $p^+\coloneqq p^-+1$.\enlargethispage{4mm}
	
	For different values of $p^- \in \{1.5,1.75, 2, 2.25, 2.5, 2.75\}$, fractional exponents $\alpha =\beta=\gamma\in \{0.5,1.0\}$, and a series of triangulations $\mathcal{T}_{h_i}$,
	$i = 0, \ldots , 9$, obtained by  global refinement~as~described~above, the EOC is
	computed and presented~in~the~Tables~\ref{tab1}--\ref{tab4}, respectively:
	for both the Mini element and the Taylor--Hood element, for the velocity errors $e_{\bfv,i}$, $i=0,\ldots,9$, we report
	the expected a convergence rate of about $\texttt{EOC}_i(e_{\bfv,i}) \approx \alpha \min\{1,(p^+)'/2\}$ , $i=1,\ldots,9$,~in~Case~\hyperlink{case_1}{1} and $\texttt{EOC}_i(e_{\bfv,i}) \approx \alpha$,~${i=1,\ldots, 9}$,~in~Case~\hyperlink{case_2}{2}. For the pressure errors  $e_{q,i}$, $i=0,\ldots,9$, however, for both the Mini element and the Taylor--Hood element, we report experimental convergence rates that are larger than theoretically predicted maximal convergence rates of about 
	$\min\{2,(p^+)'\}/(r^-)'$ in Case~\hyperlink{case_1}{1} and $\min\{1,(p^+)'/2\}\}$  in Case~\hyperlink{case_2}{2}.~Since,~owing~to~\eqref{eq:relation},\linebreak the experimental convergence rates of the true pressure errors $e_{q,i}^{\textup{true}}$, $i=0,\ldots,9$, at least as high as the experimental convergence rates of the localized pressure error $e_{q,i}$, $i=0,\ldots,9$, this indicates that the \textit{a priori} error estimates derived for the pressure in Corollary \ref{cor:error_FE} are potentially sub-optimal.~In~the~case~of~a constant \hspace*{-0.1mm}power-law \hspace*{-0.1mm}index \hspace*{-0.1mm}$p\hspace*{-0.1em}\in\hspace*{-0.1em} (2,\infty)$, \hspace*{-0.1mm}the \hspace*{-0.1mm}same \hspace*{-0.1mm}has \hspace*{-0.1mm}been \hspace*{-0.1mm}report~\hspace*{-0.1mm}for~\hspace*{-0.1mm}a~\hspace*{-0.1mm}finite~\hspace*{-0.1mm}element~\hspace*{-0.1mm}approximation~\hspace*{-0.1mm}in~\hspace*{-0.1mm}\mbox{\cite{bdr-phi-stokes,JK23_inhom}}.\newpage

\begin{table}[H]
	\setlength\tabcolsep{7.0pt}
	\centering
	\begin{tabular}{c |c|c|c|c|c|c|c|c|c|c|}  \hline
		\multicolumn{1}{|c||}{\cellcolor{lightgray}$\rho_q$}	
		& \multicolumn{6}{c||}{\cellcolor{lightgray}(Case \hyperlink{case_1}{1})\vphantom{$X^{X_{X_X}}_{X^{X^X}}$}}   & \multicolumn{4}{c|}{\cellcolor{lightgray}(Case \hyperlink{case_2}{2})}\\ 
		\hline 
		
		\multicolumn{1}{|c||}{\cellcolor{lightgray}\diagbox[height=1.1\line,width=0.135\dimexpr\linewidth]{\vspace{-0.6mm}\hspace*{-1mm}$i$}{\\[-4.5mm]$p^-$\hspace*{-2mm}}}
		& \cellcolor{lightgray}1.5 & \cellcolor{lightgray}1.75  & \cellcolor{lightgray}2.0  &  \cellcolor{lightgray}2.25 & \cellcolor{lightgray}2.5  & \multicolumn{1}{c||}{\cellcolor{lightgray}2.75} & 
		\multicolumn{1}{c|}{\cellcolor{lightgray}2.0}   & \cellcolor{lightgray}2.25  & \cellcolor{lightgray}2.5  & \cellcolor{lightgray}2.75  \\ \hline\hline
		\multicolumn{1}{|c||}{\cellcolor{lightgray}$\alpha=\beta=\gamma$} &	\multicolumn{10}{c|}{\cellcolor{lightgray}$1.0$}	\\ \hline
		\multicolumn{1}{|c||}{\cellcolor{lightgray}$3$}             & 0.642 & 0.639 & 0.632 & 0.623 & 0.613 & \multicolumn{1}{c||}{0.603} & \multicolumn{1}{c|}{0.681} & 0.672 & 0.579 & 0.401 \\ \hline
		\multicolumn{1}{|c||}{\cellcolor{lightgray}$4$}             & 0.750 & 0.719 & 0.695 & 0.676 & 0.659 & \multicolumn{1}{c||}{0.645} & \multicolumn{1}{c|}{0.759} & 0.805 & 0.849 & 0.870 \\ \hline
		\multicolumn{1}{|c||}{\cellcolor{lightgray}$5$}             & 0.801 & 0.757 & 0.725 & 0.701 & 0.681 & \multicolumn{1}{c||}{0.665} & \multicolumn{1}{c|}{0.846} & 0.806 & 0.790 & 0.834 \\ \hline
		\multicolumn{1}{|c||}{\cellcolor{lightgray}$6$}             & 0.824 & 0.774 & 0.739 & 0.713 & 0.692 & \multicolumn{1}{c||}{0.674} & \multicolumn{1}{c|}{0.912} & 0.884 & 0.865 & 0.851 \\ \hline
		\multicolumn{1}{|c||}{\cellcolor{lightgray}$7$}             & 0.833 & 0.782 & 0.746 & 0.718 & 0.696 & \multicolumn{1}{c||}{0.678} & \multicolumn{1}{c|}{0.928} & 0.926 & 0.921 & 0.910 \\ \hline
		\multicolumn{1}{|c||}{\cellcolor{lightgray}$8$}             & 0.835 & 0.785 & 0.749 & 0.721 & 0.699 & \multicolumn{1}{c||}{0.680} & \multicolumn{1}{c|}{0.974} & 0.965 & 0.949 & 0.936 \\ \hline
		\multicolumn{1}{|c||}{\cellcolor{lightgray}$9$}             & 0.836 & 0.786 & 0.750 & 0.722 & 0.700 & \multicolumn{1}{c||}{0.681} & \multicolumn{1}{c|}{0.987} & 0.981 & 0.967 & 0.848 \\ \hline\hline
		\multicolumn{1}{|c||}{\cellcolor{lightgray}\small theory}   & 0.833 & 0.786 & 0.750 & 0.722 & 0.700 & \multicolumn{1}{c||}{0.682} & \multicolumn{1}{c|}{1.000} & 1.000 & 1.000 & 1.000 \\ \hline\hline
		\multicolumn{1}{|c||}{\cellcolor{lightgray}$\alpha=\beta=\gamma$} &	\multicolumn{10}{c|}{\cellcolor{lightgray}$0.5$}	\\ \hline
		\multicolumn{1}{|c||}{\cellcolor{lightgray}$3$}             & 0.660 & 0.603 & 0.509 & 0.425 & 0.367 & \multicolumn{1}{c||}{0.332} & \multicolumn{1}{c|}{0.642} & 0.596 & 0.552 & 0.516 \\ \hline
		\multicolumn{1}{|c||}{\cellcolor{lightgray}$4$}             & 0.573 & 0.512 & 0.439 & 0.381 & 0.346 & \multicolumn{1}{c||}{0.327} & \multicolumn{1}{c|}{0.568} & 0.540 & 0.515 & 0.496 \\ \hline
		\multicolumn{1}{|c||}{\cellcolor{lightgray}$5$}             & 0.530 & 0.473 & 0.413 & 0.369 & 0.345 & \multicolumn{1}{c||}{0.331} & \multicolumn{1}{c|}{0.539} & 0.522 & 0.506 & 0.495 \\ \hline
		\multicolumn{1}{|c||}{\cellcolor{lightgray}$6$}             & 0.503 & 0.451 & 0.400 & 0.366 & 0.346 & \multicolumn{1}{c||}{0.335} & \multicolumn{1}{c|}{0.522} & 0.511 & 0.502 & 0.496 \\ \hline
		\multicolumn{1}{|c||}{\cellcolor{lightgray}$7$}             & 0.486 & 0.438 & 0.393 & 0.365 & 0.348 & \multicolumn{1}{c||}{0.338} & \multicolumn{1}{c|}{0.512} & 0.506 & 0.501 & 0.497 \\ \hline
		\multicolumn{1}{|c||}{\cellcolor{lightgray}$8$}             & 0.476 & 0.430 & 0.390 & 0.365 & 0.350 & \multicolumn{1}{c||}{0.339} & \multicolumn{1}{c|}{0.506} & 0.503 & 0.500 & 0.498 \\ \hline
		\multicolumn{1}{|c||}{\cellcolor{lightgray}$9$}             & 0.470 & 0.425 & 0.388 & 0.365 & 0.350 & \multicolumn{1}{c||}{0.341} & \multicolumn{1}{c|}{0.502} & 0.501 & 0.500 & 0.499 \\ \hline\hline
		\multicolumn{1}{|c||}{\cellcolor{lightgray}\small theory}   & 0.417 & 0.393 & 0.375 & 0.361 & 0.350 & \multicolumn{1}{c||}{0.341} & \multicolumn{1}{c|}{0.500} & 0.500 & 0.500 & 0.500 \\ \hline
	\end{tabular}
	\caption{Experimental order of convergence (MINI): $\texttt{EOC}_i(e_{\bfv,i})$,~${i=3,\dots,9}$.}
	\label{tab1}
\end{table}

\begin{table}[H]
	\setlength\tabcolsep{7.0pt}
	\centering
	\begin{tabular}{c |c|c|c|c|c|c|c|c|c|c|c|c|}  \hline
		\multicolumn{1}{|c||}{\cellcolor{lightgray}$\rho_q$}	
		& \multicolumn{6}{c||}{\cellcolor{lightgray}(Case \hyperlink{case_1}{1})\vphantom{$X^{X_{X_X}}_{X^{X^X}}$}}   & \multicolumn{4}{c|}{\cellcolor{lightgray}(Case \hyperlink{case_2}{2})}\\ 
		\hline 
		
		\multicolumn{1}{|c||}{\cellcolor{lightgray}\diagbox[height=1.1\line,width=0.135\dimexpr\linewidth]{\vspace{-0.6mm}\hspace*{-1mm}$i$}{\\[-4.75mm]$p^-$\hspace*{-1.5mm}}}
		& \cellcolor{lightgray}1.5 & \cellcolor{lightgray}1.75  & \cellcolor{lightgray}2.0  &  \cellcolor{lightgray}2.25 & \cellcolor{lightgray}2.5  & \multicolumn{1}{c||}{\cellcolor{lightgray}2.75} & 
		\multicolumn{1}{c|}{\cellcolor{lightgray}2.0}   & \cellcolor{lightgray}2.25  & \cellcolor{lightgray}2.5  & \cellcolor{lightgray}2.75  \\ \hline\hline
		\multicolumn{1}{|c||}{\cellcolor{lightgray}$\alpha=\beta=\gamma$} &	\multicolumn{10}{c|}{\cellcolor{lightgray}$1.0$}	\\ \hline
		\multicolumn{1}{|c||}{\cellcolor{lightgray}$3$}             & 0.631 & 0.660 & 0.671 & 0.681 & 0.691 & \multicolumn{1}{c||}{0.709} & \multicolumn{1}{c|}{1.172} & 1.476 & 1.498 & 1.002 \\ \hline
		\multicolumn{1}{|c||}{\cellcolor{lightgray}$4$}             & 0.694 & 0.727 & 0.748 & 0.764 & 0.777 & \multicolumn{1}{c||}{0.790} & \multicolumn{1}{c|}{1.083} & 1.435 & 1.285 & 1.488 \\ \hline
		\multicolumn{1}{|c||}{\cellcolor{lightgray}$5$}             & 0.778 & 0.817 & 0.838 & 0.854 & 0.865 & \multicolumn{1}{c||}{0.875} & \multicolumn{1}{c|}{0.977} & 1.222 & 1.932 & 1.639 \\ \hline
		\multicolumn{1}{|c||}{\cellcolor{lightgray}$6$}             & 0.829 & 0.873 & 0.894 & 0.907 & 0.917 & \multicolumn{1}{c||}{0.924} & \multicolumn{1}{c|}{1.000} & 1.130 & 1.255 & 1.922 \\ \hline
		\multicolumn{1}{|c||}{\cellcolor{lightgray}$7$}             & 0.859 & 0.910 & 0.929 & 0.941 & 0.949 & \multicolumn{1}{c||}{0.954} & \multicolumn{1}{c|}{1.041} & 1.150 & 1.216 & 1.572 \\ \hline
		\multicolumn{1}{|c||}{\cellcolor{lightgray}$8$}             & 0.877 & 0.935 & 0.952 & 0.962 & 0.968 & \multicolumn{1}{c||}{0.972} & \multicolumn{1}{c|}{1.073} & 1.181 & 1.230 & 1.309 \\ \hline
		\multicolumn{1}{|c||}{\cellcolor{lightgray}$9$}             & 0.888 & 0.952 & 0.969 & 0.976 & 0.981 & \multicolumn{1}{c||}{0.984} & \multicolumn{1}{c|}{1.097} & 1.201 & 1.252 & 1.331 \\ \hline\hline
		\multicolumn{1}{|c||}{\cellcolor{lightgray}\small theory}   & 0.556 & 0.673 & 0.750 & 0.722 & 0.700 & \multicolumn{1}{c||}{0.682} & \multicolumn{1}{c|}{1.000} & 1.000 & 1.000 & 1.000 \\ \hline\hline
		\multicolumn{1}{|c||}{\cellcolor{lightgray}$\alpha=\beta=\gamma$} &	\multicolumn{10}{c|}{\cellcolor{lightgray}$0.5$}	\\ \hline
		\multicolumn{1}{|c||}{\cellcolor{lightgray}$3$}             & 0.582 & 0.673 & 0.757 & 0.775 & 0.723 & \multicolumn{1}{c||}{0.622} & \multicolumn{1}{c|}{0.768} & 0.857 & 0.920 & 0.859 \\ \hline
		\multicolumn{1}{|c||}{\cellcolor{lightgray}$4$}             & 0.725 & 0.787 & 0.791 & 0.735 & 0.627 & \multicolumn{1}{c||}{0.554} & \multicolumn{1}{c|}{0.851} & 0.874 & 0.843 & 0.785 \\ \hline
		\multicolumn{1}{|c||}{\cellcolor{lightgray}$5$}             & 0.710 & 0.760 & 0.718 & 0.650 & 0.580 & \multicolumn{1}{c||}{0.525} & \multicolumn{1}{c|}{0.814} & 0.810 & 0.789 & 0.756 \\ \hline
		\multicolumn{1}{|c||}{\cellcolor{lightgray}$6$}             & 0.680 & 0.708 & 0.678 & 0.614 & 0.562 & \multicolumn{1}{c||}{0.518} & \multicolumn{1}{c|}{0.778} & 0.782 & 0.770 & 0.759 \\ \hline
		\multicolumn{1}{|c||}{\cellcolor{lightgray}$7$}             & 0.661 & 0.677 & 0.644 & 0.596 & 0.554 & \multicolumn{1}{c||}{0.513} & \multicolumn{1}{c|}{0.758} & 0.766 & 0.762 & 0.758 \\ \hline
		\multicolumn{1}{|c||}{\cellcolor{lightgray}$8$}             & 0.643 & 0.645 & 0.612 & 0.580 & 0.545 & \multicolumn{1}{c||}{0.514} & \multicolumn{1}{c|}{0.742} & 0.754 & 0.756 & 0.756 \\ \hline
		\multicolumn{1}{|c||}{\cellcolor{lightgray}$9$}             & 0.622 & 0.621 & 0.591 & 0.562 & 0.535 & \multicolumn{1}{c||}{0.511} & \multicolumn{1}{c|}{0.727} & 0.741 & 0.749 & 0.754 \\ \hline\hline
		\multicolumn{1}{|c||}{\cellcolor{lightgray}\small theory}   & 0.278 & 0.337 & 0.375 & 0.361 & 0.350 & \multicolumn{1}{c||}{0.341} & \multicolumn{1}{c|}{0.500} & 0.500 & 0.500 & 0.500 \\ \hline
	\end{tabular}
	\caption{Experimental order of convergence (MINI): $\texttt{EOC}_i(e_{q,i})$,~${i=3,\dots,9}$.}
	\label{tab2}
\end{table}
	
	\begin{table}[H]
		\setlength\tabcolsep{7.0pt}
		\centering
		\begin{tabular}{c |c|c|c|c|c|c|c|c|c|c|}  \hline
			\multicolumn{1}{|c||}{\cellcolor{lightgray}$\rho_q$}	
			& \multicolumn{6}{c||}{\cellcolor{lightgray}(Case \hyperlink{case_1}{1})\vphantom{$X^{X_{X_X}}_{X^{X^X}}$}}   & \multicolumn{4}{c|}{\cellcolor{lightgray}(Case \hyperlink{case_2}{2})}\\ 
			\hline 
			
			\multicolumn{1}{|c||}{\cellcolor{lightgray}\diagbox[height=1.1\line,width=0.135\dimexpr\linewidth]{\vspace{-0.6mm}\hspace*{-1mm}$i$}{\\[-4.5mm]$p^-$\hspace*{-2mm}}}
			& \cellcolor{lightgray}1.5 & \cellcolor{lightgray}1.75  & \cellcolor{lightgray}2.0  &  \cellcolor{lightgray}2.25 & \cellcolor{lightgray}2.5  & \multicolumn{1}{c||}{\cellcolor{lightgray}2.75} & 
			\multicolumn{1}{c|}{\cellcolor{lightgray}2.0}   & \cellcolor{lightgray}2.25  & \cellcolor{lightgray}2.5  & \cellcolor{lightgray}2.75  \\ \hline\hline
			\multicolumn{1}{|c||}{\cellcolor{lightgray}$\alpha=\beta=\gamma$} &	\multicolumn{10}{c|}{\cellcolor{lightgray}$1.0$}	\\ \hline
			\multicolumn{1}{|c||}{\cellcolor{lightgray}$3$}             & 0.692 & 0.685 & 0.673 & 0.659 & 0.646 & \multicolumn{1}{c||}{0.634} & \multicolumn{1}{c|}{0.856} & 0.964 & 1.008 & -0.12 \\ \hline
			\multicolumn{1}{|c||}{\cellcolor{lightgray}$4$}             & 0.773 & 0.740 & 0.714 & 0.693 & 0.674 & \multicolumn{1}{c||}{0.658} & \multicolumn{1}{c|}{0.820} & 0.964 & 1.112 & 1.288 \\ \hline
			\multicolumn{1}{|c||}{\cellcolor{lightgray}$5$}             & 0.808 & 0.765 & 0.733 & 0.708 & 0.688 & \multicolumn{1}{c||}{0.670} & \multicolumn{1}{c|}{0.848} & 0.832 & 0.872 & 0.999 \\ \hline
			\multicolumn{1}{|c||}{\cellcolor{lightgray}$6$}             & 0.824 & 0.777 & 0.743 & 0.716 & 0.694 & \multicolumn{1}{c||}{0.677} & \multicolumn{1}{c|}{0.912} & 0.890 & 0.878 & 0.905 \\ \hline
			\multicolumn{1}{|c||}{\cellcolor{lightgray}$7$}             & 0.831 & 0.783 & 0.747 & 0.720 & 0.698 & \multicolumn{1}{c||}{0.679} & \multicolumn{1}{c|}{0.951} & 0.934 & 0.922 & 0.936 \\ \hline
			\multicolumn{1}{|c||}{\cellcolor{lightgray}$8$}             & 0.834 & 0.785 & 0.749 & 0.721 & 0.699 & \multicolumn{1}{c||}{0.681} & \multicolumn{1}{c|}{0.917} & 0.928 & 0.946 & 0.943 \\ \hline
			\multicolumn{1}{|c||}{\cellcolor{lightgray}$9$}             & 0.834 & 0.786 & 0.750 & 0.722 & 0.700 & \multicolumn{1}{c||}{0.681} & \multicolumn{1}{c|}{0.985} & 0.976 & 0.961 & 0.937 \\ \hline\hline
			\multicolumn{1}{|c||}{\cellcolor{lightgray}\small theory}   & 0.833 & 0.786 & 0.750 & 0.722 & 0.700 & \multicolumn{1}{c||}{0.682} & \multicolumn{1}{c|}{1.000} & 1.000 & 1.000 & 1.000 \\ \hline\hline
			\multicolumn{1}{|c||}{\cellcolor{lightgray}$\alpha=\beta=\gamma$} &	\multicolumn{10}{c|}{\cellcolor{lightgray}$0.5$}	\\ \hline
			\multicolumn{1}{|c||}{\cellcolor{lightgray}$3$}             & 0.353 & 0.308 & 0.305 & 0.309 & 0.312 & \multicolumn{1}{c||}{0.312} & \multicolumn{1}{c|}{0.432} & 0.442 & 0.454 & 0.463 \\ \hline
			\multicolumn{1}{|c||}{\cellcolor{lightgray}$4$}             & 0.371 & 0.344 & 0.337 & 0.333 & 0.330 & \multicolumn{1}{c||}{0.326} & \multicolumn{1}{c|}{0.462} & 0.468 & 0.475 & 0.479 \\ \hline
			\multicolumn{1}{|c||}{\cellcolor{lightgray}$5$}             & 0.389 & 0.365 & 0.354 & 0.346 & 0.339 & \multicolumn{1}{c||}{0.333} & \multicolumn{1}{c|}{0.479} & 0.482 & 0.486 & 0.489 \\ \hline
			\multicolumn{1}{|c||}{\cellcolor{lightgray}$6$}             & 0.401 & 0.378 & 0.364 & 0.353 & 0.345 & \multicolumn{1}{c||}{0.338} & \multicolumn{1}{c|}{0.489} & 0.491 & 0.493 & 0.494 \\ \hline
			\multicolumn{1}{|c||}{\cellcolor{lightgray}$7$}             & 0.410 & 0.385 & 0.369 & 0.358 & 0.348 & \multicolumn{1}{c||}{0.340} & \multicolumn{1}{c|}{0.494} & 0.496 & 0.496 & 0.497 \\ \hline
			\multicolumn{1}{|c||}{\cellcolor{lightgray}$8$}             & 0.415 & 0.389 & 0.373 & 0.360 & 0.350 & \multicolumn{1}{c||}{0.341} & \multicolumn{1}{c|}{0.498} & 0.498 & 0.499 & 0.499 \\ \hline
			\multicolumn{1}{|c||}{\cellcolor{lightgray}$9$}             & 0.417 & 0.392 & 0.374 & 0.361 & 0.351 & \multicolumn{1}{c||}{0.342} & \multicolumn{1}{c|}{0.500} & 0.500 & 0.500 & 0.500 \\ \hline\hline
			\multicolumn{1}{|c||}{\cellcolor{lightgray}\small theory}   & 0.417 & 0.393 & 0.375 & 0.361 & 0.350 & \multicolumn{1}{c||}{0.341} & \multicolumn{1}{c|}{0.500} & 0.500 & 0.500 & 0.500 \\ \hline
		\end{tabular}
		\caption{Experimental order of convergence (Taylor--Hood): $\texttt{EOC}_i(e_{\bfv,i})$,~${i=3,\dots,9}$.}
		\label{tab3}
	\end{table}
	
	\begin{table}[H]
		\setlength\tabcolsep{7.0pt}
		\centering
		\begin{tabular}{c |c|c|c|c|c|c|c|c|c|c|c|c|}  \hline
			\multicolumn{1}{|c||}{\cellcolor{lightgray}$\rho_q$}	
			& \multicolumn{6}{c||}{\cellcolor{lightgray}(Case \hyperlink{case_1}{1})\vphantom{$X^{X_{X_X}}_{X^{X^X}}$}}   & \multicolumn{4}{c|}{\cellcolor{lightgray}(Case \hyperlink{case_2}{2})}\\ 
			\hline 
			
			\multicolumn{1}{|c||}{\cellcolor{lightgray}\diagbox[height=1.1\line,width=0.135\dimexpr\linewidth]{\vspace{-0.6mm}\hspace*{-1mm}$i$}{\\[-4.75mm]$p^-$\hspace*{-1.5mm}}}
			& \cellcolor{lightgray}1.5 & \cellcolor{lightgray}1.75  & \cellcolor{lightgray}2.0  &  \cellcolor{lightgray}2.25 & \cellcolor{lightgray}2.5  & \multicolumn{1}{c||}{\cellcolor{lightgray}2.75} & 
			\multicolumn{1}{c|}{\cellcolor{lightgray}2.0}   & \cellcolor{lightgray}2.25  & \cellcolor{lightgray}2.5  & \cellcolor{lightgray}2.75  \\ \hline\hline
			\multicolumn{1}{|c||}{\cellcolor{lightgray}$\alpha=\beta=\gamma$} &	\multicolumn{10}{c|}{\cellcolor{lightgray}$1.0$}	\\ \hline
			\multicolumn{1}{|c||}{\cellcolor{lightgray}$3$}             & 0.484 & 0.585 & 0.634 & 0.663 & 0.699 & \multicolumn{1}{c||}{0.718} & \multicolumn{1}{c|}{1.870} & 2.184 & 2.118 & 0.724 \\ \hline
			\multicolumn{1}{|c||}{\cellcolor{lightgray}$4$}             & 0.740 & 0.789 & 0.815 & 0.833 & 0.848 & \multicolumn{1}{c||}{0.858} & \multicolumn{1}{c|}{1.292} & 2.397 & 2.882 & 2.365 \\ \hline
			\multicolumn{1}{|c||}{\cellcolor{lightgray}$5$}             & 0.811 & 0.857 & 0.879 & 0.894 & 0.906 & \multicolumn{1}{c||}{0.913} & \multicolumn{1}{c|}{1.000} & 1.338 & 1.720 & 2.311 \\ \hline
			\multicolumn{1}{|c||}{\cellcolor{lightgray}$6$}             & 0.845 & 0.897 & 0.917 & 0.930 & 0.939 & \multicolumn{1}{c||}{0.945} & \multicolumn{1}{c|}{1.037} & 1.182 & 1.744 & 1.746 \\ \hline
			\multicolumn{1}{|c||}{\cellcolor{lightgray}$7$}             & 0.867 & 0.924 & 0.943 & 0.954 & 0.962 & \multicolumn{1}{c||}{0.966} & \multicolumn{1}{c|}{1.068} & 1.181 & 1.382 & 1.795 \\ \hline
			\multicolumn{1}{|c||}{\cellcolor{lightgray}$8$}             & 0.884 & 0.941 & 0.961 & 0.970 & 0.976 & \multicolumn{1}{c||}{0.979} & \multicolumn{1}{c|}{1.091} & 1.194 & 1.335 & 1.714 \\ \hline
			\multicolumn{1}{|c||}{\cellcolor{lightgray}$9$}             & 0.895 & 0.952 & 0.970 & 0.981 & 0.985 & \multicolumn{1}{c||}{0.988} & \multicolumn{1}{c|}{1.110} & 1.207 & 1.276 & 1.669 \\ \hline\hline
			\multicolumn{1}{|c||}{\cellcolor{lightgray}\small theory}   & 0.556 & 0.673 & 0.750 & 0.722 & 0.700 & \multicolumn{1}{c||}{0.682} & \multicolumn{1}{c|}{1.000} & 1.000 & 1.000 & 1.000 \\ \hline\hline
			\multicolumn{1}{|c||}{\cellcolor{lightgray}$\alpha=\beta=\gamma$} &	\multicolumn{10}{c|}{\cellcolor{lightgray}$0.5$}	\\ \hline
			\multicolumn{1}{|c||}{\cellcolor{lightgray}$3$}             & 0.326 & 0.411 & 0.460 & 0.486 & 0.498 & \multicolumn{1}{c||}{0.488} & \multicolumn{1}{c|}{0.551} & 0.619 & 0.670 & 0.682 \\ \hline
			\multicolumn{1}{|c||}{\cellcolor{lightgray}$4$}             & 0.433 & 0.479 & 0.502 & 0.513 & 0.516 & \multicolumn{1}{c||}{0.503} & \multicolumn{1}{c|}{0.623} & 0.659 & 0.684 & 0.704 \\ \hline
			\multicolumn{1}{|c||}{\cellcolor{lightgray}$5$}             & 0.448 & 0.481 & 0.504 & 0.518 & 0.518 & \multicolumn{1}{c||}{0.508} & \multicolumn{1}{c|}{0.653} & 0.683 & 0.706 & 0.718 \\ \hline
			\multicolumn{1}{|c||}{\cellcolor{lightgray}$6$}             & 0.456 & 0.493 & 0.514 & 0.526 & 0.518 & \multicolumn{1}{c||}{0.506} & \multicolumn{1}{c|}{0.675} & 0.697 & 0.712 & 0.720 \\ \hline
			\multicolumn{1}{|c||}{\cellcolor{lightgray}$7$}             & 0.456 & 0.498 & 0.518 & 0.526 & 0.511 & \multicolumn{1}{c||}{0.501} & \multicolumn{1}{c|}{0.688} & 0.708 & 0.710 & 0.720 \\ \hline
			\multicolumn{1}{|c||}{\cellcolor{lightgray}$8$}             & 0.444 & 0.492 & 0.515 & 0.525 & 0.508 & \multicolumn{1}{c||}{0.499} & \multicolumn{1}{c|}{0.693} & 0.709 & 0.712 & 0.723 \\ \hline
			\multicolumn{1}{|c||}{\cellcolor{lightgray}$9$}             & 0.422 & 0.481 & 0.508 & 0.521 & 0.506 & \multicolumn{1}{c||}{0.498} & \multicolumn{1}{c|}{0.691} & 0.705 & 0.713 & 0.725 \\ \hline\hline
			\multicolumn{1}{|c||}{\cellcolor{lightgray}\small theory}   & 0.278 & 0.337 & 0.375 & 0.361 & 0.350 & \multicolumn{1}{c||}{0.341} & \multicolumn{1}{c|}{0.500} & 0.500 & 0.500 & 0.500 \\ \hline
		\end{tabular}
		\caption{Experimental order of convergence (Taylor--Hood): $\texttt{EOC}_i(e_{q,i})$,~${i=3,\dots,9}$.}
		\label{tab4}
	\end{table}
	
	\subsection{An example for an electro-rheological fluid flow}
	
	\hspace*{5mm}In this subsection, we examine a less academic example describing an unsteady \textit{electro-rheological fluid} flow in three dimensions.  Solutions to the steady $p(\cdot)$-Navier--Stokes equations \eqref{eq:p-navier-stokes}~also model  electro-rheological fluid flow behavior, if the right-hand side is given~via~${\mathbf{f}\coloneqq\widehat{\mathbf{f}}+\chi_E\,\textup{div}\,(\mathbf{E}\otimes\mathbf{E})\colon \Omega\to\mathbb{R}^3}$, where $\widehat{\mathbf{f}}\colon \Omega\to \mathbb{R}^3$ is a given \textit{mechanical body force},
	$\chi_E>0$ the \textit{di-eletric susceptibility}, $\mathbf{E}\colon \overline{\Omega}\to \mathbb{R}^3$ a given \textit{electric field}, solving the  \textit{quasi-static Maxwell's equations}, \textit{i.e.},
	\begin{align}\label{eq:maxwell}
		\begin{aligned}
			\textup{div}\,\mathbf{E} &= 0&&\quad\text{ in }\Omega\,,\\
			\textrm{curl}\,\mathbf{E} &= \mathbf{0}&&\quad\text{ in }\Omega\,,\\
			\mathbf{E}\cdot\mathbf{n}&=\mathbf{E}_0\cdot\mathbf{n}&&\quad\text{ on }\partial\Omega\,,
		\end{aligned}
	\end{align} 
	and the power-law index $p\colon \overline{\Omega}\to (1,+\infty)$ depends on the strength of the electric field $\vert \mathbf{E}\vert\colon \overline{\Omega}\to \mathbb{R}_{\ge 0}$, \textit{i.e.}, 
	there exists a material function $\widehat{p}\colon \mathbb{R}_{\ge 0}\to\mathbb{R}_{\ge 0} $ such that
	$p(x)\coloneqq \widehat{p}(\vert \mathbf{E}(x)\vert)$ for all $x\in \overline{\Omega}$. In the system \eqref{eq:maxwell}, by the vector field $\mathbf{n}\colon \partial \Omega\to \mathbb{S}^2$ we denote the normal vector field to $\partial\Omega$ pointing outward.
	
	In the numerical experiments, we choose as physical domain $\Omega\coloneqq (0,1)^3\setminus(B_{\frac{1}{16}}^3(\frac{1}{4}\mathbf{e}_1)\cup B_{\frac{1}{16}}^3(\frac{3}{4}\mathbf{e}_1))$\footnote{Here, $\mathbf{e}_1\coloneqq (1,0,0)^\top\in\mathbb{S}^2 $ denotes the first three-dimensional unit vector and $\smash{B_{\frac{1}{16}}^3(\frac{1}{4}\mathbf{e}_1)}$, $\smash{B_{\frac{1}{16}}^3(\frac{3}{4}\mathbf{e}_1)}$ the three-dimensional balls with radius $\frac{1}{16}$ and center $\frac{1}{4}\mathbf{e}_1$, $\frac{3}{4}\mathbf{e}_1$, respectively.},~\textit{i.e.}, the unit cube with two ball-shaped holes (\text{cf}.\ Figure \ref{fig:E}(LEFT)), as a material function $\widehat{p}\in  C^{0,1}(\mathbb{R}_{\ge 0})$, defined by $\widehat{p}(t)\coloneqq 2+\frac{2}{1+10t}$~for~all~${t\ge 0}$, and as electric field $\mathbf{E} \in C^{\infty}(\overline{\Omega};\mathbb{R}^3)$, for every $x\in \overline{\Omega}$~defined~by
	\begin{align}\label{def:E}
		\mathbf{E}(x)\coloneqq \frac{x-\frac{1}{4}\mathbf{e}_1}{\vert x-\frac{1}{4}\mathbf{e}_1\vert^3}-\frac{x-\frac{3}{4}\mathbf{e}_1}{\vert x-\frac{3}{4}\mathbf{e}_1\vert^3}\,.
	\end{align}
	in order to model \emph{shear-thickening} (note that, by definition, it holds that $p^->2$) between~two~\mbox{\emph{electrodes}},  located at the two holes of the domain $\Omega$ (\textit{cf}.\ Figure \ref{fig:E}(LEFT)).  It is readily checked that $\mathbf{E}\colon \overline{\Omega}\to \mathbb{R}^3$ indeed solves the quasi-static Maxwell's equations \eqref{eq:maxwell} if we prescribe the normal boundary~\mbox{condition}, \textit{e.g.}, \hspace*{-0.1mm}if \hspace*{-0.1mm}we \hspace*{-0.1mm}set \hspace*{-0.1mm}$\mathbf{E}_0\!\coloneqq\! \mathbf{E}$ \hspace*{-0.1mm}on \hspace*{-0.1mm}$\partial\Omega$. \hspace*{-0.1mm}For \hspace*{-0.1mm}sake \hspace*{-0.1mm}of \hspace*{-0.1mm}simplicity, \hspace*{-0.1mm}we \hspace*{-0.1mm}set \hspace*{-0.1mm}$\chi_E\!\coloneqq\! 1$. \hspace*{-0.1mm}To \hspace*{-0.1mm}\mbox{generate}~\hspace*{-0.1mm}a~\hspace*{-0.1mm}\mbox{vortex}~\hspace*{-0.1mm}flow~\hspace*{-0.1mm}around~\hspace*{-0.1mm}the~\hspace*{-0.1mm}\mbox{$x_2$-axis}, we choose as mechanical body force 
	$\widehat{\mathbf{f}}\hspace*{-0.15em}\in \hspace*{-0.15em} C^\infty(\overline{\Omega};\mathbb{R}^3)$,~\mbox{defined}~by~${ \widehat{\mathbf{f}}(x)\hspace*{-0.175em}\coloneqq \hspace*{-0.175em}(2x_2\hspace*{-0.15em}-\hspace*{-0.15em}1)\mathbf{e}_1}$~for~all~$x\hspace*{-0.175em}=\hspace*{-0.175em}(x_1,x_2,x_3)^\top\hspace*{-0.25em}\in \hspace*{-0.175em}\overline{\Omega}$, which becomes the total force $\mathbf{f}\hspace*{-0.15em}\in\hspace*{-0.15em} C^{\infty}(\overline{\Omega};\mathbb{R}^3)$ in absence of the electric field  (\textit{cf}.\ \mbox{Figure} \ref{fig:f}(LEFT)). 
	The electric field $\mathbf{E}\in C^\infty(\overline{\Omega};\mathbb{R}^3)$ and total force
	$\mathbf{f}\in C^\infty(\overline{\Omega};\mathbb{R}^3)$ are depicted in Figure~\ref{fig:E}(RIGHT) and Figure~\ref{fig:f}(RIGHT), respectively. 
	
	\begin{figure}[H]\centering
		\includegraphics[width=6.7cm]{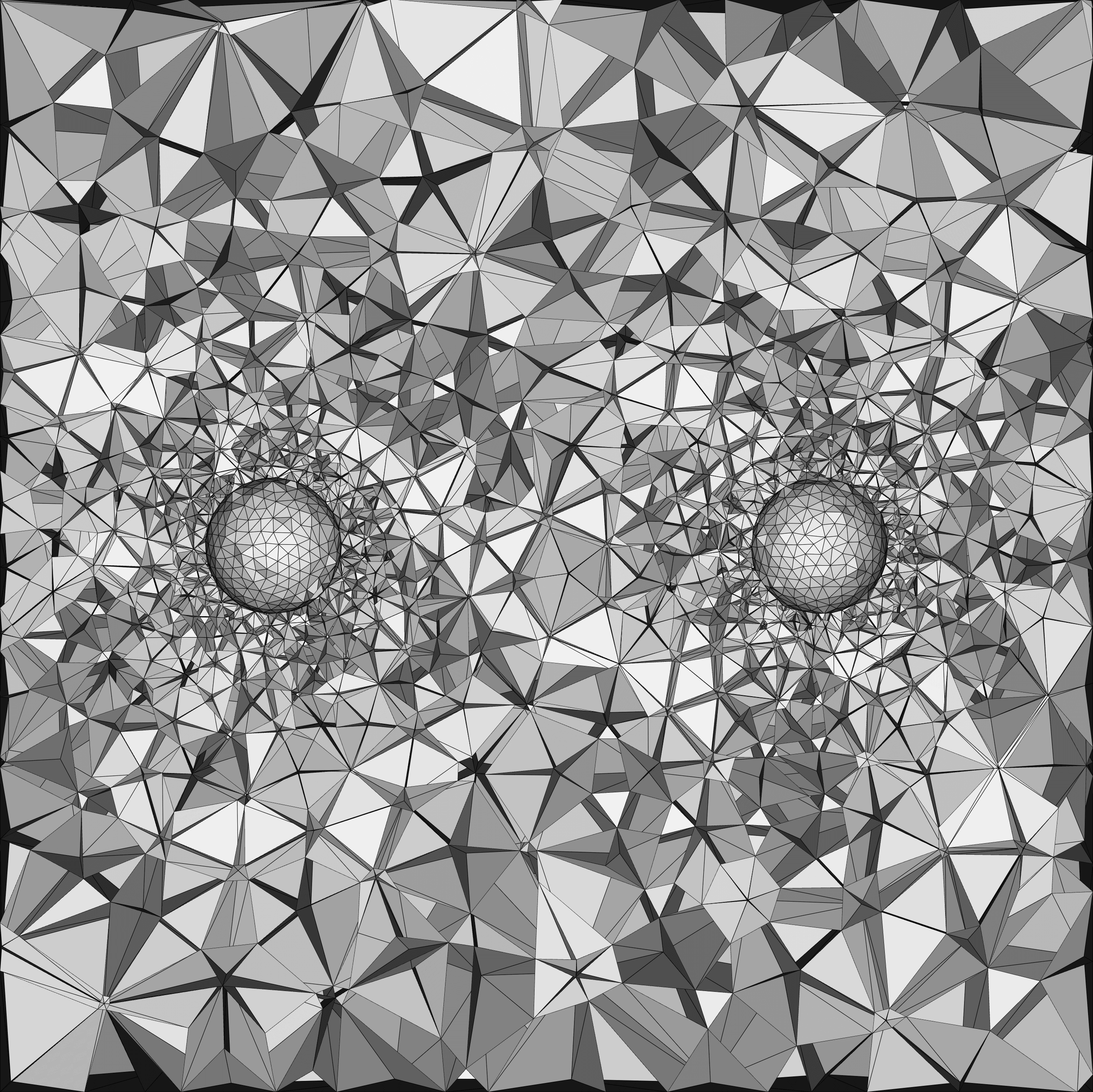} 	\includegraphics[width=8.625cm]{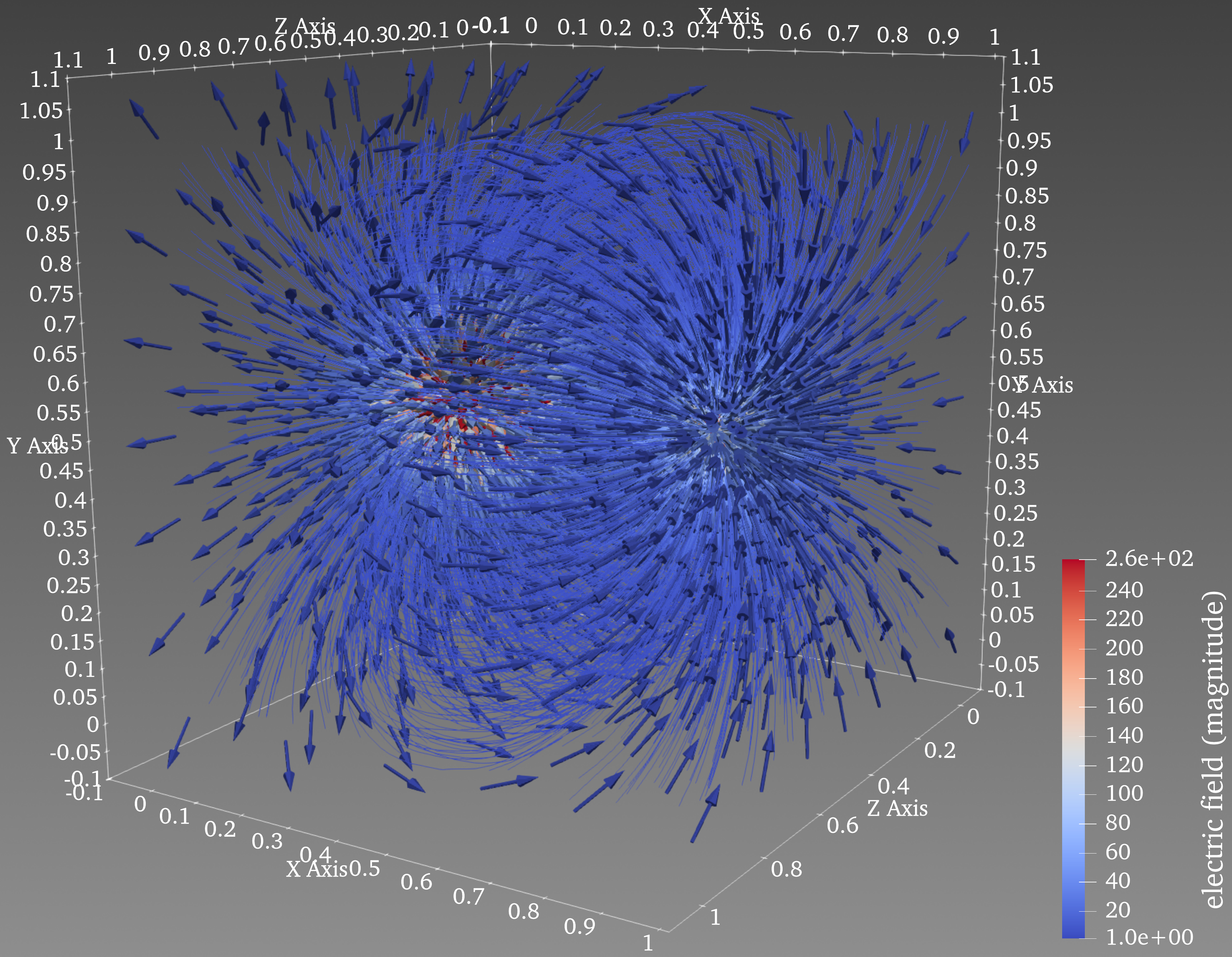} 
		\caption{LEFT: intersection of $\mathcal{T}_h$ with ($\mathbb{R}\times \{\frac{1}{2}\}\times \mathbb{R}$)-plane; RIGHT:  electric field $\mathbf{E}\in C^\infty(\overline{\Omega};\mathbb{R}^3)$.}
		\label{fig:E}
	\end{figure}\vspace*{-3mm}
	
	\begin{figure}[H]\centering
		\includegraphics[width=7.9cm]{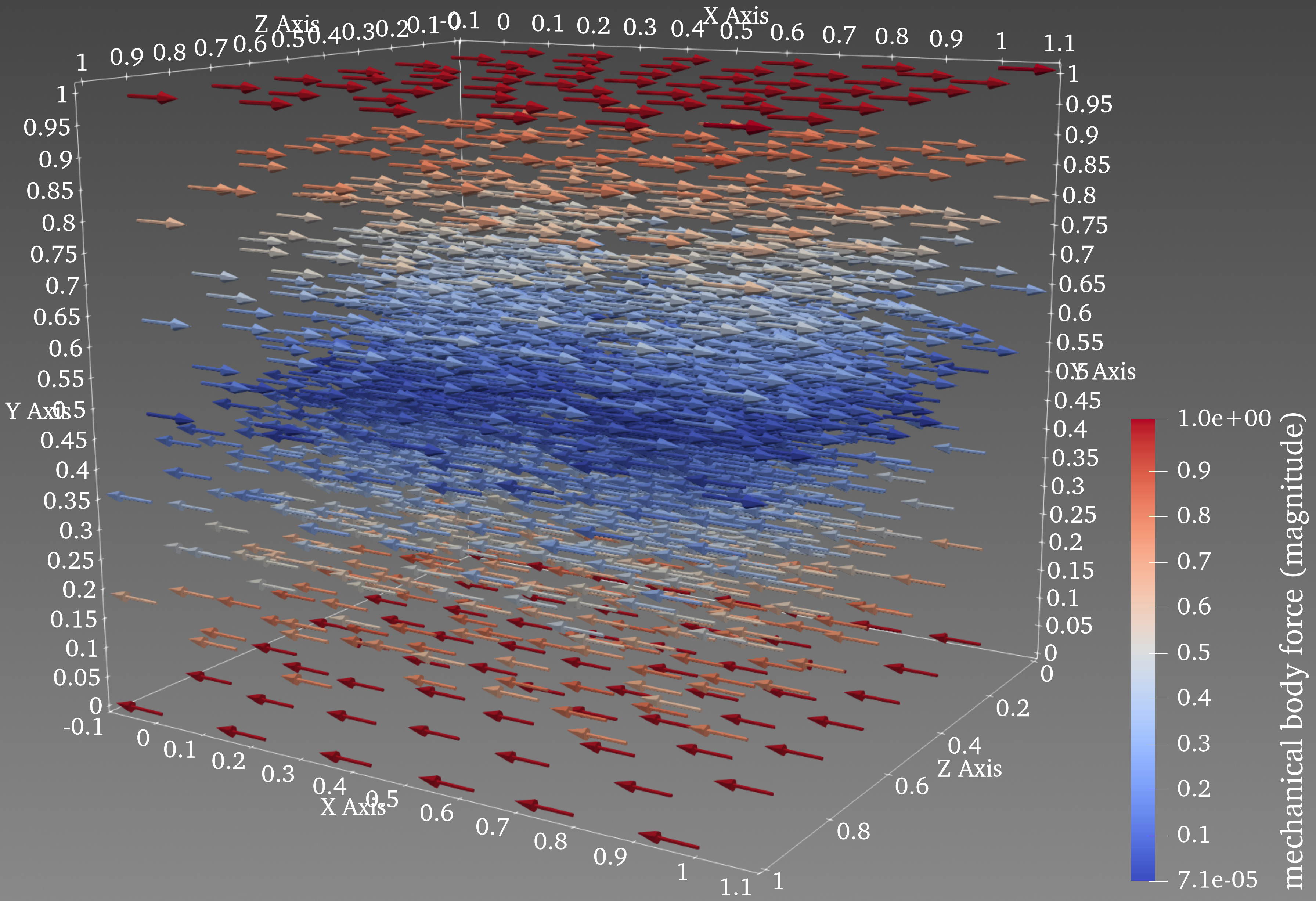}	\includegraphics[width=7.5cm]{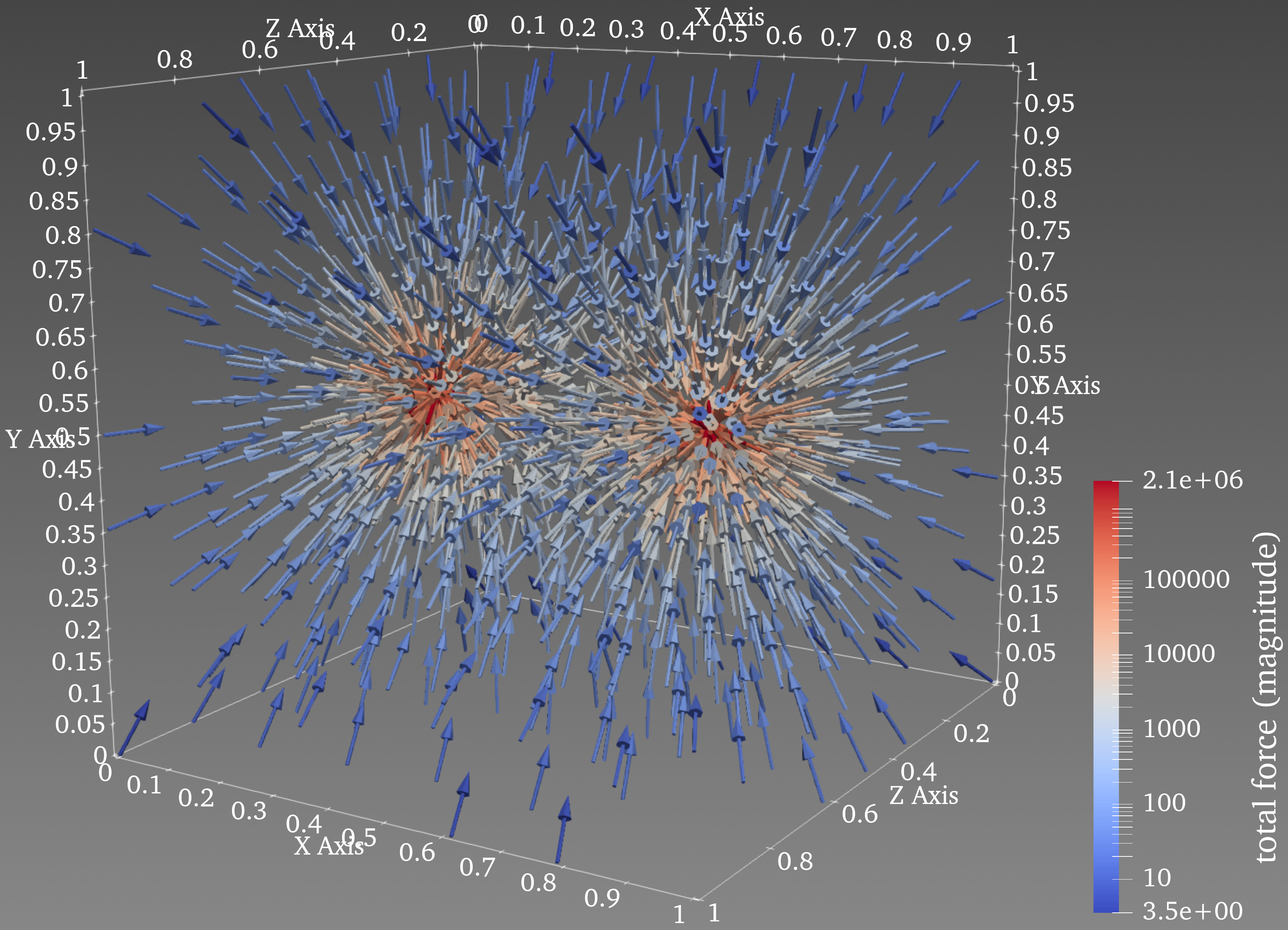} 
		\caption{LEFT: mechanical body force $\widehat{\mathbf{f}}\in C^\infty(\overline{\Omega};\mathbb{R}^3)$; RIGHT: total force $\mathbf{f}\in C^\infty(\overline{\Omega};\mathbb{R}^3)$ when the electric field is applied (in a $\log$-plot).}
		\label{fig:f}
	\end{figure}
	
	We apply Problem (\hyperlink{Qh}{Q$_h$}) (or Problem (\hyperlink{Ph}{P$_h$}), respectively)  to approximate the system~\eqref{eq:p-navier-stokes} with  $\bfS\colon\Omega\times \mathbb{R}^{3\times 3}\to\mathbb{R}^{3\times 3}_{\mathrm{sym}}$,  for every $(x,\mathbf{A})^\top\in \Omega\times\mathbb{R}^{3\times 3}$ defined by
	\begin{align*}
		\bfS(x,\bfA) \coloneqq \mu_0\,(\delta+\vert \bfA^{\textup{sym}}\vert)^{p(x)-2}\bfA^{\textup{sym}}\,,
	\end{align*}  
	where $\mu_0 = \frac{1}{2}$ and  $\delta\coloneqq 1.0\times10^{-5}$.
	
	Since  $\widehat{p}\hspace*{-0.1em}\in\hspace*{-0.1em} W^{1,\infty}(\mathbb{R}_{\ge 0})$,
	we have that $p\hspace*{-0.1em}\in\hspace*{-0.1em} W^{1,\infty}(\Omega)$. In particular, the power-law index~${p\hspace*{-0.1em}\in\hspace*{-0.1em} W^{1,\infty}(\Omega)}$ is approximated by $p_h\in \mathbb{P}^0(\mathcal{T}_h)$, $h\in (0,1]$, which is obtained by employing the same one-point quadrature rule from the previous section.
	We use the Taylor--Hood element (\textit{cf}.\ Remark \ref{FEM.V}(ii)) on a triangulation $\mathcal{T}_h$ with 4.947 vertices and 26.903 tetrahedra to compute $(\mathbf{v}_h,q_h)^\top\in\Vo_{h,0}\times \Qo_h$ solving Problem (\hyperlink{Qh}{Q$_h$}). To better compare the influence of the 
	applied electric field $\mathbf{E} \in C^{\infty}(\overline{\Omega};\mathbb{R}^3)$, we carry out two different numerical test cases:
	
	$\bullet$ \textit{Numerical test cases:}
	
	\textit{(Test Case 1)}.\ \hypertarget{TC1}{} In \hspace*{-0.1mm}this \hspace*{-0.1mm}case, \hspace*{-0.1mm}the \hspace*{-0.1mm}electric \hspace*{-0.1mm}field \hspace*{-0.1mm}is \hspace*{-0.1mm}not \hspace*{-0.1mm}applied, \hspace*{-0.1mm}\textit{i.e.}, \hspace*{-0.1mm}instead \hspace*{-0.1mm}of \hspace*{-0.1mm}\eqref{def:E},~\hspace*{-0.1mm}we~\hspace*{-0.1mm}set~\hspace*{-0.1mm}${\mathbf{E}\hspace*{-0.15em}\coloneqq\hspace*{-0.15em}\mathbf{0}}$~in~$\Omega$, so that the total force just becomes the mechanical body force, \textit{i.e.},  
	$\mathbf{f}=\widehat{\mathbf{f}}$ in $\Omega$ (\textit{cf}.\ Figure \ref{fig:f}(LEFT)). 
	Since, in this case, $p=4$ in $\Omega$, we approximate a standard shear-thickening non-Newtonian fluid.
	
	\textit{(Test Case 2)}.\ \hypertarget{TC2}{} In this case, the electric field defined by \eqref{def:E} is applied, so that the total forces is given via $\mathbf{f}=\widehat{\mathbf{f}}+\chi_E\textup{div}\,(\mathbf{E}\otimes \mathbf{E})$~in~$\Omega$~(\textit{cf}.~Figure~\ref{fig:f}(RIGHT)).
	Hence, in this case, we approximate~an electro-rheological fluid.
	
	$\bullet$ \textit{Observations:}
	
	\textit{(Test Case 1)}.\  In this case, by the construction of the mechanical body force $\mathbf{f} \in C^{\infty}(\overline{\Omega};\mathbb{R}^3)$, from the 
	 velocity vector field, we derive that a vortex flow is generated (\textit{cf}.\ Figure \ref{fig:v}(LEFT)). The magnitude of kinematic pressure is slightly larger close the the two holes but not oscillating much (\textit{cf}.\ Figure \ref{fig:p}(LEFT)).

	\textit{(Test Case 2)}.\  In this case,  the electric field  $\mathbf{E}\in C^\infty(\overline{\Omega};\mathbb{R}^3)$~is~applied (\textit{cf}.\ Figure~\ref{fig:E}(RIGHT)), so that the total force  $\mathbf{f}\in C^\infty(\overline{\Omega};\mathbb{R}^3)$ generates a high attraction close to the two holes (\textit{cf}.\ Figure~\ref{fig:f}(RIGHT)), where the electrodes are located, so
	that 
	the magnitude of both the velocity vector field and the kinematic pressure significantly increase close to these two holes. To be more precise,
	the magnitude of velocity vector field  is increased by a factor of about $43$ (\textit{cf}.\ Figure~\ref{fig:v}(RIGHT)) compared to (\hyperlink{TC2}{Test~Case~2}) (\textit{cf}.\ Figure~\ref{fig:v}(LEFT)), while the magnitude of the kinematic pressure is  increased by a factor~of~about~$1.2\times 10^5$ (\textit{cf}.\ Figure~\ref{fig:p}(RIGHT))  compared to (\hyperlink{TC2}{Test Case 2}) (\textit{cf}.\ Figure~\ref{fig:p}(LEFT)).

	\begin{figure}[H]\centering
		\includegraphics[width=7.75cm]{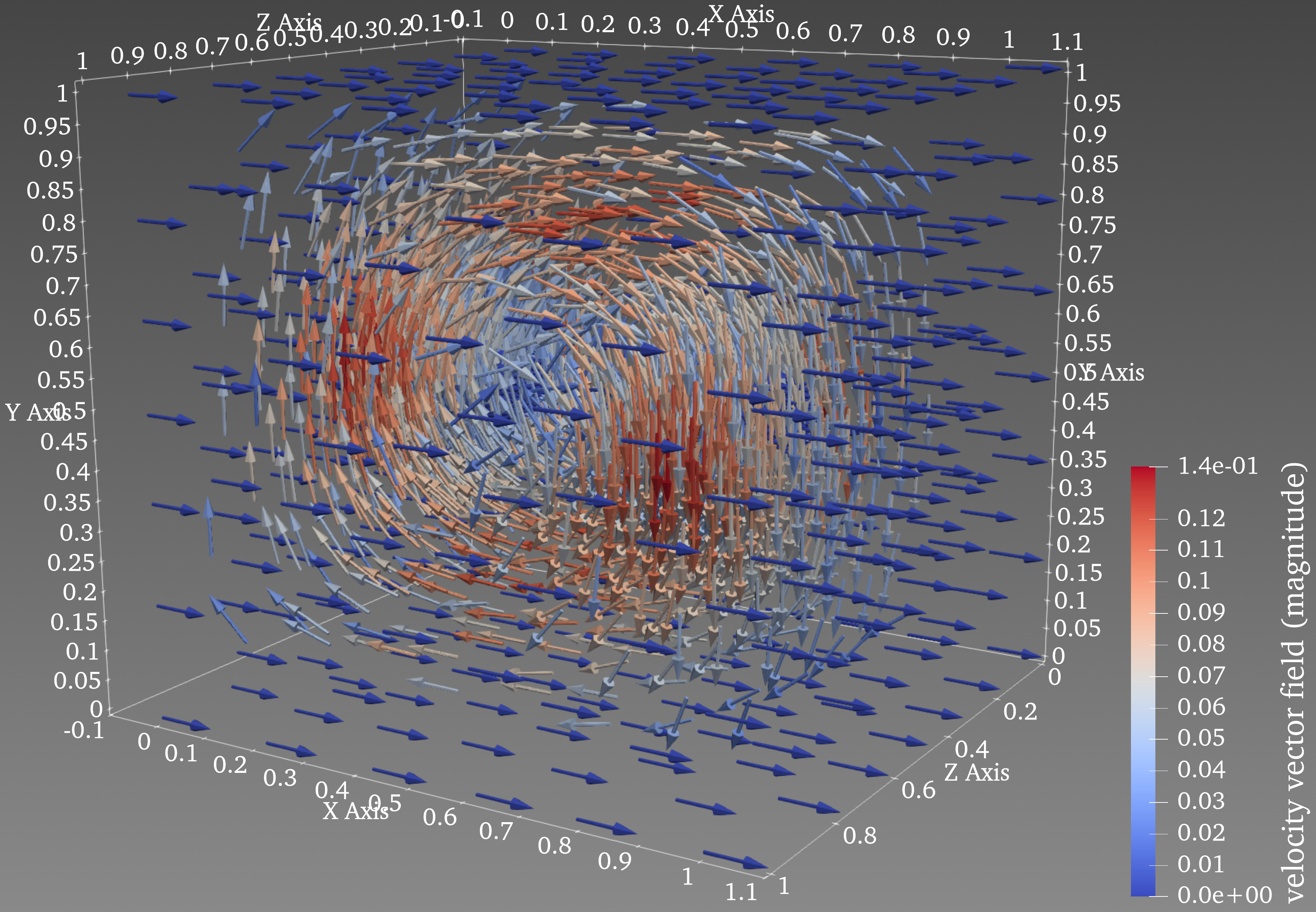}	\includegraphics[width=7.7cm]{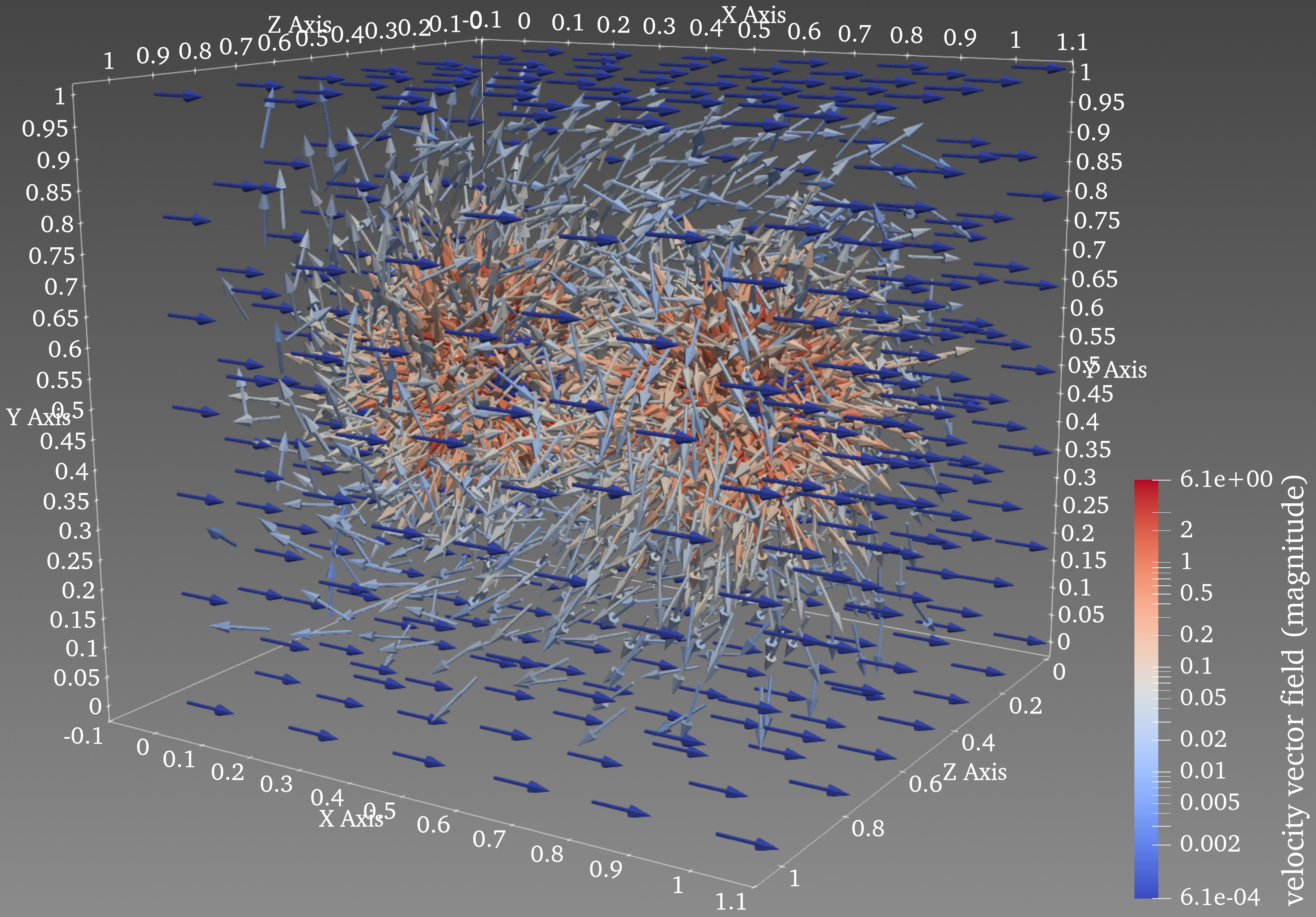}\vspace*{-1.75mm}
		\caption{LEFT: discrete velocity vector field $\mathbf{v}_h\in (\mathbb{P}^2(\mathcal{T}_h))^3$   in (\protect\hyperlink{TC1}{Test Case 1});
			RIGHT: discrete velocity vector field $\mathbf{v}_h\in (\mathbb{P}^2(\mathcal{T}_h))^3$  in (\protect\hyperlink{TC2}{Test Case 2}) (in a $\log$-plot).}
		\label{fig:v}
	\end{figure}
	
	\begin{figure}[H]\centering
		\includegraphics[width=7.7cm]{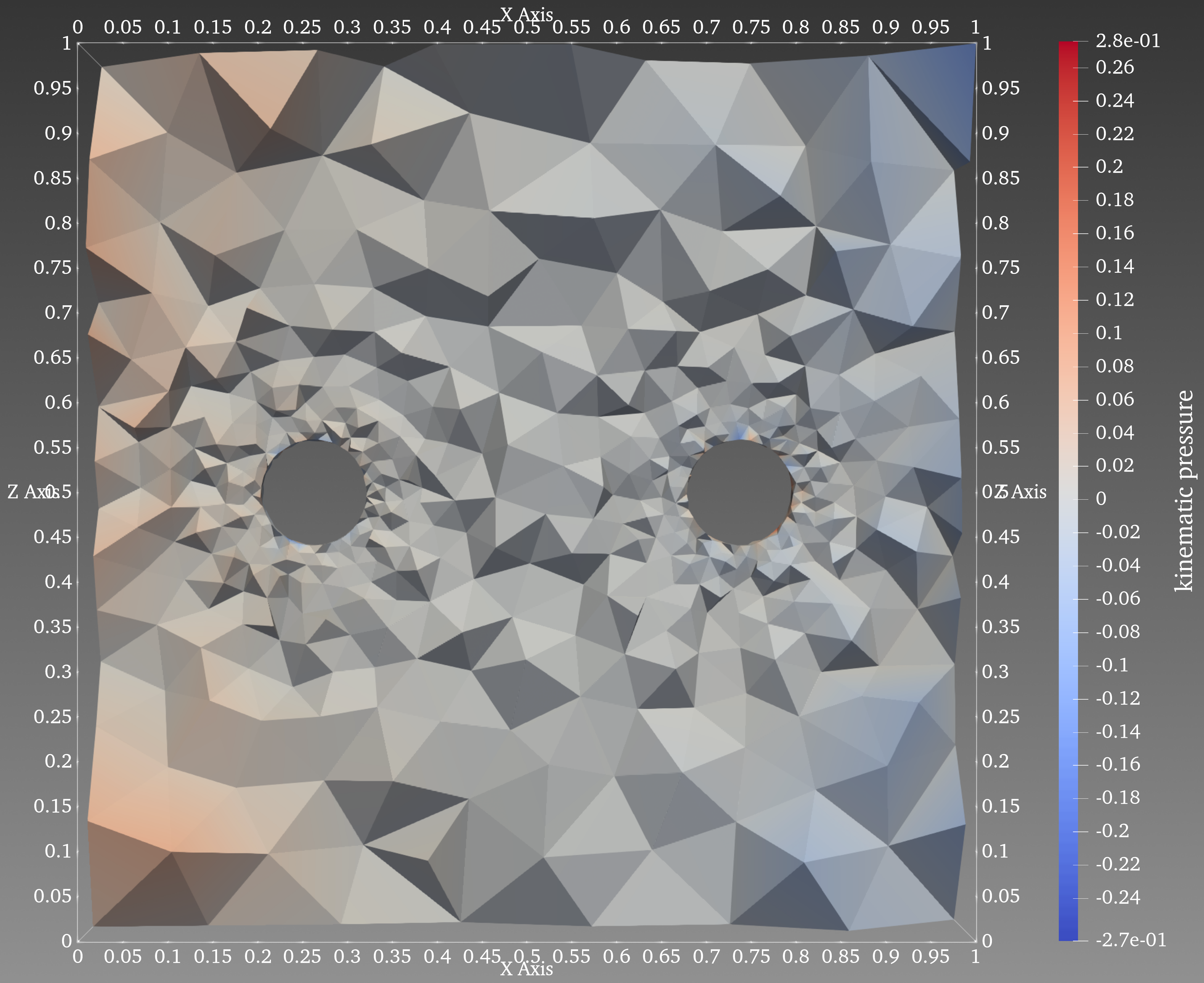}	\includegraphics[width=7.725cm]{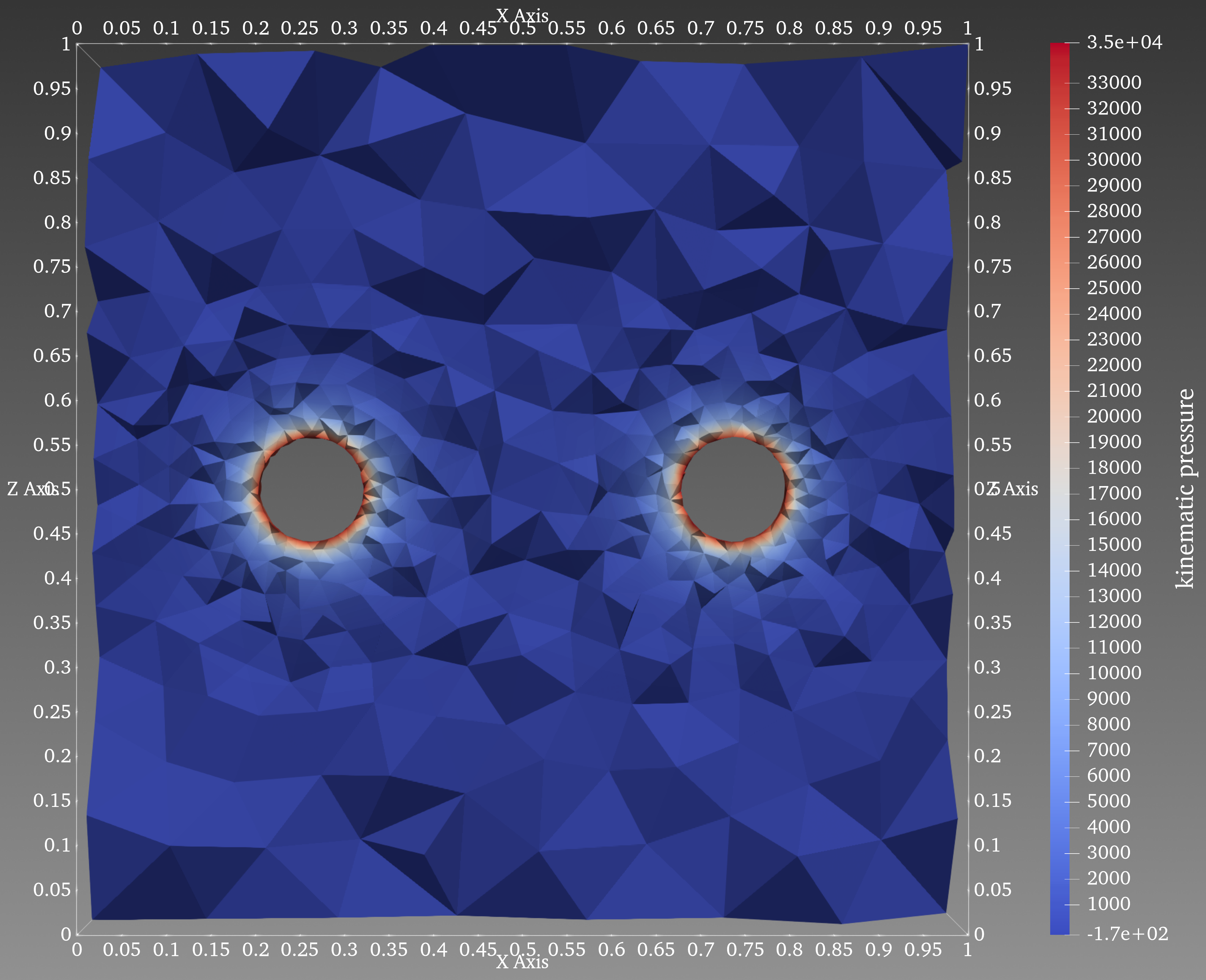}\vspace*{-1.75mm}
		\caption{LEFT: discrete pressure $q_h\in \mathbb{P}^1_c(\mathcal{T}_h)$ intersected with the ($\mathbb{R}\times \{\frac{1}{2}\}\times \mathbb{R}$)-plane in (\protect\hyperlink{TC1}{Test~Case~1});
			RIGHT: discrete pressure $q_h\in \mathbb{P}^1_c(\mathcal{T}_h)$ intersected with the ($\mathbb{R}\times \{\frac{1}{2}\}\times \mathbb{R}$)-plane in (\protect\hyperlink{TC2}{Test Case 2}).}
		\label{fig:p}
	\end{figure}

	\section{Outlook}
	 
	\textcolor{black}{\hspace*{5mm}We believe that the lower bound $p^-\ge \frac{3d}{d+2}$ can be further improved to the lower bound $p^-\ge \frac{2d}{d+1}$, when still working with the Temam modification \eqref{def:bh} (\textit{cf}.\ \cite{JK23_inhom} for the case of a constant~exponent), or even to the lower bound $p^-\ge \frac{2d}{d+2}$, when working with exactly divergence-free finite elements or with a different approximation of the convective term using a divergence correction operator. Another future research direction is the treatment of the unsteady case, \textit{i.e.}, of the so-called  unsteady $p(\cdot,\cdot)$-Navier--Stokes equations,
	 for which recently a weak convergence result for a fully-discrete approximation employing a simple backward Euler step in time and discretely divergence-free finite element in space has been established (\textit{cf}.\ \cite{BerKa23}).
	 Carrying out a thorough error analysis also for the unsteady $p(\cdot,\cdot)$-Navier--Stokes equations is the next natural step. Eventually, all results derived for the steady $p(\cdot)$-Navier--Stokes equation and the unsteady $p(\cdot,\cdot)$-Navier--Stokes equations
	 should be extended to the fully-coupled systems for smart fluids, \textit{e.g.}, electro-rheological fluids (\textit{cf}.\ \cite{RR1,rubo}), micro-polar electro-rheological~fluids (\textit{cf}.\ \cite{win-r,eringen-book}), magneto-rheological fluids (\textit{cf}.\ \cite{magneto}),  chemically reacting fluids (\textit{cf}.\ \cite{LKM78,HMPR10}),~and~\mbox{thermo-rheological}~fluids~(\textit{cf}.~\cite{Z97,AR06}).}
	
	\newpage
	\appendix
	
	\section{Modular-to-norm estimates}
	
	\hspace*{5mm}In this section, we give a proof for two essential modular-to-norm estimates, which can be used to transfer convergence rates measured in modulars into convergence rates measured in Luxembourg~norms. Note that the inverse is always easier, since, appealing to \cite[Lem.\  3.2.5]{dhhr}, for a function $f\in L^{p(\cdot)}(M)$, from $\|f\|_{p(\cdot),M}\leq\gamma\leq 1$, it follows that $\rho_{p(\cdot),M}(f)\leq \|f\|_{p(\cdot),M}\leq \gamma$.\enlargethispage{5mm}
	
	\begin{lemma}\label{lem:just_to_mighty}
		Let $M\subseteq \mathbb{R}^d$, $d\in \mathbb{N}$, be a (Lebesgue) measurable set, $p\in \mathcal{P}^\infty(M)$ with  $p^-> 1$,  $r\coloneqq \min\{2,p\}\in \mathcal{P}^\infty(M)$,  $\delta\ge 0$, and $a\in L^{p(\cdot)}(M;\mathbb{R}_{\ge 0})$. Then, 
		the~\mbox{following}~\mbox{statements}~apply:
		\begin{itemize}[noitemsep,topsep=2pt,labelwidth=\widthof{(ii)},leftmargin=!,font=\normalfont\itshape]
			\item[(i)] For every $c_0\ge 1$, $\gamma\leq 1$, and $f\in L^{p(\cdot)}(M)$ from $\rho_{\varphi_a,M}(f)\leq c_0\,\gamma$, 
			it follows~that 
			\begin{align*}
				\|f\|_{\varphi_a,M}\lesssim  c_0^{\frac{1}{r^-}}\, \gamma^{\frac{1}{r^+}}\,,
			\end{align*}
			where $\lesssim$ depends only on $p^+$.
			
			\item[(ii)] For every $c_0\ge 1$, $\gamma\leq 1$, and $g\in L^{p'(\cdot)}(M)$ from $\rho_{(\varphi_a)^*,M}(g)\leq c_0\, \gamma$, 
			it follows~that
			\begin{align*}
				\|g\|_{(\varphi_a)^*,M}\lesssim c_0^{\frac{1}{(r^+)'}} \,\gamma^{\frac{1}{(r^-)'}}\,,
			\end{align*}
			where $\lesssim$ depends only on $p^-$ and $p^+$.
		\end{itemize}
	\end{lemma}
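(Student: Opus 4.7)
The plan is to prove both (i) and (ii) by the same two-step template: first absorb the factor $c_0$ via a homogeneity substitution that exploits the \emph{lower-growth} scaling of the underlying $N$-function, and then pass from the modular to the Luxembourg norm via the \emph{upper-growth} scaling together with the unit ball property $\|h\|_\psi\le\lambda\iff\rho_\psi(h/\lambda)\le 1$ (cf.\ \cite[Lem.~3.2.4]{dhhr}).

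The technical engine comes from Remark~\ref{rem:phi_a}. Starting from $\varphi_a(x,t)\sim(\delta+a(x)+t)^{p(x)-2}t^2$ and splitting into the subcases $p(x)\lessgtr 2$, I would verify by elementary manipulation the pointwise scalings
\begin{equation*}
\varphi_a(x,\mu t)\le c\,\mu^{r(x)}\varphi_a(x,t)\ (\mu\in[0,1]),\qquad \varphi_a(x,\mu t)\le c\,\mu^{\max\{2,p(x)\}}\varphi_a(x,t)\ (\mu\ge 1),
\end{equation*}
together with the conjugate pair for $(\varphi_a)^*$, in which the relevant exponents become $(\max\{2,p(x)\})'$ (for $\mu\le 1$) and $r'(x)=\max\{2,p'(x)\}$ (for $\mu\ge 1$). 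All constants depend only on $p^-,p^+$.

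For \textbf{(i)}, I rescale by $\tilde f\coloneqq c_0^{-1/r^-}f$. The $\mu\le 1$ scaling applied with $\mu=c_0^{-1/r^-}\le 1$, combined with the pointwise bound $c_0^{-r(x)/r^-}\le c_0^{-1}$ (valid because $c_0\ge 1$ and $r(x)\ge r^-$), yields $\rho_{\varphi_a,M}(\tilde f)\lesssim\gamma$; multiplying the eventual norm bound by $c_0^{1/r^-}$ then produces the advertised dependence on $c_0$. The lemma is thereby reduced to the homogeneous claim $\|\tilde f\|_{\varphi_a,M}\lesssim \gamma^{1/r^+}$, which I will establish by choosing the threshold $\lambda\coloneqq (c\gamma)^{1/r^+}$ and checking $\rho_{\varphi_a,M}(\tilde f/\lambda)\le 1$ directly via the upper-growth scaling, exploiting $\gamma\le 1$ (so that $\gamma^{1/r^+}\ge\gamma^{1/r^-}$) to pass from the sharp scaling exponent to the one in the statement. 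Part~\textbf{(ii)} follows the same blueprint with $\varphi_a$ replaced by $(\varphi_a)^*$: the $c_0$-absorption uses the lower scaling of $(\varphi_a)^*$, whose exponent $(\max\{2,p(x)\})'$ is bounded above by $(r^+)'$; after rescaling, the homogeneous claim reduces to checking $\rho_{(\varphi_a)^*,M}(\cdot/\lambda)\le 1$ at $\lambda=(c\gamma)^{1/(r^-)'}$ via the upper scaling of $(\varphi_a)^*$ whose exponent $r'(x)$ is dominated by $(r^-)'$.

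The principal technical care lies in the pointwise bookkeeping of the two growth regimes of the shifted $N$-function (determined by whether the argument is small or large compared with $\delta+a(x)$) in the presence of the $x$-dependence of $p(\cdot)$; however, since every scaling exponent can be controlled uniformly in $x$ by $r^\pm$ or $(r^\mp)'$ alone, the remainder amounts to routine modular algebra. No auxiliary ingredient beyond Remark~\ref{rem:phi_a} and the unit ball property is required.
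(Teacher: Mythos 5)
Your approach mirrors the paper's (pointwise two-sided scaling of $\varphi_a$ followed by the unit-ball property of the Luxembourg norm), and the preliminary steps are sound: the two pointwise scaling inequalities you state do hold with constants depending only on $p^-,p^+$, and the rescaling $\tilde f\coloneqq c_0^{-1/r^-}f$ indeed gives $\rho_{\varphi_a,M}(\tilde f)\lesssim\gamma$ because $c_0^{-r(x)/r^-}\le c_0^{-1}$ for $c_0\ge 1$ and $r(x)\ge r^-$.

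The gap is in the final modular-to-norm step. For $\lambda\le 1$ the upper-growth scaling gives $\rho_{\varphi_a,M}(\tilde f/\lambda)\lesssim\lambda^{-\max\{2,p^+\}}\rho_{\varphi_a,M}(\tilde f)\lesssim\lambda^{-\max\{2,p^+\}}\gamma$; the uniform exponent is $\max\{2,p^+\}$, not $r^+=\min\{2,p^+\}$. To make the right-hand side $\le 1$ you must take $\lambda\gtrsim\gamma^{1/\max\{2,p^+\}}$, which for $\gamma\le 1$ and $p^+\ne 2$ is strictly \emph{larger} than $\gamma^{1/r^+}$; your choice $\lambda=(c\gamma)^{1/r^+}$ is therefore too small, and $\rho_{\varphi_a,M}(\tilde f/\lambda)\le 1$ fails for small $\gamma$. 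The aside ``$\gamma^{1/r^+}\ge\gamma^{1/r^-}$'' does not help: the comparison in play is $\gamma^{1/r^+}$ against $\gamma^{1/\max\{2,p^+\}}$, and it goes the wrong way for you. The same directional slip appears in the $c_0$-absorption in (ii), where the lower-scaling exponent of $(\varphi_a)^*$ is bounded \emph{below} by $(\max\{2,p^+\})'$, not by $(r^+)'$. In fact the constant-exponent example $p\equiv p^+>2$ with $\delta=a=0$ (so that $\rho_{\varphi,M}(f)\sim\|f\|_{\varphi,M}^{p^+}$) shows the exponents $1/r^+$ and $1/(r^+)'$ appearing in the statement are not reachable from the hypothesis. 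What the scaling does prove — and what the paper's own computation effectively produces — is $\|f\|_{\varphi_a,M}\lesssim c_0^{1/r^-}\gamma^{1/\max\{2,p^+\}}$ and $\|g\|_{(\varphi_a)^*,M}\lesssim c_0^{1/(\max\{2,p^+\})'}\gamma^{1/(r^-)'}$. (The lemma is only invoked in Section~\ref{sec:a_priori} with $\gamma=1$ in (i) and via the $\gamma^{1/(r^-)'}$ factor in (ii), so this is harmless downstream; but as a proof of the lemma as literally stated, neither your sketch nor the paper's argument closes the gap, and the obstruction cannot be removed by sharper bookkeeping.)
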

	
	\begin{proof}
		\textit{ad (i)}. For a.e.\ $x\in M$, we distinguish the cases $p(x)\ge 2$ and $p(x)<2$.
		
		\textit{Case $p(x)\ge 2$.} Observing that, owing to \cite[Lem. 5.1, (5.11)]{dr-nafsa}, \eqref{eq:phi_shifted}, \eqref{eq:def_phi}, the $\Delta_2$-condition~of $\varphi_{a(x)}(x,\cdot)\colon\mathbb{R}^{\ge 0}\to \mathbb{R}^{\ge 0}$, and \cite[Lem.~5.3]{dr-nafsa}, for every $\lambda \leq 1$, $c\ge 1$,  and $t\ge 0$, it holds that
		\begin{align*}
			\varphi_{a(x)}\Big(x,\frac{t}{c\lambda}\Big)
			&\leq
			(\varphi_{a(x)})'\Big(x,\frac{t}{c\lambda}\Big)\frac{t}{c\lambda}
			\\& \leq
			\Big(\frac{\delta}{c\lambda}+\frac{a(x)}{c\lambda}+\frac{t}{c\lambda}\Big)^{p(x)-2}\Big(\frac{t}{c\lambda}\Big)^2
			\\&\leq 
			\frac{1}{\lambda^{p(x)} c^2}(\delta+a(x)+t)^{p(x)-2}t^2
			\\
			&
			=
			\frac{1}{\lambda^{p^+}c^2}(\varphi_{a(x)})'(x,t)t
			\\& \leq 
			\frac{1}{\lambda^{p^+} c^2}\varphi_{a(x)}(x,2t)
			\leq 
			\frac{2^{p^+}}{\lambda^{p^+} c^2}\varphi_{a(x)}(x,t)\,.
		\end{align*}
		
		\textit{Case $p(x)< 2$.}  Observing that, owing to \cite[Lem. 5.1, (5.11)]{dr-nafsa}, \eqref{eq:phi_shifted}, \eqref{eq:def_phi}, the $\Delta_2$-condition~of $\varphi_{a(x)}(x,\cdot)\colon\mathbb{R}^{\ge 0}\to \mathbb{R}^{\ge 0}$ and \cite[Lem.~5.3]{dr-nafsa}, for every $\lambda \leq 1$, $c\ge 1$,  and $t\ge 0$, it holds that
		\begin{align*}
			\varphi_{a(x)}\Big(x,\frac{t}{c\lambda}\Big)
			&\leq
			\Big(\delta+a(x)+\frac{t}{c\lambda}\Big)^{p(x)-2}\Big(\frac{t}{c\lambda}\Big)^2
			\\
			&\leq 
			\frac{1}{\lambda^2}
			\Big(\frac{\delta}{c}+\frac{a(x)}{c}+\frac{t}{c}\Big)^{p(x)-2}\Big(\frac{t}{c}\Big)^2
			\\
			&\leq 
			\frac{1}{\lambda^2c^{p(x)}}
			(\delta+a(x)+t)^{p(x)-2}t^2
			\le   \frac{2^{p^+}}{\lambda^2c^{p^-}}\varphi_{a(x)}(x,t)\,.
		\end{align*}
		Therefore, choosing $\lambda = \gamma^{\smash{\frac{1}{r^+}}}\leq 1$ and $c=\smash{c_0^{\frac{1}{r^-}}}2^{p^+}\ge 1$, we find that
		\begin{align*}
			\rho_{\varphi_a,M}\bigg(
			\frac{f}{(c_0^{1/r^-}2^{p^+})^{1/(r^+)'} \gamma^{1/\max\{2,p^+\}}}
			\bigg)\leq \frac{1}{c_0 \gamma}\rho_{\varphi_a,M}(f)\leq 1\,,
		\end{align*}
		so that, from the definition of the Luxembourg norm, we conclude the assertion.
		
		\textit{ad (ii)}. Observing that, for every $\lambda \leq 1$, $c\ge 1$,  $t\ge 0$,  and a.e.\ $x\in M$,~it~holds that
		\begin{align*}
			(\varphi_{a(x)})^*\Big(x,\frac{t}{c\lambda}\Big)
			&\sim 
			\Big(\delta^{p(x)-1}+a(x)^{p(x)-1}+\frac{t}{c\lambda}\Big)^{p'(x)-2}\Big(\frac{t}{c\lambda}\Big)^2\,,
		\end{align*}
		where $\sim$ depends only on $p^-$ and $p^+$, the claimed inequality follows analogously to (i).
	\end{proof}

	\section{Sobolev-to-natural-distance estimates}
	
	\hspace*{5mm}In this section, we derive estimates that bound Sobolev norm distances by the natural distance.
	Before stating a general estimate, we derive two estimates distinguishing the cases $p^+\leq 2$ and $p^-\ge 2$. 
	
	\begin{lemma}\label{lem:sobolev2F_first}
		Let $M\subseteq \mathbb{R}^d$, $d\in \mathbb{N}$, be a (Lebesgue) measurable set and  $p\in \mathcal{P}^{\infty}(M)$ with $p^+\le 2$. Then,
		for every $\bfA,\bfB\in L^{p(\cdot)}(M;\mathbb{R}^{d\times d})$, it holds that
		\begin{align*}
			\|\bfA-\bfB\|_{p(\cdot),M}^2&\lesssim \|\bfF(\cdot,\bfA)-\bfF(\cdot,\bfB)\|_{2,M}\|(\vert \bfA\vert+\vert \bfB\vert)^{\smash{\frac{2-p(\cdot)}{2}}}\|_{\frac{2p(\cdot)}{p(\cdot)-2},M}^2\\&\lesssim 
			\|\bfF(\cdot,\bfA)-\bfF(\cdot,\bfB)\|_{2,M}^2\,\big(1+\rho_{p(\cdot),M}(\vert \bfA\vert+\vert \bfB\vert)\big)^{\smash{\frac{2}{p^-}}}\,,
		\end{align*}
		where $\lesssim$ depends only on $p^+$ and $p^-$.
	\end{lemma}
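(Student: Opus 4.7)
My plan is to reduce the lemma to a pointwise estimate and then apply a variable-exponent Hölder inequality. The starting point is Proposition~\ref{lem:hammer}, equivalence~\eqref{eq:hammera}, combined with \eqref{rem:phi_a.1}, which yields
\begin{align*}
|\bfF(x,\bfA)-\bfF(x,\bfB)|^2 \sim \varphi_{|\bfA^{\textup{sym}}|}(x,|\bfA^{\textup{sym}}-\bfB^{\textup{sym}}|) \sim (\delta+|\bfA^{\textup{sym}}|+|\bfB^{\textup{sym}}|)^{p(x)-2}\,|\bfA^{\textup{sym}}-\bfB^{\textup{sym}}|^2\,.
\end{align*}
Since $p^+\le 2$ forces $p(x)-2\le 0$, I can invert this equivalence to obtain the pointwise bound
\begin{align*}
|\bfA^{\textup{sym}}-\bfB^{\textup{sym}}| \lesssim (\delta+|\bfA|+|\bfB|)^{(2-p(x))/2}\,|\bfF(x,\bfA)-\bfF(x,\bfB)|\quad\text{a.e.\ in }M\,.
\end{align*}
(Without loss of generality I can assume $\bfA,\bfB$ are symmetric, since $\bfF$ depends only on the symmetric part.)

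For the first claimed inequality I would apply the variable-exponent Hölder inequality using the identity $\tfrac{1}{p(\cdot)} = \tfrac{1}{2}+\tfrac{2-p(\cdot)}{2p(\cdot)}$; this is well-defined because $p^+\le 2$ makes the companion exponent $\tfrac{2p(\cdot)}{2-p(\cdot)}$ nonnegative, with the degenerate case $p(x)=2$ giving $\infty$, where however the corresponding weight factor reduces to $1$. Taking the pointwise estimate above, multiplying, and applying Hölder in the variable-exponent space yields
\begin{align*}
\|\bfA-\bfB\|_{p(\cdot),M} \lesssim \|\bfF(\cdot,\bfA)-\bfF(\cdot,\bfB)\|_{2,M}\, \|(\delta+|\bfA|+|\bfB|)^{(2-p(\cdot))/2}\|_{2p(\cdot)/(2-p(\cdot)),M}\,,
\end{align*}
and squaring produces the first bound in the lemma (with the $L^2$-norm squared and the correct companion exponent $2p(\cdot)/(2-p(\cdot))$).

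For the second claimed bound I would pass from the weighted norm back to a modular by the direct computation
\begin{align*}
\rho_{2p(\cdot)/(2-p(\cdot)),M}\bigl((\delta+|\bfA|+|\bfB|)^{(2-p(\cdot))/2}\bigr) = \int_M (\delta+|\bfA|+|\bfB|)^{p(x)}\,\mathrm{d}x \lesssim 1+\rho_{p(\cdot),M}(|\bfA|+|\bfB|)\,,
\end{align*}
where the implicit constant depends only on $\delta$, $p^+$, and $|M|$. The norm-modular inequality (cf.\ \cite[Lem.~3.2.5]{dhhr} or Lemma~\ref{lem:just_to_mighty}(i)) then converts this into a bound on the Luxembourg norm: denoting $q(\cdot)\coloneqq 2p(\cdot)/(2-p(\cdot))$ and observing that $q^- = 2p^-/(2-p^-)$, we obtain $2/q^- = 2/p^- - 1 \le 2/p^-$, so the norm raised to the power $2$ is dominated by $(1+\rho_{p(\cdot),M}(|\bfA|+|\bfB|))^{2/p^-}$. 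Inserting this into the first bound closes the chain.

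The main obstacle I expect is the careful bookkeeping of the variable-exponent Hölder step in the regime $p(x)\to 2$, where the companion exponent $\tfrac{2p(\cdot)}{2-p(\cdot)}$ blows up; in that limit however the weight $(\delta+|\bfA|+|\bfB|)^{(2-p(x))/2}$ tends to $1$, so the norm remains controlled, and the correct way to handle it is to use the $L^\infty$-endpoint of the variable-exponent Hölder inequality on the set where $p(x)$ is close to $2$. A secondary (minor) point is the treatment of $\delta\ge 0$ and of possibly infinite $|M|$, which only affects the multiplicative constants and can be absorbed into the already implicit dependence on $p^-$ and $p^+$.
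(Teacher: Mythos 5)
Your proof follows the same route as the paper's: both decompose $\vert\bfA-\bfB\vert$ as a product with a $\pm(2-p(\cdot))/2$ weight, apply the generalized H\"older inequality with the exponent split $\tfrac{1}{p(\cdot)}=\tfrac{1}{2}+\tfrac{2-p(\cdot)}{2p(\cdot)}$, identify the first factor with $\|\bfF(\cdot,\bfA)-\bfF(\cdot,\bfB)\|_{2,M}$ via \eqref{eq:hammera}, and pass from the Luxembourg norm of the weight to its modular. You also correctly flag that the companion exponent should read $2p(\cdot)/(2-p(\cdot))$ rather than the $2p(\cdot)/(p(\cdot)-2)$ that appears in the statement and in \eqref{eq:sobolev2F_first.1} (the paper's own modular computation in \eqref{eq:sobolev2F_first.2} is internally consistent with your corrected exponent). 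Likewise, in the norm-to-modular step you use $q^-=2p^-/(2-p^-)$ while the paper uses $p^-$; since $p^-<q^-$, both yield the power $2/p^-$ in the final bound.

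The substantive difference is the handling of $\delta$. The paper's decomposition places $(\vert\bfA\vert+\vert\bfB\vert)^{(p(\cdot)-2)/2}$ (no $\delta$) next to $\vert\bfA-\bfB\vert$ and asserts that the resulting factor is $\lesssim\vert\bfF(\cdot,\bfA)-\bfF(\cdot,\bfB)\vert$. For $p(\cdot)\le 2$ and $\delta>0$ one has $(\vert\bfA\vert+\vert\bfB\vert)^{(p(\cdot)-2)/2}\ge(\delta+\vert\bfA\vert+\vert\bfB\vert)^{(p(\cdot)-2)/2}$, which is the \emph{wrong} direction, so that step only holds with a $\delta$-free constant when $\delta=0$. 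You instead keep $\delta$ in the weight, which makes the H\"older step legitimate for all $\delta\ge 0$; the price is that your modular bound $\int_M(\delta+\vert\bfA\vert+\vert\bfB\vert)^{p(\cdot)}\,\mathrm{d}x\lesssim 1+\rho_{p(\cdot),M}(\vert\bfA\vert+\vert\bfB\vert)$ picks up $\delta$- and $\vert M\vert$-dependence (and fails outright for $\vert M\vert=\infty$), in tension with the lemma's stated ``depends only on $p^+$ and $p^-$.'' You should state explicitly that you are proving a $\delta$-modified intermediate inequality, not literally the first display of the lemma. In fact you could reconcile the two by bounding the $\delta$-weight modular as $\rho_{p(\cdot),M}(\delta+\vert\bfA\vert+\vert\bfB\vert)\lesssim \max\{\delta^{p^-},\delta^{p^+}\}\vert M\vert+\rho_{p(\cdot),M}(\vert\bfA\vert+\vert\bfB\vert)$ and remarking that in the paper's applications $M$ is bounded and $\delta$ is a fixed parameter, so this is harmless. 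Either way, your argument is not wrong; it is arguably more careful than the paper's proof on the $\delta$-issue, but it requires the (mild) extra hypotheses $\vert M\vert<\infty$ and a $\delta$-dependent constant that the lemma, as worded, does not advertise.
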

	
	\begin{proof}
		Using the generalized Hölder inequality \eqref{eq:gen_hoelder} and \eqref{eq:hammera}, we find that
		\begin{align}\label{eq:sobolev2F_first.1}
			\begin{aligned}
				\|\bfA-\bfB\|_{p(\cdot),M}&=\|\vert \bfA-\bfB\vert(\vert \bfA\vert+\vert \bfB\vert)^{\smash{\frac{p(\cdot)-2}{2}}}(\vert \bfA\vert+\vert \bfB\vert)^{\smash{\frac{2-p(\cdot)}{2}}}\|_{p(\cdot),M}\\&\leq
				2\,\|\vert \bfA-\bfB\vert(\vert \bfA\vert+\vert \bfB\vert)^{\smash{\frac{p(\cdot)-2}{2}}}\|_{2,M}\\&\quad\times\|(\vert \bfA\vert+\vert \bfB\vert)^{\smash{\frac{2-p(\cdot)}{2}}}\|_{\smash{\frac{2p(\cdot)}{p(\cdot)-2}},M}
				\\&\leq 
				c\,\|\bfF(\cdot,\bfA)-\bfF(\cdot,\bfB)\|_{2,M}\|(\vert \bfA\vert+\vert \bfB\vert)^{\smash{\frac{2-p(\cdot)}{2}}}\|_{\smash{\frac{2p(\cdot)}{p(\cdot)-2}},M}\,.
			\end{aligned}
		\end{align}
		Moreover,  appealing to \cite[Lem.\  3.2.5]{dhhr}, we have that
		\begin{align}\label{eq:sobolev2F_first.2}
			\begin{aligned}
				\|(\vert \bfA\vert+\vert \bfB\vert)^{\smash{\frac{2-p(\cdot)}{2}}}\|_{\frac{2p(\cdot)}{p(\cdot)-2},M}^{p^-}-1&\leq \rho_{\smash{\frac{2p(\cdot)}{2-p(\cdot)}},M}((\vert \bfA\vert+\vert \bfB\vert)^{\smash{\frac{2-p(\cdot)}{2}}})
				\\&= \rho_{p(\cdot),M}(\vert \bfA\vert+\vert \bfB\vert)\,.
			\end{aligned}
		\end{align}
		Using \eqref{eq:sobolev2F_first.2} in \eqref{eq:sobolev2F_first.1}, we conclude the claimed estimate.
	\end{proof}
	
	\begin{lemma}\label{lem:sobolev2F_second}
	Let $M\subseteq \mathbb{R}^d$, $d\in \mathbb{N}$, be a (Lebesgue) measurable set, $\delta>0$, and  $p\in \mathcal{P}^{\infty}(M)$~with~$p^-\ge  2$. Then,
		for every $\bfA,\bfB\in L^{p(\cdot)}(M;\mathbb{R}^{d\times d})$, it holds that
		\begin{align*}
			\|\bfA-\bfB\|_{2,M}^2\lesssim (\min\{1,\delta\})^{2-p^+}\,\|\bfF(\cdot,\bfA)-\bfF(\cdot,\bfB)\|_{2,M}^2\,,
		\end{align*}
		where $\lesssim$ depends only on $p^+$ and $p^-$.
	\end{lemma}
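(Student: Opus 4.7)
The plan is to obtain the desired bound pointwise and then integrate. By the basic equivalence \eqref{eq:hammera} from Proposition \ref{lem:hammer}, for a.e.\ $x\in M$, we have
\begin{align*}
    \vert\bfF(x,\bfA)-\bfF(x,\bfB)\vert^{2}\sim \varphi_{\vert\bfA^{\textup{sym}}\vert}(x,\vert\bfA^{\textup{sym}}-\bfB^{\textup{sym}}\vert)\sim (\delta+\vert\bfA^{\textup{sym}}\vert+\vert\bfB^{\textup{sym}}-\bfA^{\textup{sym}}\vert)^{p(x)-2}\,\vert\bfA^{\textup{sym}}-\bfB^{\textup{sym}}\vert^{2},
\end{align*}
where in the last step I have used \eqref{rem:phi_a.1} together with the symmetry in $\bfA^{\textup{sym}},\bfB^{\textup{sym}}$ to replace $\vert\bfB^{\textup{sym}}\vert$ by $\vert\bfA^{\textup{sym}}-\bfB^{\textup{sym}}\vert$ inside the shift (or one may simply keep $\vert\bfB^{\textup{sym}}\vert$; both yield the argument).

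Next, since $p^-\ge 2$, the exponent $p(x)-2\ge 0$ for a.e.\ $x\in M$, so the factor $(\delta+\vert\bfA^{\textup{sym}}\vert+\vert\bfB^{\textup{sym}}-\bfA^{\textup{sym}}\vert)^{p(x)-2}$ is monotone non-decreasing in its base. The base is at least $\delta>0$, hence
\begin{align*}
    (\delta+\vert\bfA^{\textup{sym}}\vert+\vert\bfB^{\textup{sym}}-\bfA^{\textup{sym}}\vert)^{p(x)-2}\ge \delta^{p(x)-2}\quad\text{ a.e.\ in }M.
\end{align*}
I now distinguish the two cases $\delta\ge 1$ and $\delta<1$: if $\delta\ge 1$, then $\delta^{p(x)-2}\ge 1=(\min\{1,\delta\})^{p^+-2}$; if $\delta<1$, then $\delta^{p(x)-2}\ge \delta^{p^+-2}=(\min\{1,\delta\})^{p^+-2}$, since $p(x)\le p^+$ and the base is $<1$. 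In either case,
\begin{align*}
    \vert\bfF(x,\bfA)-\bfF(x,\bfB)\vert^{2}\gtrsim (\min\{1,\delta\})^{p^+-2}\,\vert\bfA^{\textup{sym}}-\bfB^{\textup{sym}}\vert^{2}\quad\text{ a.e.\ in }M,
\end{align*}
with the implicit constant depending only on $p^-,p^+$.

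Rearranging and integrating over $M$ then yields
\begin{align*}
    \|\bfA^{\textup{sym}}-\bfB^{\textup{sym}}\|_{2,M}^{2}\lesssim (\min\{1,\delta\})^{2-p^+}\,\|\bfF(\cdot,\bfA)-\bfF(\cdot,\bfB)\|_{2,M}^{2},
\end{align*}
which is the claimed estimate (the statement is to be read with the symmetric parts, in line with the conventions in Section \ref{sec:basic}, or one invokes $\|\bfA^{\textup{sym}}-\bfB^{\textup{sym}}\|_{2,M}\le \|\bfA-\bfB\|_{2,M}$ and applies the argument with $\bfA,\bfB$ replaced by their symmetric parts). There is no serious obstacle here; the only delicate point is the separation into the two regimes $\delta\ge 1$ and $\delta<1$ required to produce the correct $\min\{1,\delta\}$ prefactor uniformly in $x\in M$.
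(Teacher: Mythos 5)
Your proof is correct and takes essentially the same route as the paper's: both rest on the equivalence \eqref{eq:hammera} combined with the asymptotics \eqref{rem:phi_a.1}, and on the observation that, since $p^-\ge 2$ and $\delta>0$, the shifted $N$-function is bounded below by $\smash{(\min\{1,\delta\})^{p^+-2}t^2}$; the paper merely compresses your two-case discussion into the single chain $\delta^{p(x)-2}\ge(\min\{1,\delta\})^{p(x)-2}\ge(\min\{1,\delta\})^{p^+-2}$. One small caveat on your closing parenthetical: the inequality $\|\bfA^{\textup{sym}}-\bfB^{\textup{sym}}\|_{2,M}\le\|\bfA-\bfB\|_{2,M}$ runs in the wrong direction to upgrade a bound on the symmetric parts to one on the full matrices, so that fix does not close the gap; the paper quietly makes the same identification (its modular $\rho_{\varphi_{|\bfA|},M}(\bfA-\bfB)$ is compared to \eqref{eq:hammera}, which only involves symmetric parts), and in all applications $\bfA,\bfB$ are symmetric-gradient tensors anyway, so this is a presentational wrinkle you share with the original rather than a defect of your argument.
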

	
	\begin{proof}
		Due to $\varphi_{\vert \bfD\bfv(x)\vert }(x,t)\geq c\, \delta^{p(x)-2}t^2\ge  c\, (\min\{1,\delta\})^{p(x)-2}t^2\ge  c\, (\min\{1,\delta\})^{p^+-2}t^2$ for a.e. $x\in M$ and $t\ge 0$ (\textit{cf}.\ \eqref{rem:phi_a.1}), where $c>0$  depends only on $p^+,p^->1$, and \eqref{eq:hammera}, we have that 
		\begin{align*}
			\|\bfA-\bfB\|_{2,M}^2&\leq c\, (\min\{1,\delta\})^{2-p^+}\rho_{\varphi_{\vert \bfA\vert },M}(\bfA-\bfB)
			\\&\leq c\, (\min\{1,\delta\})^{2-p^+}\,\|\bfF(\cdot,\bfA)-\bfF(\cdot,\bfB)\|_{2,M}^2\,,
		\end{align*}
		which is the claimed estimate.
	\end{proof}
	
	Combining Lemma \ref{lem:sobolev2F_first} and Lemma \ref{lem:sobolev2F_second}, we arrive at the following general estimate relating  Sobolev norm distances to the natural distance.\enlargethispage{2mm}
	
	\begin{lemma}\label{lem:sobolev2F}
		Let $M\subseteq \mathbb{R}^d$, $d\in \mathbb{N}$, be (Lebesgue) measurable set, $\delta>0$, and  $p\in \mathcal{P}^{\infty}(G)$ with $p^-> 1$.  Moreover, set $r\coloneqq \min\{2,p\}\in \mathcal{P}^{\infty}(M)$. Then,  for every $\bfA,\bfB\in L^{p(\cdot)}(M;\mathbb{R}^{d\times d})$, it holds that
		\begin{align*}
			\|\bfA-\bfB\|_{r(\cdot),M}^2&\lesssim
			\|\bfF(\cdot,\bfA)-\bfF(\cdot,\bfB)\|_{2,M}^2\\[-1mm]&\quad\times\big((1+\rho_{p(\cdot),M}(\vert \bfA\vert+\vert \bfB\vert))^{\smash{\frac{2}{p^-}}}+(\min\{1,\delta\})^{2-p^+}\big)\,,
		\end{align*}
		where $\lesssim$ depends only on $p^+$ and $p^-$.
	\end{lemma}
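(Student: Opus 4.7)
The plan is to split $M$ into the two regions where the two already-established specialized estimates (Lemma~\ref{lem:sobolev2F_first} and Lemma~\ref{lem:sobolev2F_second}) can be applied directly, and then reassemble. Concretely, I set
\begin{align*}
M_1 \coloneqq \{x \in M \mid p(x) \le 2\}, \qquad M_2 \coloneqq \{x \in M \mid p(x) > 2\}.
\end{align*}
On $M_1$ the restricted exponent $p|_{M_1}$ satisfies $(p|_{M_1})^+ \le 2$ and $r = p$; on $M_2$ the restricted exponent satisfies $(p|_{M_2})^- \ge 2$ and $r = 2$. The constants arising from the two lemmas then depend only on $p^-$ and $p^+$, because $1 < p^- \le (p|_{M_i})^{\pm} \le p^+$ for $i = 1,2$.

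On the first piece I would apply Lemma~\ref{lem:sobolev2F_first} to $\bfA|_{M_1}, \bfB|_{M_1}$ with exponent $p|_{M_1}$, yielding
\begin{align*}
\|\bfA-\bfB\|_{r(\cdot),M_1}^2 = \|\bfA-\bfB\|_{p(\cdot),M_1}^2 \lesssim \|\bfF(\cdot,\bfA)-\bfF(\cdot,\bfB)\|_{2,M_1}^2 \, \bigl(1+\rho_{p(\cdot),M_1}(\vert \bfA\vert+\vert \bfB\vert)\bigr)^{2/p^-}.
\end{align*}
On the second piece I would apply Lemma~\ref{lem:sobolev2F_second} to $\bfA|_{M_2}, \bfB|_{M_2}$ with exponent $p|_{M_2}$, yielding
\begin{align*}
\|\bfA-\bfB\|_{r(\cdot),M_2}^2 = \|\bfA-\bfB\|_{2,M_2}^2 \lesssim (\min\{1,\delta\})^{2-p^+} \, \|\bfF(\cdot,\bfA)-\bfF(\cdot,\bfB)\|_{2,M_2}^2.
\end{align*}

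To recombine, I would use that $M_1, M_2$ are disjoint and measurable, so by the standard decomposition property of Luxembourg norms on disjoint unions (cf.~\cite[Cor.~3.3.4]{dhhr}) there is a constant $c$ (depending only on $p^-,p^+$) such that
\begin{align*}
\|\bfA-\bfB\|_{r(\cdot),M}^2 \le c\,\bigl(\|\bfA-\bfB\|_{r(\cdot),M_1}^2 + \|\bfA-\bfB\|_{r(\cdot),M_2}^2\bigr).
\end{align*}
Finally, using the trivial monotonicities $\|\bfF(\cdot,\bfA)-\bfF(\cdot,\bfB)\|_{2,M_i} \le \|\bfF(\cdot,\bfA)-\bfF(\cdot,\bfB)\|_{2,M}$ and $\rho_{p(\cdot),M_1}(\vert \bfA\vert+\vert \bfB\vert) \le \rho_{p(\cdot),M}(\vert \bfA\vert+\vert \bfB\vert)$ yields the claimed combined bound.

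The proof is essentially a bookkeeping exercise; no substantial obstacle is anticipated. The only minor subtlety is ensuring the constants in the decomposition of the Luxembourg norm over the disjoint union $M = M_1 \cup M_2$ depend only on $p^-, p^+$, which follows from the standard embedding/splitting inequalities for variable Lebesgue spaces, and that the hidden constants from Lemma~\ref{lem:sobolev2F_first} and Lemma~\ref{lem:sobolev2F_second}, which are stated in terms of the respective restricted extremal exponents, are uniformly controlled by $p^-$ and $p^+$.
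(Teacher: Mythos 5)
Your proof is correct and follows the same route as the paper's: split $M$ into $\{p \le 2\}$ and $\{p > 2\}$, apply Lemma \ref{lem:sobolev2F_first} on the former and Lemma \ref{lem:sobolev2F_second} on the latter, and recombine via the triangle inequality for the Luxembourg norm over a disjoint union. The extra care you take over the dependence of the hidden constants on $p^-,p^+$ is sound but not a real obstacle, exactly as you say.
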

	
	\begin{proof}
		Using the decomposition
		\begin{align*}
			\|\bfA-\bfB\|_{r(\cdot),M}
			&\leq  \|\bfA-\bfB\|_{r(\cdot),\{p\leq 2\}}+\|\bfA-\bfB\|_{r(\cdot),\{p> 2\}}
			\\&= \|\bfA-\bfB\|_{p(\cdot),\{p\leq 2\}}+\|\bfA-\bfB\|_{2,\{p> 2\}}\,,
		\end{align*}
		in conjunction with Lemma \ref{lem:sobolev2F_first} and Lemma \ref{lem:sobolev2F_second}, we conclude the claimed estimate.
	\end{proof}

	\subsection{Discrete-to-continuous-and-vice-versa inequalities}
	\hspace*{5mm}The following results bound the error resulting from switching from $\bfS_h\colon\Omega\times\mathbb{R}^{d\times d}\to \mathbb{R}^{d\times d}_{\textup{sym}}$, $h\in (0,1]$, to $\bfS\colon\Omega\times\mathbb{R}^{d\times d}\to \mathbb{R}^{d\times d}_{\textup{sym}}$ or 
	from switching from
	$(\varphi_h)^*\colon\Omega\times\mathbb{R}_{\ge 0}\to \mathbb{R}_{\ge 0}$, $h\in (0,1]$, to $\varphi^*\colon\Omega\times\mathbb{R}_{\ge 0}\to \mathbb{R}_{\ge 0}$ and vice versa, respectively.\enlargethispage{3mm}
	
	\begin{proposition}\label{lem:A-Ah}
		Let $p\in C^{0,\alpha}(\overline{\Omega})$ with $p^->1$ and $\alpha\in (0,1]$, and let $\delta\ge 0$.   Then, there exists a constant ${s>1}$, which can chosen to be close to $0$ if $h_T>0$ is close to $0$, such that
		for every $T\in \mathcal{T}_h$,  $g\in L^{p'(\cdot)s}(T)$, $\bfA\in L^{p(\cdot)s}(T;\mathbb{R}^{d\times d})$, and $\lambda\in [0,1]$, it holds that
		\begin{align}
		\| \bfF_h(\cdot,\bfA)-\bfF(\cdot,\bfA)\|_{2,T}^2&\lesssim  h_T^{2\alpha}\,\|1+\vert \bfA\vert^{p(\cdot)s}\|_{1,T}
			\,,\label{eq:Fh-F}\\
			\| \bfF_h^*(\cdot,\bfS_h(\cdot,\bfA))-\bfF^*_h(\cdot,\bfS(\cdot,\bfA))\|_{2,T}^2&\lesssim h_T^{2\alpha}\,\|1+\vert\bfA\vert^{p(\cdot)s}\|_{1,T}
			\,,\label{eq:Ah-A}\\
			\rho_{((\varphi_h)_{\vert \bfA\vert})^*,T}(\lambda\,g) &\lesssim \rho_{(\varphi_{\vert \bfA \vert})^*,T}(\lambda\,g)\label{eq:phih-phi}\\&\quad+\smash{\lambda^{\smash{\min\{2, (p^+)'\}}}}h_T^{\alpha}\,\|1+\vert \bfA\vert^{p(\cdot)s}+\vert g\vert^{p'(\cdot)s}\|_{1,T}\,,\notag
		\end{align} 
		where the hidden constants also depend on $s>1$ and the chunkiness $\omega_0>0$.
	\end{proposition}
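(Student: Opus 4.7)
\textbf{Proof proposal for Proposition \ref{lem:A-Ah}.} The plan is to exploit that on each $T\in\mathcal{T}_h$ the exponents $p$ and $p_h$ differ by at most the Hölder modulus, $\vert p(x)-p_h(x)\vert=\vert p(x)-p(\xi_T)\vert\leq [p]_{\alpha,\Omega}\,h_T^{\alpha}$ for all $x\in T$, and then to quantify how sensitively the expressions $\bfF$, $\bfF^*$, $\bfS$, and $\varphi^*$ depend on their exponents. The basic building block will be the pointwise estimate
\begin{align*}
\vert t^a - t^b\vert \lesssim \vert a-b\vert \,\vert \log t\vert\,\bigl(t^a+t^b\bigr)\quad\text{for all }t>0\,,
\end{align*}
combined with the Young-type absorption $\vert \log t\vert \lesssim_{\sigma} t^{\sigma}+t^{-\sigma}$ valid for every $\sigma>0$, which is precisely what forces the appearance of an auxiliary constant $s>1$ (arising as $s=1+\sigma/p^-$ or similar); by choosing $\sigma>0$ small as $h_T\to 0$, the constant $s>1$ can be taken close to $1$.

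For \eqref{eq:Fh-F}, I factor out $\bfA^{\textup{sym}}$ and write
\begin{align*}
\bfF_h(x,\bfA)-\bfF(x,\bfA) = \bigl[(\delta+\vert\bfA^{\textup{sym}}\vert)^{(p_h(x)-2)/2}-(\delta+\vert\bfA^{\textup{sym}}\vert)^{(p(x)-2)/2}\bigr]\,\bfA^{\textup{sym}}\,,
\end{align*}
apply the building block with $a=(p_h(x)-2)/2$ and $b=(p(x)-2)/2$, and use $\vert a-b\vert\lesssim h_T^{\alpha}$ together with $\vert\log(\delta+\vert\bfA^{\textup{sym}}\vert)\vert\lesssim (\delta+\vert\bfA^{\textup{sym}}\vert)^{\sigma}+(\delta+\vert\bfA^{\textup{sym}}\vert)^{-\sigma}$. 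Squaring and integrating over $T$, the cross-terms $t^{2a}\vert\bfA\vert^2$ and $t^{2b}\vert\bfA\vert^2$ collapse to $(\delta+\vert\bfA^{\textup{sym}}\vert)^{p(x)+\widetilde\sigma}+\ldots$, and after absorbing $\delta$-contributions into the $1$ on the right-hand side, I obtain exactly $h_T^{2\alpha}\,\|1+\vert\bfA\vert^{p(\cdot)s}\|_{1,T}$ with $s=1+2\sigma/p^-$.

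For \eqref{eq:Ah-A}, the natural route is to use the duality identity $\bfF^*(x,\bfS(x,\bfA))=\bfF(x,\bfA)$ (which follows by direct computation of the exponents, since $(p-1)(p'-2)/2+p-2=(p-2)/2$), and likewise $\bfF_h^*(\cdot,\bfS_h(\cdot,\bfA))=\bfF_h(\cdot,\bfA)$. Then
\begin{align*}
\bfF_h^*(\cdot,\bfS_h(\cdot,\bfA))-\bfF_h^*(\cdot,\bfS(\cdot,\bfA))=\bigl[\bfF_h(\cdot,\bfA)-\bfF(\cdot,\bfA)\bigr]+\bigl[\bfF(\cdot,\bfA)-\bfF_h^*(\cdot,\bfS(\cdot,\bfA))\bigr]\,.
\end{align*}
The first term is handled by \eqref{eq:Fh-F}. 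For the second term I repeat the scheme of \eqref{eq:Fh-F} applied to $\bfF^*-\bfF_h^*$ evaluated at $\bfS(\cdot,\bfA)$, using $\vert\bfS(\cdot,\bfA)\vert\sim (\delta+\vert\bfA^{\textup{sym}}\vert)^{p(\cdot)-1}\vert\bfA^{\textup{sym}}\vert$ so that powers of $\vert\bfS(\cdot,\bfA)\vert^{p'(\cdot)s}$ translate back into $\vert\bfA\vert^{p(\cdot)s}$ up to constants depending on $\delta,p^-,p^+$.

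For \eqref{eq:phih-phi}, I use the explicit equivalence from Remark~\ref{rem:phi_a},
\begin{align*}
(\varphi_a)^*(x,\lambda g)\sim \bigl((\delta+a)^{p(x)-1}+\lambda\vert g\vert\bigr)^{p'(x)-2}\,(\lambda\vert g\vert)^2\,,
\end{align*}
and the analogous equivalence with $p$ replaced by $p_h$. The difference is controlled by first comparing the prefactor $((\delta+a)^{p(x)-1}+\lambda\vert g\vert)^{p'(x)-2}$ with its $p_h$-counterpart via the building block (gaining $h_T^{\alpha}$ together with a slightly larger power that is absorbed into the term $\vert g\vert^{p'(\cdot)s}+\vert\bfA\vert^{p(\cdot)s}+1$ thanks to $s>1$), and then extracting the factor $\lambda^{\min\{2,(p^+)'\}}$ by splitting the integrand according to whether $\lambda\vert g\vert$ is smaller or larger than $(\delta+a)^{p(x)-1}$: in the first regime $(\varphi_a)^*(x,\lambda g)\sim (\delta+a)^{(p(x)-1)(p'(x)-2)}\lambda^2\vert g\vert^2$, giving the exponent $2$ on $\lambda$, whereas in the second regime $(\varphi_a)^*(x,\lambda g)\sim \lambda^{p'(x)}\vert g\vert^{p'(x)}$, giving the exponent $p'(x)\ge (p^+)'$; taking the minimum yields $\lambda^{\min\{2,(p^+)'\}}$.

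The main obstacle I anticipate is bookkeeping in \eqref{eq:phih-phi}: one must simultaneously (i) pay the Hölder penalty $h_T^{\alpha}$, (ii) absorb the logarithmic loss into the exponent $s>1$, (iii) extract the sharp $\lambda$-power $\min\{2,(p^+)'\}$, and (iv) keep all constants independent of $a\ge 0$ while the shift change Lemma \ref{lem:shift-change} is not applicable here because no natural-distance term is allowed on the right-hand side. The clean separation of regimes $\lambda\vert g\vert\lessgtr(\delta+a)^{p(x)-1}$ is what makes this bookkeeping go through.
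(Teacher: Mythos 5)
Your building-block estimate $\vert t^a-t^b\vert\lesssim\vert a-b\vert\,\vert\log t\vert\,(t^a+t^b)$, together with $\vert\log t\vert\lesssim t^\sigma+t^{-\sigma}$ (with a constant depending on $\sigma>0$), is the right engine, and your treatment of \eqref{eq:Fh-F} is correct. The paper itself proves the proposition only by citation to \cite{BK23_pxDirichlet}, so a line-by-line comparison is not possible, but this device is the standard one for exponent perturbations and the resulting $s>1$ matches the structure of the claim. Your sketch for \eqref{eq:phih-phi} is also workable, keeping in mind that the exponent enters the prefactor $\big((\delta+a)^{p-1}+t\big)^{p'-2}$ in two independent places (so the single-exponent building block must be applied twice with a triangle argument), and that your two-regime extraction of $\lambda^{\min\{2,(p^+)'\}}$ is equivalent to the estimate $(\varphi_a)^*(x,\lambda t)\lesssim\lambda^{\min\{2,p'(x)\}}(\varphi_a)^*(x,t)$ that the paper already invokes in the proof of Corollary \ref{cor:error_FE}.

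However, the argument for \eqref{eq:Ah-A} has a genuine gap. The duality identity $\bfF^*(x,\bfS(x,\bfA))=\bfF(x,\bfA)$ (and its discrete counterpart $\bfF_h^*(\cdot,\bfS_h(\cdot,\bfA))=\bfF_h(\cdot,\bfA)$) holds only for $\delta=0$ and $\mu_0=1$, whereas the proposition is stated for $\delta\ge0$ and the constitutive coefficient $\mu_0>0$ in \eqref{def:A} is arbitrary. Concretely, for $\delta=0$ and general $\mu_0$ one computes $\bfF^*(x,\bfS(x,\bfA))=\mu_0^{p'(x)/2}\,\bfF(x,\bfA)$; for $\mu_0=1$ and $\delta>0$ the identity would require $\delta^{p(x)-1}+(\delta+t)^{p(x)-2}t=(\delta+t)^{p(x)-1}$, i.e.\ $\delta\,t=0$, which fails for every $t>0$. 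The consequence is visible at the quadrature point: at $x=\xi_T$ the left-hand side of \eqref{eq:Ah-A} vanishes (since $\bfS_h(\xi_T,\cdot)=\bfS(\xi_T,\cdot)$), but your first bracket also vanishes there while your second bracket equals $(1-\mu_0^{p'(\xi_T)/2})\,\bfF(\xi_T,\bfA)$ when $\delta=0$, an order-one quantity whenever $\mu_0\ne1$; so the decomposition you wrote is not an algebraic identity and the three-term splitting cannot close. The repair is to avoid passing through $\bfF$ entirely and estimate the difference of $\bfF_h^*$ at the two arguments $\bfS_h(x,\bfA)$ and $\bfS(x,\bfA)$ directly: combining the discrete analogues of \eqref{eq:hammerf} and \eqref{eq:hammerg} (cf.\ Remark \ref{rem:uniform}) gives $\vert\bfF_h^*(x,\bfS_h(x,\bfA))-\bfF_h^*(x,\bfS(x,\bfA))\vert^2\sim\big((\varphi_h)_{\vert\bfA^{\textup{sym}}\vert}\big)^*\big(x,\vert\bfS_h(x,\bfA)-\bfS(x,\bfA)\vert\big)$, after which your building block applied to $\vert\bfS_h(x,\bfA)-\bfS(x,\bfA)\vert$ and the polynomial growth of $(\varphi_h)^*$ close the estimate with constants uniform in $\delta$ and $\mu_0$, which is exactly what \eqref{eq:hammerh} is designed for.
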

	
	\begin{proof}
		See \cite[Prop.\  2.10]{BK23_pxDirichlet}.
	\end{proof}
	
	\begin{lemma}\label{lem:poincare_F}
		Let $p\in C^{0,\alpha}(\overline{\Omega})$ with $p^->1$ and $\alpha\in (0,1]$, and let $\bfA\in L^{p(\cdot)}(\Omega;\mathbb{R}^{d\times d})$ be such that
		$\bfF(\cdot,\bfA)\in N^{\beta,2}(\Omega;\mathbb{R}^{d\times d})$ with $\beta\in (0,1]$. Then,  there exists a constant $s>1$ which can chosen to be close to $1$ if $h>0$ is close to $0$,  such that for every $T\in \mathcal{T}_h$, it holds that
		\begin{align}
			\|\bfF(\cdot,\bfA)-\bfF(\cdot,\langle\bfA\rangle_{T})\|_{2,T}^2&\lesssim 
			h_T^{2\beta}\,[ \bfF(\cdot,\bfA)]_{N^{\beta,2}(\textcolor{black}{T})}^2
			+ h_T^{2\alpha}\,\|1+\vert \bfA\vert^{p(\cdot)s}\|_{1,T}\,,\label{lem:poincare_F.1}\\
			\|\bfF(\cdot,\bfA)-\bfF(\cdot,\langle\bfA\rangle_{\omega_T})\|_{2,\omega_T}^2&\lesssim
			h_T^{2\beta}\,[ \bfF(\cdot,\bfA)]_{N^{\beta,2}(\textcolor{black}{\omega_T})}^2
			+ h_T^{2\alpha}\,\|1+\vert \bfA\vert^{p(\cdot)s}\|_{1,\omega_T}\,,\label{lem:poincare_F.2}
		\end{align}
		where 
		$\lesssim$ depends on $p^-$, $p^+$, $[p]_{\alpha,\Omega}$, $s$, and $\omega_0$.
		In particular, it holds that
		\begin{align}
			\|\bfF(\cdot,\bfA)-\bfF(\cdot,\Pi_h^0\bfA)\|_{2,\Omega}^2&\lesssim 
			h^{2\beta}\,[ \bfF(\cdot,\bfA)]_{N^{\beta,2}(\Omega)}^2
			+ h^{2\alpha}\,\|1+\vert \bfA\vert^{p(\cdot)s}\|_{1,\Omega}\,,\label{lem:poincare_F.3}\\
			\sum_{T\in \mathcal{T}_h}{	\|\bfF(\cdot,\bfA)-\bfF(\cdot,\langle\bfA\rangle_{\omega_T})\|_{2,\omega_T}^2}&\lesssim 
			h^{2\beta}\,[ \bfF(\cdot,\bfA)]_{N^{\beta,2}(\Omega)}^2
			+ h^{2\alpha}\,\|1+\vert \bfA\vert^{p(\cdot)s}\|_{1,\Omega}\,.\label{lem:poincare_F.4}
		\end{align} 
	\end{lemma}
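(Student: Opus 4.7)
The plan for \eqref{lem:poincare_F.1} is to pass through the discretized non-linearity $\bfF_h$, which by construction has a constant power-law index $p(\xi_T)$ on each $T\in\mathcal{T}_h$. More precisely, I would first apply the triangle inequality
\begin{align*}
\|\bfF(\cdot,\bfA)-\bfF(\cdot,\langle\bfA\rangle_T)\|_{2,T}&\leq \|\bfF(\cdot,\bfA)-\bfF_h(\cdot,\bfA)\|_{2,T}+\|\bfF_h(\cdot,\bfA)-\bfF_h(\cdot,\langle\bfA\rangle_T)\|_{2,T}\\
&\quad+\|\bfF_h(\cdot,\langle\bfA\rangle_T)-\bfF(\cdot,\langle\bfA\rangle_T)\|_{2,T}
\end{align*}
and handle the first and third terms by Proposition~\ref{lem:A-Ah}\eqref{eq:Fh-F}; for the third, applying this with $\bfA$ replaced by the constant $\langle\bfA\rangle_T$, the required bound $\|1+|\langle\bfA\rangle_T|^{p(\cdot)s}\|_{1,T}\lesssim \|1+|\bfA|^{p(\cdot)s}\|_{1,T}$ follows from Jensen's inequality combined with Lemma~\ref{lem:local_change} to exchange the values of $p(\cdot)$ at different points of $T$.

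For the middle term, which involves only the constant-exponent non-linearity $\bfF(\xi_T,\cdot)$ on $T$, the plan is to exploit the classical \emph{best-constant approximation equivalence} for shifted $N$-functions with constant exponent,
\begin{align*}
\|\bfF(\xi_T,\bfA)-\bfF(\xi_T,\langle\bfA\rangle_T)\|_{2,T}^2\lesssim \|\bfF_h(\cdot,\bfA)-\langle\bfF_h(\cdot,\bfA)\rangle_T\|_{2,T}^2\,,
\end{align*}
with constants depending only on $p^-,p^+$ (cf.\ standard arguments for $(p,\delta)$-type non-linearities as in \cite{dr-nafsa, BDS15}). Then, by one further triangle inequality with intermediate $\langle\bfF(\cdot,\bfA)\rangle_T$, combined with Cauchy--Schwarz to absorb the difference of means into an $L^2$-norm, this term reduces to $\|\bfF_h(\cdot,\bfA)-\bfF(\cdot,\bfA)\|_{2,T}^2$ (again controlled by Proposition~\ref{lem:A-Ah}\eqref{eq:Fh-F}) plus the standard Nikolski\u{\i}--Poincaré quantity $\|\bfF(\cdot,\bfA)-\langle\bfF(\cdot,\bfA)\rangle_T\|_{2,T}^2\lesssim h_T^{2\beta}[\bfF(\cdot,\bfA)]_{N^{\beta,2}(\omega_T)}^2$ of \cite[(4.6), (4.7)]{breit-lars-etal}.

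The estimate \eqref{lem:poincare_F.2} can be reduced to \eqref{lem:poincare_F.1} element-wise: for each $T'\subset\omega_T$, the triangle inequality yields
\begin{align*}
\|\bfF(\cdot,\bfA)-\bfF(\cdot,\langle\bfA\rangle_{\omega_T})\|_{2,T'}&\leq \|\bfF(\cdot,\bfA)-\bfF(\cdot,\langle\bfA\rangle_{T'})\|_{2,T'}+\|\bfF(\cdot,\langle\bfA\rangle_{T'})-\bfF(\cdot,\langle\bfA\rangle_{\omega_T})\|_{2,T'}\,,
\end{align*}
where the first summand is handled by \eqref{lem:poincare_F.1} on $T'$ and the second, involving two constant arguments, is controlled via the $\varphi$-equivalences of Proposition~\ref{lem:hammer} together with Poincaré-type estimates on $|\langle\bfA\rangle_{T'}-\langle\bfA\rangle_{\omega_T}|$ expressed through the $N^{\beta,2}$-semi-norm on $\omega_T^{2\times}$. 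Summing over the (uniformly bounded number of) $T'\subset\omega_T$ yields the claim. Finally, the global estimates \eqref{lem:poincare_F.3} and \eqref{lem:poincare_F.4} follow by summing \eqref{lem:poincare_F.1} over $T\in\mathcal{T}_h$ (using that $\bfF(\cdot,\Pi_h^0\bfA)|_T=\bfF(\cdot,\langle\bfA\rangle_T)$, where $\Pi_h^0$ is the piecewise $L^2$-projection) and summing \eqref{lem:poincare_F.2}, respectively, exploiting the shape-regularity-induced bounded overlap of the patches $\{\omega_T\}_{T\in\mathcal{T}_h}$ and $\{\omega_T^{2\times}\}_{T\in\mathcal{T}_h}$.

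The main obstacle will be the best-constant approximation equivalence for $\bfF(\xi_T,\cdot)$ \emph{uniformly in} $\xi_T\in T$; while this is a standard constant-exponent result, care is required to ensure that the hidden constants depend only on $p^-,p^+>1$ and not on $\xi_T$ itself, which is secured by the uniform $\Delta_2\cap\nabla_2$-bounds for $\{\varphi(\xi_T,\cdot)\}_{\xi_T\in\overline{\Omega}}$ noted in Remark~\ref{rem:uniform}. Once this equivalence is established with uniform constants, the remaining steps reduce to routine applications of Proposition~\ref{lem:A-Ah} and the Nikolski\u{\i}--Poincaré inequality.
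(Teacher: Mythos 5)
Your proposed route for \eqref{lem:poincare_F.1} is genuinely different from the paper's: the paper never passes through $\bfF_h$ or a best-constant-approximation equivalence. Instead, it bounds $\|\bfF(\cdot,\bfA)-\bfF(\cdot,\langle\bfA\rangle_T)\|_{2,T}^2$ directly by the modular $\int_T\varphi_{|\bfA(x)|}(x,\langle|\bfA(x)-\bfA|\rangle_T)\,\mathrm{d}x$ via \eqref{eq:hammera}, applies Jensen's inequality pointwise in $x$, then splits the integrand $\varphi_{|\bfA(x)|}(x,|\bfA(x)-\bfA(y)|)\sim|\bfF(x,\bfA(x))-\bfF(x,\bfA(y))|^2$ into $|\bfF(x,\bfA(x))-\bfF(y,\bfA(y))|^2 + |\bfF(y,\bfA(y))-\bfF(x,\bfA(y))|^2$; the first piece becomes the Nikolski\u{\i} term after a change of variables and the second is controlled by the pointwise exponent-variation bound \eqref{eq:Fxh-Fx}. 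Your $\bfF_h$-sandwich plus best-constant-equivalence is a legitimate alternative; what the paper's direct route buys is that it never needs the quasi-optimality of $\bfF(\langle\bfA\rangle_T)$ as a constant approximation (only that $\varphi_a(x,\cdot)$ is increasing and convex), and the exponent-variation term appears exactly once rather than from three separate applications of Proposition~\ref{lem:A-Ah}.

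For \eqref{lem:poincare_F.2} your plan has a real gap. After the element-wise triangle inequality, the second summand $\|\bfF(\cdot,\langle\bfA\rangle_{T'})-\bfF(\cdot,\langle\bfA\rangle_{\omega_T})\|_{2,T'}$ cannot be controlled by a "Poincar\'e-type estimate on $|\langle\bfA\rangle_{T'}-\langle\bfA\rangle_{\omega_T}|$ expressed through the $N^{\beta,2}$-semi-norm on $\omega_T^{2\times}$'': the $N^{\beta,2}$-semi-norm controls translates of $\bfF(\cdot,\bfA)$, not of $\bfA$ itself, and $\bfF$ is nonlinear. Making the step rigorous requires the $\varphi$-equivalences, a Jensen step over $\fint_{T'}\fint_{\omega_T}$, and a shift change (Lemma~\ref{lem:shift-change}) to move the shift $|\langle\bfA\rangle_{T'}|$ onto $|\bfA(y)|$ before the $N^{\beta,2}$-semi-norm can enter; the $\varepsilon$-term from the shift change then produces a quantity of the same type as \eqref{lem:poincare_F.1} on $T'$ which must be handled separately. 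In other words, filling this gap amounts to re-deriving the Jensen/modular argument from scratch — which is exactly what the paper does, but without the detour: it simply reruns the proof of \eqref{lem:poincare_F.1} verbatim with $\omega_T$ in place of $T$ (and $\omega_T^{2\times}$ replacing $\omega_T$ as the translated domain), using $\mathrm{diam}(\omega_T)\sim h_T$. I'd recommend dropping the element-wise reduction and applying your argument for \eqref{lem:poincare_F.1} directly on the patch. Your treatment of \eqref{lem:poincare_F.3}--\eqref{lem:poincare_F.4} via summation and bounded overlap matches the paper.
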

	
	\begin{proof} 
		
		\textit{ad \eqref{lem:poincare_F.1}.}
		Using \eqref{eq:hammera}, $\vert \bfA(x)-\langle\bfA\rangle_T\vert =\vert \langle\bfA(x)-\bfA\rangle_T\vert\leq \langle\vert  \bfA(x)-\bfA\vert\rangle_T$ for a.e.\ $x\in T$, Jensen's inequality for a.e.\ fixed $x\in T$, \eqref{eq:nikolski_semi-norm}, that there exists a constant $s>1$ that can chosen to be close to $1$ if $h_T>0$ is close to $0$ such that for every $x,y\in T$, it holds that
		\begin{align}
			\vert \bfF(x,\bfA(y))-\bfF(y,\bfA(y))\vert^2\lesssim \vert p(x)-p(y)\vert^2\,(1 +\vert \bfA\vert^{p(y)s})\,,\label{eq:Fxh-Fx}
		\end{align}
		where the hidden constants also depend on $s>1$ and the chunkiness $\omega_0>0$,   
		we find that
		\begin{align}\label{lem:poincare_F.5}
			\begin{aligned}
				\|\bfF(\cdot,\bfA)-\bfF(\cdot,\langle\bfA\rangle_T)\|_{2,T}^2&\lesssim c\int_T{\varphi_{\vert \bfA(x)\vert }(x,\langle\vert  \bfA(x)-\bfA\vert\rangle_T)\,\mathrm{d}x}
				\\&\lesssim \int_T{\fint_T{\varphi_{\vert \bfA(x)\vert }(x,\vert  \bfA(x)-\bfA(y)\vert)\,\mathrm{d}y}\,\mathrm{d}x}
				\\&\lesssim \int_T{\fint_T{\vert  \bfF(x,\bfA(x))-\bfF(y,\bfA(y))\vert^2\,\mathrm{d}y}\,\mathrm{d}x}\\&\quad+\int_T{\fint_T{\vert  \bfF(y,\bfA(y))-\bfF(x,\bfA(y))\vert^2\,\mathrm{d}y}\,\mathrm{d}x}
				\\&\lesssim \frac{1}{\vert T\vert }\int_{\vert\cdot \vert \leq h_T}{\int_{T\cap (T-y)}{\vert  \bfF(x+y,\bfA(x+y)-\bfF(x,\bfA(x))\vert^2\,\mathrm{d}x}\,\mathrm{d}y}
				\\&\quad + h_T^{2\alpha}\,\|1+\vert \bfA\vert^{p(\cdot)s}\|_{1,T}
				\\&\lesssim h_T^{2\beta}\, [\bfF(\cdot,\bfA)]_{N^{\beta,2}(\textcolor{black}{T})}^2
				+ h_T^{2\alpha}\,\|1+\vert \bfA\vert^{p(\cdot)s}\|_{1,T}\,,
			\end{aligned}
		\end{align}
		which is the claimed estimate \eqref{lem:poincare_F.1}.
		
		\textit{ad \eqref{lem:poincare_F.2}.} Using \eqref{eq:hammera}, $\vert \bfA(x)-\langle\bfA\rangle_{\omega_T}\vert =\vert \langle\bfA(x)-\bfA\rangle_{\omega_T}\vert\leq \langle\vert  \bfA(x)-\bfA\vert\rangle_{\omega_T}$ for a.e.\ $x\in \omega_T$, Jensen's inequality for a.e.\ fixed $x\in T$,  \eqref{eq:Fxh-Fx},  \eqref{eq:nikolski_semi-norm}, and $\textup{diam}(\omega_T)\sim h_T$, with a constant depending only~on~$\omega_0$,
		we find that
		\begin{align*}
			\|\bfF(\cdot,\bfA)-\bfF(\cdot,\langle\bfA\rangle_{\omega_T})\|_{2,\omega_T}^2&
			= \int_{\omega_T}{\fint_{\omega_T}{\vert  \bfF(x,\bfA(x))-\bfF(y,\bfA(y))\vert^2\,\mathrm{d}y}\,\mathrm{d}x}\\&\quad+\int_{\omega_T}{\fint_{\omega_T}{\vert  \bfF(y,\bfA(y))-\bfF(x,\bfA(y))\vert^2\,\mathrm{d}y}\,\mathrm{d}x}
			\\&\lesssim \frac{1}{\vert \omega_T\vert }\int_{\vert\cdot \vert \leq \textup{diam}(\omega_T)}{\int_{\omega_T\cap (\omega_T-y)}{\vert  \bfF(x+y,\bfA(x+y)-\bfF(x,\bfA(x))\vert^2\,\mathrm{d}x}\,\mathrm{d}y}
			\\&\quad + h_T^{2\alpha}\,\|1+\vert \bfA\vert^{p(\cdot)s}\|_{1,\omega_T}
			\\&\lesssim \textup{diam}(\omega_T)^{2\beta} \,[\bfF(\cdot,\bfA)]_{N^{\beta,2}(\textcolor{black}{\omega_T})}^2
			+ h_T^{2\alpha}\,\|1+\vert \bfA\vert^{p(\cdot)s}\|_{1,\omega_T}
			\\&\lesssim h_T^{2\beta} \,[\bfF(\cdot,\bfA)]_{N^{\beta,2}(\textcolor{black}{\omega_T})}^2
			+ h_T^{2\alpha}\,\|1+\vert \bfA\vert^{p(\cdot)s}\|_{1,\omega_T}\,,
		\end{align*}
		which is the claimed estimate \eqref{lem:poincare_F.2}.\enlargethispage{2mm}
		
		\textit{ad \eqref{lem:poincare_F.3}.} Using \eqref{lem:poincare_F.5}, via summation with respect to $T\in \mathcal{T}_h$, and using that $h_T\sim h$~for~all~$T\in \mathcal{T}_h$, with a constant depending only on $\omega_0$,
		we find that
		\begin{align*}
			\|\bfF(\cdot,\bfA)-\bfF(\cdot,\Pi_h^0\bfA)\|_{2,\Omega}^2&\leq \sum_{T\in \mathcal{T}_h}{\|\bfF(\cdot,\bfA)-\bfF(\cdot,\langle\bfA\rangle_T)\|_{2,T}^2}
			\\&\lesssim \sum_{T\in \mathcal{T}_h}{\frac{1}{\vert T\vert }\int_{\vert\cdot \vert \leq h_T}{\int_{T\cap (T-y)}{\vert  \bfF(x+y,\bfA(x+y)-\bfF(x,\bfA(x))\vert^2\,\mathrm{d}x}\,\mathrm{d}y}}
			\\&\quad + \sum_{T\in \mathcal{T}_h}{h_T^{2\alpha}\,\|1+\vert \bfA\vert^{p(\cdot)s}\|_{1,T}}
			\\&\lesssim \frac{1}{h^d }\int_{\vert\cdot \vert \leq h}{\int_{\Omega\cap (\Omega-y)}{\vert  \bfF(x+y,\bfA(x+y)-\bfF(x,\bfA(x))\vert^2\,\mathrm{d}x}v\,\mathrm{d}y}
			\\&\quad + h^{2\alpha}\,\|1+\vert \bfA\vert^{p(\cdot)s}\|_{1,\Omega}
			\\&\lesssim h^{2\beta}\, [\bfF(\cdot,\bfA)]_{N^{\beta,2}(\Omega)}^2
			+  h^{2\alpha}\,\|1+\vert \bfA\vert^{p(\cdot)s}\|_{1,\Omega}\,,
		\end{align*}
		which is the claimed estimate \eqref{lem:poincare_F.3}.
		
		\textit{ad \eqref{lem:poincare_F.4}.} We proceed similar to the proof of \eqref{lem:poincare_F.3} but now using \eqref{lem:poincare_F.2}.
	\end{proof}	
	
	\section*{Acknowledgments}
	
	\hspace*{5mm}Alex Kaltenbach acknowledges  the hospitality of the Department of Applied Mathematics of the University of Pisa.
	
	{\setlength{\bibsep}{0pt plus 0.0ex}\small
		

\begin{thebibliography}{10}
	
	\bibitem{AMS08}
	\bgroup\scshape{}R.~Aboulaich\egroup{}, \bgroup\scshape{}D.~Meskine\egroup{},
	and \bgroup\scshape{}A.~Souissi\egroup{}, New diffusion models in image
	processing,  \emph{Comput. Math. Appl.} \textbf{56} no.~4 (2008), 874--882.
	 \doi{10.1016/j.camwa.2008.01.017}.
	
	\bibitem{AM}
	\bgroup\scshape{}E.~Acerbi\egroup{} and \bgroup\scshape{}G.~Mingione\egroup{},
	Regularity results for electrorheological fluids: the stationary case,
	\emph{C. R. Math. Acad. Sci. Paris} \textbf{334} no.~9 (2002), 817--822.
	 \doi{10.1016/S1631-073X(02)02337-3}.
	
	\bibitem{mumps}
	\bgroup\scshape{}P.~R. Amestoy\egroup{}, \bgroup\scshape{}I.~S. Duff\egroup{},
	\bgroup\scshape{}J.-Y. L'Excellent\egroup{}, and
	\bgroup\scshape{}J.~Koster\egroup{}, A fully asynchronous multifrontal solver
	using distributed dynamic scheduling,  \emph{SIAM J. Matrix Anal. Appl.}
	\textbf{23} no.~1 (2001), 15--41.  
	\doi{10.1137/S0895479899358194}.
	
	\bibitem{AR06}
	\bgroup\scshape{}S.~N. Antontsev\egroup{} and \bgroup\scshape{}J.~F.
	Rodrigues\egroup{}, On stationary thermo-rheological viscous flows,
	\emph{Ann. Univ. Ferrara Sez. VII Sci. Mat.} \textbf{52} no.~1 (2006),
	19--36.  \doi{10.1007/s11565-006-0002-9}.
	
	\bibitem{ABF84}
	\bgroup\scshape{}D.~N. Arnold\egroup{}, \bgroup\scshape{}F.~Brezzi\egroup{},
	and \bgroup\scshape{}M.~Fortin\egroup{}, A stable finite element for the
	{S}tokes equations,  \emph{Calcolo} \textbf{21} no.~4 (1984), 337--344
	(1985).    \doi{10.1007/BF02576171}.
	
	\bibitem{BK23_pxDirichlet}
	\bgroup\scshape{}A.~K. Balci\egroup{} and
	\bgroup\scshape{}A.~Kaltenbach\egroup{}, Error analysis for a
	{C}rouzeix--{R}aviart approximation of the variable exponent {D}irichlet problem,
	2023. \arxiv{2303.10687}.
	
	\bibitem{baliu}
	\bgroup\scshape{}J.~W. Barrett\egroup{} and \bgroup\scshape{}W.~B.
	Liu\egroup{}, Finite element approximation of the parabolic
	{$p$}-{L}aplacian,  \emph{SIAM J. Numer. Anal.} \textbf{31} no.~2 (1994),
	413--428.  \doi{10.1137/0731022}.
	
	\bibitem{bdr-phi-stokes}
	\bgroup\scshape{}L.~Belenki\egroup{}, \bgroup\scshape{}L.~C. Berselli\egroup{},
	\bgroup\scshape{}L.~Diening\egroup{}, and
	\bgroup\scshape{}M.~R\r{u}\v{z}i\v{c}ka\egroup{}, On the finite element
	approximation of {$p$}-{S}tokes systems,  \emph{SIAM J. Numer. Anal.}
	\textbf{50} no.~2 (2012), 373--397.  \doi{10.1137/10080436X}.
	
	\bibitem{BR85}
	\bgroup\scshape{}C.~Bernardi\egroup{} and \bgroup\scshape{}G.~Raugel\egroup{},
	Analysis of some finite elements for the {S}tokes problem,  \emph{Math.
		Comp.} \textbf{44} no.~169 (1985), 71--79.  
	\doi{10.2307/2007793}.
	
	\bibitem{BBD15}
	\bgroup\scshape{}L.~C. Berselli\egroup{}, \bgroup\scshape{}D.~Breit\egroup{},
	and \bgroup\scshape{}L.~Diening\egroup{}, Convergence analysis for a finite
	element approximation of a steady model for electrorheological fluids,
	\emph{Numer. Math.} \textbf{132} no.~4 (2016), 657--689.  
	\doi{10.1007/s00211-015-0735-4}.
	
		\bibitem{BerKa23}
	\bgroup\scshape{}L.~C. Berselli\egroup{} and \bgroup\scshape{}A.~Kaltenbach\egroup{},
	Convergence analysis for a fully-discrete finite element approximation of the unsteady $p(\cdot,\cdot)$-Navier--Stokes equations,
	2024. \arxiv{2402.16606}
	
	\bibitem{BBF13}
	\bgroup\scshape{}D.~Boffi\egroup{}, \bgroup\scshape{}F.~Brezzi\egroup{}, and
	\bgroup\scshape{}M.~Fortin\egroup{}, \emph{Mixed finite element methods and
		applications}, \emph{Springer Series in Computational Mathematics}
	\textbf{44}, Springer, Heidelberg, 2013. 
	\doi{10.1007/978-3-642-36519-5}.
	
	\bibitem{breit-lars-etal}
	\bgroup\scshape{}D.~Breit\egroup{}, \bgroup\scshape{}L.~Diening\egroup{},
	\bgroup\scshape{}J.~Storn\egroup{}, and
	\bgroup\scshape{}J.~Wichmann\egroup{}, The parabolic {$p$}-{L}aplacian with
	fractional differentiability,  \emph{IMA J. Numer. Anal.} \textbf{41} no.~3
	(2021), 2110--2138.   \doi{10.1093/imanum/draa081}.
	
	\bibitem{BDS15}
	\bgroup\scshape{}D.~Breit\egroup{}, \bgroup\scshape{}L.~Diening\egroup{}, and
	\bgroup\scshape{}S.~Schwarzacher\egroup{}, Finite element approximation of
	the {$p(\cdot)$}-{L}aplacian,  \emph{SIAM J. Numer. Anal.} \textbf{53} no.~1
	(2015), 551--572. \doi{10.1137/130946046}.
	
	\bibitem{BF1991}
	\bgroup\scshape{}F.~Brezzi\egroup{} and \bgroup\scshape{}M.~Fortin\egroup{},
	\emph{Mixed and hybrid finite element methods}, \emph{Springer Series in
		Computational Mathematics} \textbf{15}, Springer-Verlag, New York, 1991.
	\doi{10.1007/978-1-4612-3172-1}.
	
	\bibitem{HMPR10}
	\bgroup\scshape{}C.~Bridges\egroup{}, \bgroup\scshape{}S.~Karra\egroup{}, and
	\bgroup\scshape{}K.~R. Rajagopal\egroup{}, On modeling the response of the
	synovial fluid: unsteady flow of a shear-thinning, chemically-reacting fluid
	mixture,  \emph{Comput. Math. Appl.} \textbf{60} no.~8 (2010), 2333--2349.
	\doi{10.1016/j.camwa.2010.08.027}.
	
	\bibitem{magneto}
	\bgroup\scshape{}I.~A. Brigadnov\egroup{} and
	\bgroup\scshape{}A.~Dorfmann\egroup{}, Mathematical modeling of
	magnetorheological fluids,  \emph{Contin. Mech. Thermodyn.} \textbf{17} no.~1
	(2005), 29--42 (English). \doi{10.1007/s00161-004-0185-1}.
	
	\bibitem{CZ17}
	\bgroup\scshape{}M.~Caliari\egroup{} and \bgroup\scshape{}S.~Zuccher\egroup{},
	Quasi-{N}ewton minimization for the {$p(x)$}-{L}aplacian problem,  \emph{J.
		Comput. Appl. Math.} \textbf{309} (2017), 122--131. 
	\doi{10.1016/j.cam.2016.06.026}.
	
	\bibitem{CLR06}
	\bgroup\scshape{}Y.~Chen\egroup{}, \bgroup\scshape{}S.~Levine\egroup{}, and
	\bgroup\scshape{}M.~Rao\egroup{}, Variable exponent, linear growth
	functionals in image restoration,  \emph{SIAM J. Appl. Math.} \textbf{66}
	no.~4 (2006), 1383--1406. \doi{10.1137/050624522}.
	
	\bibitem{CG09}
	\bgroup\scshape{}F.~Crispo\egroup{} and \bgroup\scshape{}C.~R.
	Grisanti\egroup{}, On the {$C^{1,\gamma}(\overline\Omega)\cap
		W^{2,2}(\Omega)$} regularity for a class of electro-rheological fluids,
	\emph{J. Math. Anal. Appl.} \textbf{356} no.~1 (2009), 119--132.
 \doi{10.1016/j.jmaa.2009.02.013}.
	
	\bibitem{CR73}
	\bgroup\scshape{}M.~Crouzeix\egroup{} and \bgroup\scshape{}P.-A.
	Raviart\egroup{}, Conforming and nonconforming finite element methods for
	solving the stationary {S}tokes equations. {I},  \emph{Rev. Fran\c{c}aise
		Automat. Informat. Recherche Op\'{e}rationnelle S\'{e}r. Rouge} \textbf{7}, no. {\rm R}-3 (1973), 33--75. 
	
	\bibitem{DPLM12}
	\bgroup\scshape{}L.~M. Del~Pezzo\egroup{}, \bgroup\scshape{}A.~L.
	Lombardi\egroup{}, and \bgroup\scshape{}S.~Mart\'{\i}nez\egroup{}, Interior
	penalty discontinuous {G}alerkin {FEM} for the {$p(x)$}-{L}aplacian,
	\emph{SIAM J. Numer. Anal.} \textbf{50} no.~5 (2012), 2497--2521.
  \doi{10.1137/110820324}.
	
	\bibitem{Hitchhiker12}
	\bgroup\scshape{}E.~Di~Nezza\egroup{}, \bgroup\scshape{}G.~Palatucci\egroup{},
	and \bgroup\scshape{}E.~Valdinoci\egroup{}, Hitchhiker's guide to the
	fractional {S}obolev spaces,  \emph{Bull. Sci. Math.} \textbf{136} no.~5
	(2012), 521--573.   \doi{10.1016/j.bulsci.2011.12.004}.
	
	\bibitem{der-2d-erf}
	\bgroup\scshape{}L.~Diening\egroup{}, \bgroup\scshape{}F.~Ettwein\egroup{}, and
	\bgroup\scshape{}M.~R\r{u}\v{z}i\v{c}ka\egroup{}, {$C^{1,\alpha}$}-regularity
	for electrorheological fluids in two dimensions,  \emph{NoDEA Nonlinear
		Differential Equations Appl.} \textbf{14} no.~1-2 (2007), 207--217.
 \doi{10.1007/s00030-007-5026-z}.
	
	\bibitem{DST2021}
	\bgroup\scshape{}L.~Diening\egroup{}, \bgroup\scshape{}J.~Storn\egroup{}, and
	\bgroup\scshape{}T.~Tscherpel\egroup{}, Fortin operator for the
	{T}aylor-{H}ood element,  \emph{Numer. Math.} \textbf{150} no.~2 (2022),
	671--689.   \doi{10.1007/s00211-021-01260-1}.
	
	\bibitem{dhhr}
	\bgroup\scshape{}L.~Diening\egroup{}, \bgroup\scshape{}P.~Harjulehto\egroup{},
	\bgroup\scshape{}P.~H\"{a}st\"{o}\egroup{}, and
	\bgroup\scshape{}M.~R\r{u}\v{z}i\v{c}ka\egroup{}, \emph{Lebesgue and
		{S}obolev spaces with variable exponents}, \emph{Lecture Notes in
		Mathematics} \textbf{2017}, Springer, Heidelberg, 2011. 
	\doi{10.1007/978-3-642-18363-8}.
	
	\bibitem{dms}
	\bgroup\scshape{}L.~Diening\egroup{}, \bgroup\scshape{}J.~M\'{a}lek\egroup{},
	and \bgroup\scshape{}M.~Steinhauer\egroup{}, On {L}ipschitz truncations of
	{S}obolev functions (with variable exponent) and their selected applications,
	\emph{ESAIM Control Optim. Calc. Var.} \textbf{14} no.~2 (2008), 211--232.
 \doi{10.1051/cocv:2007049}.
	
	\bibitem{win-r}
	\bgroup\scshape{}W.~Eckart\egroup{} and \bgroup\scshape{}M.~R{\r {u}}{\v
		{z}}i{\v {c}}ka\egroup{}, Modeling micropolar electrorheological fluids,
	\emph{Int. J. Appl. Mech. Eng.} \textbf{11} (2006), 813--844.
	
	\bibitem{eringen-book}
	\bgroup\scshape{}A.~C. Eringen\egroup{}, \emph{Microcontinuum field theories.
		{I}. {F}oundations and solids}, Springer-Verlag, New York, 1999.
 \doi{10.1007/978-1-4612-0555-5}.
	
	\bibitem{EG21}
	\bgroup\scshape{}A.~Ern\egroup{} and \bgroup\scshape{}J.~L. Guermond\egroup{},
	\emph{Finite Elements I: Approximation and Interpolation}, \emph{Texts in
		Applied Mathematics} no.~1, Springer International Publishing, 2021.
	\doi{10.1007/978-3-030-56341-7}.
	
	\bibitem{GHS23}
	\bgroup\scshape{}P.~A. Gazca-Orozco\egroup{},
	\bgroup\scshape{}P.~Heid\egroup{}, and \bgroup\scshape{}E.~S\"uli\egroup{},
	Nonlinear iterative approximation of steady incompressible chemically
	reacting flows,  \emph{Comptes Rendus. M\'ecanique} (2023) (en), Online
	first. \doi{10.5802/crmeca.127}.
	
	\bibitem{GL01}
	\bgroup\scshape{}V.~Girault\egroup{} and \bgroup\scshape{}J.-L. Lions\egroup{},
	Two-grid finite-element schemes for the transient {N}avier-{S}tokes problem,
	\emph{M2AN Math. Model. Numer. Anal.} \textbf{35} no.~5 (2001), 945--980.
	\doi{10.1051/m2an:2001145}.
	
	\bibitem{GR86}
	\bgroup\scshape{}V.~Girault\egroup{} and \bgroup\scshape{}P.-A.
	Raviart\egroup{}, \emph{Finite element methods for {N}avier-{S}tokes
		equations}, \emph{Springer Series in Computational Mathematics} \textbf{5},
	Springer-Verlag, Berlin, 1986, Theory and algorithms. 
	\doi{10.1007/978-3-642-61623-5}.
	
	\bibitem{GS03}
	\bgroup\scshape{}V.~Girault\egroup{} and \bgroup\scshape{}L.~R. Scott\egroup{},
	A quasi-local interpolation operator preserving the discrete divergence,
	\emph{Calcolo} \textbf{40} no.~1 (2003), 1--19.
	\doi{10.1007/s100920300000}.
	
		\bibitem{numpy}
	\bgroup\scshape{}C.~R. Harris~et al.\egroup{}, Array programming with {NumPy},
	\emph{Nature} \textbf{585} no.~7825 (2020), 357--362.
	\doi{10.1038/s41586-020-2649-2}.
	
	\bibitem{Hi13a}
	\bgroup\scshape{}A.~Hirn\egroup{}, Approximation of the {$p$}-{S}tokes
	equations with equal-order finite elements,  \emph{J. Math. Fluid Mech.}
	\textbf{15} no.~1 (2013), 65--88. 
	\doi{10.1007/s00021-012-0095-0}.
	
	\bibitem{huber-paper}
	\bgroup\scshape{}A.~Huber\egroup{}, The divergence equation in weighted- and
	{$L^{p(\cdot)}$}-spaces,  \emph{Math. Z.} \textbf{267} no.~1-2 (2011),
	341--366. \mr{2772254}.  \doi{10.1007/s00209-009-0622-8}.
	
	\bibitem{Hun07}
	\bgroup\scshape{}J.~D. Hunter\egroup{}, Matplotlib: A 2d graphics environment,
	\emph{Computing in Science \& Engineering} \textbf{9} no.~3 (2007), 90--95.
	\doi{10.1109/MCSE.2007.55}.
	
	\bibitem{matlab}
	\bgroup\scshape{}T.~M. Inc.\egroup{}, Matlab version: 9.13.0 (r2022b), 2022.
	Available at \url{https://www.mathworks.com}.
	
	\bibitem{JK23_inhom}
	\bgroup\scshape{}J.~Jeßberger\egroup{} and
	\bgroup\scshape{}A.~Kaltenbach\egroup{}, Finite element discretization of the
	steady, generalized {N}avier--{S}tokes equations with inhomogeneous {D}irichlet
	boundary conditions, 2023. \arxiv{2310.04084}.
	
	\bibitem{KPS18}
	\bgroup\scshape{}S.~Ko\egroup{},
	\bgroup\scshape{}P.~Pust\v{e}jovsk\'{a}\egroup{}, and
	\bgroup\scshape{}E.~S\"{u}li\egroup{}, Finite element approximation of an
	incompressible chemically reacting non-{N}ewtonian fluid,  \emph{ESAIM Math.
		Model. Numer. Anal.} \textbf{52} no.~2 (2018), 509--541. 
	\doi{10.1051/m2an/2017043}.
	
	\bibitem{KS19}
	\bgroup\scshape{}S.~Ko\egroup{} and \bgroup\scshape{}E.~S\"{u}li\egroup{},
	Finite element approximation of steady flows of generalized {N}ewtonian
	fluids with concentration-dependent power-law index,  \emph{Math. Comp.}
	\textbf{88} no.~317 (2019), 1061--1090. 
	\doi{10.1090/mcom/3379}.
	
	\bibitem{LKM78}
	\bgroup\scshape{}W.~M. Lai\egroup{}, \bgroup\scshape{}S.~C. Kuei\egroup{}, and
	\bgroup\scshape{}V.~C. Mow\egroup{}, {Rheological Equations for Synovial
		Fluids},  \emph{Journal of Biomechanical Engineering} \textbf{100} no.~4
	(1978), 169--186. \doi{10.1115/1.3426208}.
	
	\bibitem{LLP10}
	\bgroup\scshape{}F.~Li\egroup{}, \bgroup\scshape{}Z.~Li\egroup{}, and
	\bgroup\scshape{}L.~Pi\egroup{}, Variable exponent functionals in image
	restoration,  \emph{Appl. Math. Comput.} \textbf{216} no.~3 (2010), 870--882.
 	\doi{10.1016/j.amc.2010.01.094}.
	
	\bibitem{LW10}
	\bgroup\scshape{}A.~Logg\egroup{} and \bgroup\scshape{}G.~N. Wells\egroup{},
	Dolfin: Automated finite element computing,  \emph{ACM Transactions on
		Mathematical Software} \textbf{37} no.~2 (2010), 1--28. 
	\doi{10.1145/1731022.1731030}.
	
	\bibitem{mnr3}
	\bgroup\scshape{}J.~M{\'a}lek\egroup{}, \bgroup\scshape{}J.~Nečas\egroup{},
	and \bgroup\scshape{}M.~Růžička\egroup{}, {On weak solutions to a class of
		non-{N}ewtonian incompressible fluids in bounded three-dimensional domains:
		the case $p\geq2$},  \emph{Advances in Differential Equations} \textbf{6}
	no.~3 (2001), 257 -- 302. \doi{10.57262/ade/1357141212}.
	
	\bibitem{Nikol75}
	\bgroup\scshape{}S.~M. Nikolski\u{\i}\egroup{}, \emph{Approximation of
		functions of several variables and imbedding theorems}, Springer-Verlag, New
	York-Heidelberg, 1975. \doi{https://doi.org/10.1007/978-3-642-65711-5}.
	
	\bibitem{distmesh}
	\bgroup\scshape{}P.-O. Persson\egroup{} and
	\bgroup\scshape{}G.~Strang\egroup{}, A {S}imple {M}esh {G}enerator in
	{MATLAB},  \emph{SIAM Review} \textbf{46} no.~2 (2004), 329--345.
	\doi{10.1137/S0036144503429121}.
	
\bibitem{RR1}
\bgroup\scshape{}K.~Rajagopal\egroup{} and \bgroup\scshape{}M.~R{\r u}{\v
	z}i{\v c}ka\egroup{}, On the modeling of electrorheological materials,
\emph{Mechanics Research Communications} \textbf{23} no.~4 (1996), 401--407.
\doi{https://doi.org/10.1016/0093-6413(96)00038-9}.
	
	\bibitem{dr-nafsa}
	\bgroup\scshape{}M.~R\r{u}\v{z}i\v{c}ka\egroup{} and
	\bgroup\scshape{}L.~Diening\egroup{}, Non-{N}ewtonian fluids and function
	spaces,  in \emph{N{AFSA} 8---{N}onlinear analysis, function spaces and
		applications. {V}ol. 8}, Czech. Acad. Sci., Prague, 2007, pp.~94--143.
	
	\bibitem{rubo}
	\bgroup\scshape{}M.~R{\r u}{\v z}i{\v c}ka\egroup{}, \emph{Electrorheological
		Fluids: Modeling and Mathematical Theory}, \emph{Lecture Notes in Math.}
	\textbf{1748}, Springer, Berlin, 2000. \doi{https://doi.org/10.1007/BFb0104029}.
	
	\bibitem{San1993}
	\bgroup\scshape{}D.~Sandri\egroup{}, Sur l'approximation num\'erique des
	\'ecoulements quasi-newtoniens dont la viscosit\'e suit la loi puissance ou
	la loi de {C}arreau,  \emph{RAIRO Mod\'el. Math. Anal. Num\'er.} \textbf{27}
	no.~2 (1993), 131--155.
	
	\bibitem{TH73}
	\bgroup\scshape{}C.~Taylor\egroup{} and \bgroup\scshape{}P.~Hood\egroup{}, A
	numerical solution of the {N}avier-{S}tokes equations using the finite
	element technique,  \emph{Internat. J. Comput. \& Fluids} \textbf{1} no.~1
	(1973), 73--100. \doi{10.1016/0045-7930(73)90027-3}.
	 
	\bibitem{Yang03}
	\bgroup\scshape{}D.~Yang\egroup{}, New characterizations of {H}aj\l
	asz-{S}obolev spaces on metric spaces,  \emph{Sci. China Ser. A} \textbf{46}
	no.~5 (2003), 675--689.  \doi{10.1360/02ys0343}.
	
	\bibitem{ZS22}
	\bgroup\scshape{}X.~Zhang\egroup{} and \bgroup\scshape{}Q.~Sun\egroup{},
	Characterizations of variable fractional {H}aj{\l}asz--{S}obolev spaces,
	2022. \arxiv{2204.11612}\enlargethispage{5mm}
	
	\bibitem{Z97}
	\bgroup\scshape{}V.~V. Zhikov\egroup{}, Meyer-type estimates for solving the
	nonlinear {S}tokes system,  \emph{Differ. Uravn.} \textbf{33} no.~1 (1997),
	107--114, 143.
	
\end{thebibliography}

\def\cprime{$'$} \def\cprime{$'$} \def\cprime{$'$}
\providecommand{\bysame}{\leavevmode\hbox to3em{\hrulefill}\thinspace}
\providecommand{\noopsort}[1]{}
\providecommand{\mr}[1]{\href{http://www.ams.org/mathscinet-getitem?mr=#1}{MR~#1}}
\providecommand{\zbl}[1]{\href{http://www.zentralblatt-math.org/zmath/en/search/?q=an:#1}{Zbl~#1}}
\providecommand{\jfm}[1]{\href{http://www.emis.de/cgi-bin/JFM-item?#1}{JFM~#1}}
\providecommand{\arxiv}[1]{\href{http://www.arxiv.org/abs/#1}{arXiv~#1}}
\providecommand{\doi}[1]{\url{https://doi.org/#1}}
\providecommand{\MR}{\relax\ifhmode\unskip\space\fi MR }
\providecommand{\MRhref}[2]{%
	\href{http://www.ams.org/mathscinet-getitem?mr=#1}{#2}
}
\providecommand{\href}[2]{#2}

	}
	
\end{document}